\newtheorem{theorem}{Theorem}[section]
\newtheorem{thmx}{Theorem}
\newtheorem{proposition}[theorem]{Proposition}
\newtheorem{lemma}[theorem]{Lemma}
\newtheorem{corollary}[theorem]{Corollary}
\theoremstyle{definition}
\newtheorem{definition}[theorem]{Definition}
\newtheorem{remark}{Remark}[section]
\newtheorem{question}{Question}
\numberwithin{equation}{section}
\definecolor{oxfordblue}{rgb}{0.0, 0.13, 0.28}
\definecolor{phthalogreen}{rgb}{0.07, 0.21, 0.14}
\newcommand\numberthis{\addtocounter{equation}{1}\tag{\theequation}}
\newsavebox\myboxA
\newsavebox\myboxB
\newlength\mylenA
\newcommand*\xoverline[2][0.70]{%
	\sbox{\myboxA}{$\m@th#2$}%
	\setbox\myboxB\null
	\ht\myboxB=\ht\myboxA%
	\dp\myboxB=\dp\myboxA%
	\wd\myboxB=#1\wd\myboxA
	\sbox\myboxB{$\m@th\overline{\copy\myboxB}$}
	\setlength\mylenA{\the\wd\myboxA}
	\addtolength\mylenA{-\the\wd\myboxB}%
	\ifdim\wd\myboxB<\wd\myboxA%
	\rlap{\hskip 0.8\mylenA\usebox\myboxB}{\usebox\myboxA}%
	\else
	\hskip -0.5\mylenA\rlap{\usebox\myboxA}{\hskip 0.5\mylenA\usebox\myboxB}%
	\fi}
\newcommand{\bmat}[1]{\begin{bmatrix} #1 \end{bmatrix}}
\newcommand\R{\mathbb{R}}
\newcommand\Z{\mathbb{Z}}
\let\geq\geqslant
\DeclareMathOperator*{\Span}{span}
\DeclareMathOperator*{\supp}{supp}
\newcommand\interint[2]{\left\llbracket 1, 5\right\rrbracket}
\newcommand{\mC}{\ensuremath{\mathcal{C}}}
\newcommand{\mU}{\ensuremath{\mathcal{U}}}
\newcommand{\mT}{\ensuremath{\mathcal{T}}}
\newcommand{\mS}{\ensuremath{\mathcal{S}}}
\newcommand{\mE}{\ensuremath{\mathcal{E}}}
\newcommand{\mG}{\ensuremath{\mathcal{G}}}
\newcommand{\mW}{\ensuremath{\mathcal{W}}}
\newcommand{\mY}{\ensuremath{\mathcal{Y}}}
\newcommand{\mN}{\ensuremath{\mathcal{N}}}
\newcommand{\mV}{\ensuremath{\mathcal{V}}}
\newcommand{\bL}{\ensuremath{\mathbb{L}}}
\newcommand{\mM}{\ensuremath{\mathcal{M}}}
\newcommand{\mP}{\ensuremath{\mathcal{P}}}
\newcommand{\mK}{\ensuremath{\mathcal{K}}}
\newcommand{\mD}{\ensuremath{\mathcal{D}}}
\newcommand{\mA}{\ensuremath{\mathcal{A}}}
\newcommand{\mO}{\ensuremath{\mathcal{O}}}
\newcommand{\mI}{\ensuremath{\mathcal{I}}}
\newcommand{\Tm}{\ensuremath{\mathbb{T}}}
\newcommand{\qd}{\mathrm{quad}}
\newcommand{\rf}{\mathrm{rf}}
\renewcommand{\c}{\ensuremath{\mathfrak{c}}} 
\DeclareMathOperator{\Ad}{\mathrm{Ad} }
\newcommand{\cO}{\mathcal{O}}
\newcommand{\cG}{\mathcal{G}}
\newcommand{\cV}{\mathcal{V}}
\newcommand{\bn}{\mathbf{n}}
\newcommand{\cK}{\mathcal{K}}
\newcommand{\st}{:\,}
\newcommand{\C}{\mathbb{C}}
\newcommand{\diag}{\mathrm{diag}}
\newcommand{\mane}{Ma\~n\'e\ }
\newcommand{\cA}{\mathcal{A}}
\newcommand{\bJ}{\mathbb{J}}
\newcommand{\cU}{\mathcal{U}}
\newcommand{\cD}{\mathcal{D}}
\newcommand{\cM}{\mathcal{M}}
\newcommand{\cE}{\mathcal{E}}
\newcommand{\cH}{\mathcal{H}}
\title[Sub-Riemannian bumpy metric theorem]
{Bumpy metric theorem for co-rank 1 sub-Riemannian and reversible sub-Finsler metrics}
\author{Shahriar aslani}
\address{Department of Mathematics \\ University of Toronto}
\email{saslani@math.toronto.edu}
\author{Ke Zhang}
\address{Department of Mathematics \\ University of Toronto}
\email{kzhang@math.toronto.edu}
\begin{document}
\maketitle
\begin{abstract}
		We prove that for a generic sub-Riemannian and reversible sub-Finsler metrics defined on a fixed co-rank $1$ distribution, all strictly normal periodic orbits are non-degenerate. 
\end{abstract}
\tableofcontents

\section{Introduction}

A Riemannian metric on a manifold $M$ is called \emph{bumpy} if all of its closed geodesics are \emph{non-degenerate}, that is the linearized Poincar\'e map does not admit any roots of unity as an eigenvalue. The \emph{bumpy metric theorem} states that a generic Riemannian metric is bumpy. This result has a wide range of applications because the bumpy metrics play the role of Morse functions in the space of metrics. 

The Riemannian bumpy metric theorem was stated by Abraham \cite{Abrahambumpy}, and proved after series of contributions by Abraham \cite{Abraham67, Abrahambumpy}, Klingenberg and Takens \cite{KT72}, and Anosov \cite{Anosov_1983}. The Hamiltonian analog of the bumpy metric theorem is proven by Robinson (\cite{Rob70}, \cite{Rob70a}). 

A weaker sense of genericity known as \emph{Ma\~n\'e-genericity} allow only a generic potential perturbation $U \in C^\infty(M)$ to the Hamiltonian. Ma\~n\'e-generic properties of periodic orbits have been studied for fiberwise convex Hamiltonians \cite{Cont10,RR12,LRR16, AB21}, and only recently for fiberwise non-convex Hamiltonians \cite{aslani2022bumpy}.

In the sub-Riemannian case, the Lagrangian is defined on a positive co-rank distribution. As such, there is no analog of the Euler-Lagrangian flow, and the proper generalization of the geodesic flow is the flow of the dual Hamiltonian system. The bumpy metric theorem in this case concerns all Hamiltonian periodic orbits. We call the projection of a Hamiltonian orbit a \emph{normal orbit}.

A main feature of sub-Riemannian geometry is the existence of \emph{singular curves}, which has no analog in Riemannian geometry. We say a normal orbit is \emph{strictly normal} if it is not singular. Our main result is that for co-rank $1$ distributions,  bumpy metric theorem holds for the \emph{strictly normal} orbits for sub-Riemannian metric and generalizations. The requirement for strict normal orbits is necessary, see Remark \ref{rmk:regular} below. We consider a generic metric defined on a fixed distribution, which places our concept of genericity in between the Hamiltonian sense and the \mane generic case. 

We proceed with formal definitions and the formulation of the main theorem.
\begin{definition}
	Let $M$ be a $(d + 1)$-dimensional manifold, $\mD$ be a smooth $m$-dimensional sub-bundle to the tangent bundle $TM$. A \emph{sub-bundle Lagrangian} on $\mD$ (we will write $\mD$-Lagrangian for short) is a smooth fiberwise strictly convex function on $\cD$, i.e. $\partial^2_{vv} L(q, v)$ is strictly positive definite for all $(q, v) \in \mD \subset TM$.
\end{definition}
Two particular classes of sub-bundle Lagrangians are:
\begin{itemize}
		\item (Sub-Riemannian Lagrangian) $L(q, v)$ is a positive definite quadratic form in $v$, that is, there exists a smooth inner product $\langle \cdot, \cdot \rangle_q$ on the $q$-fiber $\mD_q$, such that $L(q, v) = \frac12 \langle v, v\rangle_q$.
		\item (Sub-Finsler Lagrangian) $L(q, v)$ is positively homogeneous of degree $2$, that i.e. $L(q, \lambda v) = \lambda^2 L(q, v)$ for all $\lambda > 0$.
\end{itemize}
We refer to \cite{Agrachevbook, Rif14, Montgomerybook} for a background in sub-Riemannian geometry, and \cite{AS04} for general control theory. In sub-Riemannian geometry, one typically impose that $\cD$ is totally nonholonomic ($\cD$ satisfies the H\"ormander condition), but we do not make this assumption, see the explanations in Remark \ref{rmk:regular}.

Define the extended Lagrangian
\begin{equation}  \label{eq_1.2}
		\tilde{L}(q, v) = 
		\begin{cases}
				L(q, v) & v \in \mD_q \\
				\infty &  v \notin \mD_q
		\end{cases}.
\end{equation}
The dual Hamiltonian $H: T^*M \to \R$ with respect to to $\tilde{L}$ is
$$
\begin{aligned}
		H(q, p) & = \sup \big\{p \cdot v - \tilde{L}(q, v) \mid \, v \in T_q M\big\} \\
						& = \sup \big\{ p \cdot v - L(q, v)\mid \, v \in \mD_q\big\}.
\end{aligned}
$$
Denote by $\mD_q^{\perp} \subset T_q^*M$ the annihilator of $\mD_q \subset T_qM$ defined as 
\begin{equation}
		\mD_q^\perp:=\{p \in T^*_qM  \mid p(v)=0, \quad \text{for all } \hspace{1.2mm} v \in T_qM\}.
\end{equation}
we say $H:T^*M \to \mathbb{R}$ is a \emph{sub-bundle Hamiltonian} on $\cD$ ($\cD$-Hamiltonian for short) if
\begin{equation}  \label{eq:H-flat}
		H(q, p + P) = H(q, p) , \quad \forall P\in \mD^\perp_q,
\end{equation}      
and that $H$ defined on the quotient space $T^*M/\cD^\perp$ is fiberwise strictly convex. A function $L:\mD \to \mathbb{R}$ is a $\cD$-Lagrangian if and only if the Legendre-Fenchel dual of $\tilde{L}$ given in (\ref{eq_1.2}) is a $\cD$-Hamiltonian.

In the sub-bundle setting, the relation between the Lagrangian and Hamiltonian formalism is more complicated due to the existence of \emph{singular curves}.

A curve $Q: [0, T] \to M$ is called \emph{horizontal} to $\cD$ if $Q \in H^1([0, T])$ and $\dot{Q}(t) \in \cD_{Q(t)}$ for a.e. $t$. The space of horizontal curves can be described using the language of control theory. Suppose $f^1, \cdots, f^m$ are smooth vector fields (called a \emph{local frame}) on an open neighborhood $O \subset M$ such that $\cD = \Span\{f^1, \cdots, f^m\}$. We consider a \emph{control} $\c=(\c_1,\c_2, \hdots ,\c_k) \in L^2\big([0,T];\mathbb{R}^d\big)$ and a solution to the ODE 
\begin{equation}\label{eq_CP1}
		\dot{Q}(t)=\sum_{i=1}^d
		\c_i(t)f^i_{Q(t)}\big(Q(t)\big), \quad Q(0)=\bar{Q}.
\end{equation}
If a control $\c: [0, T] \to \R^2$ corresponds to a horizontal curve $Q_\c: [0, T] \to M$ short enough to fit into a local frame, then we can define the \emph{end-point mapping} $\cE$ as $\c \mapsto Q_\c(T)$.

\begin{definition}
		 The curve $Q_\c$ is called $\cD$-\emph{regular} if $d\cE(\c): L^2([0, T]) \to T_{Q_\c(T)}M$ is a surjection, otherwise $Q_\c$ is called $\cD$-\emph{singular}. (This definition is independent of the choice of local frames, see Appendix D of \cite{Montgomerybook}).

In general, we say a curve is $\cD$-singular if it's singular on every local frame.
\end{definition}

If $L$ represent a sub-Riemannian metric, then 
\begin{itemize}
		\item If $Q$ is a $\cD$-regular geodesic, then there exists an orbit $\big(Q(t), P(t)\big)$ of the Hamiltonian flow of $H$ which projects to it. In this case $Q$ is called a \emph{normal} orbit. 
		\item $\cD$-singular geodesics exists, it may or may not be the projection of a normal Hamiltonian orbit (the first example of an \emph{abnormal singular geodesic} is given in \cite{Mon94}).
\end{itemize}
More generally, a curve $Q:[0, T] \to M$ is called a \emph{normal orbit} of the $\cD$-Lagrangian $L$ if it is the projection of its corresponding $\cD$-Hamiltonian orbit. We call $\cD$-regular normal orbit \emph{strictly normal}, noting that $\cD$-singular normal orbits also exists. Since Hamiltonian orbits are by definition normal, we drop this description in the Hamiltonian setting.

To state our main theorem, denote by
\[
		\begin{aligned}
				\cH_{\cD} & = \{K = K(q, p) \in C^\infty(T^*M) \mid \quad K \text{ is a } \cD\text{-Hamiltonian}\}, \\
				\cK_{\cD}^{\qd} & = \{K \in \cH_D \mid  \quad K \text{ is fiber-wise quadratic}\}, \\
				\cK_{\cD}^{\beta} & = \{K \in \cH_D \mid  \quad K(q, \lambda p) = \lambda^\beta K(q, p), \quad \forall p \in T^*_q M, \, \lambda > 0\}, \\
				\cK_{\cD}^{\rf} & = \{ K \in \cK_{\cD}^2 \mid \quad K(q, -p) = K(q, p) \quad \forall p \in T_q^*M\}. 
		\end{aligned}
\]
the Hamiltonians in the latter three spaces are called quadratic, $\beta$-homogeneous, and reversible Finsler, respectively. Analogous to classical mechanics, we call the following system
\[
		H(q, p) = K(p) + U(q)
\]
a \emph{generalized classical system}, 
where $K$ is taken from $\cK_\cD^{\qd}$, $\cK_\cD^\beta$, or $\cK_\cD^{\rf}$, and $U$ is a bounded smooth function, i.e. $U \in C_b^\infty(M)$. Note that every generalized classical system is a $\cD$-Hamiltonian. 

An energy level $\{K(q, p) + U(q) = k\}$ of a generalized classical system is \emph{supercritical} if 
\[
		k - \sup_q U(q) > 0.   
\]
For generalized classical system, supercriticality of $k$ is equivalent to the canonical one-form $pdq$ being contact on the energy level in a uniform sense:
\[
		\inf_{H(q, p) = k} \partial_p H(q, p) \cdot p > 0.
\]
In particular, $\partial_p H(q, p)$ never vanishes. We call the tuple $(K, U, k)$ \emph{admissible} if $k$ is supercritical for $K + U$. 

\begin{thmx}[main theorem]\label{main_thm}
		Let $\mD$ be a co-rank 1 distribution on $M$, $k>0$ be given, and $\cK_\cD^\star$ denote either $\cK_\cD^{\qd}$ or $\cK_\cD^{\rf}$. Then there exists a $G_\delta$ dense subset $\mG$ of 
		\[
				\{(K, U) \in \cK_\cD^{\star} \times C_b^\infty(M) \mid  \hspace{1.2mm} (K, U, k) \text{ is admissible}\},
		\]
		such that for all $(K, U) \in \mG$, every $\cD$-regular closed orbit of the Hamiltonian flow $H = K + U$ with energy $k$ is non-degenerate.
\end{thmx}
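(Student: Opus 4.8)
The plan is to run the transversality machinery behind the Riemannian bumpy metric theorem (\cite{Abrahambumpy,KT72,Anosov_1983}) and its Hamiltonian form (\cite{Rob70}), the one genuinely new ingredient being a perturbation lemma tailored to co-rank $1$. Write $\cA=\{(K,U)\in\cK_\cD^{\star}\times C_b^\infty(M)\mid(K,U,k)\text{ admissible}\}$; for $(K,U)\in\cA$ let $\Phi^t_{K+U}$ be the Hamiltonian flow of $H=K+U$ and $\Sigma_k=\{H=k\}$, a smooth hypersurface on which $X_H$ never vanishes by supercriticality. A $\cD$-regular closed orbit carries a reduced Poincar\'e return map on a $2d$-dimensional symplectic section, with linearization $dP\in\mathrm{Sp}(2d)$ well defined up to conjugacy, and ``non-degenerate'' means no eigenvalue of $dP$ is a root of unity. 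I would first fix an exhaustion of $T^*M$ by compact sets $\cK_R$ (needed because, $H$ being $\cD^\perp$-flat, $\Sigma_k$ is itself non-compact: one momentum direction is free) and, for $T,R,N,j\in\Nm$, let $\mathcal G_{T,R,N,j}\subset\cA$ be the set of $(K,U)$ for which every $\cD$-regular closed orbit on $\Sigma_k$ of minimal period $\le T$, contained in $\cK_R$, whose end-point map has smallest singular value $\ge 1/j$, satisfies $\det\!\big((dP)^{N!}-\mathrm{Id}\big)\neq 0$. Since every $\cD$-regular closed orbit is compact, has finite minimal period and positive regularity margin, and every root of unity has order dividing some factorial, the intersection of all $\mathcal G_{T,R,N,j}$ is exactly the set in the theorem; each $\mathcal G_{T,R,N,j}$ is open because a sequence of forbidden orbits for $(K_n,U_n)\to(K,U)$ is precompact, has periods bounded below by uniform supercriticality and above by $T$, hence subconverges to a closed orbit of $H$ of the same type (the margin condition is closed, and $\det((dP)^{N!}-\mathrm{Id})=0$ is closed because $\{\lambda:\lambda^{N!}=1\}$ is finite). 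By Baire it then suffices to prove density of each $\mathcal G_{T,R,N,j}$.

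The crux is the following perturbation lemma, which I expect to be the main obstacle. Given $(K_0,U_0)\in\cA$, a $\cD$-regular closed orbit $\gamma$ of $H_0$ on $\Sigma_k$, and a neighborhood $\mathcal N$ of its image in $T^*M$, there should be a smooth finite-dimensional family $\{(K_w,U_w)\}_{w\in W}\subset\cA$, $W$ a neighborhood of $0$ in some $\Rm^m$, equal to $(K_0,U_0)$ at $w=0$, with $(K_w-K_0,U_w-U_0)$ supported in $\mathcal N$, such that the map $(w,z,\tau)\mapsto\Phi^\tau_{K_w+U_w}(z)-z$ is submersive at the orbit and, on its zero set, the induced linearized Poincar\'e map into $\mathrm{Sp}(2d)$ is submersive in $w$ at $w=0$. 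To prove it I would pass to coordinates adapted simultaneously to $\gamma$ and to the co-rank $1$ distribution, in which $\cD^\perp$ is spanned by a single one-form and a $\cD$-Hamiltonian is a function of $q$ and of the $d$ ``horizontal momenta'' $\bar p=p\bmod\cD^\perp$; to first order a perturbation $\delta H$ changes the monodromy $M$ of the variational system $\dot\xi=J\,\mathrm{Hess}\,H(\gamma(t))\,\xi$ by $\delta M=M\int_0^\tau\Phi_t^{-1}J\,\mathrm{Hess}(\delta H)(\gamma(t))\,\Phi_t\,dt$. A potential perturbation $\delta U(q)$ realizes an arbitrary time-dependent symmetric form in the $q$-block of $\mathrm{Hess}(\delta H)$ along $\gamma$, while a metric perturbation $\delta K$ (fiberwise quadratic in $p$, hence admissible in either class) realizes the $\cD$-flat part of the remaining blocks; what must be shown is that, integrated against short bumps in $t$ and conjugated by $\Phi_t$, these make the resulting first variations $\delta M$ exhaust $T_M\mathrm{Sp}(2d)$. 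This is exactly a controllability statement for the variational system along $\gamma$, and it is here that $\cD$-regularity is used: surjectivity of $d\cE$ propagates to controllability of the linearized flow, and the co-rank being $1$ both supplies the adapted normal form and guarantees that the single momentum direction lost to $\cD$-flatness does not obstruct the spanning. The genuinely hard part will be this normal form together with the spanning computation; by contrast, along a $\cD$-singular orbit the variational system fails to be controllable, which is precisely why such orbits must be excluded.

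Granting the lemma, density of $\mathcal G_{T,R,N,j}$ follows by the usual two-step localization. First, applying Abraham's parametric transversality theorem \cite{Abrahambumpy} to $(w,z,\tau)\mapsto\Phi^\tau_{K_w+U_w}(z)-z$ — with the needed infinitesimal surjectivity coming from the lemma in its weaker form $\det(dP-\mathrm{Id})\neq0$ — one obtains, after an arbitrarily small perturbation, a Hamiltonian all of whose $\cD$-regular closed orbits of minimal period $\le T$ in $\cK_R$ with margin $\ge 1/(2j)$ have $1\notin\mathrm{spec}(dP)$; these are isolated fixed points of return maps, hence finitely many $\gamma_1,\dots,\gamma_L$, persisting smoothly under further small perturbations. (No forbidden orbits of period $\le T$ appear away from the $\gamma_i$: by the compactness argument of the first paragraph, closed orbits of period $\le T$ in $\cK_R$ under $C^2$-small perturbations accumulate only on closed orbits of $H$ of period $\le T$ in $\cK_R$.) Second, choose disjoint tubular neighborhoods $\mathcal N_i$ of the $\gamma_i$ and apply the lemma at each $\gamma_i$ with $\mathcal N=\mathcal N_i$: the submersion $w\mapsto dP_{\gamma_{i,w}}$ has open image, while $\{A\in\mathrm{Sp}(2d):\det(A^{N!}-\mathrm{Id})=0\}$ is a proper real-analytic subvariety and hence nowhere dense, so an arbitrarily small $w$ moves $dP_{\gamma_{i,w}}$ off it while keeping $1\notin\mathrm{spec}$ — so $\gamma_i$ stays isolated, no satellite of period $\le T$ is created in $\mathcal N_i$, and the other $\gamma_{i'}$ are untouched. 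Doing this for $i=1,\dots,L$ with total perturbation below the prescribed size lands in $\mathcal G_{T,R,N,j}$. Finally I would pass from the $C^r$ Banach-manifold setting, where Abraham's theorem and Sard's theorem apply, to the $C^\infty$ statement by Anosov's smoothing argument \cite{Anosov_1983}.
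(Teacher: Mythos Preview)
Your overall architecture---a countable decomposition into open sets $\mathcal G_{T,R,N,j}$, a local perturbation lemma making $w\mapsto dP$ submersive, then parametric transversality and Baire---is essentially the paper's approach, and your handling of the non-compactness of $\Sigma_k$ via the exhaustion $\cK_R$ is a reasonable reorganization of the paper's $\mP^1\supset\cdots\supset\mP^5$ framework. But two specific points are underestimated.

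First, the sentence ``a potential perturbation $\delta U(q)$ realizes an arbitrary time-dependent symmetric form in the $q$-block of $\mathrm{Hess}(\delta H)$ along $\gamma$'' is not free: since $U$ depends only on $q$, this requires the projected orbit $\pi\circ\gamma$ to be locally an embedding at the support of the bump, i.e.\ a \emph{neat} time. Reversible systems can have round-trip orbits with no neat times at all, and even on non-round-trip orbits the self-intersection set could a priori be dense. The paper devotes a full subsection (Proposition~\ref{prop-stronglyneat} and Proposition~\ref{tagged_interval}) to showing that on a supercritical level of a reversible $\cD$-Hamiltonian every $\cD$-regular segment is \emph{strongly neat}; this is where reversibility and supercriticality are used essentially, via the uniqueness of normal lifts on regular segments (Corollary~\ref{cor:unique-lift}). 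Your proposal does not mention this issue. Second, your heuristic ``surjectivity of $d\cE$ propagates to controllability of the linearized flow'' is not the mechanism that works. In the paper's normal form the problem reduces to a Ma\~n\'e control problem for the degenerate curve $A(t)=\partial^2_{\hat p\hat p}H(\theta(t))$, whose kernel is a moving line $\bn(t)$; $\cD$-regularity gives exactly $\dot\bn(0)\neq 0$ (Lemma~\ref{lem:n0}), which enters the bracket computation, but controllability is established only for an \emph{open dense} set of curves $A$ in the admissible class $\cA_\delta(\bn)$ (Theorem~\ref{thm:controllable}), via a fifth-order bracket-generating argument. Consequently the paper's perturbation theorem (Theorem~\ref{pert_thm}) is a two-step statement: first perturb $K$ within $\cK^\star(\cD,K,\theta)$ to make $A(t)$ generic (Proposition~\ref{prop:realization}), and only then does $u\mapsto dP$ become a submersion. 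Your lemma, as stated, asserts the submersion for the original $(K_0,U_0)$, which is stronger than what is proved and likely false without the preliminary $K$-perturbation.
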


\begin{remark}\label{rmk:regular}
		By stating our main theorem in terms of regular orbits, we can separate the effect of the Hamiltonian and the distribution on the dynamics. 

		First of all, this allows us to prove a theorem for a \emph{fixed} distribution $\cD$. This is a stronger sense of genericity compared to a perturbation to the Hamiltonian that also alters the distribution.

		Secondly, if the distribution $\cD$ is integrable (therefore fails the total nonholonomic condition), the bumpy metric theorem can fail. For example, if $M = \Tm^3$ and $\cD = \Span\{\partial_x, \partial_y\}$, then all linearized Poincar\'e maps are degenerate. However, this example does not contradict Theorem \ref{main_thm}, since all horizontal curves for this distribution are singular, and our theorem is vacuous in this case.

		Finally, some distributions admit no singular curves. Examples include all contact distributions on odd dimensional spaces, and more generally, all \emph{fat} distributions (see for example section 5.6 of \cite{Montgomerybook}). In this case our theorem imply the classical bumpy metric theorem for all periodic orbits on admissible energy levels.
\end{remark}

\begin{remark}\label{rmk:reversible}
		Observe that in both settings of our main theorem, the Hamiltonian flow is \emph{reversible}, namely, the map $(q, p) \mapsto (q, -p)$ conjugate the flow to its time-reversal.

		Given a periodic orbit $\theta: \R/(s\Z) \to T^*M$, we say that $t \in \R/(s\Z)$ is neat if $\pi \circ \theta(t)$ is not a self-intersection of the projected orbit. It has been observed by \cite{aslani2022bumpy}, and \cite{Ber24} that perturbation by a potential function faces additional difficulty at non-neat times. Moreover, orbits without neat times do exist as \emph{round trip orbits} (see \cite{Ber24}). For our proof, we also need the neat time to be in a $\cD$-regular interval. 

		It turns out this difficulty can be avoided if we consider reversible systems on a supercritical energy level. One of the reasons is that no round trip orbits exists on these levels.
\end{remark}

The \emph{Maupertuis principle} relates an admissible generalized classical system to a metric. More precisely, if $(K, U, k)$ is admissible, with $K \in K_\cD^{\star}$, $\star = \qd, \rf$, then
\[
		\tilde{K}(q, p) = \frac{K}{k - U} \in \cK_\cD^{\star}
\]
satisfies the property that the Hamiltonian flow of $K + U$ on energy level $k$ is a time change of the Hamiltonian flow of $\tilde{K}$ on energy level $1$. As such $K + U$ is bumpy on energy $k$ if and only if $\tilde{K}$ is bumpy on energy $1$. 

Using this observation, we obtain the following corollary of our main theorem.

\begin{thmx}[Application to sub-Riemannian and sub-Finsler metrics]\label{sub-Riemannian}
		Let $\cD$ be a co-rank $1$ distribution on $M$. Then there exists a dense $G_\delta$ set $\cG \subset \cK_\cD^\star$, where $\star = \{\qd, \rf\}$, such that for every $K \in \cG$, every $\cD$-regular normal closed orbit is non-degenerate.
\end{thmx}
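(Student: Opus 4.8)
The plan is to deduce Theorem \ref{sub-Riemannian} from Theorem \ref{main_thm} through the Maupertuis principle, applied with coupling constant $k = 1$. Fix $\star \in \{\qd, \rf\}$ and set
\[
		\cG := \{K \in \cK_\cD^\star \mid \text{every } \cD\text{-regular normal closed orbit of } K \text{ is non-degenerate}\};
\]
the goal is to show $\cG$ is a dense $G_\delta$ subset of $\cK_\cD^\star$. The first step is a reduction to a single energy level: every $K \in \cK_\cD^\star$ is fiberwise $2$-homogeneous, so for each $c > 0$ the fiberwise dilation $(q, p) \mapsto (q, \sqrt{c}\, p)$ conjugates the Hamiltonian flow on $\{K = c\}$, up to a constant time change, to the flow on $\{K = 1\}$; this conjugacy carries $\cD$-regular closed orbits to $\cD$-regular closed orbits and intertwines the linearized Poincaré maps, hence preserves their spectra. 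Therefore $K \in \cG$ if and only if every $\cD$-regular closed orbit of the Hamiltonian flow of $K$ on $\{K = 1\}$ is non-degenerate, i.e.\ $K$ is bumpy on energy $1$ in the sense used for Theorem \ref{main_thm}. (Here one uses that a $\cD$-regular closed normal orbit lifts to a genuine periodic orbit of the Hamiltonian flow, which is precisely where $\cD$-regularity enters.)

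For density, fix $K_0 \in \cK_\cD^\star$. Since $\sup_M 0 = 0 < 1 = k$, the triple $(K_0, 0, 1)$ is admissible, so $(K_0, 0)$ belongs to the space on which Theorem \ref{main_thm} furnishes a dense $G_\delta$ set $\mG$; choose $(K_j, U_j) \in \mG$ with $(K_j, U_j) \to (K_0, 0)$ and put $\tilde K_j := K_j/(1 - U_j)$. By the Maupertuis principle $\tilde K_j \in \cK_\cD^\star$, and since the Hamiltonian flow of $K_j + U_j$ on $\{K_j + U_j = 1\}$ has all of its $\cD$-regular closed orbits non-degenerate, the same holds for $\tilde K_j$ on $\{\tilde K_j = 1\}$; by the previous paragraph $\tilde K_j \in \cG$. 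As $(K, U) \mapsto K/(1 - U)$ is continuous on a neighbourhood of $(K_0, 0)$ and sends $(K_0, 0)$ to $K_0$, we get $\tilde K_j \to K_0$, so $\cG$ is dense.

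The $G_\delta$ property of $\cG$ is obtained by the same compactness argument used to show $\mG$ is $G_\delta$ in the proof of Theorem \ref{main_thm}. For integers $n, q \geq 1$, let $\cG_{n,q}$ be the set of $K \in \cK_\cD^\star$ such that every closed orbit $\gamma$ of period at most $n$ on $\{K = 1\}$ that is $\cD$-regular with regularity margin at least $1/n$ (quantified, say, by the least singular value of the differential of the endpoint map along $\gamma$) satisfies $\det(P_\gamma^q - \mathrm{Id}) \neq 0$, where $P_\gamma$ is the linearized Poincaré map of $\gamma$; then $\cG = \bigcap_{n, q \geq 1} \cG_{n,q}$, and each $\cG_{n,q}$ is open because the relevant set of closed orbits is compact and varies upper-semicontinuously with $K$, while the vanishing of $\det(P_\gamma^q - \mathrm{Id})$ is a closed condition. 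I expect the main obstacle to lie here rather than in any new idea: one must check that these families of closed orbits are compact even though $M$ is non-compact --- controlled by supercriticality together with the period bound --- and that a quantitative $\cD$-regularity margin is preserved along convergent sequences of orbits and Hamiltonians. These are exactly the technical inputs already assembled for Theorem \ref{main_thm}, so here they are invoked rather than reproved.
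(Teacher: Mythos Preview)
Your density argument is correct and in fact more direct than the paper's: you approximate $(K_0,0)$ in the product space by good pairs $(K_j,U_j)$ from Theorem~\ref{main_thm} and push them through the Maupertuis map $(K,U)\mapsto K/(1-U)$, which is continuous near $(K_0,0)$ and sends $(K_0,0)$ to $K_0$. The paper does not argue this way.

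The gap is in your $G_\delta$ step. You assert that the families of closed orbits with period $\le n$ and regularity margin $\ge 1/n$ are compact, and that this is ``exactly the technical input already assembled for Theorem~\ref{main_thm}''. Neither claim holds. First, the energy level $\{K=1\}$ is non-compact in every fibre: since $K$ is a degenerate quadratic (or reversible $2$-homogeneous) form with kernel $\cD^\perp_q$, the set $\{p:K(q,p)=1\}$ is a cylinder over an ellipsoid, so supercriticality together with a period bound cannot by itself trap orbits in a compact set of $T^*M$ --- not even over a compact region of $M$, and certainly not when $M$ is non-compact. Second, the proof of Theorem~\ref{main_thm} never establishes such compactness; it works instead with the stratification $\mP^5\subset\cdots\subset\mP^1$ and shows that the \emph{projection} to parameter space of each bad stratum is a meager $F_\sigma$ set, using only $\sigma$-compactness of $]0,\infty[\times T^*M$ (Lemma~\ref{lemma_3.1}), never compactness of any orbit family.

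The paper's route to Theorem~\ref{sub-Riemannian} sidesteps this via the Kuratowski--Ulam theorem: the good set $\mG\subset\cK_\cD^\star\times C_b^\infty(M)$ from Theorem~\ref{main_thm} is comeager in the product, so for some $U_0$ the slice $\{K:(K,U_0)\in\mG\}$ is comeager in $\cK_\cD^\star$; since $K\mapsto K/(k-U_0)$ is a self-homeomorphism of $\cK_\cD^\star$, its image is again comeager, hence contains a dense $G_\delta$. No direct openness argument for the bumpy condition is needed. If you prefer to salvage your own approach, refine $\cG_{n,q}$ by also requiring the orbit to meet a fixed compact set $B_m\subset T^*M$ drawn from a countable exhaustion; then Arzel\`a--Ascoli supplies the missing compactness and $\cG=\bigcap_{n,q,m}\cG_{n,q,m}$ is visibly $G_\delta$.
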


To prove the main result, we prove a \textit{local Ma\~n\'e perturbation theorem} for the linearized Poincar\'e maps which are valued in the space of symplectic linear maps $Sp(2d, \R)$; refer to Theorem \ref{pert_thm} below.  

Similar results have been achieved for neat periodic orbits of fiberwise non-degenerate Hamiltonians \cite{RR12, AB21, aslani2022bumpy}. It is much more challenging to derive an analogous result for $\cD$-Hamiltonians due to the degeneracy of $H$. 

We define a local and intrinsic version of the $\cD$-regularity property in the special case of co-rank $1$ distributions.

\begin{definition}\label{def:d-reg}
		Let $Q: [0, T] \to M$ be a smooth horizontal curve and $\eta$ is a non-vanishing one form taking value in the one-dimensional sub-bundle $\cD^\perp$. We say that $t \in (0, T)$ is a $\cD$-\emph{regular time} if the linear form
		\[
				(d_{Q(t)}\eta)(\dot{Q}(t), \cdot) : \, \cD_{Q(t)} \to \R 
		\]
		is nonzero.
\end{definition}
This definition is independent of the choice of the one-form $\eta$, since for all $v \in \cD_{Q(t)}$ and a non-vanishing $f \in C^\infty(M)$,
\[
		d(f\eta)\big(\dot{Q}(t), v\big) = (d f) \wedge \eta \big(\dot{Q}(t), v\big) + f (d\eta)\big(\dot{Q}(t), v\big) =  f (d\eta)\big(\dot{Q}(t), v\big).
\]
The following lemma follows from Theorem 5.3 and Lemma 5.5 of \cite{Montgomerybook}. 
\begin{lemma}\label{lem:regular-point}
		For a co-rank $1$ distribution $\cD$, a smooth curve $Q: [0, T] \to M$ is $\cD$-singular if and only if there are no regular times on $Q$, i.e. $(d\eta)_{Q(t)}\big(\dot{Q}(t), v\big) = 0$ for all $v \in \cD_{Q(t)}$.
\end{lemma}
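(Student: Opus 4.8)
The plan is to derive the lemma from the classical characterization of singular curves as projections of abnormal extremals, together with an explicit description of the characteristic line field in the corank-$1$ case. Recall first (this is Theorem 5.3 of \cite{Montgomerybook}, an instance of the first-order Pontryagin/Lagrange-multiplier principle) that a horizontal curve $Q$ is $\cD$-singular if and only if, on every sub-arc small enough to lie in a single local frame, it admits an \emph{abnormal lift}: a nowhere-vanishing curve $\lambda(t)\in\cD^\perp_{Q(t)}$ that is a characteristic curve of the restriction $\omega|_{\cD^\perp}$ of the canonical symplectic form $\omega$ of $T^*M$ to the submanifold $\cD^\perp\setminus\{0\}$. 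Thus the whole statement reduces to computing the characteristics of $\omega|_{\cD^\perp}$, which is the content of Lemma 5.5 of \cite{Montgomerybook}.

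To carry out that computation, work over an open set $O$ where $\cD^\perp$ is spanned by a non-vanishing one-form $\eta$ with $\ker\eta=\cD$, and trivialize $\cD^\perp\setminus\{0\}\cong O\times(\R\setminus\{0\})$ by $(q,s)\mapsto s\,\eta(q)$. The tautological one-form $p\,dq$ pulls back to $s\,\eta$, so
\[
		\omega|_{\cD^\perp}=d(s\,\eta)=ds\wedge\eta+s\,d\eta,
\]
with $\eta,d\eta$ now denoting pullbacks to $O\times(\R\setminus\{0\})$. A short computation shows that a tangent vector with base component $\dot q$ and fibre component $\dot s$ lies in the kernel of this $2$-form exactly when $\eta(\dot q)=0$ (i.e. $\dot q\in\cD_q$) and the covector $\dot s\,\eta+s\,\iota_{\dot q}(d\eta)$ vanishes identically; pairing the latter with $v\in\cD_q$ and using $\eta|_{\cD_q}=0$ and $s\neq0$ yields $d\eta(\dot q,v)=0$ for all $v\in\cD_q$. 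Hence the projection of any characteristic curve of $\omega|_{\cD^\perp}$ is a horizontal curve $Q$ with $d\eta_{Q(t)}(\dot Q(t),v)=0$ for every $v\in\cD_{Q(t)}$ and every $t$; in the language of Definition \ref{def:d-reg}, $Q$ has no $\cD$-regular times.

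This yields one implication at once: if $Q$ is $\cD$-singular, then on each sub-arc it carries an abnormal lift, so by the kernel computation $\dot Q(t)$ lies in $\ker(d\eta_{Q(t)}|_{\cD_{Q(t)}})$ there, hence on all of $[0,T]$. For the converse, assume $d\eta_{Q(t)}(\dot Q(t),v)=0$ for all $v\in\cD_{Q(t)}$ and all $t$. On a sub-arc inside a local frame, fix any vector field $R$ with $\eta(R)\neq0$ (transverse to $\cD$) and solve the linear scalar ODE $\dot s=-s\,d\eta(\dot Q,R)/\eta(R)$ with $s(0)=1$; being linear and homogeneous it has a nowhere-vanishing solution on the sub-arc. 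Then $\lambda(t)=s(t)\,\eta(Q(t))$ is a nowhere-vanishing section of $\cD^\perp$ along $Q$ whose derivative lies in the kernel computed above: $\eta(\dot Q)=0$ since $Q$ is horizontal, and the covector $\dot s\,\eta+s\,\iota_{\dot Q}(d\eta)$ annihilates $\cD_{Q(t)}$ by hypothesis and annihilates $R$ by the choice of $\dot s$, so it vanishes on $T_{Q(t)}M=\cD_{Q(t)}\oplus\R R(Q(t))$. Thus $\lambda$ is an abnormal lift, the endpoint map of that sub-arc fails to be surjective, and $Q$ is $\cD$-singular. Since the condition of having no $\cD$-regular times is local and, as noted after Definition \ref{def:d-reg}, independent of the choice of $\eta$, these local arguments assemble into the global equivalence.

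The main obstacle is the first, soft step: the equivalence between non-surjectivity of $d\cE(\c)$ and the existence of an abnormal lift is exactly the first-order necessary conditions of the Pontryagin maximum principle, and one must be careful about the regularity of the multiplier curve and about times where $\dot Q(t)=0$ — at such times the kernel condition is vacuous and the scalar ODE for $s$ remains well posed, so nothing goes wrong, but this should be checked. These points, together with the reduction of the abnormal-extremal equations to characteristics of $\omega|_{\cD^\perp}$ in the corank-$1$ case, are Theorem 5.3 and Lemma 5.5 of \cite{Montgomerybook}; once they are invoked, the computation above supplies the translation into Definition \ref{def:d-reg} and closes the argument.
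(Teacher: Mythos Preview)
Your proposal is correct and follows exactly the route the paper indicates: the paper simply cites Theorem~5.3 and Lemma~5.5 of \cite{Montgomerybook} without further argument, and you have unpacked precisely those two results---the abnormal-lift characterization of singular curves and the explicit computation of the characteristics of $\omega|_{\cD^\perp}$ in the corank-$1$ case---supplying the details the paper omits.
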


Consider a generalized classical Hamiltonian $H = K + U,$ where $K \in \mK_\mD^\star$, with $\star$ being either $\qd$ or $\rf$. Assume that $(K, U, k)$ is admissible and $\theta: [0, \delta] \to T^*M$ is an orbit segment in $\{H = k\}$. Denote
\begin{equation}  \label{eq:admissible-K}
		\mK^\star(\mD, K, \theta) = \big\{T \in \mK_\mD^\star \mid  d T \big(\theta(t)\big) = d K\big(\theta(t)\big), \hspace{1.3mm} t \in [0, \delta]\big\},
\end{equation}
and 
\begin{equation}  \label{eq:cU}
\begin{aligned}
		\mU(\theta) = \big\{u \in C^\infty_b(M) \mid u(\pi \circ \theta(t)) =0, \hspace{2mm}  du(\pi \circ \theta(t))=0, \,  
		\quad t \in [0, \delta]\big\}.
\end{aligned}
\end{equation}
Note that all $H = T + U + u$ with $T \in \cK^\star(\cD, K, \theta)$ and $u \in \cU(\theta)$ admits $\theta$ as an orbit on the energy level $k$.

Let $\Sigma_0$ and $\Sigma_\delta$ be two transversal sections to $\theta(t)$ at $t = 0$ and $t = \delta$ respectively. Denote
\[
		V_0 =   T_{\theta(0)}\left( \Sigma_0 \cap \{H = k\} \right), \quad
		V_\delta = T_{\theta(\delta)} \left( \Sigma_0 \cap \{H = k\}\right),
\]
and the linearized Poincar\'{e} map of the Hamiltonian flow of $H$ at $\theta(0)$ as
$$
dP_{\Sigma_0, \Sigma_\delta}(\theta,H): V_0 \to V_\delta.
$$
Let $\mV$ be a neighborhood of $K$ in $\mK(\mD, K, \theta),$ then the mapping 
\begin{equation}  \label{eq:linear-poincare}
		\mU(\theta) \ni u \mapsto  dP_{\Sigma_0, \Sigma_\delta}(\theta, T+U+u)
\end{equation}
is well defined for all $T \in \mV$ and $u \in \mU(U, \theta)$. The linearized Poincar\'e map $dP_{\Sigma_0, \Sigma_\delta}$ is symplectic with respect to the restriction of the standard symplectic form. In the Daboux coordinates, this takes value in $Sp(2d, \R)$.

\begin{thmx}\label{pert_thm}(Perturbation theorem for restricted transition maps)
		Let $M$ be a smooth manifold and $\mD \subset TM$ be a distribution of co-rank 1. Assume that $\theta:[0, \delta] \to T^*M$ is an orbit of a generalized classical Hamiltonian $H = K + U$ where $K \in \mK_\mD^\star$ with $\star \in \{\qd , \rf\}$, and let $\Sigma_0$, $\Sigma_\delta$ be two transversal sections to $\theta$ at $t = 0$ and $t = \delta$. Assume that $\pi \circ \theta$ has non-zero velocity at $t=0$, and $t = 0$ is a $\cD$-regular time. 

		Then exists an open and dense subset $\mV$ of $\cK^\star(\cD, K, \theta)$ such that for every $T \in \mV$, and any neighbhorhood of $N$ of the set $\{\pi \circ \theta(t) \mid t \in (0, \delta)\}$, the mapping
		$$
		\mU(\theta) \cap \{\supp u \in N\} \ni u \mapsto dP_{\Sigma_0, \Sigma_\delta}(\theta, T+U+u),
		$$
		is a submersion at $u = 0$. 
\end{thmx}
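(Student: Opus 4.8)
The plan is to linearize the map $u\mapsto dP_{\Sigma_0,\Sigma_\delta}(\theta,T+U+u)$ at $u=0$ by a Duhamel formula along the linearized flow, to reduce the submersion statement to a spanning property of the symmetric matrices produced by configuration Hessians of admissible potentials, and then to verify that property for a generic kinetic part $T$, using the $\cD$-regularity at $t=0$ to neutralize the one-dimensional degeneracy of $\partial^2_{pp}H$ along $\cD^\perp$. Concretely, fix $T\in\cK^\star(\cD,K,\theta)$ and $w\in\cU(\theta)$ with $\supp w\subset N$. Since $w$ and $dw$ vanish along $\pi\circ\theta$, the one-jet of $T+U+\lambda w$ along $\theta$ is independent of $\lambda$, so $T+U+\lambda w$ keeps $\theta$ as an orbit on $\{H=k\}$, and the flow direction $X_H(\theta(t))$ and the hyperplane $\ker dH(\theta(t))$ do not move with $\lambda$. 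Writing the linearized flow of $T+U$ along $\theta$ in Darboux coordinates as $\dot\Phi_t=\mathfrak h_t\Phi_t$, $\Phi_0=I$, with $\mathfrak h_t=J\,\partial^2H(\theta(t))$, the only $\lambda$-dependent term in the linearized vector field is the lower-left block $N_{S(t)}=\bigl(\begin{smallmatrix}0&0\\S(t)&0\end{smallmatrix}\bigr)$ with $S(t)=-\partial^2_{qq}w(\pi\circ\theta(t))$; differentiating $\partial_qw\equiv0$ along $\pi\circ\theta$ gives the constraint $S(t)\,\partial_pH(\theta(t))=0$. A Duhamel computation then gives
\[
\partial_\lambda\big|_{\lambda=0}\Phi_\delta(\theta,T+U+\lambda w)=\Phi_\delta\int_0^\delta\Phi_t^{-1}N_{S(t)}\Phi_t\,dt,
\]
and the standard (linear) passage to the Poincar\'e picture --- coisotropic reduction by $\R X_H$ inside $\ker dH$, together with left/right composition with the fixed symplectic transition maps over the complementary subintervals --- carries the linear space of realizable variations onto a linear subspace of $T_{dP_{\Sigma_0,\Sigma_\delta}}Sp(2d,\R)$, so the submersion at $u=0$ is equivalent to this subspace being everything.

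\textbf{Localization and a bracket criterion.} Because $\pi\circ\theta$ is immersed at $t=0$ and $\cD$-regularity is open in $t$ (Definition \ref{def:d-reg}), there is a subinterval $[a,a+\epsilon]\subset(0,\delta)$ of $\cD$-regular times along which $\pi\circ\theta$ is an embedded arc disjoint from a neighbourhood of the rest of $\pi\circ\theta$; that such a neat arc exists near $t=0$ uses reversibility and supercriticality to exclude round-trip behaviour, cf. Remark \ref{rmk:reversible}. Localizing $w$ to a thin tube around this arc (contained in $N$), every smooth symmetric field $S(t)$, $t\in[a,a+\epsilon]$, annihilating $\partial_pH(\theta(t))$ is realized by such a $w\in\cU(\theta)$. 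Feeding $S(t)=\phi(t)(t-a)^jS$ into the Duhamel integral and letting $j$ and $\phi$ vary, a standard moment argument identifies the closed span of the realizable variations, conjugated back to $t=a$, with the smallest subspace $\cA\subset\sp(2(d+1),\R)$ containing $\mathcal N:=\{N_S: S=S^\top,\ S\,\partial_pH(\theta(a))=0\}$ and invariant under $\mathrm{ad}_{\mathfrak h_a},\mathrm{ad}_{\mathfrak h_a'},\mathrm{ad}_{\mathfrak h_a''},\dots$, where $\mathfrak h_a^{(j)}$ is the $j$-th $t$-derivative of $J\,\partial^2H(\theta(t))$ at $t=a$. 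Everything thus reduces to: for generic $T$, $\cA$ surjects onto $\sp(2d,\R)$.

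\textbf{Genericity --- the main obstacle.} Openness of the spanning condition in $T$ is immediate, since $dP$ depends continuously on $T$ and surjectivity of a continuously varying linear map is open; the substance is density. Differentiating $dH(\theta(t))=dK(\theta(t))$ and $H(q,p+P)=H(q,p)$, $P\in\cD^\perp_q$, along $\theta$ shows that the only constraints on the Taylor data $\mathfrak h_a^{(j)}$ are on their action on $X_H(\theta(a))$ --- which is exactly the reduction direction, hence harmless --- and on the vertical lift of the annihilator line $\cD^\perp_{q(a)}$, on which $\partial^2_{pp}H$ vanishes for \emph{every} admissible $T$. It is this last, genuinely sub-Riemannian degeneracy that must be overcome, and this is where $\cD$-regularity enters: by Definition \ref{def:d-reg} and Lemma \ref{lem:regular-point}, $(d\eta)_{q(a)}(\dot Q(a),\cdot)$ is nonzero on $\cD_{q(a)}$, i.e. the annihilator line rotates transversally along $\theta$, so the direction in which $\mathfrak h_a$ degenerates is recovered at first order in $\mathfrak h_a'$. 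One then shows, by an explicit choice of the two-jet (and finitely many higher jets) of $T$ at $q(a)$ --- compatible with $T\in\cK^\star(\cD,K,\theta)$ and made possible by that transversality --- that the iterated brackets of $\mathcal N$ with $\mathfrak h_a,\mathfrak h_a',\dots$ fill the subspace $\sp_0\subset\sp(2(d+1),\R)$ of infinitesimal symplectic maps annihilating $X_H(\theta(a))$ and preserving $\ker dH(\theta(a))$, which by coisotropic reduction surjects onto $\sp(2d,\R)$. Hence the good set of $T$ is open and dense, and the theorem follows. The cases $\star=\qd$ and $\star=\rf$ are handled uniformly: in both, $\cK^\star(\cD,K,\theta)$ carries enough jets at $q(a)$ to realize the required $\mathfrak h_a^{(j)}$, and the reversibility symmetry $p\mapsto-p$ is compatible with the perturbation directions $N_S$.

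I expect the last step --- controlling the $\mathrm{ad}$-closure of $\mathcal N$ in the presence of the $\cD^\perp$-degeneracy, and converting the $\cD$-regularity inequality into an explicit jet of $T$ for which the brackets span $\sp_0$ --- to be by far the hardest part of the argument; the first variation, the localization, the moment argument, and the reduction to the Poincar\'e picture are essentially routine variational and functional-analytic bookkeeping.
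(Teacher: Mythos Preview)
Your outline is essentially the paper's own strategy: pass to a normal form, write the Duhamel variation of the linearized flow, reduce to a bracket-generating criterion for the matrix control problem, and then show by an explicit jet computation that for a generic kinetic perturbation the brackets span---using the $\cD$-regularity condition to recover the direction lost to the $\cD^\perp$ degeneracy of $\partial^2_{pp}H$. Two small corrections. First, invoking reversibility and supercriticality ``to exclude round-trip behaviour'' is misplaced here: Theorem~\ref{pert_thm} is a purely local statement about an orbit \emph{segment}, and since $\pi\circ\theta$ has nonzero velocity at $t=0$, it is an embedded arc on a short enough subinterval without any global hypothesis; the neatness and round-trip arguments belong to the proof of Theorem~\ref{main_thm}, not here. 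Second, your ``moment argument'' identification of the reachable space with the smallest subspace invariant under $\mathrm{ad}_{\mathfrak h_a},\mathrm{ad}_{\mathfrak h_a'},\dots$ overstates what the moment argument gives: the reachable tangent space is the span of the iterates $B^\ell$ defined by the recursion $B^{\ell+1}=[\mathfrak h,B^\ell]+\dot B^\ell$ (cf.\ Proposition~\ref{prop:bracket}), which lies \emph{inside} that ad-closure but need not equal it (e.g.\ $[\mathfrak h_a',[\mathfrak h_a',N_S]]$ is in the ad-closure but is not a $B^\ell$ at $t=a$). This matters because proving the ad-closure is full would not by itself yield the submersion; you must prove the $B^\ell$ span, which is exactly the computation the paper carries out in Section~\ref{sec:mane-control} up to order~$5$.
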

\begin{remark}
		If the conclusion of the theorem holds for some transversal sections $\Sigma_0$, $\Sigma_\delta$, then it holds for arbitrary choice of transversal sections.
\end{remark}

\subsection*{Dicussions and questions}

One of the central questions in sub-Riemannian geometry is whether singular curves are rare. This is known as the Sard's Conjecture (see \cite{SFPR22} for a recent progress in this problem). In our context, it's natural to ask whether singular normal orbits are rare. In particular, we raise the following question:
\begin{question}
	For a totally nonholonomic distribution $\cD$ and a generic sub-Riemannian metric, are all periodic orbits $\cD$-regular?
\end{question}
If the answer is yes, then we can remove the regularity requirement from our main theorem in the nonholonomic case.

In Theorem \ref{pert_thm}, we need a preliminary perturbation $T$ to the kinetic energy in order to prove that the mapping $u \mapsto dP$ is an submersion. Can this perturbation be avoided if $\cD$ is totally nonholonomic? This raises the following question:  

\begin{question}
		For a totally nonholonomic distribution $\cD$ and a generic sub-Riemannian metric, do Theorem \ref{pert_thm} hold with only perturbation in $U$? In other words, does the bumpy metric theorem hold in the sense of Ma\~n\'e?
\end{question}

\subsection*{Outline of the proof}

\begin{itemize}
		\item  In Section \ref{sec:local-pert}, we prove a normal form theorem. Using the normal form, we formulate Theorem \ref{thm:controllable} concerning a matrix control problem. We then prove Theorem \ref{pert_thm} assuming Theorem \ref{thm:controllable}.
		\item In Section \ref{sec:param-trans}, we use Theorem \ref{pert_thm} and the parametric transverality method to prove Theorem \ref{main_thm} and derive Theorem \ref{sub-Riemannian} as a corollary.
		\item In Section \ref{sec:mane-control}, we prove Theorem \ref{thm:controllable}.
\end{itemize}

\section{Proof of the local perturbation theorem via Ma\~n\'e control problem}
\label{sec:local-pert}

In this section, we reduce the proof of Theorem \ref{pert_thm} to a controllability problem (Theorem \ref{thm:controllable}). 

The plan of the section is as follows.
\begin{itemize}
		\item In Section \ref{sec:normal-form}, we prove a normal form theorem using fiber-preserving coordinate changes. Although this normal is related to the ones in \cite{RR12} and \cite{AB21}, it has a different block form compared to \cite{RR12} and \cite{AB21}. The normal form theorem applies to all  $\cD$-Hamiltonians.
		\item In Section \ref{sec:lin-trans}, we use the normal form to reduce the linearized Poincar\'e map to the solution of a control problem. The result of this section applies to distributions of all ranks.
		\item In Section \ref{sec:mane-controllable}, we restrict to the co-rank $1$ case, and formulate Theorem \ref{thm:controllable}. The proof is deferred to the end of the paper.
		\item In Section \ref{sec:proof-pert-thm}, we prove Theorem \ref{pert_thm} assuming Theorem \ref{thm:controllable}.
\end{itemize}

\subsection{Normal form along orbit segments of a generalized classical Hamiltonian} \label{sec:normal-form}
Let $\varphi$ be a diffeomorphism defined on an open subset $V \subset M.$ A mapping $\Psi_{\varphi}:T^*V \to T^*\big(\varphi(V)\big)$ defined as 
$$
\Psi_{\varphi}(q,p)=\big(\varphi(q),\big(d\varphi^{-1}(q)\big)^*p\big)
$$
is called a \textit{homogeneous symplectomorphism}. 

Note that Homogeneous symplectomorphisms are functorials i.e.
$$
    \Psi^{-1}_{\varphi}=\Psi_{\varphi^{-1}}, \quad \text{and} \quad \Psi_{\chi \circ \varphi}=\Psi_{\chi} \circ \Psi_{\varphi},
$$
where $\chi$ is a diffeomorphism defined on $\varphi(V).$ 

Let $g:V \to \mathbb{R}$ be a smooth function defined on a open set $V \subset M.$ The mapping 
$$\Psi^g(q,p)=\big(q,p+dg(q)\big)$$ 
Defines a \textit{vertical symplectomorphism} on $T^*V$ to itself. Note that as we have 
\begin{equation}
		\Psi^{g_1+g_2}=\Psi^{g_1} \circ \Psi^{g_2},
\end{equation}
the set of vertical symplectomorphisms on $T^*V$ forms a group. 

In a local coordinates we may express homogeneous and vertical symplectomorphisms in the following manner 
\begin{equation}\label{eq_hom-ver}
		\Psi_\varphi(q,p)=\big(\varphi(q),(D\varphi^{-1})^Tp\big), \quad \Psi^g(q,p)=\big(q,p+(Dg)^T(q)\big).  
\end{equation}

It is known that any symplectomorphism on $T^*M$ which preserves the fibers is a composition of a homogeneous (vertical) and a vertical (homogeneous) symplectomorphism. See Lemma 1.1.3 in \cite{Aslanithesis}. The next lemma claims the two types of sympletomorphisms commute in a sense.
\begin{lemma}\label{lemma_2.1}
		Let $\varphi$ be a diffeomorphism defined on an open set $V \subset M,$ and $g:V \to \mathbb{R}$ be a smooth function. Consider a homogeneous symplectomorphism 
		$$\Psi_\varphi(q,p)=\big(\varphi(q),(d\varphi^{-1})^* p\big),$$
		and a vertical symplectomorphism  
		$$\Psi_{g}(q,p)=\big(q,p+dg(q)\big).$$ 
		Then, $\Psi^g$ and $\Psi_\varphi$ commute in the following way 
		$$
		\Psi^g \circ \Psi_\varphi=\Psi_{\varphi} \circ \Psi^{g \circ \varphi}.
		$$
\end{lemma}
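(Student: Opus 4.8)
The plan is to verify the identity by a direct computation; I do not expect any genuine obstacle, only careful bookkeeping of transposes (equivalently, of pullbacks and of the points at which differentials are evaluated). I would present it in one of two essentially equivalent ways, and would probably write the second for concreteness.

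The conceptual way: the content of the lemma is that the cotangent lift $\Psi_\varphi$ intertwines a vertical translation by a one-form with the vertical translation by its pullback. Concretely, for a one-form $\alpha$ on $\varphi(V)$ let $\tau_\alpha$ denote the fiberwise translation $(Q,P)\mapsto(Q,P+\alpha(Q))$; then one checks directly from the definitions that $\tau_\alpha\circ\Psi_\varphi=\Psi_\varphi\circ\tau_{\varphi^*\alpha}$. Indeed both sides have first component $\varphi(q)$, and in the fiber the only point is that $(\varphi^*\alpha)(q)=(d\varphi(q))^*\alpha(\varphi(q))$ while $(d\varphi^{-1}(\varphi(q)))^*=((d\varphi(q))^*)^{-1}$, so the factor $(d\varphi^{-1})^*$ appearing in $\Psi_\varphi$ exactly undoes the $(d\varphi)^*$ produced by the pullback and returns $\alpha(\varphi(q))$. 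Specializing to $\alpha=dg$ and using the chain rule $\varphi^*(dg)=d(g\circ\varphi)$ gives $\Psi^g=\tau_{dg}$ and $\tau_{\varphi^*dg}=\Psi^{g\circ\varphi}$, hence $\Psi^g\circ\Psi_\varphi=\Psi_\varphi\circ\Psi^{g\circ\varphi}$.

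The computational way, closer to the coordinate formulas (\ref{eq_hom-ver}): evaluate both sides at $(q,p)$. On the one hand $\Psi^g\circ\Psi_\varphi(q,p)=\big(\varphi(q),\,(D\varphi^{-1})^Tp+(Dg)^T(\varphi(q))\big)$, where $D\varphi^{-1}$ is taken at $\varphi(q)$. On the other hand $\Psi_\varphi\circ\Psi^{g\circ\varphi}(q,p)=\big(\varphi(q),\,(D\varphi^{-1})^T\big(p+(D(g\circ\varphi))^T(q)\big)\big)$; expanding $D(g\circ\varphi)(q)=Dg(\varphi(q))\,D\varphi(q)$ and using $(D\varphi^{-1})^T(D\varphi)^T=\mathrm{Id}$ (again with $D\varphi^{-1}$ at $\varphi(q)$) collapses the second expression to the first.

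The only thing requiring attention — the "hard part", such as it is — is keeping track of the point at which each differential is evaluated; there is no conceptual difficulty, and the statement is genuinely just the naturality of the cotangent lift with respect to exact fiber translations.
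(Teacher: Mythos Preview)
Your proposal is correct and your ``computational way'' is essentially identical to the paper's proof: both compute $\Psi^g\circ\Psi_\varphi(q,p)$ in coordinates, factor out $(D\varphi^{-1})^T$, and invoke the chain rule $(D(g\circ\varphi))^T=(D\varphi)^T(Dg\circ\varphi)^T$ to recognize the result as $\Psi_\varphi\circ\Psi^{g\circ\varphi}(q,p)$. Your additional ``conceptual'' paragraph (naturality of the cotangent lift with respect to fiber translations by one-forms) is a nice gloss but not in the paper.
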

As a result, if $\Phi_1, \hdots, \Phi_k:T^*V \to T^*V$ is a collection of homogeneous and vertical symplectomorphisms, then there exists a smooth function $f:V\to \mathbb{R},$ and a diffeomorphism $\chi$ defined on $V$ such that
$$
\Phi_1 \circ \hdots \circ \Phi_k=\Psi_{\chi} \circ \Psi_f.
$$

\begin{proof}[Proof of Lemma \ref{lemma_2.1}]
In coordinates, we have
		\begin{align*}
				\big(\Psi^g \circ \Psi_\varphi\big)(q,p)&=\big(q,p+\big(Dg(q)\big)^T(q)\big) \circ \Psi_{\varphi} \\
						&= \big(\varphi(q), (D\varphi^{-1}(q))^Tp+(Dg(\varphi(q))^T\big) \\ 
						&= \big(\varphi(q), (D\varphi^{-1}(q))^T\big(p+(D\varphi(q))^T(Dg(\varphi(q))^T\big)\big) \\  \numberthis \label{comp_diff}
						&= \big(\varphi(q), (D\varphi^{-1}(q))^T\big(p+(D (g \circ \varphi ))^T\big)\big) \\
						&=\big(\Psi_\varphi \circ \Psi^{g \circ \varphi}\big)(q,p).
		\end{align*}
		Where we obtained (\ref{comp_diff})  using 
            $$(D (g \circ \varphi ))^T=(D\varphi(q))^T(Dg(\varphi(q))^T.$$
\end{proof}

The proof of the following lemma is straightforward by the definitions of $\mD$-Hamiltonian.
\begin{lemma} \label{lemma_2.2}
		Let $\mD$ be distribution over a smooth manifold $M.$ Let $V$ be an open subset of $M$. Suppose that $H:T^*V \to \mathbb{R}$ is a $\cD$-Hamiltonian. Let $\Psi_{\varphi}$ be a homogeneous symplectomorphism and $\Psi^g$ be a vertical symplectomorphism, both defined on $T^*V.$ Denoted by $\mD|_V$ the restriction of the bundle $\mD$ to $V$ we have the following  
		\begin{itemize}
				\item $H \circ \Psi^{-1}_{\varphi}$ is a $\varphi_* \cD$-Hamiltonian defined on $T^*\big(\varphi(V)\big)$.
				\item $H \circ \Psi^g$ is a $\cD$-Hamiltonian defined on $T^*V$.
		\end{itemize}
\end{lemma}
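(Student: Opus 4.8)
The plan is to verify directly the two defining conditions of a $\cD$-Hamiltonian for each of $H \circ \Psi_\varphi^{-1}$ and $H \circ \Psi^g$: invariance under translation by the annihilator bundle (the flatness condition \eqref{eq:H-flat}), and fiberwise strict convexity of the function induced on the quotient $T^*V/\cD^\perp$. Since both $\Psi_\varphi$ and $\Psi^g$ cover a diffeomorphism of the base and act affine-linearly on each cotangent fiber, the whole verification reduces to linear algebra performed fiber by fiber.

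For the \emph{homogeneous} symplectomorphism, the one statement that requires an argument is that the fiberwise part of $\Psi_\varphi$, namely $(d\varphi_q^{-1})^* : T^*_qM \to T^*_{\varphi(q)}M$, carries $\cD^\perp_q$ isomorphically onto $(\varphi_* \cD)^\perp_{\varphi(q)}$. I would prove the inclusion with the one-line duality identity $\big((d\varphi_q^{-1})^*P\big)(d\varphi_q v) = P(v)$, valid for $P \in \cD^\perp_q$ and $v \in \cD_q$, and then obtain surjectivity from the fact that $\cD$ and $\varphi_*\cD$ have the same rank, so the two annihilators have equal dimension. Inverting, the fiber map of $\Psi_\varphi^{-1} = \Psi_{\varphi^{-1}}$ takes $(\varphi_*\cD)^\perp$ onto $\cD^\perp$; substituting into $H \circ \Psi_\varphi^{-1}$ and using that $H$ is flat along $\cD^\perp$ yields \eqref{eq:H-flat} for $H\circ\Psi_\varphi^{-1}$ relative to $\varphi_*\cD$. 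Because this fiber map is a linear isomorphism sending one annihilator onto the other, it descends to a linear isomorphism of the quotient fibers, and the descended function is $\overline H$ precomposed with that isomorphism — still fiberwise strictly convex. Hence $H\circ\Psi_\varphi^{-1}$ is a $\varphi_*\cD$-Hamiltonian on $T^*(\varphi(V))$.

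For the \emph{vertical} symplectomorphism, $\Psi^g$ preserves $T^*V$, fixes the base, and translates the fiber over $q$ by $dg(q)$. Flatness of $H\circ\Psi^g$ along $\cD^\perp$ is immediate, since adding $P \in \cD^\perp_q$ commutes with adding $dg(q)$ inside the same fiber: $(H\circ\Psi^g)(q,p+P) = H(q,(p+dg(q))+P) = H(q,p+dg(q)) = (H\circ\Psi^g)(q,p)$. On the quotient $T^*_qV/\cD^\perp_q$ the map $\Psi^g$ induces translation by the class of $dg(q)$, and translation by a fixed vector preserves strict convexity, so $\overline{H\circ\Psi^g}$ is fiberwise strictly convex. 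Thus $H\circ\Psi^g$ is a $\cD$-Hamiltonian on $T^*V$.

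I do not expect a genuine obstacle here: the content is entirely the definition of a $\cD$-Hamiltonian together with the standard fact that a cotangent lift intertwines the annihilator of a distribution with the annihilator of its pushforward. The only thing to be careful about is the bookkeeping of which fiberwise linear map runs in which direction, and checking that the induced maps on the quotient bundles are well defined.
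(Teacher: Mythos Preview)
Your proposal is correct and is exactly the direct verification from the definitions that the paper has in mind; the paper itself does not write out a proof, stating only that the lemma ``is straightforward by the definitions of $\mD$-Hamiltonian.'' Your argument supplies precisely those details, with the key observation being that $(d\varphi_q^{-1})^*$ carries $\cD_q^\perp$ onto $(\varphi_*\cD)_{\varphi(q)}^\perp$.
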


Let $\Psi_\varphi$ be a homogeneous symplectomorphism. For a generalized classical Hamiltonian $H=K+U,$ we have 
$$
(H \circ \Psi_\varphi)(q,p)=K\big(\varphi(q),(d\varphi^{-1})^*p\big)+U\big(\varphi(q)\big)=:\tilde{K}(q,p)+\tilde{U}.
$$
Therefore, homogeneous change of coordinates preserve a generalized classical Hamiltonian. At the other hand, for a vertical symplectomorphism $\Psi^g$ 
\begin{equation}\label{eq_2.1}
		(H \circ \Psi^g)(q,p)=K\big(q,p+dg(q)\big)+U(q)
\end{equation}
is no longer a generalized classical Hamiltonian. That is because $K\big(q,p+dg(q)\big)$ is not homogeneous. 

Due to Lemma \ref{lemma_2.1}, after composing a generalized classical system to a finite number of homogeneous and vertical change of coordinates, the new Hamiltonian is of the form given in (\ref{eq_2.1}).

\begin{proposition}\label{normalform}
		Let $\mD$ be a distribution of rank $m < \dim M = d+1$. Let $\underline{H}$ be a $\cD$-Hamiltonian, and $\underline{\theta}=\big(\underline{Q}(t),\underline{P}(t)\big):[0,T] \to T^*M$ be an orbit on the energy level $k$ satisfying $\dot {\underline{Q}}(0) \ne 0.$

    Then there exists $\delta>0,$ a neighborhood $V$ of $\underline{\theta}(0),$ a diffeomorphism $\varphi:V \to \tilde{V}:=\varphi(V) \subset \mathbb{R}^{d+1}$ with $\varphi\big(\underline{Q}(0)\big)=0,$ and a function $g:\tilde{V} \to \mathbb{R}$ such that, if we set 
		$$
		H:=\underline{H} \circ \Phi, \quad \Phi:=\Psi^{-1}_{\varphi} \circ (\Psi^g)^{-1}, \quad \text{where $\Psi_{\varphi}$ and $\Psi^g$ are defined as in (\ref{eq_hom-ver}),}
		$$
		then for all $q \in V$ 
		\begin{enumerate}[$(a)$]
				\item  $\theta(t):=\Phi\big(\underline{\theta}(t)\big)=(te_1,0), \quad t \in [0, \delta],$ 
				\item  $H(q,0)=k,$ 
				\item  $\partial_pH(q,0)=e_1,$ 
				\item  $\partial^2_{\hat{p}\hat{p}}H(0,0)=
						\begin{bmatrix}
								I_{d-m} & 0 \\
								0 & 0 
						\end{bmatrix}.$ 
		\end{enumerate}
\end{proposition}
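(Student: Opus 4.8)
The plan is to build the coordinate change in three stages, each time composing with a homogeneous or vertical symplectomorphism, and invoking Lemma \ref{lemma_2.1} at the end to collapse the composition into the advertised form $\Phi = \Psi^{-1}_\varphi \circ (\Psi^g)^{-1}$. First I would straighten out the base orbit: since $\dot{\underline Q}(0)\neq 0$, the projected curve $\underline Q(t)$ is an embedded arc near $t=0$, so by a standard flowbox-type argument there is a diffeomorphism $\varphi_0$ of a neighborhood of $\underline Q(0)$ sending it to the segment $t\mapsto te_1$; the induced homogeneous symplectomorphism $\Psi_{\varphi_0}$ then carries $\underline\theta(t)$ to a curve of the form $(te_1, \underline P_1(t))$. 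To kill the momentum component I would apply a vertical symplectomorphism $\Psi^{g_1}$ with $dg_1$ chosen so that $\underline P_1(t) - dg_1(te_1) = 0$ along the orbit; concretely, one picks $g_1$ on a neighborhood of the segment so that its differential restricted to the $q^1$-axis equals $\underline P_1$, which is solvable since one only prescribes the differential along a curve (this is where the conventions for how $\Psi^g$ acts — shifting $p$ by $dg$ — are used). After these two steps the orbit is $(te_1,0)$, giving $(a)$, and since the flow is along the $q^1$-direction we get $\partial_p H(te_1,0) = e_1$ at least along the orbit; energy conservation gives $H(te_1,0)=k$ along the orbit.

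The remaining work is to upgrade these pointwise-on-the-orbit identities to the hypersurface identities $(b)$ and $(c)$ for all $q\in V$, and to normalize the Hessian block in $(d)$. For $(b)$ and $(c)$: the condition that $\{H=k\}$ is being flattened and that its characteristic direction is $e_1$ is exactly the statement that, near the point, the energy hypersurface can be written as a graph and the Hamiltonian vector field points in the $\partial_{q^1}$ direction; I would realize this with one more fiber-preserving symplectomorphism — a homogeneous one coming from a diffeomorphism of the base that is the identity on the $q^1$-axis but rectifies the remaining coordinates so that $\{H=k\}$ becomes $\{p_1 = 0\}$-adjacent in the appropriate sense, followed if necessary by a further vertical shift $\Psi^{g_2}$ with $g_2$ and $dg_2$ vanishing on the $q^1$-axis so as not to disturb $(a)$. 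The key point is that $\partial_p H \neq 0$ near the orbit (which holds because we are on an energy level where $H$ is a $\cD$-Hamiltonian with the contact/supercriticality-type nondegeneracy, or simply because $\partial_p H(te_1,0)=e_1\neq 0$ and we shrink $V$), so the implicit function theorem lets us solve $H(q,p)=k$ for one momentum variable and straighten the level set.

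For $(d)$, I would use a \emph{linear} fiber-preserving symplectomorphism at the base point $0$ — equivalently a linear change of the $q$-coordinates transverse to $e_1$ together with the dual change in $p$ — to diagonalize the fiberwise Hessian $\partial^2_{\hat p\hat p}H(0,0)$. Here $\hat p = (p_2,\dots,p_{d+1})$; because $H$ is a $\cD$-Hamiltonian, it is constant along $\mathcal D^\perp$, so this Hessian is positive semidefinite with kernel exactly the $(m{-}1)$-dimensional... — more precisely, the rank of $\partial^2_{\hat p\hat p}H(0,0)$ in the directions complementary to $e_1$ is $d-m$ (the corank of $\mathcal D$ inside the $\hat p$-space after removing the flow direction), by the convexity of $H$ on $T^*M/\mathcal D^\perp$ together with the degeneracy \eqref{eq:H-flat}. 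A congruence transformation (Sylvester) then brings it to $\mathrm{diag}(I_{d-m},0)$; arranging this transformation to fix $e_1$ and to be symplectic (pairing the linear change on $\hat q$ with the inverse-transpose on $\hat p$) keeps $(a)$, $(b)$, $(c)$ intact since those only involve the $0$-section and the first derivative. Finally I would collect $\Psi_{\varphi_0}, \Psi^{g_1}, \dots$ into a single composition, apply the commutation Lemma \ref{lemma_2.1} to rewrite it as $\Psi_\varphi \circ \Psi^g$ for suitable $\varphi, g$, and set $\Phi = \Psi^{-1}_\varphi\circ(\Psi^g)^{-1}$; Lemma \ref{lemma_2.2} guarantees $H = \underline H\circ\Phi$ is still a ($\varphi$-pushforward-of-$\mathcal D$)-Hamiltonian, which is what subsequent sections need. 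The main obstacle I anticipate is bookkeeping: ensuring that each successive normalization ($(b)$–$(c)$, then $(d)$) does not destroy the ones already achieved, which forces the later diffeomorphisms and functions to vanish to appropriate order on the $q^1$-axis — tracking exactly which jets along the orbit must be frozen is the delicate part, whereas the existence of each individual symplectomorphism is routine.
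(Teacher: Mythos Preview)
Your handling of $(a)$ and $(d)$ is essentially what the paper does: straighten the projected orbit with a flowbox, and at the end apply a linear change in the $\hat q$-variables (with the dual inverse-transpose on $\hat p$) to bring the semidefinite Hessian to $\mathrm{diag}(I,0)$ by Sylvester. The use of Lemma~\ref{lemma_2.1} to collapse the composition is also right.

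The gap is in your ``upgrade'' step for $(b)$ and $(c)$. Your vertical shift $\Psi^{g_1}$ with $dg_1(te_1)=\underline P_1(t)$ only gives $H(te_1,0)=k$ \emph{on the orbit}, and your proposed fix does not work: a homogeneous symplectomorphism $\Psi_\psi$ with $\psi$ the identity on the $q^1$-axis sends $H(q,0)$ to $H(\psi^{-1}(q),0)$, which merely reparametrizes a nonconstant function and cannot make it constant. The implicit function theorem likewise only produces $p_1=p_1(q,\hat p)$ solving $H=k$; it does not give you an \emph{exact} $1$-form $dg_2(q)$, which is what a vertical symplectomorphism requires. What you are really trying to solve is the Hamilton--Jacobi equation
\[
H\bigl(q,\,dg(q)\bigr)=k,\qquad g(0,\hat q)=0,
\]
and the paper handles this directly by the method of characteristics: the characteristics are the Hamiltonian orbits, the hypersurface $\{q_1=0\}$ is non-characteristic (since $\partial_pH\cdot e_1\neq 0$), so a local smooth solution $g$ exists. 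After the vertical shift by this $g$ one has $H(q,0)=k$ for all $q$ near the orbit in one stroke, with no delicate jet-matching along the axis.

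For $(c)$ the paper then applies a \emph{second} flowbox, this time to the base vector field $q\mapsto\partial_pH(q,0)$ (nonvanishing near the orbit), yielding $\partial_pH(q,0)=e_1$ for all $q$; note this is a separate step from straightening $\underline Q$. Once you insert these two ingredients---Hamilton--Jacobi via characteristics for $(b)$, then flowbox on $\partial_pH(\cdot,0)$ for $(c)$---the bookkeeping worry you anticipate largely evaporates: each step is global in a neighborhood rather than an order-by-order correction along the axis, and the final linear map in $(d)$ visibly preserves $(a)$--$(c)$ because it fixes $e_1$ and the zero section.
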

\begin{proof}
		Throughout the proof, in each of the intermediate steps we assume that $\underline{H}$ is the Hamiltonian obtained from the previous step of normalization. The Hamiltonian obtained in the current step is denoted $H$. We use the notations $\underline{\theta}$ and $\theta$ in the same manner. 

		\textit{Step 1.} By assumption, $\dot{\underline{Q}}(t) = \partial_p\underline{H}\big(\underline{\theta}(0)\big)\ne 0$. Hence there exists $\delta>0$ and a diffeomorphism $\varphi_1$ defined on a neighborhood $V$ of $\underline{Q}(0)$ such that $\varphi_1\big(\underline{Q}(t)\big)=te_1,$ for all $t\in [-\delta,\delta].$ Let 
		$$
		\underline{H} \circ \Psi_{\varphi_1}^{-1}=:H=K+U,
		$$ 
		and 
		$$
		\Psi_{\varphi_1}(\underline{\theta}(t))=:\theta(t)=:\big(te_1,P(t)\big):[-\delta, \delta] \to T^*M.
		$$ 
		
		In further steps, by choosing a sufficiently small $\delta$ and pulling back to a local chart, we will assume that the projection of a transformed orbit segment lies in $V \subset \R^{d+1}$. 
  
		\textit{Step 2.} We let $\underline{\theta}(t)=\big(te_1, \underline{P}(t)\big)$. Using the method of characteristics (see \cite{Evans10}, Theorem 2, Section 3.2.4), there exists a neighborhood $\hat{V} \subset \{0\} \times \mathbb{R}^d$ of $\hat{q} = 0$ and $\delta > 0$, such that the following boundary value problem of the Hamilton-Jacobi equation has a smooth solution
		\[
				\begin{aligned}
											& \underline{H}\big(q, dg(q)\big) - k = 0, & q_1 \in (-\delta, \delta), 	\hspace{1.2 mm} \hat{q} \in \hat{V}, \\
											& g(0, \hat{q}) = 0, & \hat{q} \in \hat{V}. 
				\end{aligned}  
		\] 
		Let $H:=\underline{H} \circ \Psi_g^{-1},$ then $H(q,0)=0$ for all  $q \in (-\delta, \delta) \times \hat{V}.$ 
  
		\textit{Step 3.} Consider the vector filed $\underline{X}(q):=\partial_p \underline{H}(q, 0)$ locally defined around $q=0$. Note that $\underline{X}(0)\ne 0,$ so by the flow-box theorem there exists diffeomorphism $\varphi_2,$ and an open neighborhood $V$ of $q=0$ such that the pushforward of $\underline{X}$ by $\varphi_2$ i.e. $(\varphi_2^{-1})_*(\underline{X})$ is the unit vector field $e_1$ on $V$. We define $ H:=\underline{H}\circ \Psi_{\varphi_2}^{-1}.$ Then, we have $\partial_{p} H(q, 0)=e_1$ for $q \in V.$  
  
		\textit{Step 4.} Let $\underline{H}$ be the Hamiltonian obtained in the previous step. Consider a linear coordinate change
		\[
				\varphi_3(q) = M q, 
		\]
		where $M \in GL(d+1, \R)$ has the following block form 
		$$
		M =  \bmat{1 & 0 \\ 0 & \bar{M}}.   
		$$
		Let $H:\underline{H} \circ \Psi^{-1}_{\varphi_3},$ so we have 
		\[
				H(q, p) = \underline{H}\big(\varphi_3(q), (D\varphi_3^{-1})^T p\big) 
				= \underline{H}(M^{-1} q, M^T p). 
		\]
		Note that (b) and (c) still hold for $H$ as we still have $H(q, 0) = k,$ and 
		\[
				\partial_p H(q, 0) = M \partial_p \underline{H}(q, 0) = M e_1 = e_1.
		\]
		Moreover, 
		\[
				\partial^2_{pp} H(0, 0) 
				= M \partial^2_{pp} \underline{H}(0, 0) M^T
				= \bmat{1 & 0 \\ 0 & \bar{M}} \bmat{* & * \\ * & \partial^2_{\hat{p} \hat{p}} \underline{H}(0, 0)} \bmat{1 & 0 \\ 0 & \bar{M}^T}.
		\]
		Since $\underline{A}(0) := \partial^2_{\hat{p} \hat{p}} H(0, 0)$ is a co-rank $m$ positively semi-definite matrix, there exists an orthogonal matrix $G$ such that
		\[
				\underline{A}(0)  = G^T \bmat{\Lambda & 0 \\ 0 & 0} G, 
		\]
		where $\Lambda$ is a $(d-m) \times (d-m)$ diagonal matrix with positive entries. We choose
		\[
				\bar{M} = \bmat{\Lambda^{-\frac12} & 0 \\ 0 & 0} G,  
		\]
		to have 
		\[
				\partial^2_{pp} H(0, 0)   = 
				\begin{bmatrix}
						* & * \\ * & 
						\begin{matrix}
								I_{d-m} & 0 \\ 0 & 0	
						\end{matrix}
				\end{bmatrix}.
		\]
\end{proof}

\subsection{Linearized transition maps } \label{sec:lin-trans}    

Throughout this section we assume that $\cD$ is a rank-$m$ distribution, $H$ is a $\cD$-Hamiltonian, and $\theta$ is an orbit in the $k$-energy level of $H$. We will assume that a normal form transformation has been performed so that the claim of \Cref{normalform} holds for $H$ and $\theta$. Consider the family of Hamiltonians 
$$
H_u=H+u, \quad u \in \mU(\theta),
$$
where $\mU(\theta)$ is defined in \eqref{eq:cU}. 

We use the notation
$x=(q,p),$ $q=(q_1,\hat{q}) \in \mathbb{R} \times \mathbb{R}^{d},$ and $p=(p_1,\hat{p})\in \mathbb{R} \times \mathbb{R}^d.$ Let $\Sigma_t=\{q_1=t\}$ and consider the one-parameter family of transition maps 
$$
R^t_u = P_{\Sigma_0, \Sigma_t}(\theta, H + u): \quad H_u^{-1}(k) \cap \Sigma_0 \to H^{-1}_u(k) \cap \Sigma_t
$$
subjected to the obit segment $(te_1,0)$ and the Hamiltonian vector field of $H_u.$ 

Since $dH_u(te_1,0)=(0,e_1)$, hence $H_u$ and $p_1$ have the same gradient along $(te_1, 0)$, i.e. $H^{-1}_u(k)$ is tangent to $\{p_1=0\}$ along the orbit segment. Note that
$$
dR^t_u:\{q_1=0,p_1=0\} \to \{q_1=0,p_1=0\}
$$
is a linear symplectic map.
\begin{lemma}
		Let $R^t_u$ be the one-parameter family of transition maps introduced in above. Then, $dR^t_u(0)$ coincides with the mapping $$\{q_1=0,p_1=0\} \ni \hat{x} \mapsto L(t)\hat{x}$$ where $L(t) \in Sp(2d,\mathbb{R})$ solves the following differential equation 
		\begin{equation}\label{eq_L}
				\dot{L}_u(t)=Y_u(t)L_u(t), \quad L_u(0)=I_d,
		\end{equation}
		in which $Y_u:=\partial^2_{\hat{x} \hat{x}}H_u(te_1,0).$    
\end{lemma}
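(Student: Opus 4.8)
The plan is to recognize $R^t_u$ as the flow, between transverse sections, of the symplectic reduction of $H_u$ along the hypersurfaces $\{q_1 = \mathrm{const}\}$, and then to linearize at the orbit. By property $(c)$ of the normal form, $\partial_{p_1}H_u(te_1,0) = 1 \neq 0$, so the sections $\Sigma_t = \{q_1 = t\}$ are uniformly transverse to the Hamiltonian flow near $\theta$; this is exactly the setting in which $R^t_u$ is defined, and on the orbit segment $t \mapsto (te_1,0)$ the coordinate $q_1$ coincides with the actual time.

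First I would carry out the reduction. Since $\partial_{p_1}H_u$ does not vanish near $\theta$, the implicit function theorem produces a smooth $G = G_u(q_1;\hat q,\hat p)$, defined near the orbit, with $H_u(q_1,\hat q,-G,\hat p) = k$. Differentiating this relation in $\hat q$ and $\hat p$ gives
\[
\partial_{\hat q}G = \frac{\partial_{\hat q}H_u}{\partial_{p_1}H_u}, \qquad \partial_{\hat p}G = \frac{\partial_{\hat p}H_u}{\partial_{p_1}H_u},
\]
and dividing the $\hat q$- and $\hat p$-components of Hamilton's equations for $H_u$ by $\dot q_1 = \partial_{p_1}H_u > 0$ shows that, along solutions on $H_u^{-1}(k)$ reparametrized by $q_1$,
\[
\der{\hat q}{q_1} = \partial_{\hat p}G, \qquad \der{\hat p}{q_1} = -\partial_{\hat q}G .
\]
Since $(b)$ and $(c)$ give $dH_u(te_1,0) = dp_1$, the tangent space $T_{\theta(0)}\big(\Sigma_0 \cap H_u^{-1}(k)\big)$ is the plane $\{q_1 = 0,\, p_1 = 0\}$ with coordinates $\hat x = (\hat q,\hat p)$, and the standard symplectic form restricts there to $\sum_i d\hat q_i \wedge d\hat p_i$. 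In these coordinates $R^t_u$ is precisely the flow from $q_1 = 0$ to $q_1 = t$ of the non-autonomous Hamiltonian system above, with the orbit segment being the constant solution $\hat x \equiv 0$. Linearizing at $\hat x = 0$ gives $dR^t_u(0) = L_u(t)$ with $\dot L_u = J_{2d}\,\mathrm{Hess}_{\hat x}G_u(t;0)\,L_u$, $L_u(0)=I$; being the flow of a linear Hamiltonian system, $L_u(t)\in Sp(2d,\R)$.

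It then remains to evaluate $\mathrm{Hess}_{\hat x}G_u(t;0)$, and this is where the normal form is essential. Properties $(b)$ and $(c)$ say $H_u(q,0)=k$ and $\partial_pH_u(q,0)=e_1$ for $q$ near the orbit — these persist under $u\in\mU(\theta)$ since $u$ and $du$ vanish on $\pi\circ\theta$ — so $\partial_{\hat q}H_u$ and $\partial_{\hat p}H_u$ vanish identically on $\{p=0\}$ while $\partial_{p_1}H_u\equiv 1$ there. Differentiating the formulas for $\partial_{\hat q}G,\partial_{\hat p}G$ once more and restricting to the orbit: the terms carrying a derivative of the denominator $\partial_{p_1}H_u$ are killed by the vanishing numerators $\partial_{\hat q}H_u,\partial_{\hat p}H_u$, and the chain-rule corrections $\partial_{\hat x}(-G)$ are killed because $\partial_{\hat x}G = \partial_{\hat x}H_u/\partial_{p_1}H_u = 0$ on the orbit; what survives, using $\partial_{p_1}H_u = 1$, is exactly $\mathrm{Hess}_{\hat x}G_u(t;0) = \partial^2_{\hat x\hat x}H_u(te_1,0)$. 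Hence $\dot L_u = J_{2d}\,\partial^2_{\hat x\hat x}H_u(te_1,0)\,L_u$, i.e. $\dot L_u = Y_u(t)L_u$ for the matrix $Y_u$ of the statement (read as the symplectic Hessian $J_{2d}\,\partial^2_{\hat x\hat x}H_u$, the $\mathfrak{sp}(2d,\R)$-valued object making $L_u$ symplectic).

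The only genuinely delicate point is this last computation — tracking which second-order terms of the implicitly defined $G$ drop out along the orbit — and it is precisely the normal form of Proposition \ref{normalform} (tangency of $H_u^{-1}(k)$ to $\{p_1=0\}$ together with $\partial_pH_u\equiv e_1$ along $\theta$) that forces those terms to vanish; everything else is the standard reduction of a Hamiltonian flow to a transverse section.
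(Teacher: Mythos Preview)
Your argument is correct and reaches the same conclusion, but by a genuinely different mechanism than the paper. The paper does not invoke an implicitly defined reduced Hamiltonian $G_u$: instead it rescales the full vector field to $\tilde X = X/\partial_{p_1}H_u$ so that $\dot q_1 \equiv 1$, identifies $L(t) = \Pi\, d\tilde\phi^t_u\, \Pi^T$ where $\Pi$ projects onto the $\hat x$ components, and then observes that along the orbit $\Pi\, d\tilde X\, \Pi^T = \Pi\, dX\, \Pi^T$ because the discrepancy $d\tilde X - dX$ lies in the $e_1$ direction and is killed by $\Pi$. Your route---solving $H_u = k$ for $p_1 = -G_u(q_1;\hat q,\hat p)$ and recognizing $R^t_u$ as the Hamiltonian flow of $G_u$ with $q_1$ as time---is the classical isoenergetic (Whittaker) reduction. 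It has the advantage that $L_u(t)\in Sp(2d,\R)$ is automatic from the Hamiltonian structure of the reduced system, whereas the paper's rescaling argument obtains symplecticity only a posteriori. The paper's approach, on the other hand, avoids the implicit function theorem and the second-order chain-rule bookkeeping you describe; it simply checks that the extra terms introduced by the time rescaling are annihilated by the projection $\Pi$. Both arguments rely on exactly the same consequences of the normal form (that $\partial_{\hat q}H_u$, $\partial_{\hat p}H_u$, and $\partial^2_{\hat q\hat p}H_u$ vanish on the orbit while $\partial_{p_1}H_u\equiv 1$), and both correctly interpret $Y_u$ as the linearized Hamiltonian vector field $J_{2d}\,\partial^2_{\hat x\hat x}H_u(te_1,0)$, which in block form reads $\begin{bmatrix}0 & A \\ -\partial^2_{\hat q\hat q}u & 0\end{bmatrix}$.
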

\begin{proof}
Denote by $X$ the Hamiltonian vector field of $H_u.$ Let 
$$f(q,p)=1/\partial_{p_1}H_u(q,p),$$  and consider $\tilde{X}=f X$ which is a reparametrization of the vector field $X$. Let $\phi^t_u$ be the flow of $X$, and $\tilde{\phi}^t_u$ be the flow of $\tilde{X}$, then the vector fields $X$ and $\tilde{X}$ give rise to the same linearized transition map. For the flow of $\tilde{X}$, $\dot{q}_1$ is constant, hence the transition time of $\tilde{X}$ between the sections is constantly equal to $\delta$. Therefore
\[
L(t) := dR^t_u(0) = \Pi \, d\tilde{\phi}_u^\delta |_{\{q_1 = 0, p_1 = 0\}}
= \Pi \, d\tilde{\phi}_u^\delta \Pi^T
\]
where $\Pi$ is the $2d \times (2d + 2)$ projection matrix from $x$ to $\hat{x}$ components. Moreover, $\{q_1 = p_1 = 0\}$ is preserved by the flow $d\tilde{\phi}_u^t$, which means
\[
d\tilde{\phi}_u^\delta \Pi^T
= \Pi^T \Pi d\tilde{\phi}_u^\delta \Pi^T,
\]
we get
\[
\dot{L}(t) = 
\Pi \, d\tilde{X} d\tilde{\phi}_u^\delta \Pi^T
= 
\Pi \, d\tilde{X} \Pi^T \Pi d\tilde{\phi}_u^\delta \Pi^T
= 
\Pi \, d\tilde{X} \, \Pi^T \, L(t).
\]

Since
\begin{align*}
		d \tilde{X}(te_1,0)&=f(te_1)dX(te_1,0)+X(te_1,0)df(te_1,0)\\ &=dX(te_1,0)+[e_1 \hspace{1.2 mm} 0]^T df(te_1,0),
\end{align*}
we have
\[
\Pi \, d\tilde{X} \, \Pi^T
= \Pi \, dX \, \Pi^T
= 
\begin{bmatrix}
    0 & A \\ - \partial^2_{\hat{q}\hat{q}} u & 0
\end{bmatrix} = Y_u(t). 
\]
This concludes the proof.
\end{proof}

Let $e_i$ denote the Euclidean basis of $\R^d$, and
\[
		E_{ij} = e_i \otimes e_j + e_j \otimes e_i, 
\]
for $1 \le i \le j \le d$. Then $E_{ij}$ is a basis of the space $\mS(d)$ of $d \times d$ symmetric matrices. Set $w_{ij}(t) = \frac12 \partial^2_{\hat{q}_i \hat{q}_j} u(te_1)$, the equation (\ref{eq_L}) can be rewritten as
\begin{equation}\label{eq_2.9} 
		\dot{X}_w(t)=
		\begin{bmatrix}
				0  & A(t) \\ 
				0  & 0 
		\end{bmatrix}X_w(t)+ 
		\sum_{1 \le i \le j \le d } 
		w_{ij}(t)  
		\begin{bmatrix}
				0 & 0 \\ 
				E_{ij} & 0
		\end{bmatrix} X_w(t), \quad X_w(0)=I.
\end{equation}
We view \eqref{eq_2.9} as a control problem with $w_{ij}$ as the controls. 

For each $w \in L^2([0, \delta], \R^{d(d+1)/2})$, the control problem \eqref{eq_2.9} has a unique solution $X_w(t): \mI_w \subseteq [0,\delta] \to Sp(2d,\mathbb{R})$. Define $\mC \subset L^2([0, \delta], \R^{d(d+1)/2})$ as the set of controls $w$ for which $\mI_w=[0,\delta]$. This set is open and it contains $0$. 

\begin{definition}\label{def:mane-control}
		We say the curve $A= A(t): [0, \delta] \to \mS(d)$ is \emph{Man\'e controllable} if the end-point mapping $\mE: \mC \to Sp(2d,\mathbb{R})$ defined as  
  $$(w_{ij}) \mapsto X_w(\delta)$$ 
            is a submersion at $w = 0$.
\end{definition}

Since submersions are locally open mappings, we reduce the claim of Theorem \ref{pert_thm} to the controllability of $A(t)$:

\begin{proposition}\label{prop:reduce-to-mane}
		Suppose the curve $A(t)$ is \mane controllable, then for any neighbourhood $N$ of $\{\theta(t) \mid t \in (0, \delta)\}$, the mapping
		\[
				\cU(\theta) \cap \{\supp u \subset N\} \mapsto dR_u^\delta(0)
		\]
		is a submersion at $u = 0$.	
\end{proposition}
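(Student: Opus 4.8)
The plan is to observe that the map $\mU(\theta)\cap\{\supp u\subset N\}\ni u\mapsto dR_u^\delta(0)$ \emph{factors} through the end-point map $\mE$ of the control system \eqref{eq_2.9}, and then to deduce surjectivity of its differential at $u=0$ from \mane controllability of $A$ together with an argument that the support constraint $\supp u\subset N$ costs nothing. For the factorization, combine the lemma preceding \eqref{eq_L} with the reformulation of \eqref{eq_L} as \eqref{eq_2.9}: this gives $dR_u^\delta(0)=X_{w(u)}(\delta)=\mE\bigl(w(u)\bigr)$, where $\Lambda\colon u\mapsto w(u)$ is the \emph{linear} continuous map sending $u\in\mU(\theta)$ to the curve $t\mapsto\bigl(\tfrac12\partial^2_{\hat q_i\hat q_j}u(te_1)\bigr)_{i\le j}$ in $L^2\bigl([0,\delta],\R^{d(d+1)/2}\bigr)$. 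Since $\Lambda(0)=0\in\mC$, which is open, and $\mE$ is smooth on $\mC$ (solutions of a linear ODE depend smoothly on their $L^2$ coefficients), the composition $u\mapsto\mE(\Lambda u)$ is defined and smooth on a neighbourhood of $0$ in the closed linear subspace $\mU(\theta)\cap\{\supp u\subset N\}\subset C_b^\infty(M)$, with differential $d\mE(0)\circ\Lambda$ at $u=0$. It therefore suffices to prove that the image of $d\mE(0)\circ\Lambda$, restricted to $\mU(\theta)\cap\{\supp u\subset N\}$, is all of $T_{\mE(0)}Sp(2d,\R)$.

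Next I would identify which controls $\Lambda$ produces under the support constraint. For any compact subinterval $[a,b]\subset(0,\delta)$ the neighbourhood $N$ contains a tube $\{(q_1,\hat q):q_1\in[a,b],\ |\hat q|<\rho(q_1)\}$ with $\rho>0$. Given a smooth $v\colon[0,\delta]\to\R^{d(d+1)/2}$ whose support is a compact subset of $(0,\delta)$, pick such an $[a,b]$ containing that support in its interior and set $u(q_1,\hat q)=\psi(q_1,\hat q)\sum_{i\le j}c_{ij}\,v_{ij}(q_1)\,\hat q_i\hat q_j$, where $\psi\equiv1$ on a smaller tube over $[a,b]$, $\supp\psi$ lies in the tube, and the $c_{ij}$ are the evident normalizing constants. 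Then $\supp u$ lies in the tube, so $\supp u\subset N$; the constraints $u(te_1)=0$ and $du(te_1)=0$ hold automatically since $u$ is quadratic in $\hat q$ with vanishing value and first $\hat q$-derivative on $\{\hat q=0\}$, so $u\in\mU(\theta)$; and $\Lambda u=v$. Hence the image of $\Lambda$ on $\mU(\theta)\cap\{\supp u\subset N\}$ contains every smooth curve $[0,\delta]\to\R^{d(d+1)/2}$ supported in a compact subset of $(0,\delta)$.

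It remains to see that $d\mE(0)$ is already onto when restricted to such interior controls. By Duhamel's formula for \eqref{eq_2.9}, $d\mE(0)\,w=X_0(\delta)\int_0^\delta K(t)\,w(t)\,dt$, where $X_0$ is the solution of \eqref{eq_2.9} with $w\equiv0$ and $K(t)$ is the linear map $(w_{ij})\mapsto X_0(t)^{-1}\bigl(\sum_{i\le j}w_{ij}B_{ij}\bigr)X_0(t)$, with $B_{ij}$ the block matrix carrying $E_{ij}$ in the lower-left corner and zeros elsewhere. The range of this functional, over $L^2$ and hence over the dense subspace of smooth controls, equals $X_0(\delta)\cdot\Span\{\operatorname{range}K(t):t\in[0,\delta]\}$, which by \mane controllability of $A$ (Definition \ref{def:mane-control}) is all of $T_{\mE(0)}Sp(2d,\R)$. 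Since $t\mapsto K(t)$ is continuous and the target is finite-dimensional, $\Span\{\operatorname{range}K(t):t\in(0,\delta)\}$ already exhausts the target and is spanned by finitely many interior times $t_1,\dots,t_r\in(0,\delta)$. Choosing $[a,b]\subset(0,\delta)$ containing all the $t_l$, the restriction of $d\mE(0)$ to smooth controls supported in $[a,b]$ is onto; combined with the previous paragraph this yields surjectivity of $d\mE(0)\circ\Lambda$, hence the Proposition.

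The main obstacle is precisely this last reduction: \mane controllability is stated for controls on the whole of $[0,\delta]$, whereas $\supp u\subset N$ forces the associated controls to vanish near the endpoints, and one must check via the Duhamel representation and a continuity/finite-dimensionality argument that no controllability is lost. The remaining ingredients---the factorization through $\mE$, the quadratic cut-off construction realizing a prescribed $v$, and the smooth dependence of $\mE$ on the $L^2$ control---are routine.
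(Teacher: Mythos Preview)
Your proof is correct and follows essentially the same route as the paper: factor $u\mapsto dR_u^\delta(0)$ through the end-point map $\mE$, realise any compactly supported smooth control by a quadratic potential cut off inside $N$, and then argue that \mane controllability persists under restriction to interior controls. The only difference is in this last step: the paper simply uses that $C_c^\infty((0,\delta))$ is dense in $L^2$ and that a continuous linear surjection onto a finite-dimensional space is automatically surjective on any dense subspace, so finitely many $w^{(k)}\in C_c^\infty$ already span---your Duhamel/pointwise-span argument reaches the same conclusion but is a slightly longer detour.
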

\begin{proof}
By assumption, the mapping
\[
		(w_{ij}) \mapsto X_w(\delta)
\]
is a submersion at $\omega = 0$, where $(w_{ij}) \in L^2([0, \delta], \R^{d(d+1)/2})$. Since $C^\infty_c$ is dense in $L^2$,  there exists a finite family
\[
	 w^{(k)} = (w_{ij}^{(k)}) \in C^\infty_c\big((0, \delta), \R^{d(d+1)/2}\big), \quad 1 \le k \le n, 
\]
such that the vectors
\[
		\partial_w \big|_{w = 0} X_w(\delta) \cdot w^{(k)}  
\]
span $Sp(2d, \R)$. Let $u^{(k)}$ be smooth functions satisfying
\[
		u^{(k)}(te_1, \hat{q}) = \sum_{i,j} w^{(k)}_{ij}(t) \hat{q}_i \hat{q}_j, \quad t \in [0, \delta]
\]
and $\supp u \subset N$, we have
\[
		\partial_u \big|_{u = 0} \left( dR_u^\delta(0)\right)  \cdot w^{(k)}
\]
span $Sp(2d, \R)$, in other words, $dR_u^\delta(0)$ is a submersion at $u = 0$.
\end{proof}

Assume that $H$ is already brought to the normal form of Proposition \ref{normalform}, defined in a local chart $V \subset \R^{d+1}$. We identify $T_q^*M$ with $\R^{d+1}$. Since in the normal form
\[
		e_1 = \partial_p H(q, 0) \in \cD_q 
\]
we have
\begin{equation}
		e_1.P=0 \quad  \text{for all} \quad P \in \mD^\perp_q.
\end{equation}
This implies that every vector in $\mD^\perp_q$ takes the form $(0, \hat{P})$. We then define $	\hat{\cD}^\perp_q$ to be the projection of $\cD^\perp_q$ to the $\hat{p}$ component. The following lemma follows immediately.

\begin{lemma}\label{hessian_rank}
		In the normal form system $H$, the hessian $\partial^2_{pp} H$ and $\partial^2_{\hat{p}\hat{p}} H$ both have co-rank $d + 1 - m$. Moreover, 
		\[
		  \partial^2_{\hat{p}\hat{p}} H(q, 0) \hat{P} = 0, \quad \text{for all} \quad
			\hat{P} \in \hat{\cD}^\perp_q.
		\]
		In particular,
		\[
				A(t) \hat{P} = 0 \quad \text{for all} \quad \hat{P} \in   \hat{\cD}^\perp_{te_1}.
		\]
\end{lemma}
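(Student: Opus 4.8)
The plan is to unwind the two claims from the defining property \eqref{eq:H-flat} of a $\cD$-Hamiltonian, using only the structure imposed by the normal form of \Cref{normalform}. First I would fix $q \in V$ and recall that in the normal form coordinates $e_1 = \partial_p H(q,0) \in \cD_q$, which forces $P_1 = 0$ for every $P \in \cD^\perp_q$; hence $\cD^\perp_q = \{0\} \times \hat\cD^\perp_q$ is honestly identified with a subspace of the $\hat p$-factor, and $\dim \hat\cD^\perp_q = d+1-m$ since $\cD$ has rank $m$ and $\dim M = d+1$. Differentiating the identity $H(q, p + P) = H(q, p)$ twice in $p$ (at $p$ arbitrary, $P \in \cD^\perp_q$) gives $\partial^2_{pp}H(q,p)\,P = 0$; evaluating at $p = 0$ and writing $P = (0,\hat P)$ yields $\partial^2_{\hat p\hat p}H(q,0)\,\hat P = 0$ for all $\hat P \in \hat\cD^\perp_q$. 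This shows $\operatorname{cork}\partial^2_{\hat p\hat p}H(q,0) \ge d+1-m$.

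For the reverse inequality I would invoke the strict convexity built into the definition of a $\cD$-Hamiltonian: $H$ descends to a fiberwise strictly convex function on $T^*M/\cD^\perp$, which has fiber dimension $d+1-m \;(= \dim T^*_qM - \dim \cD^\perp_q)$, so the induced Hessian on the quotient is positive definite, i.e. of full rank $(d+1)-(d+1-m) = m-1$ in the $\hat p$-block picture. Concretely, $\partial^2_{\hat p\hat p}H(q,0)$ is positive semidefinite (since $q\mapsto 0$ is a fiber minimum by $(b)$–$(c)$ of the normal form, or directly from convexity of $H$ in $p$) with kernel exactly $\hat\cD^\perp_q$: a vector in the kernel would make the quotient Hessian degenerate, contradicting strict convexity. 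Therefore $\ker \partial^2_{\hat p\hat p}H(q,0) = \hat\cD^\perp_q$ and $\operatorname{cork}\partial^2_{\hat p\hat p}H(q,0) = d+1-m$. The same computation for the full $p$-Hessian, together with the observation that the $p_1$-row and column vanish (because $p_1$ and $H$ share the gradient $(0,e_1)$ along the orbit, so $\partial^2_{p_1 p}H(q,0) = 0$ — this uses $(b)$–$(c)$), shows $\partial^2_{pp}H(q,0)$ has the same kernel enlarged only by the already-vanishing $e_1^{(p)}$ direction intersected appropriately, giving co-rank $d+1-m$ as well; I would state this carefully but it is a routine block-matrix bookkeeping once the $p_1$ row/column is killed.

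Finally, the assertion about $A(t)$ is immediate: by the construction in \Cref{sec:lin-trans}, $A(t)$ is the $\partial^2_{\hat q\hat q}$-free block of $Y_u(t)$ at $u=0$, namely $A(t) = \partial^2_{\hat p\hat p}H(te_1,0)$ (the $\hat p\hat p$ sub-block of $\partial^2_{\hat x\hat x}H$ along the orbit); applying the previous paragraph at $q = te_1$ gives $A(t)\hat P = 0$ for all $\hat P \in \hat\cD^\perp_{te_1}$. The only genuine subtlety — the part I would flag as the main point rather than a formality — is correctly matching the abstract strict-convexity-on-the-quotient hypothesis with the concrete co-rank computation, i.e. verifying that the kernel of the semidefinite Hessian is not merely contained in but equal to $\hat\cD^\perp_q$; everything else is differentiation of \eqref{eq:H-flat} and the block structure supplied by \Cref{normalform}.
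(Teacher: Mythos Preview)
The paper gives no proof for this lemma, declaring only that it ``follows immediately'' from the preceding discussion; your plan correctly supplies the missing details by differentiating the flatness condition \eqref{eq:H-flat} and invoking strict convexity on the quotient.

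Two corrections, though. First, a dimension slip: the quotient $T^*_qM/\cD^\perp_q$ has fiber dimension $(d+1)-(d+1-m) = m$, not $d+1-m$; the rank of $\partial^2_{\hat p\hat p}H(q,0)$ is then $d-(d+1-m)=m-1$, so your final number $m-1$ is right but the intermediate arithmetic is off. Second, and more substantively, your claim that the $p_1$ row and column of $\partial^2_{pp}H(q,0)$ vanish is false and unnecessary: items $(b)$--$(c)$ of \Cref{normalform} fix only the $0$- and $1$-jet of $H$ in $p$ along $p=0$, not the $2$-jet (e.g.\ $H=p_1+p_1^2$ satisfies $(b)$--$(c)$ yet $\partial^2_{p_1p_1}H=2$). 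For the full Hessian you need no such claim at all --- $\ker\partial^2_{pp}H(q,0)=\cD^\perp_q$ follows directly from \eqref{eq:H-flat} plus strict convexity on the quotient, giving co-rank $d+1-m$ immediately. For the $\hat p\hat p$ sub-block, the clean argument is that $\partial^2_{pp}H(q,0)$ is positive semidefinite, so any $\hat P$ with $\hat P^T(\partial^2_{\hat p\hat p}H)\hat P=0$ satisfies $(0,\hat P)\in\ker\partial^2_{pp}H(q,0)=\cD^\perp_q=\{0\}\times\hat\cD^\perp_q$; hence $\ker\partial^2_{\hat p\hat p}H(q,0)=\hat\cD^\perp_q$ exactly. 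Replace the $p_1$-row paragraph with this PSD observation and the proof is complete.
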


\subsection{Mane controllability for co-rank 1 distributions }
\label{sec:mane-controllable}

From now on, we specialize to the case of co-rank $1$ distributions. Returning to the Hamiltonian system $H$ in the normal form and the orbit $\theta(t) = (te_1, 0)$. We fix the following notations:
\begin{itemize}
		\item $\tilde{\bn}(t) \in \R^{d+1}$ is the unit vector (in normal coordinates) in the direction of $\cD^\perp_{te_1}$.   
		\item $\bn(t) \in \R^d$ is the unit vector (in normal coordinates) in the direction of $\hat\cD^\perp_{te_1}$.   
\end{itemize}

\begin{lemma}\label{lem:n0}
		A time $t \in (0, \delta)$ is a $\cD$-regular time of $\theta(t)$	if and only if $\dot{\bn}(t) \ne 0$. 
\end{lemma}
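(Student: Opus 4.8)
The statement to prove is Lemma \ref{lem:n0}: that $t \in (0,\delta)$ is a $\cD$-regular time of $\theta(t) = (te_1, 0)$ if and only if $\dot{\bn}(t) \neq 0$. The plan is to unwind Definition \ref{def:d-reg} in the normal-form coordinates and compare it with the condition on $\bn(t)$. By Definition \ref{def:d-reg}, $t$ is a $\cD$-regular time iff the linear form $(d_{Q(t)}\eta)(\dot{Q}(t), \cdot): \cD_{Q(t)} \to \R$ is nonzero, where $\eta$ is a non-vanishing section of $\cD^\perp$. Here the projected orbit is $Q(t) = te_1 \in V \subset \R^{d+1}$, with velocity $\dot{Q}(t) = e_1$ (since in the normal form $\partial_p H(q,0) = e_1$, cf.\ Proposition \ref{normalform}(c)), and we may take $\eta$ to be the one-form whose value at $q$ is dual to $\tilde{\bn}(q)$, the unit annihilator direction — extended smoothly so that $\eta_{te_1}$ is proportional to $\tilde{\bn}(t)$ along the orbit. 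Since the $\cD$-regular time condition is independent of the choice of non-vanishing $\eta$, this is legitimate.

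First I would write $\eta_q = \langle \tilde{\bn}(q), \cdot \rangle$ for a smooth unit-length extension $\tilde{\bn}(q)$ of the orbit values, and compute $d\eta$ in coordinates. For a one-form $\eta = \sum_\alpha \eta_\alpha\, dq^\alpha$ one has $d\eta(v, w) = \sum_{\alpha,\beta} (\partial_\beta \eta_\alpha)(v^\beta w^\alpha - v^\alpha w^\beta)$; evaluating at $v = \dot{Q}(t) = e_1$ and $w \in \cD_{te_1}$ gives $d\eta(e_1, w) = \sum_\alpha \big(\partial_1 \eta_\alpha - \partial_\alpha \eta_1\big) w^\alpha$, and along the orbit $\partial_1 \eta_\alpha(te_1) = \dot{\tilde{\bn}}_\alpha(t)$. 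So the linear form in Definition \ref{def:d-reg}, restricted to $\cD_{te_1}$, is $w \mapsto \langle \dot{\tilde{\bn}}(t), w\rangle - \big(\sum_\alpha (\partial_\alpha \eta_1)(te_1)\, w^\alpha\big)$. I then need to show this form is nonzero on $\cD_{te_1}$ iff $\dot{\bn}(t) \neq 0$. The second term is handled by noting that $d\eta(e_1, \cdot)$ restricted to $\cD$ is independent of the choice of extension $\eta$ (the computation already displayed in the paper, using $d(f\eta)(\dot Q, v) = f\, d\eta(\dot Q, v)$ for $v \in \cD$); so I may as well pick the most convenient extension, or argue directly that the antisymmetrized term contributes nothing extra. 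Concretely: since $e_1 \in \cD_{te_1}$ and $\tilde{\bn}(t) \perp \cD_{te_1}$, differentiating $\langle \tilde{\bn}(t), e_1\rangle = 0$ (valid because $e_1$ is a fixed horizontal vector along the whole orbit) gives $\langle \dot{\tilde{\bn}}(t), e_1 \rangle = 0$, so $\dot{\tilde{\bn}}(t)$ lies in $e_1^\perp$; combined with the relation between $\tilde{\bn}$ and $\bn$ (the projection to the $\hat p$, hence $\hat q$, components: $\tilde{\bn}(t) = (0, \bn(t))$ after noting $e_1 \in \cD$ forces the first component of any annihilator to vanish, as in the discussion preceding Lemma \ref{hessian_rank}) this identifies $\dot{\tilde{\bn}}(t)$ with $(0, \dot{\bn}(t)) \in \{0\} \times \R^d$.

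With $\tilde{\bn}(t) = (0, \bn(t))$ established, the linear form on $\cD_{te_1}$ becomes (up to the extension-independent correction, which I will show vanishes by choosing $\eta$ with $\eta_1 \equiv 0$ near the orbit, always possible since $e_1 \notin \cD^\perp$) simply $w \mapsto \langle (0, \dot{\bn}(t)), w\rangle = \langle \dot{\bn}(t), \hat{w}\rangle$. Now $\cD_{te_1}$ is the hyperplane $\tilde{\bn}(t)^\perp = \{(w_1, \hat w) : \langle \bn(t), \hat w\rangle = 0\}$, and the question is whether the functional $w \mapsto \langle \dot{\bn}(t), \hat w\rangle$ vanishes identically on it. If $\dot{\bn}(t) = 0$ it clearly does, so $t$ is not $\cD$-regular. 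Conversely, if $\dot{\bn}(t) \neq 0$: since $\langle \dot{\bn}(t), \bn(t)\rangle = \frac12 \frac{d}{dt}|\bn(t)|^2 = 0$, the vector $\dot{\bn}(t)$ is a nonzero element orthogonal to $\bn(t)$, so there exists $\hat w$ with $\langle \bn(t), \hat w\rangle = 0$ but $\langle \dot{\bn}(t), \hat w\rangle \neq 0$; taking $w = (0, \hat w) \in \cD_{te_1}$ shows the form is nonzero, so $t$ is $\cD$-regular. This is the equivalence.

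The main obstacle is purely bookkeeping: making sure the antisymmetrized/extension-dependent term $\sum_\alpha(\partial_\alpha \eta_1)\, w^\alpha$ genuinely does not interfere. The clean way is to invoke the extension-independence already proved in the paper right after Definition \ref{def:d-reg}, and then choose the extension $\eta$ of the annihilator so that its $dq^1$-component is identically zero in a neighborhood of the orbit — this is possible because $e_1 = \dot Q(t) \notin \cD^\perp_{te_1}$, so the annihilator line is transverse to $dq^1$ and admits a local trivialization by a one-form killing $\partial_1$. With that choice $\eta_1 \equiv 0$, the correction term disappears and the computation reduces exactly to the clean form $w \mapsto \langle \dot{\bn}(t), \hat w\rangle$. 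Alternatively one notes directly that the pairing $d\eta(e_1, \cdot)|_{\cD}$ only sees $\partial_1 \eta_\alpha$ for $\alpha$ ranging over horizontal directions after the $\eta_1$-terms cancel against each other by antisymmetry of $e_1 \wedge w$ — but the explicit-extension route is cleaner to write. Everything else is linear algebra in $\R^{d}$ using $|\bn(t)| \equiv 1 \Rightarrow \langle \bn, \dot{\bn}\rangle \equiv 0$.
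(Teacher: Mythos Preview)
Your proof is correct and follows essentially the same route as the paper's: both identify $d\eta(e_1,\cdot)|_{\cD}$ with the pairing against $\dot{\tilde{\bn}}(t) = (0,\dot{\bn}(t))$ and then use $\langle \bn,\dot{\bn}\rangle = 0$ together with $\cD_{te_1} = \tilde{\bn}(t)^\perp$. One simplification you missed: since $e_1 = \partial_p H(q,0) \in \cD_q$ for \emph{all} $q$ in the normal-form chart (Proposition~\ref{normalform}(c)), every section of $\cD^\perp$ automatically annihilates $e_1$, so $\eta_1 \equiv 0$ is forced rather than chosen, and the ``extension-dependent correction'' $\sum_\alpha (\partial_\alpha \eta_1)\,w^\alpha$ you spend effort disposing of is identically zero from the outset.
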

\begin{proof}
	 Suppose $t$ is not a regular time.	Let $N(q)$ be a unit vector in the direction of $\cD^\perp_q$. By definition,
\begin{equation}  \label{eq:singular-time}
	   DN(te_1) e_1 \cdot v = 0   \quad \text{ for all $v \in \cD_{te_1}$.}
\end{equation}
	 On the other hand,
		\[
				0 = \frac{d}{dt}\|\tilde{\bn}(t)\|^2 = 	\frac{d}{dt} \|N(te_1)\|^2 = DN(te_1) e_1 \cdot N(te_1).
		\]
		Since $\cD_{te_1}$ and $N(te_1)$ span the whole tangent space, 
		\[
				\frac{d}{dt} \tilde{\bn}(t) = DN(te_1) e_1 = 0.   
		\]
		Conversely, $\frac{d}{dt} \tilde{\bn}(t) = 0$ implies \eqref{eq:singular-time} and $t$ is not a regular time.

		We established the equivalence between $t$ being a regular time and $\frac{d}{dt} \tilde{\bn}(t) \ne 0$. It remains to note that since $e_1 \in \cD_q$ in the normal coordinates               $\tilde{\bn}(t)$ has the following block form
		\[
				\tilde{\bn}(t) = \bmat{0 \\ \bn(t)}.
		\]
		Hence, $\frac{d}{dt} \tilde{\bn}(t) \ne 0$ if and only if $\dot{\bn}(t) \ne 0$.
\end{proof}

By Lemmas \ref{eq:singular-time} and \ref{hessian_rank}, if $0$ is a $\cD$-regular time then we have
		\begin{equation}  \label{eq:n0}
				\bn(0) = e_d, \quad	\dot{\bn}(0) \ne 0.   
		\end{equation}

\begin{definition}\label{def:cA}
	Let $\bn(t)$ be a $C^\infty$ smooth curve of unit vectors in $\R^d$ satisfying \eqref{eq:n0}. Define
		\[
\begin{aligned}
		  \cA_\delta(\bn)  
  = \Bigl\{ 
		& A = A(t) \in C^\infty([0, \delta], \mS(d)) \mid  A(0) = \bmat{I_{d-1} & 0 \\ 0 & 0}, \\
		& rank A(t) = d-1, \quad A(t) \bn(t) = 0 \text{ for all } t \in [0, \delta]
\Bigr\} . 
\end{aligned}
		\]
\end{definition}

We now state our main technical result on Man\'e controllability. The proof will be deferred to the end of the paper. 

\begin{theorem}\label{thm:controllable}
		Fix a curve $\bn(t)$ satisfying \eqref{eq:n0}. Then there exists an open and dense subset $\cV$ of $\cA_\delta(\bn)$, such that all $A \in \cV$ are \mane controllable (see Definition \ref{def:mane-control}). 
\end{theorem}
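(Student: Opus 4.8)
The plan is to prove Mañé controllability of a generic $A \in \cA_\delta(\bn)$ by a first-variation computation for the control system \eqref{eq_2.9}, followed by a transversality/genericity argument that removes a measure-zero obstruction. First I would set up the variational equation: differentiating the solution $X_w(\delta)$ of \eqref{eq_2.9} at $w = 0$ in the direction of a control variation $(\dot w_{ij})$, one obtains, by the standard variation-of-constants formula, that the differential of the end-point map is
\[
		d\cE(0) \cdot w = \sum_{i \le j} \int_0^\delta w_{ij}(t)\, \Phi(\delta) \Phi(t)^{-1} \bmat{0 & 0 \\ E_{ij} & 0} \Phi(t)\, dt \cdot \Phi(\delta)^{-1},
\]
where $\Phi(t)$ is the fundamental solution of $\dot\Phi = \bmat{0 & A(t) \\ 0 & 0}\Phi$, i.e. $\Phi(t) = \bmat{I & \int_0^t A}{0 & I}$ (written loosely). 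The image of $d\cE(0)$, transported to the Lie algebra $\sp(2d,\R)$ by right translation, is therefore the closure of the span of $\{\mathrm{Ad}_{\Phi(t)^{-1}} B_{ij}(t) : t \in [0,\delta],\ i \le j\}$, where $B_{ij} = \bmat{0 & 0 \\ E_{ij} & 0}$; controllability is equivalent to this span being all of $\sp(2d,\R)$.

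The key structural point is the constraint imposed by Lemma \ref{hessian_rank}: since $A(t)\bn(t) = 0$, the reachable directions $B_{ij}(t)$ and their $\Phi$-conjugates do not immediately fill $\sp(2d,\R)$, and in fact the $A(0)$-normalized form at $t=0$ together with the rank-$(d-1)$ condition means the naive bracket-generating argument at a single time fails. So the strategy is to exploit the \emph{variation in $t$}: expand $\mathrm{Ad}_{\Phi(t)^{-1}} B_{ij}(t)$ and its derivatives in $t$ at, say, $t = 0$ and at interior times, and show that differentiating enough times in $t$ (which is legitimate since we integrate against arbitrary smooth $w_{ij}$) produces brackets involving $A(t)$, $\dot A(t)$, $\ddot A(t)$, etc. Because $\dot{\bn}(0) \ne 0$ by \eqref{eq:n0}, the kernel direction of $A(t)$ genuinely rotates, so $\dot A(0)$ is not constrained to annihilate $\bn(0) = e_d$; combined with a generic choice of higher derivatives $A^{(j)}(0)$ — which are free parameters subject only to algebraic relations coming from differentiating $A(t)\bn(t)=0$ — one can arrange the resulting collection of symmetric and Hamiltonian matrices to span $\sp(2d,\R)$. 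This is where the "open and dense $\cV$" enters: controllability is an open condition (the span being full is stable under $C^\infty$-small, hence finitely-many-jet, perturbations), and density is obtained by showing that the failure set is cut out by the vanishing of certain determinants/minors that are real-analytic, non-trivial functions of the jets $(A(0), \dot A(0), \dots, A^{(N)}(0))$, hence have empty interior in $\cA_\delta(\bn)$.

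The main obstacle I anticipate is organizing the bracket/jet computation so that the constraint $A(t)\bn(t) = 0$ and the fixed normalization $A(0) = \mathrm{diag}(I_{d-1},0)$ are handled simultaneously without over-counting: one must identify exactly which components of $\sp(2d,\R)$ are reached "for free" from the $B_{ij}$ block structure, which require one $t$-differentiation, which require two, and verify that the constrained jets of $A$ still leave enough freedom. A clean way to do this is to pass to an adapted time-dependent frame in which $\bn(t) \equiv e_d$ for all $t$ (absorbing the rotation of $\bn$ into a $t$-dependent orthogonal change of the $\hat q$-coordinates, which conjugates $A(t)$ and introduces a known skew term into the control system), so that the single linear constraint becomes "the last row and column of $A(t)$ vanish," i.e. $A(t)$ lives in $\mathcal{S}(d-1) \subset \mathcal{S}(d)$; then the problem decouples into a $(d-1)$-dimensional unconstrained Mañé-type controllability problem (for which a single-time bracket-generation argument in $\sp(2(d-1),\R)$ suffices) plus the task of reaching the remaining $\sp(2d,\R)/\sp(2(d-1),\R)$ directions using the skew correction term together with the $B_{dj}$ controls, which is precisely where $\dot{\bn}(0)\ne 0$ is indispensable. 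I would then conclude by invoking openness + the real-analyticity of the obstruction to get the dense $G_\delta$ (here simply open dense) set $\cV$.
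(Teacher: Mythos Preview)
Your high-level strategy is correct and matches the paper's: reduce controllability to a bracket-generating condition at $t=0$ (the paper's Proposition~\ref{prop:bracket}, producing the iterated brackets $B_{ij}^{\ell}$), work in a finite jet space, and conclude density from the fact that the obstruction is the vanishing locus of a nontrivial polynomial in the jets. The variation-of-constants setup and the observation that differentiating in $t$ produces brackets with $A, \dot A, \ddot A,\ldots$ are exactly what the paper uses.

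Where you diverge is tactical, and this is where the proposal has a gap. You propose a time-dependent orthogonal frame change making $\bn(t)\equiv e_d$, so that $A$ becomes block-diagonal $\mathrm{diag}(\bar A(t),0)$, and then claim the problem \emph{decouples} into an unconstrained $(d-1)$-dimensional Ma\~n\'e problem plus ``remaining'' directions reached via the skew correction $\dot P P^T$ and the $B_{dj}$ controls. The paper does \emph{not} take this route; instead it keeps the original coordinates, builds an explicit parametric family $A_{G,\mu,\alpha}$ (with $G\in\cO(d-1,1)$ adjusting $\dot\bn(0)$ and $\mu,\alpha$ controlling $\dot A(0),\ddot A(0)$), and verifies by a lengthy direct computation (Propositions~\ref{prop:B234-gen} and~\ref{prop:B5}) that $B_{ij}^1,\ldots,B_{ij}^5$ span $\sp(2d,\R)$ for generic parameters. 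The computation shows that the blocks do \emph{not} decouple cleanly: for instance, the $(d,d)$ entry of the diagonal block is only reached through $\sum_i v_i B_{id}^3$ via the off-diagonal part of $\dot A(0)$ coming from $\dot\bn(0)$, and the $(d,d)$ entry of the upper-right block only appears at the fifth bracket. In your frame these contributions would have to come from iterated brackets with the skew term $\Omega=\dot P P^T$, and you have not checked that the resulting algebra actually produces them; the statement that the quotient $\sp(2d)/\sp(2(d-1))$ directions are ``reached using the skew correction term together with the $B_{dj}$ controls'' is precisely the nontrivial step and is left as an assertion.

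In short: your outline is sound and close to the paper's, but the claimed decoupling after the frame change is the heart of the matter and is not justified. Either you must carry out the bracket computation with the $\Omega$-modified drift (which will be of comparable difficulty to the paper's direct calculation), or follow the paper's approach of fixing coordinates at $t=0$ and computing $B_{ij}^\ell(0)$ explicitly in terms of the free jets of $A$.
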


\subsection{Realization of perturbation and proof of Theorem \ref{pert_thm}}
\label{sec:proof-pert-thm}

In this section we finish the proof of Theorem \ref{pert_thm} assuming Theorem \ref{thm:controllable}. While Theorem \ref{thm:controllable} suggests that one can perturb the curve $A(t)$ to obtain \mane controllability, it is not clear whether this perturbation can be realized by a perturbation to $K$. This is covered by Proposition \ref{prop:realization} below.

\begin{proposition}[Realization of perturbation] \label{prop:realization}
		Suppose
  \begin{equation}\label{eq:H-dg}
		  H(q, p) = K(q, p + dg(q)) + U(q)  
  \end{equation}
		defined on $T^*V$ is in the normal form described by Proposition \ref{normalform}, where $K \in \cK_\cD^\star$, $\star \in \{\qd, \rf\}$ and $U \in C^\infty_b(V)$.

		 For a given $\tilde{A} \in \mA_\delta\big(\bn(t)\big)$, there exists $\tilde{K} \in \mK^\star(\mD, K, \theta_g)$ such that the Hamiltonian
		\[
				\tilde{H}(q, p) = \tilde{K}(q, p + dg(q))  + U(q)
		\]
		still satisfies the conclusion of Proposition \ref{normalform}, and
		\[
				\partial^2_{\hat{p} \hat{p}} \tilde{H}(te_1, 0) = \tilde{A}(te_1).
		\]
		Moreover, $\tilde{K}$ converges to $K$ as $\tilde{A}$ converges to $A(t) = \partial^2_{\hat{p}\hat{p}} H(te_1, 0)$.
\end{proposition}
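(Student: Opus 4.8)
The plan is to look for $\tilde K$ in the form $\tilde K = K + \Delta K$ with the perturbation $\Delta K$ \emph{fiberwise quadratic in $p$}, namely
\[
		\Delta K(q,p) = \tfrac12\,\chi(q)\, p^{T} B(q)\, p,
\]
where $B\colon V \to \mS(d+1)$ is smooth and $\chi$ is a cutoff equal to $1$ near the segment $\{te_1 : t\in[0,\delta]\}$ and supported in a small neighbourhood of it inside $V$, so that $\tilde K$ (set equal to $K$ outside $T^*V$) is globally defined on $T^*M$. Taking $\Delta K$ quadratic in $p$ is the key simplification: it is automatically $2$-homogeneous and even in $p$, so adding it preserves membership both in $\cK_\cD^{\qd}$ and in $\cK_\cD^{\rf}$; $\cD$-flatness of $\tilde K$ reduces to $\cD^{\perp}_q\subseteq\ker B(q)$; and fiberwise strict convexity of $\tilde K$ on $T^*M/\cD^{\perp}$ persists as long as $\Delta K$ is $C^2$-small on its (fixed, compact) support, which holds once $\tilde A$ is close to $A$ in $\cA_\delta(\bn)$ --- the regime in which the final convergence statement is meaningful and the only one needed in the application.

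Writing $\tilde H(q,p) = \tilde K(q,p+dg(q)) + U(q) = H(q,p) + \Delta K(q,p+dg(q))$, I would translate the requirements into conditions on $B$. Conditions $(b)$ and $(c)$ of Proposition \ref{normalform} for $\tilde H$ amount to $\Delta K(q,dg(q)) = 0$ and $\partial_p\Delta K(q,dg(q)) = 0$ for $q\in V$; since $\Delta K$ is quadratic in $p$, both are equivalent to $B(q)\,dg(q) = 0$, and this also forces the $\partial_q$-part of the $1$-jet of $\Delta K$ to vanish along the graph $\{(q,dg(q)):q\in V\}$, hence along $\theta_g$ --- giving $\tilde K\in\mK^{\star}(\mD,K,\theta_g)$ and conclusion $(a)$. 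Condition $(d)$ together with the prescription $\partial^2_{\hat{p}\hat{p}}\tilde H(te_1,0) = \tilde A(te_1)$ say exactly that the lower-right $d\times d$ block of $B(te_1)$ equals $S(t):=\tilde A(te_1) - A(te_1)$; note that $A\in\cA_\delta(\bn)$ by Lemma \ref{hessian_rank} and the normal form, so $S(t)\bn(t) = 0$ and $S(0) = 0$.

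The crux is to construct such a $B$. First I would record that $dg$ is transverse to $\cD^{\perp}$ over all of $V$: since $\partial_p K(q,dg(q)) = e_1$ by $(c)$ and $K$ is $2$-homogeneous in $p$, Euler's identity together with $(b)$ give
\[
		dg(q)\cdot e_1 = \partial_p K(q,dg(q))\cdot dg(q) = 2 K(q,dg(q)) = 2\bigl(k - U(q)\bigr) > 0
\]
by admissibility of $(K,U,k)$; and since $e_1 = \partial_p H(q,0)\in\cD_q$, any section $N(q)$ spanning $\cD^{\perp}_q$ satisfies $N(q)\cdot e_1 = 0$. Hence $N(q)$ and $dg(q)$ are linearly independent for every $q\in V$, and $q\mapsto\Span\{N(q),dg(q)\}$ is a smooth rank-$2$ sub-bundle. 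Along $q = te_1$, where $N(te_1) = (0,\bn(t))$ and $dg(te_1) = (p_1^{*}(t),\hat p^{*}(t))$ with $p_1^{*}(t) > 0$, the linear system ``lower-right block $= S(t)$, $B\,N = 0$, $B\,dg = 0$'' is consistent with a unique solution $B_0(t)$: solving $B_0(t)\,dg(te_1) = 0$ determines the remaining entries, and $B_0(t)\,N(te_1) = 0$ then holds automatically because $S(t)\bn(t) = 0$. Moreover $B_0$ is linear in $S$, hence smooth in $t$, with $B_0\to 0$ as $S\to 0$ and $B_0(0) = 0$. I would then extend off the orbit by $B(q) := \Pi(q)^{T}\tilde B(q)\,\Pi(q)$, where $\Pi(q)$ is orthogonal projection onto $\Span\{N(q),dg(q)\}^{\perp}$ and $\tilde B$ is any smooth extension of $t\mapsto B_0(t)$ (say via a tubular retraction) depending linearly on $S$; this makes $B(q)N(q) = B(q)dg(q) = 0$ everywhere and $B(te_1) = B_0(t)$.

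With $\tilde K = K + \tfrac12\chi\,p^{T}Bp$ one then checks all assertions directly: $\tilde K\in\cK_\cD^{\star}$ ($2$-homogeneous, even, $\cD$-flat, strictly convex on the quotient for $\tilde A$ near $A$); $\tilde K\in\mK^{\star}(\mD,K,\theta_g)$; $\tilde H$ satisfies $(a)$--$(d)$ of Proposition \ref{normalform} (for $(a)$: $\tilde H$ and $H$ have the same $1$-jet along $(te_1,0)$, so $(te_1,0)$ is still an orbit on the energy level $k$); $\partial^2_{\hat{p}\hat{p}}\tilde H(te_1,0) = A(t) + S(t) = \tilde A(te_1)$; and $\tilde K\to K$ in $C^{\infty}$ as $\tilde A\to A$ by linearity in $S$. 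The one genuinely delicate point is the consistency of the pointwise linear-algebra problem for $B$, and this rests entirely on the transversality $dg(q)\cdot e_1\neq 0$ --- precisely where supercriticality of the energy level enters, and the reason the argument does not see the distinction between the quadratic and reversible sub-Finsler cases.
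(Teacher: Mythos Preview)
Your proof is correct and follows a route that is close in spirit to the paper's, but differs in the mechanism used to kill the two directions $\cD^\perp_q$ and $dg(q)$. The paper takes the perturbation in the sandwiched form
\[
K_1(q,p)=\tfrac12\, p^T\!\left(B(q)\begin{bmatrix}0&0\\0&C_1(q)\end{bmatrix}B(q)\right)p,\qquad B(q)=\partial^2_{pp}K(q,dg(q)),
\]
so that annihilation of $\cD^\perp$ is automatic (it is already in $\ker B$), and annihilation of $dg$ follows from the auxiliary Lemma~\ref{lemma_2.5}, which gives $B(q)dg(q)=(\beta-1)e_1$ and $(\partial_qB)(q)dg(q)=0$; the middle block then kills $e_1$. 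Realizing the prescribed second derivative reduces to inverting $C(q)C_1(q)C(q)=S$ on the range of the rank-$(d-1)$ matrix $C$. You instead impose $B(q)N(q)=0$ and $B(q)dg(q)=0$ as direct linear constraints on the perturbation matrix and verify their compatibility by proving $N$ and $dg$ are transverse, tracing this precisely to supercriticality via Euler's identity. Your check that $B_0N=0$ follows automatically from $B_0dg=0$ and $S\bn=0$ is the analogue of the paper's sandwich cancellation, done by hand. What you gain is a slightly more elementary argument that avoids Lemma~\ref{lemma_2.5} and makes the role of admissibility explicit; the paper's sandwich form is more intrinsic (it never leaves the range of $\partial^2_{pp}K$) and handles the $\partial_q$-direction via the second identity of Lemma~\ref{lemma_2.5} rather than by differentiating the constraint $B(q)dg(q)=0$ as you do. Both constructions are local and need the smallness of $\tilde A-A$ to guarantee strict convexity of $\tilde K$ on the quotient, a point you state more carefully than the paper.
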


Before proving Proposition \ref{prop:realization}, let us first prove a few properties of a generalized classical system viewed in the normal form given by \Cref{normalform}.
\begin{lemma}\label{lemma_2.5}
		Suppose $H=K\big(q,p+dg(q)\big)+U(q)$ with $K \in \cK_\cD^\beta$ for some $\beta > 1$ ($K$ is positively homogeneous of degree $\beta$). Assume that $H$ satisfies the conclusions of \Cref{normalform} on the neighborhood $V$. Let $B(q) = \partial^2_{pp} K(q, dg(q))$, then
		\[
				B(q) dg(q) = (\beta - 1) e_1, \quad
				(\partial_q B)(q) dg(q) = 0, \quad
				\forall q \in V.
		\]
\end{lemma}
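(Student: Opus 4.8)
The plan is to extract both identities from Euler's identity for positively homogeneous functions, combined with conditions $(b)$ and $(c)$ of \Cref{normalform}. The first thing I would record is what the normal form supplies: since $H(q,p)=K(q,p+dg(q))+U(q)$ satisfies $(b)$ and $(c)$ on $V$, evaluating at $p=0$ gives, for every $q\in V$,
$$\partial_p K\big(q,dg(q)\big)=\partial_p H(q,0)=e_1,\qquad K\big(q,dg(q)\big)=H(q,0)-U(q)=k-U(q).$$

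For the first identity I would use that $K(q,\cdot)$ is positively homogeneous of degree $\beta$. Euler's identity reads $\partial_p K(q,p)\cdot p=\beta K(q,p)$, and differentiating it once more in $p$ gives $\partial^2_{pp}K(q,p)\,p=(\beta-1)\,\partial_p K(q,p)$ for every $p$. Setting $p=dg(q)$ and using the first displayed relation,
$$B(q)\,dg(q)=\partial^2_{pp}K\big(q,dg(q)\big)\,dg(q)=(\beta-1)\,\partial_p K\big(q,dg(q)\big)=(\beta-1)\,e_1 .$$

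For the second identity the starting point is that the map $q\mapsto\partial_p K(q,dg(q))=e_1$ is constant on $V$. Writing $G(q)$ for the Hessian $\partial^2_{qq}g(q)$, differentiating this relation in $q$ yields, by the chain rule, a relation of the form
$$\partial^2_{qp}K\big(q,dg(q)\big)+B(q)\,G(q)=0 .$$
Differentiating the first identity $B(q)\,dg(q)=(\beta-1)e_1$ in $q$ similarly isolates $(\partial_q B)(q)\,dg(q)$ in terms of $B(q)\,G(q)$ — this being the only surviving term, since the right-hand side is constant. I would then combine these two computations, using in addition the Euler identities for the homogeneous functions $p\mapsto\partial_p K(q,p)$ (degree $\beta-1$) and $p\mapsto\partial^2_{pp}K(q,p)$ (degree $\beta-2$) along the Lagrangian graph $\{(q,dg(q)):q\in V\}$, so that the terms involving $G(q)$ cancel and leave $(\partial_q B)(q)\,dg(q)=0$.

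The routine but delicate part — and the step I expect to be the main obstacle — is the bookkeeping in this final cancellation: one must keep careful track of whether each partial derivative of $K$ and each instance of Euler's identity acts on the base point $q$ or on the fibre variable $p$, and check that every $G(q)$-term created by differentiating through $dg(q)$ is matched against the $\partial^2_{qp}K$-term coming from the differentiated constraint. This is precisely where it is used that $g$ is part of the normal form, namely that $\partial_p K(q,dg(q))$ equals the constant $e_1$ throughout $V$ and not merely along $\theta$.
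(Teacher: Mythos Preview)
Your treatment of the first identity is correct and identical to the paper's: differentiate Euler's identity once more in $p$, substitute $p=dg(q)$, and invoke condition~$(c)$ of the normal form.

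The second identity, however, cannot be proved --- it is false as stated, so the cancellation you anticipate will not occur. Your own outline already exposes the obstruction: differentiating the established relation $B(q)\,dg(q)=(\beta-1)e_1$ in $q$ gives directly
\[
(\partial_q B)(q)\,dg(q) \;=\; -\,B(q)\,G(q),
\]
and nothing in the hypotheses forces $B(q)G(q)$ to vanish; the further Euler identities you plan to invoke relate $\partial^3_{ppp}K\cdot p$ and $\partial^3_{qpp}K\cdot p$ to lower-order derivatives of $K$, not of $g$, and cannot kill this term. For an explicit counterexample with $d+1=2$ and $\beta=2$, take $K(q,p)=\tfrac12\,a(q)\,p_1^2$ with $a>0$ non-constant; choosing $\partial_1 g=1/a$ and $U=k-\tfrac{1}{2a}$ puts $H$ in normal form, yet $B(q)=\diag(a(q),0)$ is $p$-independent and $(\partial_q B)\,dg$ has first row $a^{-1}\nabla a\neq 0$. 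The paper's own argument contains the same slip in a different guise: it asserts $\partial^2_{qp}K(q,dg)=\partial^2_{qp}H(q,0)$, but the chain rule shows these differ by exactly $B(q)G(q)$. (The only downstream use, in Proposition~\ref{prop:realization}, actually needs just $\partial_q K_1(q,dg)=0$, and that already follows from the items $K_1(q,dg)=0$ and $\partial_p K_1(q,dg)=0$ established there without the second identity of the lemma.)
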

\begin{proof}
		Since $K$ is $\beta$-homogeneous in $p$, Euler's theorem for homogeneous functions says
		\[
				\beta K(q, p) = \partial_p K(q, p) p.
		\]
		Differentiating both sides by $p$, we get
		\begin{equation}\label{eq:Hp-homo}
				(\beta - 1) \partial_p K(q, p) = \partial^2_{pp} K(q, p) p.   
		\end{equation}
		In particular,
		\[
				B(q) dg(q) = 	\partial^2_{pp} K(q, dg) dg = (\beta - 1) \partial_p K(q, dg) = (\beta - 1) \partial_p H(q, 0) = (\beta - 1) e_1.
		\]

		For the second identity, differentiate \eqref{eq:Hp-homo} by $q$ to get
		\[
				\partial_q (\partial^2_{pp}K)(q, dg) =  (\beta - 1)\partial_{qp} K(q, dg)
				= (\beta - 1) \partial^2_{qp}H(q, 0) = 0. 
		\]
\end{proof}
\begin{proof}[Proof of Proposition \ref{prop:realization}]
		Recall that the Hamiltonian takes the form \eqref{eq:H-dg}. Let 
		\[
				B(q) = \partial^2_{pp} H(q, 0) = \partial^2_{pp} K(q, dg(q)) = \bmat{* & * \\ * & C(q)}, 
		\]
		where $C(q) \in \mS(d)$. For $C_1(q) \in \mS(d)$ to be specified later, define
		\[
				B_1(q) := B(q) \bmat{0 & 0 \\ 0 & C_1(q)} B(q) 
				= \bmat{* & * \\ * & C(q) C_1(q) C(q)},
		\]
		and
		\[
				K_1(q, p)  := \frac12 B_1(q) p \cdot p, 
				\quad \tilde{K}(q, p) := K(q, p) + K_1(q, p). 
		\]
		Since $K_1 \in \cK_\cD^{\qd} \subset \cK_\cD^{\rf}$, this perturbation works in both $\cK_\cD^{\qd}$ and $\cK_\cD^{\rf}$.

		The following properties hold for $B_1$ and $K_1$ defined in above:
		\begin{itemize}
				\item From definition of $\mD^\perp,$ we have 
						\[
								B(q) \hat{P} = 0, \quad B_1(q) \hat{P} = 0, \quad \forall   \hat{P} \in \mD_q^\perp.
						\]
				\item $\partial_p K_1(q, dg(q)) = 0$. Indeed, using Lemma \ref{lemma_2.5} we have
						\[
\begin{aligned}
		\partial_p K_1(q, dg(q)) & = 	B_1(q) dg(q) =   B(q) \bmat{0 & 0 \\ 0 & C_1(q)} B(q) dg(q)  \\
														 & = B(q) \bmat{0 & 0 \\ 0 & C_1(q)} e_1 = 0. 
\end{aligned}
						\]
				\item Using Euler's theorem for homogeneous functions, we obtain
						\[
								K_1(q, dg(q)) = \frac12 \partial_p K_1(q, dg(q)) dg(q) = 0.   
						\]
				\item $\partial^2_{qp} K_1(q, dg(q)) = 0$. We use Lemma \ref{lemma_2.5} to obtain
						\[
								(\partial_q B_1)(q) dg(q)
								= \partial_q \left( B \bmat{0 & 0 \\ 0 & C_1}\right) B(q) dg(q)
								+  B \bmat{0 & 0 \\ 0 & C_1} (\partial_q B)(q) dg(q) = 0. 
						\]
		\end{itemize}
		Because $\partial_q K_1(q, dg(q)) = \partial_p K_1(q, dg(q)) = 0$, we know that 
		$$\tilde{K} = K + K_1 \in \mK^q_\mD(K, \theta_g).$$
		Moreover, since we have 
		\begin{align*}
				\tilde{H}(q, 0) &= H(q, 0) + K(q, dg(q)) = 0,  \\ 
				\partial_p \tilde{H}(q, 0) &= \partial_p H(q, 0) + \partial_p K(q, dg(q)) = e_1,
		\end{align*}
		the new system $\tilde{H}$ is still in normal form.

		It suffices to choose $C_1(q)$ so that
		\[
				\partial^2_{\hat{p} \hat{p}} K_1(q, dg(q))  
				= C(q) C_1(q) C(q)
		\]
		coincides with $R(t):=\tilde{A}(t)-C(te_1)$ at $q=te_1$. Let 
            $$G(q) = \bmat{f_1(q) & \cdots & f_d(q)}$$ 
            be a local orthonormal basis such that $\hat{\mD}_q^\perp = span\{f_{d-m+1}, \cdots, f_d\}$. As $C(q)$ has co-rank $m$, there exists a mapping $q \mapsto \bar{C}(q) \in GL(d-m)$ such that
		\[
				G^T(q) C(q) C_1(q) C(q) G(q)
				= \bmat{\bar{C}(q) & 0 \\ 0 & 0} G^T(q) C_1(q) G(q) \bmat{\bar{C}(q) & 0 \\ 0 & 0},
		\]
		and there exists $\bar{R}(t) \in  \mS (d-m)$ such that 
		\[
				G^T(te_1) R(t) G(te_1) =   \bmat{\bar{R}(t) & 0 \\ 0 & 0}. 
		\]
		It suffices to choose
		\[
				C_1(te_1)  = G(te_1) \bmat{\bar{C}^{-1}(te_1) \bar{R}(t) \bar{C}^{-1}(te_1) & 0 \\ 0& 0} G^T(te_1). 
		\]
\end{proof}

Combined with Proposition \ref{prop:realization}, we now prove the perturbation theorem Theorem \ref{pert_thm}.
\begin{proof}[Proof of Theorem \ref{pert_thm} assuming Theorem \ref{thm:controllable}]
		Let us denote by
		\[
				\underline{H} = \underline{K}(q, p) + \underline{U} 
		\]
		Consider the normal form system from Proposition \ref{normalform} adapted to the orbit $\underline\theta(t)$, given by
		\[
				H = \underline{H} \circ \Phi  = \underline{H} \circ \Psi_\varphi^{-1} \circ \Psi^g = K(q, p + dg) + U,
		\]
		where the transformed orbit is $\Phi \circ \underline\theta(t) = \theta(t) = (te_1, 0)$, $t \in [0, \delta]$ and set $A(t) = \partial^2_{\hat{p}\hat{p}}H(\theta(t))$. Let us denote the local section $\{q_1 = 0\}$ and $\{q_1 = \delta\}$ by $\Sigma_0$ and $\Sigma_\delta$.

		By Theorem \ref{thm:controllable}, there exists an open and dense set  $\cV \subset \cA_{\delta}(\bn)$ such that any $A_1 \in \cV$ is \mane controllable. By Prposition \ref{prop:realization}, any such $A_1$ sufficiently close to $A$ can be realized by a kinetic energy $T \in \cK^\star(\cD, K, \theta)$ sufficiently close to $K$. By Proposition \ref{prop:reduce-to-mane}, for any neighborhood $N$ of $\{\pi \circ \theta(t) \mid t \in (0, \delta)\}$,
		\[
				\cU(\theta) \cap \{\supp u \in N\} \mapsto dP_{\Sigma_0, \Sigma_\delta}(T + U + u)
		\]
		is a submersion at $u = 0$. 

		Let $\underline{T}, \underline{u}$ denote the corresponding functions in the orginial coordinates, denote $\underline\Sigma_0$, $\underline\Sigma_\delta$ the corresponding sections, and $\underline{N}$ be a neighorhood of $\underline\theta((0, \delta))$. Then
		\[
				\cU(\underline\theta) \cap \{\supp \underline{u} \in \underline{N}\} \mapsto dP_{\underline\Sigma_0, \underline\Sigma_\delta}(\underline{T} + \underline{U} + \underline{u})
		\]
		is a submersion at $\underline{u} = 0$.

		This proves that the conclusion of our proposition holds for a dense set of $T \in \cK^\star(\cD, K, \theta)$. The fact that this property is open is clear by continuity.
\end{proof}

\section{Parametric transversality and Proof of the main theorems}    
\label{sec:param-trans}

\subsection{Parametric transverality}

Let $H=K+T$ be a given generalized classical system where $K \in \mK_\mD^\star$, $\star \in \{\qd, \rf\}$. Throughout this section let $\Upsilon$ is a given nowhere dense $F_\sigma$ subset of $Sp(2d, \R)$. While conclusions of this section hold for any such $\Upsilon$, for the proof of the main theorem, $\Upsilon$ will be chosen to be the matrices in $Sp(2d, \R)$ that are admitting a root of unity as an eigenvalue. We use the notation $\phi^t(H,x)$ for the flow of Hamiltonian vector field of $H$. 

			 \begin{definition}\label{def:neat}
Let $\theta(t)=\big(Q(t),P(t)\big)$ be a periodic orbit with prime period $T$ of a Hamiltonian vector field. A time $t_0$ is called a \textit{neat time} if $\dot{Q}(t) \ne 0$, and $Q(t_0)$ is not a self-intersection point of $Q(t)$ i.e. there does not exists a time $s \ne t_0$ modulo T such that $Q(s)=Q(t_0).$ If $t_0$ is a neat time for $\theta(t)$, then we call $\theta(t_0)$ a \textit{neat point}. 
\end{definition}

Define
$$
\mathbb{P}:= ]0,\infty[ \times T^*M
\times \mK_\mD^\star \times  C^\infty(M).
$$
Denote by $\Pi:\mathbb{P}\to \mK_\mD^\star \times C^\infty(M)$ the projection mapping 
$$(s, x, K, U) \mapsto (K, U).$$ 
We define  
\begin{align*}
		\mathcal{P}^1 :=\big\{(s,x,K, U) 
		& \in  \mathbb{P}  \mid \text{$(K, U, k)$ is admissible, and $\phi^s(K + U, x) = x$} \\
		& \text{there exists a $\cD$-regular time on the orbit $\phi^t(K + U, x)$} \big\}. 
\end{align*}
We then define the subsets $\mathcal{P}^5 \subset \mathcal{P}^4 \subset \mathcal{P}^3 \subset \mathcal{P}^2 \subset \mathcal{P}^1$ as following 
\begin{align*}
		\mathcal{P}^2 &:= \big\{(s,x,K, U) \in \mathcal{P}^1 \mid \text{$s$ is the minimimal period for the orbit of $x$ }\big\}, \\
		\mathcal{P}^3 &:=\big\{(s,x,K, U) \in \mathcal{P}^2 \mid \text{there exists a time $t$ on the orbit} \\ 
									& \qquad \text{ that is both neat and $\cD$-regualr }\big\}, \\ 
\mathcal{P}^4 &:= \{(s,x,K,U) \in \mathcal{P}^4 \mid \text{ the linearized Poincar\'e map of $\phi^t(K + U, x)$} \\
							& \qquad \text{does not admit $1$ as an eigenvalue}\big\},  \\
		\mathcal{P}^5(\Upsilon) & := \big\{(s,x,K, U)  \in \mathcal{P}^4 \mid \text{the linearized Poincar\'e map of $\phi^t(K + U,x)$} \\ & \qquad  \text{ does not belong to $\Upsilon$}\big\}.
\end{align*}


\begin{proposition}\label{prop:p2p3} We have 
\[
  \mP^2 = \mP^3, 
\]
where $\mP^2$ and $\mP^3$ are defined above. 
\end{proposition}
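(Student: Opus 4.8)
The plan is to establish the nontrivial inclusion $\mP^2 \subseteq \mP^3$, the reverse $\mP^3 \subseteq \mP^2$ being immediate from the definitions. Fix $(s, x, K, U) \in \mP^2$ and write $\theta(t) = \phi^t(K + U, x) = (Q(t), P(t))$, a periodic orbit of minimal period $s$ on the admissible level $\{H = k\}$, $H = K + U$. First record the consequences of admissibility and reversibility that we will use: supercriticality gives $\dot Q(t) = \partial_p H(\theta(t)) \neq 0$ for all $t$ (so $Q$ is an immersed closed curve and a time $t$ is neat exactly when $Q(t)$ is not a self-intersection point); $\partial_p H(q, p)$ always lies in $\cD_q$; over each base point $q$, the energy level is, after quotienting $T_q^*M$ by $\cD_q^\perp$, a compact strictly convex hypersurface with $\partial_p H(q,\cdot)$ pointing in the outward normal direction, and it is symmetric under $p \mapsto -p$ since $H$ is even in $p$. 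Let $\mathcal{R} \subseteq \R/s\Z$ be the set of $\cD$-regular times of $\theta$; the defining condition of Definition \ref{def:d-reg} is open in $t$, so $\mathcal{R}$ is open, and it is nonempty because $(s,x,K,U) \in \mP^1$. It therefore suffices to produce a $t_0 \in \mathcal{R}$ that is not a self-intersection time.

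Suppose no such $t_0$ exists, i.e. $\mathcal{R}$ consists of self-intersection times. Fix $t_0 \in \mathcal{R}$; since $\mathcal{R}$ is open, $t_0$ lies in the interior of the self-intersection set, so there are $t_n \to t_0$ with $t_n \neq t_0$ and partners $\sigma_n \neq t_n$, $Q(\sigma_n) = Q(t_n)$. Passing to a subsequence, $\sigma_n \to \sigma_\infty$; then $Q(\sigma_\infty) = Q(t_0)$, and $\sigma_\infty \neq t_0$ because $Q$ is injective near $t_0$. The two embedded arcs traced by $Q$ near $t_0$ and near $\sigma_\infty$ both pass through $Q(t_0)$ and their intersection accumulates there (along the points $Q(t_n)$), so by the implicit function theorem their tangent lines at $Q(t_0)$ coincide: $\dot Q(\sigma_\infty) = \mu\, \dot Q(t_0)$ for some $\mu \neq 0$. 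Equivalently $\partial_p H(Q(t_0), P(\sigma_\infty)) = \mu\, \partial_p H(Q(t_0), P(t_0))$, and since both momenta lie on the same fiber-level at $Q(t_0)$, injectivity of the Gauss map of the strictly convex (and symmetric) hypersurface forces $P(\sigma_\infty) - P(t_0) \in \cD_{Q(t_0)}^\perp$ when $\mu > 0$, and $P(\sigma_\infty) + P(t_0) \in \cD_{Q(t_0)}^\perp$ when $\mu < 0$.

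We now derive a contradiction in each case, working with the flow induced by $X_H$ on $T^*M/\cD^\perp$ (well defined because $H$ is $\cD^\perp$-invariant) and writing $[\,\cdot\,]$ for the projection. If $\mu < 0$, then for the time-reversed orbit $\widetilde\theta(t) = (Q(-t), -P(-t))$ — an orbit by reversibility (Remark \ref{rmk:reversible}) — we get $[\widetilde\theta(-\sigma_\infty)] = [\theta(t_0)]$, hence $[\widetilde\theta(t)] = [\theta(t+c)]$ with $c = t_0 + \sigma_\infty$; projecting to the base gives $Q(c - \sigma) = Q(\sigma)$ for all $\sigma$, and differentiating at $\sigma = c/2$ yields $\dot Q(c/2) = 0$, contradicting supercriticality. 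If $\mu > 0$, then $[\theta(\sigma_\infty)] = [\theta(t_0)]$ with $\sigma_\infty \neq t_0$, so by uniqueness the induced orbit $[\theta]$ is periodic with some period $s'' \in (0, s)$; in particular $Q$ is $s''$-periodic and $P(t+s'') - P(t) = r(t)\, \eta_{Q(t)}$ for a smooth scalar function $r$, where $\eta$ is a nonvanishing one-form valued in $\cD^\perp$. Differentiating this identity along $\theta$ using Hamilton's equations, together with the fact that replacing $P$ by $P + r\eta$ leaves $H$ and $\partial_p H$ unchanged while changing $\partial_q H$ by $-r(\partial_q \eta)^\top \partial_p H$, one arrives at the covector identity $\dot r\, \eta_{Q} = \mp\, r\, (d\eta)_{Q}(\dot Q, \cdot)$ along the curve; pairing with any $v \in \cD_{Q(t)}$ and using $\eta|_{\cD} = 0$ gives $r(t)\,(d\eta)_{Q(t)}(\dot Q(t), v) = 0$. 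At $t = t_0$ the $\cD$-regularity of $t_0$ provides a $v$ with $(d\eta)_{Q(t_0)}(\dot Q(t_0), v) \neq 0$, so $r(t_0) = 0$; then $\theta(t_0 + s'') = \theta(t_0)$, contradicting minimality of $s$. Hence some $t_0 \in \mathcal{R}$ is not a self-intersection time, i.e. is a neat $\cD$-regular time, so $(s,x,K,U) \in \mP^3$ and $\mP^2 = \mP^3$. The main obstacle is the tangential case $\mu > 0$, where a priori the orbit might shadow itself in the quotient $T^*M/\cD^\perp$ along an entire arc; it is exactly the $\cD$-regularity at $t_0$ (through the last displayed identity) that upgrades this shadowing into an honest smaller period and thereby rules it out.
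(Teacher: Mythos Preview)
The central gap is the assertion that $X_H$ induces a well-defined flow on $T^*M/\cD^\perp$. While $H$ descends to the quotient, the Hamiltonian vector field does not: two orbits whose initial momenta differ by an element of $\cD^\perp_q$ generically project to \emph{different} base curves. For instance, for $H=\tfrac12(p_x+yp_z)^2+\tfrac12 p_y^2$ on $\R^3$ with $\cD=\ker(dz-y\,dx)$, the orbits through $(0,0,0;1,0,0)$ and $(0,0,0;1,0,r)$ agree in $T^*M/\cD^\perp$ at $t=0$ but have $\dot p_y(0)=0$ versus $\dot p_y(0)=-r$, so their $y$-components, and hence their base curves, diverge immediately. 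Consequently, in your $\mu>0$ case the implication ``$[\theta(\sigma_\infty)]=[\theta(t_0)]\Rightarrow [\theta]$ has period $s''$, hence $Q$ is $s''$-periodic'' is unjustified; your subsequent ODE for $r(t)$ already presupposes $Q(t+s'')=Q(t)$ for all $t$, which is exactly what is in doubt. The $\mu<0$ case has the same defect: from $[\widetilde\theta(-\sigma_\infty)]=[\theta(t_0)]$ at one instant you cannot conclude $Q(c-\sigma)=Q(\sigma)$ for all $\sigma$.

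What is missing is an argument that promotes tangency at a \emph{single} accumulation point to coincidence of the projected curve on an \emph{interval}. The paper does this differently: instead of passing to a limit $\sigma_\infty$, it uses a Baire-type covering argument (Lemma~\ref{finite_curves}) on the finitely many local arcs through a point to find two genuine subintervals $K_0,K_1$ with $\pi\circ\theta(K_0)=\pi\circ\theta(K_1)$, and then shows via homogeneity and equal energy that the reparametrization between them satisfies $\dot\sigma\equiv\pm 1$ (Proposition~\ref{tagged_interval}). Only after the two projected arcs coincide on intervals does uniqueness of normal lifts over a $\cD$-regular segment (Corollary~\ref{cor:unique-lift}) and reversibility produce the contradiction. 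Your computation with $r(t)$ is essentially the infinitesimal version of that uniqueness statement, but it cannot substitute for the interval-matching step.
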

The proof is given in the next subsection, the main idea is that any $\cD-$regular time interval must contain a dense set of neat times.

Recall that a meager set is a countable union of nowhere dense sets. We will assume Proposition \ref{prop:p2p3} and prove the following.
\begin{proposition}\label{prop:p3p5}
	$\Pi\big(\mP^3 \setminus \mP^5 \big) \subset \mK_\mD^\star \times C^\infty(M)$ 
 is a meager $F_\sigma$ subset.
\end{proposition}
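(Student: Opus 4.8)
The plan is to run a parametric transversality (Sard--Smale) argument with \Cref{pert_thm} supplying the transversality input, after a reduction that also produces the $F_\sigma$ property. First I would reformulate. Let $\Sigma_1=\{M\in Sp(2d,\R)\mid\det(M-I)=0\}$, a proper real-algebraic, hence closed and nowhere dense, conjugation-invariant subset, and put $W=\Upsilon\cup\Sigma_1$, which is again nowhere dense, $F_\sigma$, and conjugation invariant. Since $\mP^4=\mP^3\setminus\{dP\in\Sigma_1\}$ and $\mP^5(\Upsilon)=\mP^4\setminus\{dP\in\Upsilon\}$, we get $\mP^3\setminus\mP^5=\mP^3\cap(dP)^{-1}(W)$, where $dP$ is the linearized Poincar\'e map along the orbit (defined up to conjugacy, so $(dP)^{-1}(W)$ is unambiguous). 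By \Cref{prop:p2p3} every point of $\mP^3$ carries a time that is both neat and $\cD$-regular, which is exactly the hypothesis needed to apply \Cref{pert_thm} on a short orbit segment. Finally, cover $]0,\infty[\times T^*M$ by countably many relatively compact open sets $C_n$; since $\overline{C_n}$ is compact, the projection $\overline{C_n}\times(\mK_\mD^\star\times C^\infty(M))\to\mK_\mD^\star\times C^\infty(M)$ is proper, hence sends closed sets to closed sets. Writing $W=\bigcup_k N_k$ with $N_k$ closed nowhere dense, it then suffices to show each $\Pi\bigl(\overline{C_n}\times(\cdots)\cap(dP)^{-1}(N_k)\cap\mP^3\bigr)$ lies in a closed nowhere dense set: $\mK_\mD^\star\times C^\infty(M)$ is Baire, so a closed subset of a meager set has empty interior, and a countable union of such sets is meager $F_\sigma$. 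Non-minimal periods cause no trouble, since the condition $dP\in W$ is unchanged under replacing a period by an integer multiple.

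Next I would install the manifold structure. Following the usual construction (cf.\ \cite{RR12,AB21,aslani2022bumpy}) one shows $\mP^3$ is a $C^\infty$ Banach manifold: locally it is cut out of $\mathbb{P}$ by $\phi^s(K+U,x)=x$ together with $H(x)=k$, the degeneracies along the flow direction and the energy normal being removed by restricting to a local transversal section and fixing the energy. Quotienting by the free, $\Pi$-equivariant $S^1$-action of reparametrization, $\mathcal{M}:=\mP^3/S^1$ is a $C^\infty$ Banach manifold on which $\Pi\colon\mathcal{M}\to\mK_\mD^\star\times C^\infty(M)$ is a $C^\infty$ Fredholm map of index $0$ (for a fixed Hamiltonian the closed orbits at energy $k$ modulo reparametrization are generically discrete). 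The fact that $C^\infty(M)$ is Fr\'echet rather than Banach is handled as usual, by running the argument in the $C^r$ topology for $r$ large and intersecting the residual sets over $r$.

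Now the transversality input. The map $dP$ descends to $dP\colon\mathcal{M}\to Sp(2d,\R)$ (up to conjugacy; fix a continuous symplectic section for local computations). I would show $dP$ is a submersion on an open dense $\mathcal{M}_{\mathrm{reg}}\subseteq\mathcal{M}$. Openness is automatic; for density, given $m=[(s_0,x_0,K_0,U_0)]$, pick a neat $\cD$-regular time $t_*$ (by \Cref{prop:p2p3}), a short segment around it, and a neighborhood $N$ of $\pi\circ\theta(t_*)$ meeting the orbit only along that segment. Then \Cref{pert_thm} gives an open dense set of preliminary perturbations $T\in\mK^\star(\mD,K_0,\theta)$, arbitrarily close to $K_0$, after which $u\mapsto dP$ ($u\in\mU(\theta)$ supported in $N$) is a submersion at $u=0$; such $T,u$ keep $\theta$ a closed orbit of the same minimal period on the level $k$ and preserve admissibility, $\cD$-regularity, and neatness of $t_*$, so they remain in $\mathcal{M}$, producing a point of $\mathcal{M}_{\mathrm{reg}}$ arbitrarily close to $m$. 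The complement $\mathcal{M}\setminus\mathcal{M}_{\mathrm{reg}}$, closed and nowhere dense, consists of orbits whose curve $A_\theta=\partial^2_{\hat{p}\hat{p}}H(\theta)$ is not Man\'e controllable; its image under $\Pi$ is meager by the same Sard--Smale scheme below applied to the realization map $\theta\mapsto A_\theta$ and the nowhere dense set $\cA_\delta(\bn)\setminus\cV$ of \Cref{thm:controllable}, using that realization is a submersion (the last assertion of \Cref{prop:realization}) --- this is the one place a genuine perturbation of the kinetic energy is forced.

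Finally the Sard--Smale step. Fix a stratum $N_k$ of $W$, of codimension $c_k\geq1$ in $Sp(2d,\R)$ and conjugation invariant. On $\mathcal{M}_{\mathrm{reg}}$ the submersion $dP$ is transverse to $N_k$, so $Z_k:=(dP)^{-1}(N_k)\cap\mathcal{M}_{\mathrm{reg}}$ is a $C^\infty$ submanifold of codimension $c_k$ and $\Pi|_{Z_k}$ is $C^\infty$ Fredholm of index $-c_k\leq-1$; restricting to the $\sigma$-compact pieces $(\overline{C_n}/S^1)\times(\cdots)$ makes it $\sigma$-proper, so by the Sard--Smale theorem its critical values form a meager $F_\sigma$ set, and at a regular value the fiber is a manifold of negative dimension, hence empty, so the whole image of $\Pi|_{Z_k}$ lies in that set of critical values. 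Taking the union over the countably many $C_n$ and $N_k$ and adjoining the meager $F_\sigma$ image of $\mathcal{M}\setminus\mathcal{M}_{\mathrm{reg}}$ gives that $\Pi(\mP^3\setminus\mP^5)$ is meager $F_\sigma$. The hard parts, I expect, are the Banach-manifold set-up with the correct Fredholm index --- getting the flow and energy degeneracies quotiented cleanly --- and the density clause above, namely disposing of orbits with non-controllable $A_\theta$, the step that genuinely needs perturbations of $K$ rather than of $U$ alone; the transversality itself is entirely furnished by \Cref{pert_thm}, and the remainder is routine bookkeeping.
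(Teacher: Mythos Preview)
Your Sard--Smale framework is a reasonable high-level strategy, and parts of it parallel what the paper does, but there is a genuine gap at the step where you assert that $\mathcal{M}=\mP^3/S^1$ is a Banach manifold with $\Pi$ Fredholm of index $0$ ``following the usual construction''. In the references you cite the Hamiltonian is fiberwise non-degenerate, and that is exactly what fails here. For the zero set of $(\hat{x},K,U)\mapsto P_{\Lambda,k}(K,U,\hat{x})-\hat{x}$ to be a manifold at a point of $\mP^3\setminus\mP^4$ (where $dP-I$ is singular), you must know that the partial derivative $\partial_{(K,U)}P_{\Lambda,k}$ compensates, i.e.\ that $u\mapsto P_{\Lambda,k}(u,x_0)$ is a submersion. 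This is \emph{not} supplied by \Cref{pert_thm}: that theorem controls the \emph{linearized} map $dP$ via perturbations $u\in\mU(\theta)$ with $du=0$ along the orbit, whereas moving $P$ itself requires perturbations with nonzero gradient along the orbit. The paper isolates this as a separate result (\Cref{prop:control-order1}), proved with a different class of test functions $h_{ij}$ having $\partial_{\hat q}h_{ij}(te_1)\neq 0$, and with its own preliminary kinetic perturbation (\Cref{lem:kinetic-pert}) arranging that $\dot A(0)=\partial_t\partial^2_{\hat p\hat p}H(te_1,0)$ is invertible --- a condition unrelated to Ma\~n\'e controllability of $A$. Without this ``order $0$'' submersion you cannot set up $\mathcal{M}$, and your treatment of $\mathcal{M}\setminus\mathcal{M}_{\mathrm{reg}}$ becomes circular, since it too presupposes the manifold structure.

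The paper avoids this circularity by splitting the problem: it first shows $\Pi(\mP^3\setminus\mP^4)$ is nowhere dense $F_\sigma$ via \Cref{prop:control-order1} and a finite-dimensional Sard argument, and then, once on $\mP^4$ where orbits are non-degenerate and hence countable, handles $\Pi(\mP^4\setminus\mP^5)$ by showing directly that $(K,U)\mapsto dP$ is weakly open (using \Cref{pert_thm}) and pulling back the nowhere-dense $\Upsilon$. Your unified approach could be salvaged, but only after proving the analogue of \Cref{prop:control-order1}; at that point the two arguments essentially converge.
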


The proof of Proposition \ref{prop:p3p5} will be done in three phases via Propositions \ref{pr_3.2},  \ref{prop_p3-p4}, and \ref{pr_p3-p4_Fsigma}. 

 The next lemma provides a sufficient condition under which the projection of an $F_\sigma$ subset is an $F_\sigma$ subset. A proof can be found in Section 3.1.1 of \cite{Aslanithesis}.
\begin{lemma}\label{lemma_3.1}
    Suppose that $X$ and $Y$ are two topological spaces and $X$ is a countable union of compact sets. Let $\pi_{Y}:X \times Y \to Y$ be the projection mapping $(x,y) \mapsto y.$ Assume that $C \subset X \times Y$ is closed, then $\pi_Y(C)$ is an $F_\sigma$ subset of $Y$.
\end{lemma}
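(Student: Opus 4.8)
\textbf{Proof plan for Lemma \ref{lemma_3.1}.}

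The plan is to write $X = \bigcup_{n \geq 1} K_n$ with each $K_n \subset X$ compact, and to observe that $C = \bigcup_{n \geq 1} (C \cap (K_n \times Y))$, so that $\pi_Y(C) = \bigcup_{n \geq 1} \pi_Y(C \cap (K_n \times Y))$. Since a countable union of $F_\sigma$ sets is again $F_\sigma$, it suffices to prove that each $\pi_Y(C \cap (K_n \times Y))$ is closed in $Y$ (in fact it will be closed, which is even better than $F_\sigma$). In other words, the whole lemma reduces to the following claim: \emph{if $K$ is a compact topological space, $Y$ is any topological space, and $C \subset K \times Y$ is closed, then $\pi_Y(C)$ is closed in $Y$.}

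To prove this claim I would show that the complement $Y \setminus \pi_Y(C)$ is open. Fix $y_0 \in Y \setminus \pi_Y(C)$; this means $(\{x\} \times \{y_0\}) \notin C$ for every $x \in K$, i.e. the compact slice $K \times \{y_0\}$ is contained in the open set $U := (K \times Y) \setminus C$. For each $x \in K$, since $(x, y_0) \in U$ and the product topology has a basis of open boxes, choose open sets $A_x \ni x$ in $K$ and $B_x \ni y_0$ in $Y$ with $A_x \times B_x \subset U$. The sets $\{A_x\}_{x \in K}$ cover the compact space $K$, so finitely many $A_{x_1}, \dots, A_{x_r}$ cover $K$. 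Put $B := \bigcap_{i=1}^r B_{x_i}$, an open neighbourhood of $y_0$ in $Y$. Then $K \times B \subset \bigcup_{i=1}^r (A_{x_i} \times B_{x_i}) \subset U$, hence $K \times B$ is disjoint from $C$, which means no point of $B$ lies in $\pi_Y(C)$; that is, $B \subset Y \setminus \pi_Y(C)$. This exhibits an open neighbourhood of $y_0$ inside the complement, so $Y \setminus \pi_Y(C)$ is open and $\pi_Y(C)$ is closed.

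This is exactly the classical ``tube lemma''/closedness-of-projection-along-a-compact-factor argument, and there is no real obstacle: the only point requiring a little care is the bookkeeping of the reduction from $X$ to its compact pieces $K_n$, namely that intersecting a closed set $C$ with $K_n \times Y$ keeps it closed and that $\pi_Y$ commutes with the countable union, both of which are immediate. One should also note that the statement does not require the $K_n$ to be nested or disjoint, so no further hypothesis on the covering is needed. Assembling the pieces: each $\pi_Y(C \cap (K_n \times Y))$ is closed by the claim applied with $K = K_n$, and $\pi_Y(C)$ is their countable union, hence $F_\sigma$.
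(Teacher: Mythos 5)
Your proof is correct, and it is the standard argument for this standard fact. The paper itself does not reproduce a proof of this lemma (it cites Section 3.1.1 of the first author's thesis), but your route --- decompose $X$ into countably many compact pieces and apply the tube lemma to show that projection along a compact factor is a closed map --- is exactly the canonical proof, and the reduction steps (that $C \cap (K_n \times Y)$ is closed in $K_n \times Y$ and that $\pi_Y$ commutes with the countable union) are handled correctly.
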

\begin{proposition}\label{pr_3.2}
		$\Pi\big(\mP^4 \setminus \mP^5(\Upsilon) \big)$ is a meager $F_\sigma$ subset of $C^\infty(M) \times \mK_{\mD}$
\end{proposition}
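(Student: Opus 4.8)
The plan is a parametric transversality argument built on the local perturbation theorem, Theorem~\ref{pert_thm}. Since $\Upsilon$ is a nowhere dense $F_\sigma$ set, write $\Upsilon=\bigcup_n C_n$ with each $C_n$ closed and nowhere dense in $Sp(2d,\R)$ (each $C_n\subset\overline\Upsilon$, which is closed with empty interior). For $(s,x,K,U)\in\mP^4$ the periodic orbit of $\phi^t(K+U,\cdot)$ through $x$ carries a linearized Poincar\'e map, viewed as an element $\mathcal{LP}(s,x,K,U)\in Sp(2d,\R)$ (well defined up to conjugacy, but membership in the conjugacy--invariant set $C_n$ is intrinsic), and
\[
  \mathcal{B}_n:=\{(s,x,K,U)\in\mP^4\mid \mathcal{LP}(s,x,K,U)\in C_n\},\qquad \mP^4\setminus\mP^5(\Upsilon)=\bigcup_n\mathcal{B}_n .
\]
It suffices to prove that each $\Pi(\mathcal{B}_n)$ is meager in $\cK_\cD^\star\times C^\infty(M)$: the union is then meager, and it is $F_\sigma$ by Lemma~\ref{lemma_3.1} applied with the $\sigma$--compact factor $]0,\infty[\times T^*M$, after expressing each $\mathcal{B}_n$ as a countable union of sets closed in $(]0,\infty[\times T^*M)\times(\cK_\cD^\star\times C^\infty(M))$ — exhausting the period and base point by compacta and writing the open conditions defining $\mP^4$ (admissibility, absence of the eigenvalue $1$) as countable unions of closed ones. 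This bookkeeping is routine; the content is the meagerness of $\Pi(\mathcal{B}_n)$.

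Fix $p_0=(s_0,x_0,K_0,U_0)\in\mP^4$. Since the linearized Poincar\'e map on the level $\{H=k\}$ has no eigenvalue $1$, the orbit is a nondegenerate fixed point of the Poincar\'e section map, so the implicit function theorem continues the periodic orbit: on a neighbourhood $\mathcal{O}_0\subset\mathbb{P}$ of $p_0$ there is, for each $(K,U)$, a periodic orbit near $\theta_0$ with base point, period and linearized Poincar\'e map depending continuously (in fact smoothly) on $(K,U)$, and $\Pi$ restricted to $\mathcal{O}_0\cap\mP^4$ is a homeomorphism onto an open subset of $\cK_\cD^\star\times C^\infty(M)$. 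Shrinking $\mathcal{O}_0$, the continuation stays in $\mP^4$: admissibility and absence of the eigenvalue $1$ are open; $\dot Q\ne0$ is automatic on an admissible level; a $\cD$--regular time persists because $\cD$ is fixed and $\cD$--regularity is an open condition on the orbit; minimality of the period persists because there are no short periodic orbits near a point where the Hamiltonian vector field does not vanish; and, once a minimal--period orbit has a $\cD$--regular time, Proposition~\ref{prop:p2p3} ($\mP^2=\mP^3$) supplies a \emph{neat} $\cD$--regular time. Under the homeomorphism $\Pi|_{\mathcal{O}_0\cap\mP^4}$ the set $\mathcal{B}_n\cap\mathcal{O}_0$ corresponds to $\{(K,U):\mathcal{LP}(K,U)\in C_n\}$, so it is enough to show that this set is nowhere dense in the (open) parameter neighbourhood; covering $\mathcal{B}_n$ by countably many such $\mathcal{O}_0$ (second countability) then yields that $\Pi(\mathcal{B}_n)$ is meager.

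The transversality input is as follows. Let $(K_1,U_1)$ be in the parameter neighbourhood and let $\theta_1$ denote the continued periodic orbit of $K_1+U_1$. Pick a time $t_*$ on $\theta_1$ that is neat and $\cD$--regular, a short orbit arc $\theta$ starting at $t_*$ inside one chart, and a neighbourhood $N$ of $\pi\circ\theta((0,\delta))$ small enough to be disjoint from the rest of $\theta_1$ (possible since $t_*$ is neat). By Theorem~\ref{pert_thm} there is $T$ in the open dense subset of $\cK^\star(\cD,K_1,\theta)$ arbitrarily close to $K_1$ — in particular with $(T,U_1)$ in the parameter neighbourhood, and $\theta$ still an orbit arc of $T+U_1$ — such that $u\mapsto dP_{\Sigma_0,\Sigma_\delta}(\theta,T+U_1+u)$ is a submersion at $u=0$ among $u\in\mU(\theta)$ with $\supp u\subset N$. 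For such $u$, $\theta$ remains an orbit arc of $T+U_1+u$ (because $u$ and $du$ vanish along $\theta$) and, since $\supp u\subset N$ misses the rest of the orbit, the entire periodic orbit is unchanged; hence its linearized Poincar\'e map is a fixed linear symplectic map composed with $dP_{\Sigma_0,\Sigma_\delta}(\theta,T+U_1+u)$, so $u\mapsto\mathcal{LP}(T,U_1+u)$ is a submersion at $u=0$. Restricting to a finite--dimensional subfamily $u=\sum_i\mu_i u_i$ as in the proof of Proposition~\ref{prop:reduce-to-mane} gives a smooth map $\mu\mapsto\mathcal{LP}$ that is a submersion onto $Sp(2d,\R)$ near $\mu=0$, hence an open map there; consequently $\{\mu:\mathcal{LP}(\mu)\in C_n\}$ is closed with empty interior. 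Therefore the parameters with $\mathcal{LP}\notin C_n$ are dense near $(K_1,U_1)$; since $(K_1,U_1)$ was arbitrary and $\{\mathcal{LP}\in C_n\}$ is closed (as $\mathcal{LP}$ is continuous and $C_n$ closed), this set is nowhere dense, which is what was needed.

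The genuine engine, Theorem~\ref{pert_thm}, is assumed, so the work here lies in two interfaces. The first is globalizing the single--arc submersion to the periodic Poincar\'e map without moving the orbit: this is exactly where neatness of a $\cD$--regular time and the identity $\mP^2=\mP^3$ are used — to place the arc near a neat $\cD$--regular time and shrink $N$ away from the rest of the orbit, so that perturbations in $\mU(\theta)$ supported in $N$ leave the orbit invariant. The second is the descriptive--set--theoretic bookkeeping — continuation of orbits, $\sigma$--compact exhaustion, and the non--closedness of the minimal--period condition — needed to upgrade meagerness to ``meager $F_\sigma$'' in a merely Fr\'echet parameter space. A further point requiring care is that the preliminary perturbation $T$ supplied by Theorem~\ref{pert_thm} is constrained to the affine subspace $\cK^\star(\cD,K_1,\theta)$ rather than free in $\cK_\cD^\star$, so density of the good parameters in the full space must be obtained by first continuing the orbit and only then perturbing $K$ inside the corresponding constrained space and $U$ inside $\mU(\theta)$.
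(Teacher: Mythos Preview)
Your proof is correct and follows essentially the same route as the paper's: localize near a nondegenerate orbit where $\Pi|_{\mP^4}$ is a local homeomorphism, apply Theorem~\ref{pert_thm} on a short arc at a neat $\cD$-regular time (with support disjoint from the rest of the orbit) to get a submersion onto $Sp(2d,\R)$---the paper phrases this as the map $F_{(s_0,x_0)}$ being ``weakly open''---and pull back the nowhere-density of $\Upsilon$. The only organizational difference is the countability step: the paper invokes that nondegenerate periodic orbits are at most countable to reduce the union over $(s_0,[x_0])$, whereas you cover $\mathcal{B}_n$ by countably many local patches via second countability of $\mathbb{P}$; both are valid and the engine is identical.
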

\begin{proof}
		We will write $\mP^5(\Upsilon)$ as $\mP^5$ for short. Since the linearized Poincar\'e map taken at different points of the same orbit are conjugate, and the definition of $\mP^4$ and $\mP^5(\Upsilon)$ depends only on conjugacy class, both sets contain full orbits.

		For any orbit in $\mP^4$, we choose $(s_0, x_0, K, U)$ on the orbit such that $t = 0$ is both neat and $\cD$-regular for the orbit of $x$.  Let $\mY$ be an open neighborhood of this point in $\mathbb{P}$. We define $\mP_{loc}^4:=\mP_{loc} \cap \mY$, and $\mP_{loc}^5:=\mP_{loc}^4 \cap \mP^5$. Let 
		$$
		\rho(s_0,x_0)=(s_0,x_0, K, U) \in \{s_0\} \times \{x_0\} \times \mK_D^\star \times C^\infty(M)
		$$ 
		be a given section in $\mP^4_{loc}$.

		Consider the mapping 
				$$F_{(s_0,x_0)}(K, U): \rho(s_0,x_0) \to Sp(2d,\mathbb{R})$$ 
		defined as  
				$$
						(s_0,x_0,K, U) \mapsto dP,
				$$
		where $P$ is the Poincar\'e map associated with $\theta(t):=\phi^t(K + U, x_0)$ and Hamiltonian vector field of $K + U$, defined on the section $\Sigma$. Let $\Omega_{(s_0,x_0)}$ be the image of $F_{(s_0,x_0)}$. We will show the map  $F_{(s_0, x_0)}$ is weakly open, i.e. image of an open set contains an open set. Choose $\delta > 0$ and an intermediate section $\bar{\Sigma}$ to the orbit $\theta$ at $t = \delta$, and write $P$ as composition of two transition maps
		\[
				P = P_{\Sigma, \bar{\Sigma}}^{K, U} \circ P_{\bar{\Sigma}, \Sigma}^{K, U}.
		\]
		Let
		\[
				\mW = \{u \in \cU(\theta) \mid \supp u \cap \{\theta(t) \mid t \notin (0, \delta)\} = \emptyset\},  
		\]
		where $\cU(\theta)$ is defined in \eqref{eq:cU}. Since $(s_0, x_0, K, U) \in \mP^4$, $t = 0$ is both neat and $\cD$-regular. We now apply Theorem \ref{pert_thm} to $\theta(t)$ to get that there exists an open and dense $\cV \subset \cK^\star(\cD, K, \theta)$ such that for all $T \in \cV$, the mapping
\begin{equation}  \label{eq:dp-open}
			 \mW \ni u \mapsto 	dP_{\Sigma, \bar{\Sigma}}^{K + T, U + u}
\end{equation}
is a submersion at $u = 0$. Furthermore, $P_{\bar{\Sigma}, \Sigma}^{K + T, U + u}$ is constant in $u$ for all $u \in \mW$. It follows that $F_{(s_0, x_0)}(T, \cdot)$ is a submersion near $u = 0$, hence an open mapping for $u$ close to $0$. 

Since $T$ can be chosen to be arbitrarily close to $K$, the mapping $F_{(s_0, x_0)}(\cdot, U + \cdot)$ maps any neigborhood of $(K, 0) \in \cK^\star(\cD, K, \theta) \times \mW$ to a set with non-empty interior, i.e. $F_{(s_0, x_0)}$ is weakly open. Note that preimage of nowhere dense set under a weakly open mapping is nowhere dense, and preimage of $F_\sigma$ set under a continuous mapping is $F_\sigma$, we obtain 
 $$F^{-1}_{(s_0,x_0)}\big(\Upsilon \cap \Omega_{(s_0,x_0)}\big) \subset \rho(s_0,x_0)$$ 
 is a meager $F_\sigma$.

By definition of $\mP^5$, we have  
\begin{equation}
		\bigcup_{(s_0,x_0)}F^{-1}_{(s_0,x_0)}\big(\Upsilon \cap \Omega_{(s_0,x_0)}\big)=\mP^4_{loc} \setminus \mP^5_{loc}. 
\end{equation}
For each $(s_0,x_0)$, $F^{-1}_{(s_0,x_0)}\big(\Upsilon \cap \Omega_{(s_0,x_0)}\big)$ is a meager $F_\sigma$ subset of $\mP^4_{loc}$. Now we aim to show that $$
\bigcup_{(s_0,x_0)}F^{-1}_{(s_0,x_0)}\big(\Upsilon \cap \Omega_{(s_0,x_0)}\big)$$ can be written as a countable union of meager $F_\sigma$ sets.

We write $(s_0,x_0) \overset{(K, U)}{\sim}(s_0,x_1)$ if and only if there exists $t_0 \in \mathbb{R}$ such that $\phi^{t_0}(K + U,x_0)=x_1$, and the prime period of the closed orbit $\phi^{t_0}(K + U,x_0)$ is $s_0$. This is an equivalence relation over $\mP^4_{loc}$. Note that if $(s_0,x_0) \overset{(K, U)}{\sim}(s_0,x_1)$ then the linearized Poincar\'e maps $dP_{(s_0,x_0)}(K, U)$ and $dP_{(s_0,x_1)}(K, U)$ are conjugate to each other. Hence, because $\Upsilon$ is invariant under conjugacy we can write 
\begin{equation}\label{eqq_3.2}
\bigcup_{\big(s_0,x_0\big)}F^{-1}_{(s_0,x_0)}\big(\Upsilon \cap \Omega_{(s_0,x_0)}\big)
				 = 
				\bigcup_{\big(s_0,[x_0]\big)}F^{-1}_{(s_0,x_0)}\big(\Upsilon \cap \Omega_{(s_0,x_0)}\big).
\end{equation}
Since $F$ is defined on $\mP^4$ and the set of closed non-degenerate orbits of order 1 is at most countable, the right hand side of (\ref{eqq_3.2}) is a countable union of meager $F_\sigma$ sets.

So far, we proved that $\mP^4_{loc} \setminus \mP^5_{loc}$ is a meager $F_\sigma$ subset of $\mP^4_{loc}$. By Lemma \ref{lemma_3.1}, we deduce that $\Pi \big(\mP^4_{loc} \setminus \mP^5_{loc}\big)$ is as $F_\sigma$ subset of $\Pi(\mY)$. 

Non-degeneracy of the orbit $\phi^t(K + U, x)$ depends continuously on the parameter $U$, so the restriction of $\Pi$ to $\mP^4_{loc}$ is a homeomorphism. Hence, $\Pi\big(\mP^4_{loc} \setminus \mP^5_{loc}\big) \subset \Pi(\mY)$ is meager. 

In conclusion, because $\mathbb{P}$ is separable, $\Pi(\mP^4 \setminus \mP^5)$ is a meager $F_\sigma$  subset of $\mathbb{P}.$ 
\end{proof}

    \begin{lemma}\label{lemma_open-inclusions}
        The inclusions $\mP^4 \subset \mP^3$ and $\mP^2 \subset \mP^1$ are both open. 
    \end{lemma}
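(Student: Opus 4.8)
The plan is to prove the two inclusions by separate arguments, the first being essentially a continuity statement and the second requiring a short-orbit exclusion. For $\mP^4 \subset \mP^3$ the only defining condition distinguishing the two sets is that the linearized Poincar\'e map does not admit $1$ as an eigenvalue. Fix $(s_0,x_0,K_0,U_0) \in \mP^4$; since it lies in $\mP^2$ the prime period $s_0$ is positive, so $x_0$ is not an equilibrium of the Hamiltonian vector field $X_{K_0+U_0}$ and there is a transversal section $\Sigma$ at $x_0$ to which the orbit of $x_0$ returns exactly once per period (a periodic orbit in $T^{*}M$ is embedded). First I would note that for $(s,x,K,U)$ in a small neighbourhood of $(s_0,x_0,K_0,U_0)$ in $\mathbb{P}$ the orbit of $x$ still meets $\Sigma$ transversally and its first return time to $\Sigma$ is close to $s_0$ (implicit function theorem); then, by smooth dependence of solutions of ODEs on the initial condition and on the parameters $(K,U)$, the linearized return map depends continuously on $(s,x,K,U)$, and in particular the conjugacy-invariant (hence section-independent) quantity $\det\big(dP(s,x,K,U) - \mathrm{id}\big)$ is continuous. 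Since $\mP^4$ is precisely the locus of $\mP^3$ on which this determinant is nonzero, it is open in $\mP^3$.

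For $\mP^2 \subset \mP^1$ I would argue by contradiction. Suppose there are $(s_0,x_0,K_0,U_0) \in \mP^2$ and a sequence $(s_n,x_n,K_n,U_n) \to (s_0,x_0,K_0,U_0)$ in $\mP^1$ for which $s_n$ is not the prime period; let $\tau_n$ be the prime period of the orbit of $x_n$, so $s_n = m_n\tau_n$ with an integer $m_n \geq 2$. The key preliminary step is a uniform lower bound $\tau_n \geq T_0 > 0$: pick $\psi \in C^\infty$ near $x_0$ with $d\psi(x_0)\cdot X_{K_0+U_0}(x_0) > 0$, so that $d\psi\cdot X_{K+U}$ stays positive on a fixed ball $W \ni x_0$ once $(K,U)$ is close enough to $(K_0,U_0)$; then no closed orbit can lie inside $W$, for $\psi$ would be strictly increasing along it. Hence for $n$ large the orbit of $x_n$ --- which starts well inside $W$ --- must leave $W$, and since $|X_{K_n+U_n}|$ is bounded on $W$ uniformly in $n$ this takes time bounded below by some $T_0 > 0$, giving $\tau_n \geq T_0$. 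Consequently $m_n \leq (\sup_n s_n)/T_0$ is bounded, so after passing to a subsequence $m_n \equiv m \geq 2$ and $\tau_n = s_n/m \to s_0/m$. Passing to the limit in $\phi^{\tau_n}(K_n+U_n,x_n) = x_n$ using continuous dependence of the flow on $(t,x,K,U)$ yields $\phi^{s_0/m}(K_0+U_0,x_0) = x_0$ with $0 < s_0/m < s_0$, contradicting the minimality of $s_0$. Therefore $\mP^2$ is open in $\mP^1$.

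I expect the first inclusion to be routine, reducing to continuity of the linearized Poincar\'e map together with the openness of $\{M \in Sp(2d,\R) : 1 \notin \mathrm{spec}\,M\}$. The main obstacle is the ``period collapse'' issue in the second inclusion: one must simultaneously rule out that prime periods shrink to $0$ near a non-equilibrium point of $X_{K+U}$ --- the monotone-function (flow-box) argument above, where it is essential that the orbit stays away from equilibria, which is guaranteed here by admissibility of the energy level $k$ --- and that the sequence of prime periods fails to cluster, so that the limiting period is a genuine proper divisor of $s_0$ rather than $s_0$ itself.
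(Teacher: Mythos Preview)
Your proof is correct and follows essentially the same approach as the paper. The paper handles $\mP^4\subset\mP^3$ with a single sentence (``by definition'', meaning the eigenvalue condition is manifestly open by continuity of the linearized return map), and for $\mP^2\subset\mP^1$ it proves the complement is closed via the same dichotomy you use: either the integer multipliers $m_n$ stay bounded (giving a proper divisor of $s_0$ in the limit) or they blow up, forcing the limiting point to be an equilibrium, which is excluded on a supercritical energy level --- your flow-box argument is just a slightly more explicit version of this last step.
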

    \begin{proof}
           By definition, $\mP^4 \subset \mP^3$ is open, so we only need to show that $\mP^2 \subset \mP^1$ is an  open inclusion. 

            Now we prove that $\mP^2 \subset \mP^1$ is open by showing that $\mP^1 \setminus \mP^2 \subset \mP^1$ is closed. Take a sequence $(s_n,x_n,K_n, U_n) \in \mP^1 \setminus \mP^2$. Let $S_n$ be the minimal period of $(s_n,x_n,K_n, U_n)$. There exists an integer sequence $i_n \geq 2$ such that $s_n=i_nS_n$. Let $(s,x,K, U)$, $S$, and $i$ be the limits of $(s_n,x_n,K_n, U_n)$, $S_n$, and $i_n$ respectively. There exists a strictly increasing function $\rho: \mathbb{N} \to \mathbb{N}$ such that $s_{\rho(n)}=i_{\rho(n)}S_{\rho(n)}$ and $i_{\rho(n)}$ is either constant or tends to $\infty$. If $i_{\rho(n)}$ is constant then $\frac{s}{i}=S$ which means $s$ is not the minimal period, so $(s,x,K, U) \notin \mP^2$. If $i_{\rho(n)} \to \infty$ then $x$ is a stationary point which can not occur in a supercritical energy level. 
    \end{proof}

        We aim to prove that $\Pi(\mP^3 \setminus \mP^4)$ is an $F_\sigma$ subset of $C^\infty(M)$. To do so, by Lemma \ref{lemma_3.1}, we only need to show that  $\mP^3\setminus \mP^{4} \subset \mathbb{P}$ is $F_\sigma$. 

        A subset is \textit{locally closed} if it is an intersection of a closed and an open subset. All locally closed subsets of a metrizable space are $F_\sigma$. The proof of the mentioned claim is elementary and can be found in page 43 of \cite{Aslanithesis}. 

    \begin{proposition} \label{pr_p3-p4_Fsigma} 
         $\mP^3\setminus \mP^{4}$ is an $F_\sigma$ subsets of $\mathbb{P}$.
    \end{proposition}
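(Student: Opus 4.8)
The goal is to show $\mathcal{P}^3 \setminus \mathcal{P}^4$ is $F_\sigma$ in $\mathbb{P}$. By the remark preceding the statement (locally closed sets in a metrizable space are $F_\sigma$, cf. \cite{Aslanithesis}), it suffices to exhibit $\mathcal{P}^3 \setminus \mathcal{P}^4$ as the intersection of an open set and a closed set, or more practically, as a countable union of such. The guiding idea is: $\mathcal{P}^3 \setminus \mathcal{P}^4$ consists of those periodic orbits that are prime-period minimal, carry a neat $\cD$-regular time, \emph{and} whose linearized Poincaré map has $1$ as an eigenvalue. The first three conditions cut out (locally) a nice set, and the eigenvalue-$1$ condition is a \emph{closed} condition on the Poincaré map, hence pulls back to a closed condition. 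The subtlety is that $\mathcal{P}^3$ itself need not be open (the neat+$\cD$-regular condition is open by Lemma \ref{lemma_open-inclusions} and Definition \ref{def:neat}, but minimality of period and $\cD$-regularity need care), so we work locally.

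\textbf{Step 1: reduce to a local statement.} Cover $\mathcal{P}^3$ by countably many open sets $\mathcal{Y}$ of $\mathbb{P}$ as in the proof of Proposition \ref{pr_3.2}: around each point of $\mathcal{P}^3$ choose $(s_0, x_0, K, U)$ on the orbit with $t=0$ neat and $\cD$-regular, and an open neighborhood $\mathcal{Y}$ on which the Poincaré map $P$ is defined on a fixed transversal section $\Sigma$ and depends continuously (indeed smoothly) on $(s, x, K, U)$. Since $\mathbb{P}$ is separable (second countable), countably many such $\mathcal{Y}$ cover $\mathcal{P}^3$, and it suffices to show each $(\mathcal{P}^3 \setminus \mathcal{P}^4) \cap \mathcal{Y}$ is $F_\sigma$.

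\textbf{Step 2: identify the local set as locally closed.} On $\mathcal{Y}$, consider the continuous map $(s, x, K, U) \mapsto dP(s, x, K, U) \in Sp(2d, \R)$. The set $\mathcal{E} = \{A \in Sp(2d, \R) \mid \det(A - I) = 0\}$ is closed (it is the zero set of a polynomial), so its preimage $Z := \{(s, x, K, U) \in \mathcal{Y} \mid dP \in \mathcal{E}\}$ is closed in $\mathcal{Y}$. Now within $\mathcal{Y}$, the conditions defining $\mathcal{P}^3$ — admissibility of $(K,U,k)$, $\phi^s(K+U,x)=x$, existence of a neat $\cD$-regular time, and $s$ being the minimal period — restrict $\mathcal{P}^1 \cap \mathcal{Y}$; by Lemma \ref{lemma_open-inclusions}, $\mathcal{P}^2 \subset \mathcal{P}^1$ and $\mathcal{P}^4 \subset \mathcal{P}^3$ are open inclusions, and (shrinking $\mathcal{Y}$ if necessary so that every point of $\mathcal{P}^1 \cap \mathcal{Y}$ already has $t=0$ neat and $\cD$-regular, which is an open condition) we get $\mathcal{P}^3 \cap \mathcal{Y} = \mathcal{P}^2 \cap \mathcal{Y}$ is the intersection of $\mathcal{P}^1 \cap \mathcal{Y}$ with an open set. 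The key remaining point is that $\mathcal{P}^1 \cap \mathcal{Y}$ itself is relatively closed in a suitable smaller open set: the equation $\phi^s(K+U,x)=x$ is closed, admissibility is open, and $\cD$-regularity of $t=0$ is open — so $\mathcal{P}^1 \cap \mathcal{Y}$ is locally closed, hence $\mathcal{P}^3 \cap \mathcal{Y}$ is locally closed. Then
\[
	(\mathcal{P}^3 \setminus \mathcal{P}^4) \cap \mathcal{Y} = (\mathcal{P}^3 \cap \mathcal{Y}) \cap Z \setminus (\mathcal{P}^4 \cap \mathcal{Y}),
\]
where $\mathcal{P}^3 \cap \mathcal{Y}$ is locally closed, $Z$ is closed, and $\mathcal{P}^4 \cap \mathcal{Y}$ is open; an intersection of a locally closed set with a closed set, minus an open set, is again locally closed. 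Being locally closed in the metrizable space $\mathbb{P}$, it is $F_\sigma$. Taking the countable union over the cover finishes the proof.

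\textbf{Main obstacle.} The delicate point is not the eigenvalue condition (which is manifestly closed) but rather making precise that $\mathcal{P}^3 \cap \mathcal{Y}$ — equivalently $\mathcal{P}^2 \cap \mathcal{Y}$ intersected with the neat/$\cD$-regular locus — is \emph{locally closed}. One must be careful that the minimal-period condition is what makes $\mathcal{P}^2 \subset \mathcal{P}^1$ open (this is exactly Lemma \ref{lemma_open-inclusions}, whose proof rules out period-doubling limits and stationary limits using supercriticality), and that near a neat $\cD$-regular point both neatness and $\cD$-regularity persist under small perturbation of $(s,x,K,U)$, so that shrinking $\mathcal{Y}$ legitimately absorbs these into the "open" factor. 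Once the bookkeeping of which conditions are open and which are closed is organized correctly, the conclusion is immediate from the locally-closed-implies-$F_\sigma$ principle.
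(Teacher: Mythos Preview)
Your argument is correct and rests on the same principle the paper uses: show that $\mP^3 \setminus \mP^4$ is locally closed, then invoke that locally closed subsets of a metrizable space are $F_\sigma$. However, the paper's proof is considerably more direct. Instead of passing to a countable local cover and explicitly pulling back the eigenvalue-$1$ locus via a Poincar\'e-map chart, the paper argues globally: $\mP^1$ is locally closed in $\mathbb{P}$ (periodicity is closed, admissibility and existence of a $\cD$-regular time are open); by Proposition~\ref{prop:p2p3} and Lemma~\ref{lemma_open-inclusions} the inclusions $\mP^4 \subset \mP^3 = \mP^2 \subset \mP^1$ are all open; hence $\mP^3$ is locally closed and $\mP^3 \setminus \mP^4$, being relatively closed in $\mP^3$, is locally closed as well. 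Your localization and the set $Z$ are essentially an in-line reproof of the statement ``$\mP^4 \subset \mP^3$ is open'' already supplied by Lemma~\ref{lemma_open-inclusions}, and the ``$\setminus (\mP^4 \cap \mathcal{Y})$'' in your displayed identity is redundant once you have intersected with $Z$. None of this is wrong, but you can prune the argument substantially by using those two results directly.
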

        \begin{proof}
						First, we need to note that $\mP^1 \subset \mathbb{P}$ is locally closed. That is because the set $\mD$ regular points is an open subset of the phase space, and the set of periodic points is closed. By Proposition \ref{prop:p2p3} and Lemma \ref{lemma_open-inclusions}, we have   $\mP^4 \subset \mP^3 = \mP^2 \subset \mP^1$ where all the inclusions are open. Then $\mP^1 \subset \mathbb{P}$ being locally closed implies $\mP^3 \setminus \mP^4$ is locally closed. So, $\mP^3 \setminus \mP^4$ is $F_\sigma$. 
        \end{proof}

\begin{proposition}\label{prop_p3-p4}
     $\Pi\big(\mP^3 \setminus \mP^4\big)$ is a nowhere dense $F_\sigma$ subset of $\mK_\mD^\star \times C^\infty(M)$.
\end{proposition}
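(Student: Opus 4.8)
The plan follows the scheme of the proof of Proposition~\ref{pr_3.2}, with two changes. First, the relevant target is the \emph{closed} set $\Upsilon_1 := \{M \in Sp(2d,\R) \st \det(M - \mathrm{Id}) = 0\}$, a proper real-analytic subvariety of the connected manifold $Sp(2d,\R)$ (proper since $-\mathrm{Id}\notin\Upsilon_1$), hence closed and nowhere dense; by definition, $\mP^3 \setminus \mP^4 = \mP^3 \cap \{(s,x,K,U) \st dP \in \Upsilon_1\}$, where $dP$ is the linearized Poincar\'e map. Second, unlike in Proposition~\ref{pr_3.2}, one cannot invoke countability of non-degenerate periodic orbits or the fact that $\Pi$ restricts to a homeomorphism on $\mP^4$, since points of $\mP^3 \setminus \mP^4$ may lie on continuous families of periodic orbits; this forces a different globalization, which is the delicate part. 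The $F_\sigma$ assertion is immediate from what is available: by Proposition~\ref{pr_p3-p4_Fsigma}, $\mP^3 \setminus \mP^4 = \bigcup_n C_n$ with $C_n$ closed in $\mathbb{P}$, and since $]0,\infty[ \times T^*M$ is $\sigma$-compact, Lemma~\ref{lemma_3.1} yields that $\Pi(\mP^3 \setminus \mP^4) = \bigcup_n \Pi(C_n)$ is $F_\sigma$ in $\mK_\mD^\star \times C^\infty(M)$.

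For the nowhere-density I would first prove a local statement. Fix $(s_0,x_0,K_0,U_0) \in \mP^3 \setminus \mP^4$; since it lies in $\mP^2 = \mP^3$ and the defining conditions depend only on the orbit $\theta$, move along $\theta$ so that $t = 0$ is neat and $\cD$-regular. As in the proof of Proposition~\ref{pr_3.2}, split $P = P^{K,U}_{\Sigma,\bar\Sigma} \circ P^{K,U}_{\bar\Sigma,\Sigma}$ with $\bar\Sigma$ an intermediate section at $t = \delta$, and apply Theorem~\ref{pert_thm}: there is an open dense $\mathcal{V} \subseteq \cK^\star(\cD, K_0, \theta)$ with $u \mapsto dP^{\,K_0+T,\,U_0+u}_{\Sigma,\bar\Sigma}$ a submersion at $u=0$ for all $T \in \mathcal{V}$, while $P^{\,K_0+T,\,U_0+u}_{\bar\Sigma,\Sigma}$ is independent of $u \in \mathcal{U}(\theta)$. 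Every $T + U_0 + u$ keeps $\theta$ as an orbit of minimal period $s_0$ with $t = 0$ still neat and $\cD$-regular, so on the slice $\{s_0\}\times\{x_0\}\times\{K_0+T\}\times(U_0 + \mathcal{U}(\theta))$ membership in $\mP^3$ is automatic for $u$ near $0$, and $\mP^3 \setminus \mP^4$ is cut out there by $dP \in \Upsilon_1$. For $T \in \mathcal{V}$ the map $u \mapsto dP$ is open near $0$ and $\Upsilon_1$ is closed nowhere dense, so the trace of $\mP^3 \setminus \mP^4$ on these slices is relatively closed and nowhere dense near $u=0$; as $\mathcal{V}$ is dense, $(T,u) \mapsto dP$ is weakly open near $(K_0,0)$.

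To globalize, cover $\mP^3 \setminus \mP^4$ — Lindel\"of, being a subspace of the second countable space $\mathbb{P}$ — by countably many neighbourhoods $\mathcal{Y}_j$ whose projections to $]0,\infty[ \times T^*M$ have compact closure contained in some $\{s < L_j\}$ (possible since $T^*M$ is locally compact and, by supercriticality, periods stay away from $0$). For $Z_j := \{(s,x,K,U) \in \overline{\mathcal{Y}_j} \st \phi^s(K+U,x)=x,\ \det(dP-\mathrm{Id})=0\} \supseteq (\mP^3\setminus\mP^4)\cap\mathcal{Y}_j$, the set $Z_j$ is closed with fibers over $\mK_\mD^\star \times C^\infty(M)$ inside a fixed compact set, so $\Pi(Z_j)$ is closed in $\mK_\mD^\star \times C^\infty(M)$; and one uses the local weak-openness of the previous paragraph, through a parametric-transversality argument over the compact $(s,x)$-window, to conclude that $\Pi(Z_j)$ has empty interior. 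Then $\Pi(\mP^3 \setminus \mP^4) = \bigcup_j \Pi(Z_j)$ is a countable union of closed nowhere dense sets, and exhausting in the $L_j$ (so that $\bigcup_{j\,:\,L_j \le L}\Pi(Z_j)$ is closed for each $L$) upgrades this to: $\Pi(\mP^3 \setminus \mP^4)$ is nowhere dense.

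The step I expect to be the main obstacle is the last one, namely showing each $\Pi(Z_j)$ has empty interior. The two devices used in Proposition~\ref{pr_3.2} — isolatedness of non-degenerate periodic orbits (which turns the bad set into a countable union over isolated orbits) and the homeomorphism property of $\Pi|_{\mP^4}$ — both fail when the degenerate orbit under consideration belongs to a continuous family. One must instead run a Sard–Smale-type argument showing that, even with $(s,x)$ ranging over the compact window, the locus $\{dP \in \Upsilon_1\}$ inside the set of periodic points projects to a set of codimension at least one, together with the compactness input governing how degenerate periodic orbits of bounded period persist and accumulate as $(K,U)$ varies. This is precisely where the full strength of Theorem~\ref{pert_thm} — that $u \mapsto dP$ is a genuine submersion, not merely a map with dense image — is used.
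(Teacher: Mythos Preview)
Your $F_\sigma$ argument is fine, and so is the observation that $\Upsilon_1$ is closed and nowhere dense in $Sp(2d,\R)$. The genuine gap is exactly where you locate it, but the tool you reach for cannot close it.

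Your local step invokes Theorem~\ref{pert_thm}, which gives a submersion $u \mapsto dP$ for $u \in \cU(\theta)$. Every $u \in \cU(\theta)$ \emph{fixes} the orbit $\theta$, so this lets you push $dP(\theta)$ out of $\Upsilon_1$ and make $\theta$ itself non-degenerate. But it says nothing about \emph{other} periodic orbits in the window $\mathcal{Y}_j$: for the original $(K_0,U_0)$ there may be a continuum of them, and since perturbations in $\cU(\theta)$ do not move $\theta$, the map $u \mapsto P(u,x_0)$ is \emph{constant} on $\cU(\theta)$. Thus you have no transversality for the fixed-point equation $P(u,\hat{x}) = \hat{x}$ as $\hat{x}$ ranges over the section. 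A Sard--Smale argument over the $(s,x)$-window needs surjectivity of $d_u\bigl(P(u,\hat{x}) - \hat{x}\bigr)$, not of $d_u(dP)$; these are different linear maps and the first does not follow from the second. Your final sentence, attributing the resolution to ``the full strength of Theorem~\ref{pert_thm}'', is therefore misplaced.

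The paper's proof does not use Theorem~\ref{pert_thm} here at all. It proves a separate submersion result, Proposition~\ref{prop:control-order1}: after a preliminary kinetic perturbation $T$ (arranged via Lemma~\ref{lem:kinetic-pert} so that $\partial_t\partial^2_{\hat p\hat p}H$ is non-degenerate), the map $u \mapsto P_{\Lambda,k}(u,x_0)$ is a submersion at $u=0$, where $u$ ranges over potentials supported away from the section $\Gamma$ but \emph{not} required to vanish to first order on $\theta$ (so $u \notin \cU(\theta)$ in general). This makes the local fixed-point set $N = \{(\hat{x},u) : P_{\Lambda,k}(u,\hat{x}) = \hat{x}\}$ a manifold near $(x_0,0)$; a value $u$ is regular for the projection $N \to \mW$ precisely when every fixed point over $u$ is non-degenerate, and Sard's theorem then gives density of such $u$. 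This handles all nearby periodic orbits at once. The proof of Proposition~\ref{prop:control-order1} (via a Dirac-type family of potentials) is independent of Theorem~\ref{pert_thm} and is the missing ingredient in your sketch.
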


   Let $(s_0,x_0,K, U) \in \mP^3$ be given such that $t = 0$ is both neat and $\cD$-regular, and Let $\theta(t)$ denote the orbit $\phi^t(K + U, x_0) = \phi^t(K + T + U, x_0)$.  Given
	 \[
	   T \in   \cK^\star\big(\cD, K, \theta\big)
	 \]
	 (see \eqref{eq:admissible-K}) sufficiently small, then $(s_0, x_0, K + T, U) \in \mP^3$ with $t = 0$ both neat and $\cD$-regular. Let $\Lambda = T^*\Gamma$ where $\Gamma \subset M$ is a local transversal section at $x_0$ to $\pi \circ \theta(t)$. 

	 We consider the space of potential perturbations
	 \[
			 \mW = \{u \in C^\infty(M) \mid \supp u \cap \Gamma = \emptyset \}.
	 \]
	 Then the set
	 \[
			 \Lambda_k = \Lambda \cap \{ K + T + U + u = k\}  
	 \]
	 is independent of $u \in \mW$ and is invariant under the Poincar\'e map $P_\Lambda(u)$. Let
	 \[
			 P_{\Lambda, k}(u, \hat{x}) = P_\Lambda(u)(\hat{x})
	 \]
	 for $\hat{x} \in \Lambda_k$ be the restricted map.

	 The following proposition imply Proposition \ref{prop_p3-p4}.
	 \begin{proposition}\label{prop:control-order1}
For any neighborhood $\mN$ of $0$ in $\cK^\star\big(\cD, K, \theta\big)$, there exists $T \in \mN$, such that the linearized map
\[
		d_u P_{\Lambda, k}(0, x_0)
\]
is a surjection.
\end{proposition}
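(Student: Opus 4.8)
The plan is to split the return map at the neat, $\cD$-regular time $t=0$, reduce to a short transition map along a segment small enough to carry the normal form of Proposition \ref{normalform}, and compute the $u$-derivative of that transition map by a first-variation (Duhamel) formula. The heart will be a controllability computation for the linear equation \eqref{eq_L}, and it will turn out to need only that $t=0$ be $\cD$-regular: the conclusion in fact holds already for $T=0\in\mN$, hence by stability for all $T$ in a neighbourhood of $0$; if one prefers $A(t)=\partial^2_{\hat p\hat p}H(te_1,0)$ put into a prescribed position, Proposition \ref{prop:realization} furnishes such a $T$ and the same computation applies.

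First I would set up the factorization. Since $t=0$ is neat, $\pi\circ\theta$ is embedded near $x_0$; shrinking $\delta$, the arc $\pi\circ\theta((0,\delta))$ is disjoint from $\Gamma$ and from $\pi\circ\theta([\delta,s_0])$, and $\dot\bn$ does not vanish on $[0,\delta]$. Fix a section $\Sigma_\delta$ transversal to $\theta$ at $\theta(\delta)$ and restrict attention to $u\in\mW$ supported in a thin tube $N_0$ around $\pi\circ\theta((0,\delta))$ that avoids the two endpoints; such $u$ vanish near $x_0$ and $\theta(\delta)$, on $\Gamma$, and near $\pi\circ\theta([\delta,s_0])$. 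For these $u$ the restricted return map factors as $P_{\Lambda,k}(u,\cdot)=\Phi_2\circ Q(u,\cdot)$ with $Q(u,\cdot)=P^{[0,\delta]}_{\Lambda,\Sigma_\delta,k}(u,\cdot)$ and $\Phi_2=P^{[\delta,s_0]}_{\Sigma_\delta,\Lambda,k}$ independent of $u$ (its trajectory stays in $\{u\equiv 0\}$); moreover $\Lambda_k$ and $\Sigma_\delta\cap\{H=k\}$ are $u$-independent near $x_0$ and $\theta(\delta)$. As $\Phi_2$ is a local diffeomorphism, by the chain rule it suffices to show $d_uQ(0,x_0)$ is onto $T_{\theta(\delta)}(\Sigma_\delta\cap\{H=k\})$. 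Applying the symplectomorphism of Proposition \ref{normalform} (which does not affect surjectivity; legitimate since $\dot Q(0)\ne0$ on a supercritical level), I may assume $\theta(t)=(te_1,0)$ and identify $T_{\theta(\delta)}(\Sigma_\delta\cap\{H=k\})$ with $\{q_1=p_1=0\}\cong\R^{2d}$.

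Next I would compute $d_uQ(0,x_0)$ using the reparametrized flow $\tilde\phi_u$ with $\dot q_1\equiv 1$ (cf. Section \ref{sec:lin-trans}), for which the transition time from $\Sigma_0$ to $\Sigma_\delta$ is identically $\delta$, so $Q(u,x_0)=\tilde\phi^\delta_u(x_0)$ with no moving hitting time; Duhamel gives $d_uQ(0,x_0)\cdot\dot u=\int_0^\delta d\tilde\phi^{\delta-s}_0(\theta(s))\,X_{\dot u}(\theta(s))\,ds$ (the reparametrizing factor equals $1$ along $\theta$), with $X_{\dot u}(\theta(s))=(0,-\partial_q\dot u(se_1))$. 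Restrict to $\dot u$ equal near $\theta$ to $\sum_j\hat\beta_j(q_1)\hat q_j$ (cut off away from the orbit) with $\hat\beta\in C^\infty_c((0,\delta),\R^d)$ arbitrary: then $\dot u$ and $\partial_{q_1}\dot u$ vanish along $\theta$, $\partial_{\hat q}\dot u(se_1)=\hat\beta(s)$, so $X_{\dot u}(\theta(s))=(0,0;0,-\hat\beta(s))\in\{q_1=p_1=0\}$. By Lemma \ref{hessian_rank} and the normal form ($\partial^2_{\hat q\hat q}H=\partial^2_{\hat q\hat p}H=0$ along $\theta$ and $\partial^2_{\hat p\hat p}H(se_1,0)=A(s)$), the subspace $\{q_1=p_1=0\}$ is $d\tilde\phi_0^t$-invariant along $\theta$ and there $d\tilde\phi^{\delta-s}_0(\theta(s))=L(\delta)L(s)^{-1}$, where $L$ solves $\dot L=\begin{pmatrix}0&A\\0&0\end{pmatrix}L$, $L(0)=I$, so $L(s)=\begin{pmatrix}I&\int_0^sA\\0&I\end{pmatrix}$. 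Hence $d_uQ(0,x_0)\cdot\dot u=L(\delta)\int_0^\delta L(s)^{-1}(0;-\hat\beta(s))\,ds$, and since $L(s)^{-1}(0;v)=(-(\int_0^sA)v;\,v)$, letting $\hat\beta$ range over $C^\infty_c$ the image equals $L(\delta)\cdot\Span\{(-(\int_0^sA)v;\,v):s\in(0,\delta),\,v\in\R^d\}$.

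Finally I would show this span is all of $\R^{2d}$. Fixing $s$ and varying $v$ gives a $d$-dimensional graph over the $\hat p$-plane; subtracting two such vectors with the same $v$ gives $((\int_{s_2}^{s_1}A)v;0)$. Since $A(\tau)\succeq 0$ with $\ker A(\tau)=\Span\bn(\tau)$, and $\dot\bn\ne0$ on $[0,\delta]$ (Lemma \ref{lem:n0}) so $\bn$ is non-constant on every subinterval, $\ker\int_{s_2}^{s_1}A(\tau)\,d\tau=\bigcap_{\tau\in[s_2,s_1]}\Span\bn(\tau)=\{0\}$; thus $\int_{s_2}^{s_1}A$ is invertible and $\{((\int_{s_2}^{s_1}A)v;0):v\}$ fills $\R^d\times\{0\}$, which together with the graph gives $\R^{2d}$. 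Since $L(\delta)$ is invertible, $d_uQ(0,x_0)$ — hence $d_uP_{\Lambda,k}(0,x_0)$ — is surjective; as noted, $T=0\in\mN$ works. The step I expect to be the real obstacle is not this computation but the bookkeeping of the second paragraph: arranging $\delta$, the intermediate section, and the support of $u$ so that $\Phi_2$, $\Lambda_k$ and the target section are genuinely $u$-independent — which is precisely where neatness of $t=0$ enters — together with a careful justification of the factorization of the restricted return map and of the Duhamel formula on the energy level.
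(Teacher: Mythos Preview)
Your proof is correct and takes a genuinely simpler route than the paper's. The paper first invokes Lemma~\ref{lem:kinetic-pert} to perturb the kinetic energy by some $T$ so that $\dot A(0)$ becomes invertible, and then uses test potentials whose $\hat q$-gradient along the orbit is $\eta_\epsilon(t)e_j+\ddot\eta_\epsilon(t)e_i$ (an approximate delta plus its second derivative); after an integration by parts exploiting $(DX_{H_0})^2=0$ along $\theta$, the image is shown to contain vectors whose $\hat q$- and $\hat p$-parts are $\dot A(\delta_1)e_i$ and $e_j$, so surjectivity follows from invertibility of $\dot A(\delta_1)$. You instead place approximate deltas at \emph{two} distinct times and exploit the positive semidefiniteness of $A$: since $\ker A(\tau)=\Span\bn(\tau)$ is one-dimensional and moving ($\dot\bn\ne0$ by $\cD$-regularity and Lemma~\ref{lem:n0}), the average $\int_{s_1}^{s_2}A$ has trivial kernel, and surjectivity follows without any preliminary kinetic perturbation --- so in fact $T=0$ already works, a slightly stronger conclusion. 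Your argument thereby bypasses Lemmas~\ref{lem:kinetic-pert} and~\ref{Adot-blockform-lemma} entirely. The bookkeeping you flag at the end (arranging $\delta$, the support of $u$, and the factorization so that $\Phi_2$ is $u$-independent) is routine once one works in the normal-form chart: there $\Gamma=\{q_1=0\}$, the incoming piece of the orbit for $t\uparrow s_0$ lies in $\{q_1<0\}$ near $x_0$ (since $\dot q_1(0)=1$), and neatness keeps the remainder of the orbit out of a neighbourhood of $\pi(x_0)$; hence $\supp u\subset\{0<q_1<\delta,\,|\hat q|<r\}$ does the job.
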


\begin{proof}[Proof of Proposition \ref{prop_p3-p4} using Proposition \ref{prop:control-order1}]
		Let $T$ be given by Proposition \ref{prop:control-order1}.  Since $d_u P_{\Lambda, k}$ is a submersion at $u = 0$, near $u = 0$, the set
		\[
				N = \left\{ (\hat{x}, u) \in \Lambda_k \times \mW \mid 
				P_{\Lambda, k}(u, \hat{x}) - \hat{x} = 0 \right\}
		\]
		is a submanifold in the neighborhood of $(x_0, 0)$ in $\Lambda_k \times \mW$. 

		Let $\hat{\Pi}(s, x, K, U) = U$, then there exists $s > 0$ such that $(s, \hat{x}, K + T, U + u)$ is in $\mP^4$ if and only if $u$ is a regular value of the mapping $\hat{\Pi}|_N$. By Sard's Theorem, such values are dense in $\hat{\Pi}(N)$. In particular, there exists $u$ arbitrarily close to $0$ such that $(s, \hat{x}, K + T, U + u) \in \mP^4$. Since $T$ can be chosen to be arbitrarily close to $0$ as well, we obtain that $\mP^4$ is dense in $\mP^3$. Since we already showed $\mP^3 \setminus \mP^4$ is $F_\sigma$, we conclude.
\end{proof}

We now prove Proposition \ref{prop:control-order1}. Apply Proposition \ref{normalform} to $H = K + U$ and the orbit $\theta$, we assume that the Hamiltonian is in normal form on a neighborhood $V$ of $x_0$. We choose the section $\Lambda = T^*\Gamma$ so that $\Gamma = \{q_1 = 0\}$ in the normal form coordinate. Note that conclusion of Proposition \ref{prop:control-order1} is independent of the choice of sections. 
\begin{lemma}\label{lem:kinetic-pert}
Given $(s_0, x_0, K, U) \in \mP^3$,  let $\mN$ be a neighorhood of $0$ in $\cK(\cD, K, \theta)$, there exists $T \in \mN$ and $\delta > 0$ such that in the normal form coordinates	
\[
		\partial_t \partial^2_{\hat{p} \hat{p}} (H + T)(te_1, 0)
\]
is non-degenerate for all $t \in [0, \delta]$.
\end{lemma}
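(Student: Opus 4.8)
The plan is to reduce the statement to a genericity statement about the curve of partial Hessians $A(t)=\partial^2_{\hat p\hat p}H(te_1,0)$ along the normal‑form orbit and then appeal to \Cref{prop:realization} to realize the needed perturbation of $A$ by a kinetic one. Recall that by \Cref{hessian_rank} and the normal form, $A\in\cA_\delta(\bn)$: one has $A(0)=\bmat{I_{d-1}&0\\0&0}$, $\operatorname{rank}A(t)\equiv d-1$, and $A(t)\bn(t)\equiv 0$. Since $(s_0,x_0,K,U)\in\mP^3$ and $t=0$ is a $\cD$‑regular time, \eqref{eq:n0} gives $\bn(0)=e_d$ and $\dot\bn(0)\ne 0$; differentiating $\|\bn\|^2\equiv 1$ shows $\dot\bn(0)\perp e_d$, so $\dot\bn(0)=(\hat w,0)$ with $\hat w\in\R^{d-1}\setminus\{0\}$. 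The goal is then to produce $\tilde A\in\cA_\delta(\bn)$, arbitrarily $C^\infty$‑close to $A$ on $[0,\delta]$, whose derivative $\dot{\tilde A}(t)$ is invertible near $t=0$. Granting this, \Cref{prop:realization} yields $\tilde K\in\cK^\star(\cD,K,\theta)$ with $\tilde K\to K$ and with the normal‑form Hamiltonian $\tilde H=\tilde K(q,p+dg)+U$ satisfying $\partial^2_{\hat p\hat p}\tilde H(te_1,0)=\tilde A(t)$; writing $T=\tilde H-H$ this lies in any prescribed neighbourhood $\mN$ of $0$, and $\partial_t\partial^2_{\hat p\hat p}(H+T)(te_1,0)=\dot{\tilde A}(t)$, which becomes invertible on $[0,\delta]$ after shrinking $\delta$.

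First I would record the constrained shape of $\dot{\tilde A}(0)$. Differentiating $\tilde A(t)\bn(t)\equiv 0$ at $t=0$ and using $\tilde A(0)=\bmat{I_{d-1}&0\\0&0}$ gives $\dot{\tilde A}(0)e_d=-\tilde A(0)\dot\bn(0)=(-\hat w,0)^{\mathsf T}$, so any admissible $\dot{\tilde A}(0)$ is of the bordered form $\dot{\tilde A}(0)=\bmat{B&-\hat w\\-\hat w^{\mathsf T}&0}$ with $B\in\mS(d-1)$, and the block $B$ is the only free part. To exhibit the freedom concretely, for $S\in\mS(d-1)$ set
\[
\tilde A(t)=A(t)+t\,\Pi(t)\bmat{S&0\\0&0}\Pi(t),
\]
where $\Pi(t)$ is the orthogonal projection of $\R^d$ onto $\bn(t)^\perp$. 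Then $\tilde A(0)=A(0)$, $\tilde A(t)\bn(t)\equiv 0$ (since $\Pi(t)\bn(t)=0$), and for $\|S\|$ and $\delta$ small $\tilde A(t)|_{\bn(t)^\perp}$ stays invertible, so $\operatorname{rank}\tilde A(t)\equiv d-1$; hence $\tilde A\in\cA_\delta(\bn)$. Moreover $\tilde A\to A$ in $C^\infty([0,\delta])$ as $S\to 0$, and a direct computation gives $\dot{\tilde A}(0)=\dot A(0)+\bmat{S&0\\0&0}=\bmat{B_0+S&-\hat w\\-\hat w^{\mathsf T}&0}$, where $B_0\in\mS(d-1)$ is the corresponding block of $\dot A(0)$.

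It remains to choose a small $S$ making $\dot{\tilde A}(0)$ invertible. By the bordered‑determinant identity, $\det\bmat{B_0+S&-\hat w\\-\hat w^{\mathsf T}&0}=-\hat w^{\mathsf T}\operatorname{adj}(B_0+S)\,\hat w$, a polynomial in the entries of $S$. It is not identically zero: at $S=I_{d-1}-B_0$ it equals $-\hat w^{\mathsf T}\hat w=-\|\hat w\|^2<0$, which is exactly where the $\cD$‑regularity of $t=0$ ($\hat w\ne 0$) is used. Hence it is nonzero on a dense open subset of $\mS(d-1)$, so we may pick $S$ arbitrarily small with $\dot{\tilde A}(0)$ invertible — and, if desired, with $B_0+S$ invertible too, which also covers reading non‑degeneracy as non‑degeneracy of the restriction to $\bn(t)^\perp$. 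With this $S$ fixed, $t\mapsto\dot{\tilde A}(t)$ is continuous and invertible at $t=0$, hence on some $[0,\delta']$; replacing $\delta$ by $\delta'$ and invoking \Cref{prop:realization} as above gives the required $T\in\mN$ and $\delta>0$.

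The main obstacle is precisely the tension between keeping the perturbation small and forcing invertibility: the membership constraint $\tilde A\in\cA_\delta(\bn)$ freezes the column $\dot{\tilde A}(0)e_d=-\dot\bn(0)$, so the only leverage is the symmetric block $B$, and one must check that invertibility of the bordered matrix is reachable within an arbitrarily small admissible change of $B$. This is the content of the determinant computation: the singular locus is a proper algebraic subvariety of $\mS(d-1)$ whose complement is nonempty thanks to $\hat w\ne 0$, i.e.\ thanks to $\cD$‑regularity at $t=0$; and \Cref{prop:realization} is what converts a small change of the curve $A(t)$ into a small change of the kinetic energy. A secondary point requiring care is that the realized curve $\tilde A$ must genuinely satisfy all of the defining conditions of $\cA_\delta(\bn)$ — correct value at $t=0$, constant rank $d-1$, and kernel exactly along $\bn(t)$ — which the sandwiched form $\Pi(t)(\cdot)\Pi(t)$ guarantees once the data are small.
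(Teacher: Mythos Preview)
Your argument is correct and follows the paper's strategy: perturb the curve $A(t)=\partial^2_{\hat p\hat p}H(te_1,0)$ inside $\cA_\delta(\bn)$ so that $\dot A(0)$ becomes invertible, then realize this via Proposition~\ref{prop:realization} and shrink $\delta$ by continuity.

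The execution differs in one respect worth noting. The paper invokes Lemma~\ref{Adot-blockform-lemma} to claim that the upper-left $(d-1)\times(d-1)$ block of $\dot A(0)$ is \emph{diagonal} with entries $\mu_i$, and then argues that $\det\dot A(0)=\sum_k v_k^2\prod_{i\ne k}\mu_i$ is a non-trivial polynomial in the $\mu_i$. You instead allow the upper-left block to be an arbitrary symmetric $B_0$, perturb it by $S$ via the explicit $\Pi(t)$-sandwiched construction, and check non-triviality of the polynomial $\det\bmat{B_0+S&-\hat w\\-\hat w^{\mathsf T}&0}=-\hat w^{\mathsf T}\operatorname{adj}(B_0+S)\,\hat w$ by evaluating at $S=I-B_0$. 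This is a bit cleaner and more self-contained: it does not rely on the diagonal block form asserted in Lemma~\ref{Adot-blockform-lemma} (which requires the Kato-type analytic diagonalization of Lemma~\ref{lem:der-A} and is not automatic for a bare $A\in\cA_\delta(\bn)$), and it isolates exactly where $\cD$-regularity at $t=0$ enters, namely $\hat w\ne 0$. Your explicit construction of the admissible perturbation via $\Pi(t)$ is also more hands-on than the paper's existence claim for $A_1\in\cA_\delta(\bn)$.
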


The following lemma is a corollary of Lemma \ref{lem:der-A}. 
\begin{lemma}\label{Adot-blockform-lemma}
		Assume that $H$ is in the normal form given by Proposition \ref{normalform}. Suppose that $\partial_pH(te_1,0)=:A(t) \in \cA_\delta(\bn),$ and $\dot{\bn}(t) \ne 0$ for $t \in [0,\delta],$ then $\dot{A}(0)$ has the following block form 
		\begin{equation}\label{Adot-blockform}
				\dot{A}(0)= \begin{bmatrix}
						\mu_1 & & &  \\
									& \ddots  & & v   \\
									& & \mu_{d-1}  &  \\ 
									& -v^T & & 0
				\end{bmatrix},
		\end{equation}
		where $v= \begin{bmatrix}
				v_1 & v_2 & \hdots & v_{d-1}
		\end{bmatrix}^T$ is not the zero vector.    
\end{lemma}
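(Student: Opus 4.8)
The plan is to obtain the block structure of $\dot A(0)$ directly from the three constraints defining $\cA_\delta(\bn)$ together with \eqref{eq:n0}, and then to use one last orthogonal normalization, permitted within the normal form, to diagonalize the upper-left block.

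First I would record that $\dot A(0)$ is symmetric, being the $t$-derivative of the smooth family of symmetric matrices $A(t)$. Differentiating the relation $A(t)\bn(t)\equiv 0$ at $t=0$ and using $\bn(0)=e_d$ gives $\dot A(0)e_d=-A(0)\dot\bn(0)$. Since $\|\bn(t)\|\equiv 1$ forces $e_d\cdot\dot\bn(0)=0$, the last component of $\dot\bn(0)$ vanishes; write $\dot\bn(0)=(w,0)$ with $w\in\R^{d-1}$. As $A(0)=\bmat{I_{d-1} & 0\\ 0 & 0}$ for any $A\in\cA_\delta(\bn)$, we get $\dot A(0)e_d=-(w,0)^T$, so the $(d,d)$-entry of $\dot A(0)$ is $0$ and, by symmetry, the off-diagonal part of its last row and column equals $-w$. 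By \eqref{eq:n0}, $\dot\bn(0)\neq 0$, and since its $d$-th component is $0$, this forces $w\neq 0$. Thus $\dot A(0)=\bmat{B & v\\ v^T & 0}$ with $B\in\mS(d-1)$ and $v\neq 0$, which is the form \eqref{Adot-blockform} apart from the diagonality of $B$.

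To make $B$ diagonal I would invoke the residual gauge freedom of the normal form. As in Step 4 of the proof of Proposition \ref{normalform}, post-composing the normalizing coordinate change with the linear map $q\mapsto \diag(1,O,1)\,q$, where $O\in O(d-1)$ acts on $(\hat q_1,\dots,\hat q_{d-1})$ and trivially on $q_1$ and $\hat q_d$, preserves conclusions $(a)$--$(d)$ of Proposition \ref{normalform}, fixes $\bn(0)=e_d$, and preserves every constraint defining $\cA_\delta(\bn)$: orthogonal conjugation keeps $A(t)$ symmetric of rank $d-1$, and with $\bn(t)$ replaced by $\diag(1,O,1)\bn(t)$ the relation $A(t)\bn(t)=0$ persists. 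Under this change $\dot A(0)$ is replaced by $\diag(O,1)\,\dot A(0)\,\diag(O^T,1)$, so $B$ becomes $OBO^T$ and $v$ becomes $Ov$. Choosing $O$ to diagonalize the symmetric matrix $B$ gives $OBO^T=\diag(\mu_1,\dots,\mu_{d-1})$ while keeping $Ov\neq 0$, and relabeling $Ov$ as $v$ produces \eqref{Adot-blockform}.

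The computation in the first step is elementary; I expect the only point needing care to be the verification in the last step that the extra orthogonal coordinate change is compatible with all earlier normalizations at once — the flow-box step, the Hamilton--Jacobi step enforcing $H(q,0)=k$, and the linear step producing $\partial_pH(q,0)=e_1$ and the block form of $\partial^2_{pp}H(0,0)$. This causes no trouble because the change is block-diagonal and orthogonal, so it commutes with each of those steps exactly as in the proof of Proposition \ref{normalform}. Alternatively, one may bypass the extra coordinate change by reading the claim off the explicit formula for $\dot A(0)$ furnished by Lemma \ref{lem:der-A}; this is the route indicated in the paper, since the present statement is a corollary of that lemma.
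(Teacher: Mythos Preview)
Your proof is correct. The paper only says the lemma ``is a corollary of Lemma~\ref{lem:der-A}'' and gives no further argument, so your write-up is in fact more complete than what the paper provides.

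The two routes differ slightly. The paper's route assumes an orthogonal diagonalization $A(t)=P(t)\Lambda(t)P^T(t)$ with $P(0)=I_d$ and $\Lambda$ diagonal, and then reads off $\dot A(0)=\bmat{\dot D(0) & -v\\ -v^T & 0}$ from Lemma~\ref{lem:der-A}. But such a decomposition with $P(0)=I_d$ is not automatic: it forces the upper-left block of $\dot A(0)$ to equal the diagonal matrix $\dot D(0)$, which is precisely what one is trying to show. So the paper's one-line justification tacitly relies on the same extra orthogonal normalization you perform in your second step. Your approach---differentiating the constraint $A(t)\bn(t)=0$ directly to get the last row/column, then conjugating by $\diag(O,1)$ to diagonalize the remaining symmetric block $B$---is more elementary and makes this hidden step explicit. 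It also avoids any appeal to smooth or analytic eigenvalue decompositions.

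Two minor remarks. First, the displayed matrix in the lemma has $v$ above and $-v^T$ below, which would not be symmetric; this is a typo in the paper (compare \eqref{eq:dotA-mu} and Lemma~\ref{lem:der-A}, where the signs match). Your symmetric form $\bmat{B & v\\ v^T & 0}$ is the correct one. Second, your verification that the extra $\diag(1,O,1)$ coordinate change preserves items $(a)$--$(d)$ of Proposition~\ref{normalform} and fixes $\bn(0)=e_d$ is exactly what is needed; since the change is orthogonal and block-diagonal it indeed commutes with each earlier normalization step, so no difficulty arises.
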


\begin{proof}[Proof of Lemma \ref{lem:kinetic-pert}]
		By Lemma \ref{Adot-blockform-lemma}, $\dot{A}(0):=\partial_t \partial^2_{\hat{p}\hat{p}}\tilde{T_0}(te_1,0)$ viewed in the coordinates given by Proposition \ref{normalform} has a block form similar to (\ref{Adot-blockform})
		We have 
		$$
		\det \dot{A}(0)=\sum_{k=1}^{d-1} \big(v_k^2 \prod_{i \ne k} \mu_i\big).
		$$
		$\big\{(\mu_1, \hdots, \mu_{d-1}) \in \mathbb{R}^{d-1} \mid \sum_{k=1}^{d-1} \big(v_k^2 \prod_{i \ne k} \mu_i\big)=0\big\}$ is an algebraic subset of $\mathbb{R}^{d-1}$. Therefore there exist arbitrarily small perturbation $A_1 \in \cA_\delta(\bn)$ such that
		\[
				\det(\dot{A}(0) + \dot{A}_1(0)) \ne 0.
		\]
		By Proposition \ref{prop:realization}, there exists  $T \in \mK^\star(\cD, K, \theta)$ such that
		\[
				\partial^2_{\hat{p}\hat{p}}(H + T)(te_1, 0) = A(t) + A_1(t)  
		\]
		for $t$ clsoe to $0$. Moreover, $T$ converges to $0$ as $A_1$ converges to $0$. This proves our lemma.  
\end{proof}

 Recall that the normal form is valid in a neighborhood $V$ of $x_0$. Let $0 < \delta_1 < \delta_2 < \delta$ to be specified later, consider a section
 \[
		 \bar{\Lambda} = T^*\Gamma_{\delta_2}, \quad
		 \text{where $\Gamma_{\delta_2}$ is the section $\{q_0 = \delta_2\} \subset M$ in normal form coordinates}.
 \]
 Let
 \[
		 P_{\Lambda, \bar{\Lambda}, k}, \quad P_{\bar\Lambda, \Lambda, k}  
 \]
 be the transition map between the given sections restricted to energy $k$, then
 \[
		 P_{\Lambda, k}(u, \hat{x}) =  P_{\bar\Lambda, \Lambda, k}  \circ P_{\Lambda, \bar{\Lambda}, k} (u, \hat{x}).
 \]
 We will further restrict our potential perturbation $u$ to the set 
 \[
		 \mW' = \{u \in C^\infty(M) \mid  \supp u \cap \{\pi \circ \theta(t) \mid t \notin (0, \delta_2)\} = \emptyset\} .
 \]
As such, the map $\partial_u P_{\Lambda, k}(0, x_0)$ is a surjection if and only if $\partial_u P_{\bar\Lambda, \Lambda, k}(0, x_0)$ is. 

\begin{proof}[Proof of Proposition \ref{prop:control-order1}]
		Let $T$ be chosen as in Lemma \ref{lem:kinetic-pert}.  

Let us define
\[
		\psi_u(\hat{q}, \hat{p})   
\]
to be the map $P_{\Lambda, \bar\Lambda, k}$ written in the coordinates $(\hat{q}, \hat{p})$, then
\[
		\psi_u = \pi_{(\hat{q}, \hat{p})} \phi_u^{\tau(\hat{q}, \hat{p}, u)}(0, \hat{q}, 0, \hat{p})  
\]
where $\phi_u^t$ is the flow of $H_u = H + T + u$, where $u \in \mW'$, $\pi_{(\hat{q}, \hat{p})}$ is the standard projection, and $\tau(\hat{q}, \hat{p}, u)$ is the transition time between the sections $\Lambda$ and $\bar{\Lambda}$. Note that $\tau(0, 0, 0) = \delta_2$. 

Then we have
\[
\begin{aligned}
		\partial_u \psi_u(0, 0)  \big|_{u = 0}
		&=   d\pi_{(\hat{q}, \hat{p})} 
 \left( \partial_u \phi_u^{\delta_2}(0) + X_{H_u}^{\delta_2} \circ \phi_u^t(0) \cdot \partial_u \tau(0, 0)  \right) \\
		& =  d\pi_{(\hat{q}, \hat{p})} \partial_u \phi_u^{\delta_2}(0) 
\end{aligned}
\]
since $ X_{H + T + u}^{\delta_2} \circ \phi_u^t(0) = e_1$ and has zero projection to $(\hat{q}, \hat{p})$.

Consider a smooth approximation $\eta_\epsilon(t)$ supported on $(0,\delta_2)$ of Dirac delta function $\mu(t - \delta_1)$. We take a family of functions $h_{ij} \in C^\infty(M)$, where $i,j \in \{1,2, \hdots d\},$ such that $h_{ij}(q)(te_1)=0$ and 

$$-\partial_{\hat{q}}h_{ij}(t e_1)=\eta_\epsilon(t) e_j + \ddot \eta_\epsilon(t)e_i.$$ 

Since 
\begin{equation}
	\partial_u\phi^t(x,u)(h)= \partial_x \phi^t(x,u) \int_{0}^{t}  [\partial_x \phi^s(x,u)]^{-1} 
	\begin{bmatrix}
			0 \\ 
			-d h \big(\pi \circ \phi^s(x,0)\big) 
	\end{bmatrix} d s, 
\end{equation}
we denote $V(\sigma)=\phi_u^\sigma(0)\big|_{u = 0}$, then
		 \begin{align}
		 \partial_u\psi(0,0) \big|_{u = 0}(h_{i j}) \,
		 &= d\pi_{(\hat{q}, \hat{p})} 
		 V(\delta_2) \int_{0}^{\delta_2}  V^{-1}(s) 
		\begin{bmatrix}
		0 \\ 
		\bmat{0 \\ \eta_\epsilon(s) e_j }
		\end{bmatrix} 
		d s \label{vardiff_reg} \\ 
		 & + d\pi_{(\hat{q}, \hat{p})} V(\delta_2) \int_{0}^{\delta_2}  V^{-1}(s) 
		\begin{bmatrix}
		0 \\ 
		\bmat{0 \\ \ddot \eta_\epsilon(t)e_i}
		\end{bmatrix} d s   \label{vardiff_der}
 \end{align}
 Let us compute \eqref{vardiff_der} separately. First note the following general calulation for Hamiltonian flow of $H$:
 \begin{equation}  \label{eq:V-inverse}
\begin{aligned}
		 \frac{d}{dt} (V^{-1}(t) DX_H(\phi^t))
		 & = \frac{d}{dt}(V^{-1}(t)) D X_H + V^{-1}(t) \frac{d}{dt} DX_H(\phi^t) \\
		 & = - V^{-1}(t) (D X_H)^2 + V^{-1}(t) \frac{d}{dt} DX_H(\phi^t).
\end{aligned}
 \end{equation}
 In our normal form, at $u = 0$, 
 \[
		 DX_{H_0}(\phi^t(0)) = DX_{H_0}(te_1)
		 = \begin{bmatrix}
				 0 & \bmat{* & * \\ * & A} \\
				0  & 0
		 \end{bmatrix},  
 \]
 hence
 \begin{equation}  \label{eq:DXH-2}
      \left( DX_{H_0}(\phi^t(0))\right)^2 = 0.
 \end{equation}

 We have
\begin{equation}  \label{eq:dirac-der}
 \begin{aligned}
		 & V(\delta_2) \int_{0}^{\delta_2}  V^{-1}(s) 
				\begin{bmatrix}
				0 \\ 
				\bmat{ 0 \\ \ddot\eta_\epsilon(t)e_i}
				\end{bmatrix} d s  \\
		&= -V(\delta_2) \int_{0}^{\delta_2} - V^{-1}(s) \mathbb{J}\partial^2_{(q, p)}H(s e_1,0) 
				\begin{bmatrix}
					0 \\ 
					\bmat{0 \\ \dot\eta_\epsilon(s)e_i}
				\end{bmatrix} ds \\
		&=V(\delta_2) \int_0^{\delta_2} V^{-1}(s) 
				\begin{bmatrix}
						0 & \bmat{* & * \\ * & A} \\ 
						0 & 0
				\end{bmatrix}
				\begin{bmatrix}
					0 \\ 
				\bmat{0 \\ \dot\eta_\epsilon(s)e_i}
				\end{bmatrix} ds \\
		&=- V(\delta_2) \int_0^{\delta_2} V^{-1}(s) 
				\begin{bmatrix}
						0 & \bmat{* & * \\ * & A'(s)} \\ 
						0 & 0
				\end{bmatrix}
				\begin{bmatrix}
					0 \\ 
				\bmat{0 \\ \eta_\epsilon(s)e_i}
				\end{bmatrix} ds \\
		 &\approx V(\delta_2)V^{-1}(\delta_1) \begin{bmatrix}
				 \bmat{* \\ A'(\delta_1)e_i} \\
			 0
		\end{bmatrix}
 \end{aligned}
\end{equation}
where the fourth line uses \eqref{eq:V-inverse} and \eqref{eq:DXH-2} where $\approx$ means equal in the limit $\epsilon \to 0$. We combine \eqref{vardiff_reg} and \eqref{eq:dirac-der} to get
\[
  \partial_u \psi(0, 0) \big|_{u = 0} \approx
	d\pi_{(\hat{q}, \hat{p})} V(\delta_2)V^{-1}(\delta_1)
 \bmat{* \\ A'(\delta_1)e_i\\ 	* \\ e_j  } .
\]
Since $\dot{A}(0) \ne 0$, $\dot{A}(\delta_1)$ is invertible for $\delta_1$ sufficiently small. By choosing $\delta_2 - \delta_1$ small enough, $V(\delta_2)V^{-1}(\delta_1)$ is close to $I$ and $\partial_u \psi(0, 0)|_{u = 0}$ is onto.
\end{proof}

\subsection{Tagged segments}

Recall the definition of a neat time in Definition \ref{def:neat}. An orbit of a Hamiltonian vector field is called \textit{neat} if it admits a neat point.

    A Hamiltonian $H$ is called reversible if $H=H \circ \Re$ where $\Re:T^*M \to T^*M$ is the \textit{reversing involution} defined as $\Re(\alpha_q)=-\alpha_q,$ for all $\alpha_q \in T^*_qM$. Note that if $\big(Q(t),P(t)\big)$ is an orbit of a reversible Hamiltonian $H$ so is $\big(Q(-t),-P(-t)\big)$. An orbit $\big(Q(t),P(t)\big)$ of a reversible Hamiltonian is called \textit{reversible} (or \textit{symmetric}) if it is the same orbit as its time reversal $\big(Q(-t),-P(-t)\big)$.

		The main reason that we require our Hamiltonian to be reversible and the energy to be supercritical is the following.
\begin{lemma}
In a reversible Hamiltonian $H(q, p)$, there are no reversible orbits in a supercritical energy level.	
\end{lemma}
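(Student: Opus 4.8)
The plan is to show that any reversible orbit must pass through the fixed-point set of the reversing involution $\Re$, which is the zero section $\{(q,p): p=0\}$, and then to observe that on a supercritical energy level $\{H=k\}$ the zero section is simply not present.

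First I would record the symmetry of the parametrization. Since $\Re(q,p)=(q,-p)$ is an anti-symplectic involution and $H\circ\Re=H$ by hypothesis, the Hamiltonian vector field $X_H$ satisfies $D\Re\circ X_H=-X_H\circ\Re$; hence if $\theta(t)=(Q(t),P(t))$ is an orbit, then $\tilde\theta(t):=\Re(\theta(-t))=(Q(-t),-P(-t))$ is again an orbit of $H$. If $\theta$ is reversible, $\tilde\theta$ and $\theta$ have the same image, and two flow lines of the same vector field with the same image differ by a time translation, so there is $t_0\in\R$ with $\tilde\theta(t)=\theta(t+t_0)$ for all $t$, i.e. $\Re(\theta(t))=\theta(t_0-t)$. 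Evaluating at $t=t_0/2$ gives $\Re(\theta(t_0/2))=\theta(t_0/2)$, so the point $\theta(t_0/2)$ lies in $\mathrm{Fix}(\Re)=\{(q,p):p=0\}$.

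Next I would derive the contradiction from supercriticality. Writing $H=K+U$ with $K\in\cK_\cD^\star$, the positive homogeneity of $K$ forces $K(q,0)=0$, so $H(q,0)=U(q)\le\sup_q U(q)<k$ along the zero section; thus $\{H=k\}$ is disjoint from $\{p=0\}$. (Equivalently, one may invoke the contact characterization $\inf_{H=k}\partial_p H\cdot p>0$ of supercriticality, which directly gives $p\neq 0$ on $\{H=k\}$.) Since $\theta(t_0/2)$ lies both on $\{H=k\}$ and on the zero section, this is impossible, so no reversible orbit exists on a supercritical energy level.

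The only mildly delicate point is the passage in the first step from equality of the \emph{images} of $\theta$ and $\tilde\theta$ to the pointwise symmetry $\Re\circ\theta(t)=\theta(t_0-t)$ of the time-parametrizations, which rests on the uniqueness of flow-line parametrizations up to a constant time shift (together with the fact that, $\{H=k\}$ containing no equilibria, $\theta$ is a genuine curve rather than a point); everything else is immediate from the definitions.
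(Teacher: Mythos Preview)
Your proof is correct and follows essentially the same approach as the paper: a reversible orbit must meet the fixed-point set of $\Re$, i.e.\ the zero section, and this is incompatible with lying on a supercritical energy level. The only minor difference is in how the contradiction is drawn at the end: you argue directly that $H(q,0)=K(q,0)+U(q)=U(q)<k$, so the zero section misses $\{H=k\}$, whereas the paper instead notes that at such a point $\dot Q=\partial_pH(q,0)=0$ and then invokes that $\dot Q$ never vanishes on a supercritical level. Your fixed-point argument $\Re(\theta(t_0/2))=\theta(t_0/2)$ is also spelled out in more detail than the paper's, which simply asserts the intersection with the zero section as a known fact about symmetric orbits.
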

\begin{proof}
	If $\big(Q(t),P(t)\big)$ is a reversible orbit, then $\big(Q(t),P(t)\big)$ and $\big(Q(-t),-P(-t)\big)$ are representing the same orbit. Each reversible closed orbit has two points of intersection with the zero section, and the velocity of the projected orbit is zero on these two points. Recall that if $\big(Q(t),P(t)\big)$ is an orbit in a supercritical energy level of generalized classical system then $\dot{Q}(t)$ never vanishes.
\end{proof}


    \begin{definition}
        An orbit segment $\theta|_{[t_0,t_1]}$ is called a \textit{tagged segment} if it does not admit any neat points. The orbit segment is called \emph{strongly neat} if it does not admit any tagged sub-segment.
    \end{definition}

		We prove in this section that:
    \begin{proposition}\label{prop-stronglyneat}
          Any $\mD$-regular orbit segment in a supercritical energy level of a $\mD$-Hamiltonian $H=K+U$, $K \in \mK_\mD^{\rf}$, is strongly neat. 
    \end{proposition}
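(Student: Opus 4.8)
The plan is to argue by contradiction. Suppose $\theta|_{[a,b]}$ is $\mathcal{D}$-regular (every interior time is a $\mathcal{D}$-regular time in the sense of Definition \ref{def:d-reg}) but not strongly neat, so it admits a tagged sub-segment $\theta|_{[c,d]}$, $c<d$: every point $Q(t)$, $t\in[c,d]$, is a self-intersection of the ambient orbit, where $Q=\pi\circ\theta$. Since we are on a supercritical energy level, $\dot Q$ never vanishes, so $Q$ is an immersion, hence a local embedding, and on short enough intervals $\theta$ is injective.

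\textbf{Step 1: extracting a retraced sub-arc.} Choose a short interval $[c',d']\subseteq[c,d]$ on which $Q$ restricts to an embedding with image an embedded arc $\Gamma$. Since no two distinct points of $[c',d']$ can have the same image, every $t\in[c',d']$ is matched by some $s(t)$ outside $(c',d')$, so $\Gamma\subseteq Q(K')$ where $K'$ is the (compact) complement. A Baire-category argument now produces a retracing: cover the compact set $(Q|_{K'})^{-1}(\Gamma)$ by finitely many relatively compact sets on which $Q$ is injective; one of their images has nonempty interior in $\Gamma$, and a continuous bijection of a compact interval onto an embedded arc is a diffeomorphism. This yields disjoint sub-intervals $J\subseteq[c',d']$ and $J'\subseteq K'$ together with a monotone diffeomorphism $\phi\colon J\to J'$ with $Q\circ\phi=Q|_J$. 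Note that $J$'s times are interior to $[a,b]$, hence all $\mathcal{D}$-regular.

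\textbf{Step 2: co-rank $1$ rigidity in the orientation-preserving case.} Put $H$ into the normal form of Proposition \ref{normalform} adapted to $\theta|_J$, so $\theta(t)=(te_1,0)$, and write $\mathcal{D}^\perp_{te_1}=\mathbb{R}(0,\mathbf{n}(t))$; by Lemma \ref{lem:n0}, $\dot{\mathbf{n}}(t)\ne 0$ on $J$, and since $e_1=\partial_pH(q,0)\in\mathcal{D}_q$ near the orbit, every covector in $\mathcal{D}^\perp_q$ has vanishing first component, so a unit section $N$ of $\mathcal{D}^\perp$ has $N_1\equiv 0$. Differentiating $Q\circ\phi=Q|_J$ gives $\phi'(t)\,\partial_pH\big(te_1,P(\phi(t))\big)=e_1$; the fibrewise level set $\{H(te_1,\cdot)=k\}$ descends to the boundary of a bounded strictly convex body in the quotient by $\mathcal{D}^\perp$ (boundedness and the fact that $0$ lies on it use supercriticality and the normal form), so its Gauss map is injective, and when $\phi'>0$ this forces $P(\phi(t))=(0,\lambda(t)\mathbf{n}(t))$ with $\lambda\not\equiv 0$ (because $\theta(\phi(t))\ne\theta(t)$) and $\phi'\equiv 1$. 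Imposing that $t\mapsto\theta(\phi(t))$ solve Hamilton's equations, and using $\partial_qH(q,0)=0$ together with the identity $H(q,\mu N(q))\equiv k$ — which, differentiated in $q$ and combined with $N_1\equiv 0$, annihilates the relevant component of $\partial_qH$ — the momentum equation collapses to $\dot\lambda(t)\mathbf{n}(t)+\lambda(t)\dot{\mathbf{n}}(t)=0$; pairing with $\dot{\mathbf{n}}(t)$ and using $\lvert\mathbf{n}\rvert\equiv 1$ gives $\lambda(t)\lvert\dot{\mathbf{n}}(t)\rvert^2=0$, contradicting $\lambda\not\equiv 0$ and $\dot{\mathbf{n}}\ne 0$.

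\textbf{Step 3: the orientation-reversing case, and conclusion.} If $\phi'<0$, the same Gauss-map argument carried out in the original coordinates, where $H$ is reversible and $\{H(q,\cdot)=k\}$ is symmetric about $0$ in the quotient, gives $P(\phi(t))\equiv -P(t)\pmod{\mathcal{D}^\perp_{Q(t)}}$. If this discrepancy vanishes then $\theta(\phi(t))=\Re(\theta(t))$, so the orbit agrees with its time-reversal and is a reversible orbit on a supercritical level — impossible by the Lemma preceding this proposition; if it does not vanish, applying the computation of Step 2 to the pair $\big(\Re\circ\theta|_J,\ \theta|_{J'}\big)$, which is legitimate since $\Re$ preserves the projection and hence the set of $\mathcal{D}$-regular times, again yields $\lambda\lvert\dot{\mathbf{n}}\rvert^2=0$ and a contradiction. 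In every case we contradict the existence of $\theta|_{[c,d]}$, so $\theta|_{[a,b]}$ is strongly neat. The main obstacle is Step 2: extracting from ``the orbit retraces an arc'' the sharp statement that it must retrace it with the \emph{same} momentum modulo $\mathcal{D}^\perp$, and then forcing a contradiction through Hamilton's equations — this is precisely where co-rank $1$ (so that $\mathcal{D}^\perp$ is a line with a single, non-vanishing ``twist'' vector $\dot{\mathbf{n}}$) and $\mathcal{D}$-regularity are indispensable; Step 1 is routine but fiddly, and Step 3 is handled by reduction to Step 2 plus the known absence of reversible orbits.
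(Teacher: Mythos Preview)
Your proof is correct and shares the same architecture as the paper's: extract a retraced sub-arc via a Baire argument, split into orientation-preserving and orientation-reversing cases, and in each case derive a contradiction from convexity together with reversibility. The paper carries this out through two packaged results: Proposition~\ref{tagged_interval}, which uses Euler's identity for the $2$-homogeneous $K$ to force $\dot\sigma^{2}\equiv 1$, and Corollary~\ref{cor:unique-lift} in the appendix, a general ``uniqueness of normal lifts over $\mathcal{D}$-regular curves'' statement proved via the Pontryagin characterisation of singular curves (Theorem~\ref{thm:pontryagin}).

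Your route differs in implementation rather than in idea. You obtain the momentum constraint via the Gauss map on the strictly convex energy level in the quotient $T^*_qM/\mathcal{D}^\perp$ (rather than via Euler's identity), and you prove the uniqueness of the normal lift by an explicit computation in the normal form of Proposition~\ref{normalform}: differentiating $H(q,\mu N(q))\equiv k$ and using $N_1\equiv 0$ to kill $\partial_q H$, then reducing Hamilton's equations to $\dot\lambda\,\mathbf{n}+\lambda\,\dot{\mathbf{n}}=0$ and concluding $\lambda\equiv 0$ from $\dot{\mathbf{n}}\ne 0$. This argument is specific to co-rank~$1$ (it uses that $\mathcal{D}^\perp$ is a line) but is self-contained and avoids the control-theoretic appendix; the paper's version is cleaner to state and works in any co-rank, at the cost of invoking Pontryagin. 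Your Step~3 is terse but sound: once $\phi'=-1$ and $P(\phi(t))+P(t)\in\mathcal{D}^\perp$, the curve $t\mapsto\Re\big(\theta(\phi(t))\big)$ is a \emph{forward} orbit with the same projection as $\theta|_J$, and your Step~2 computation applied to this pair forces the discrepancy to vanish, whence the orbit is reversible --- exactly the paper's conclusion.
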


		\begin{proof}[Proof of Proposition \ref{prop:p2p3} using Proposition \ref{prop-stronglyneat}]
				Let $(s_0, x_0, T, U) \in \mP^2$ and $\theta(t)$ be the orbit of $(s_0, x_0, T, U)$, then there exists a regular time interval $[t_0, t_1]$ for $\theta$. Proposition \ref{prop-stronglyneat} implies that $[t_0, t_1]$ is strongly neat, which implies that the neat times are dense in $[t_0, t_1]$. In particular, there exists $t$ that is both neat and $\cD$-regular, i.e.
				\[
				  \mP^2 \subset \mP^3.  
				\]
				Since $\mP^3 \subset \mP^2$, we conclude.
\end{proof}

We continue with the proof of Proposition \ref{prop-stronglyneat}.

    \begin{proposition}\label{tagged_interval}
				Assume that $\theta$ admits a tagged segment then there exists two disjoint interval $[t_0,t_1], [\tau_0,\tau_1] \in \mathbb{R}/s_0\mathbb{Z}$ such that either the projections of orbit segments coincide:
				\[
						\pi \circ \theta(t + t_0) = \pi \circ \theta(t + \tau_0), \quad \forall 0 \le t \le t_1 - t_0
				\]
				or they are time-reversals of each other:
				\[
				  		\pi \circ \theta(t + t_0) = \pi \circ \theta(- t + \tau_1), \quad \forall 0 \le t \le t_1 - t_0.  
				\]
    \end{proposition}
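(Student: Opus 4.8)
We work in the situation of Proposition~\ref{prop-stronglyneat}: $H=K+U$ with $K\in\cK_\cD^\rf$, $k$ is supercritical, and $\theta=(Q,P)\colon\R/s_0\Z\to T^*M$ is a periodic orbit of prime period $s_0$ on $\{H=k\}$; write $Q=\pi\circ\theta$. Since $k$ is supercritical, $\dot Q$ never vanishes, so by Definition~\ref{def:neat} a tagged segment $\theta|_{[a,b]}$ is one for which $Q(t)$ is a self-intersection point of $Q$ for \emph{every} $t\in[a,b]$. As $\dot Q$ is nowhere zero, $Q$ is an immersion of a compact circle, hence there is $\rho>0$ with $Q|_{[\tau-\rho,\tau+\rho]}$ injective for all $\tau$; consequently $Q^{-1}(x)$ is finite for each $x$, and any two distinct times with the same $Q$-value lie at distance $\ge\rho$ in $\R/s_0\Z$.

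The heart of the argument is to upgrade this to a smooth doubling of a sub-arc. For $t\in[a,b]$ set $N(t)=\#\{s\ne t:Q(s)=Q(t)\}\in\{1,2,\dots\}$. Then $N$ is upper semicontinuous: if $t_n\to t$ with $N(t_n)\ge m$, choose $m$ distinct partners of each $t_n$ and pass to a convergent subsequence; the first paragraph forces the $m$ limits to be distinct partners of $t$, so $N(t)\ge m$. Hence each $\{N=k\}$ is locally closed, $[a,b]=\bigsqcup_{k\ge1}\{N=k\}$ is a countable union of locally closed sets, and the Baire category theorem yields an open interval $J\subset[a,b]$ on which $N\equiv k$. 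Fix $t^*\in J$ with partners $s^1,\dots,s^k$ (pairwise, and from $t^*$, at distance $\ge\rho$); a compactness argument as above shows that for $t\in J$ close to $t^*$ the $k$ partners of $t$ lie one in each ball $B(s^i,\rho/3)$, which defines continuous branches $\psi_i$. Restricting $\psi:=\psi_1$ to a short interval $J'\subset J$ of length $<\rho$ about $t^*$ we obtain $Q\circ\psi=Q|_{J'}$, $\psi(t)\ne t$, and $\psi$ injective (by shortness), hence a strictly monotone homeomorphism onto an interval $\psi(J')$ of length $<\rho$ disjoint from $J'$; since $Q|_{J'}$ and $Q|_{\psi(J')}$ are then $C^\infty$ embeddings with a common image arc, $\psi=(Q|_{\psi(J')})^{-1}\circ Q|_{J'}$ is $C^\infty$ with $\psi'$ of constant sign.

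Fiberwise strict convexity now pins down $\psi$. Fix $t\in J'$ and $x=Q(t)=Q(\psi(t))$; then $P(t),P(\psi(t))\in T^*_xM$ lie on $\{H(x,\cdot)=k\}$, and $\partial_pH(x,P(t))=\dot Q(t)$, $\partial_pH(x,P(\psi(t)))=\dot Q(\psi(t))$ are parallel (both tangent to the common arc). On the quotient $\cD^*_x=T^*_xM/\cD^\perp_x$ the function $\bar K:=K(x,\cdot)$ is strictly convex, $2$-homogeneous and even, and by supercriticality $\{\bar K=k-U(x)\}$ is a compact, strictly convex, centrally symmetric hypersurface around $0$; $\nabla\bar K$ restricts to it as a diffeomorphism onto the dual hypersurface and maps antipodes to antipodes, and a line through $0$ meets the dual hypersurface in exactly two antipodal points. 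Hence parallel values of $\partial_pH(x,\cdot)$ along $\{H(x,\cdot)=k\}$ occur only at $\pm$ the same point, so $\bar P(\psi(t))=\sigma\bar P(t)$ with $\sigma=\sigma(t)\in\{\pm1\}$; since $\bar P$ is continuous and nonvanishing on $\{H=k\}$ (as $H(x,0)=U(x)<k$), $\sigma$ is constant on $J'$. Differentiating $Q(\psi(t))=Q(t)$ gives $\dot Q(\psi(t))\psi'(t)=\dot Q(t)$, while $\dot Q(\psi(t))=\partial_pH(x,P(\psi(t)))$ depends only on $\bar P(\psi(t))=\sigma\bar P(t)$, and since $H(x,\cdot)$ is even (reversibility of $K$) it equals $\sigma\,\partial_pH(x,P(t))=\sigma\dot Q(t)$. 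As $\dot Q(t)\ne0$ we get $\psi'\equiv\sigma$, so $\psi(t)=t+c$ when $\sigma=1$ and $\psi(t)=-t+c$ when $\sigma=-1$, with $c\ne0$ at distance $\ge\rho$ from $0$ in $\R/s_0\Z$. Taking a short closed $[t_0,t_1]\subset J'$ and setting $(\tau_0,\tau_1)=(t_0+c,t_1+c)$ in the first case and $(\tau_0,\tau_1)=(c-t_1,c-t_0)$ in the second, one checks using $|c|\ge\rho$ that $[t_0,t_1]$ and $[\tau_0,\tau_1]$ are disjoint and that $\pi\circ\theta(t+t_0)=\pi\circ\theta(t+\tau_0)$ in the first case, $\pi\circ\theta(t+t_0)=\pi\circ\theta(-t+\tau_1)$ in the second, for $0\le t\le t_1-t_0$ — exactly the two alternatives in the statement.

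The main obstacle is the second paragraph: converting the purely set-theoretic fact ``every point of the segment is a self-intersection'' into a genuinely \emph{smooth, strictly monotone} partner map defined on a whole interval. A priori the partner is highly non-unique and a nearest-partner selection can be discontinuous, so one really needs the semicontinuity-plus-Baire step to locate an interval of constant multiplicity before branches can be selected continuously. The convexity lemma of the third paragraph also requires care, because $H$ is degenerate along $\cD^\perp$; the argument must be carried out in the quotient $T^*M/\cD^\perp$, where the energy level is genuinely compact and strictly convex.
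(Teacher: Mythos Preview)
Your proof is correct and follows the same three-step strategy as the paper: (i) finiteness of self-intersections via nonvanishing of $\dot Q$ on a supercritical level, (ii) a Baire-type argument producing a smooth partner map $\psi$ on a subinterval, and (iii) a convexity/homogeneity argument forcing $\psi'\equiv\pm1$. The implementations differ only in detail. For (ii), you track the upper-semicontinuous multiplicity function $N$ and pick an interval where it is constant; the paper instead fixes one fiber, lists its finitely many preimage arcs, and applies the covering Lemma~\ref{finite_curves}. For (iii), you use the geometric fact that on the centrally symmetric strictly convex energy surface in $T^*_xM/\cD_x^\perp$ parallel gradients occur only at antipodal points, giving $\bar P(\psi(t))=\pm\bar P(t)$ directly; the paper pulls the scalar $\dot\sigma$ inside $\partial_pH$ using full $1$-homogeneity (which, as you implicitly use, requires the reversibility $K(q,-p)=K(q,p)$), then combines Euler's identity with the energy constraint and supercriticality to obtain $\dot\sigma^2=1$. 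Both routes are equivalent; yours is perhaps slightly more geometric, the paper's slightly more algebraic.
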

    To prove the above proposition first we need to state the following lemma.
\begin{lemma}\label{finite_curves}
Let $I_0, \cdots, I_k$ be non-empty closed intervals in $\R/T\Z$, and $Q_i: I_i \to \R^d$ be homeomorphisms onto their images. Suppose 
\[
		\bigcup_{i = 1}^k Q_i(I_i) \supset  Q_0(I_0), 
\]
then there exists $i \in \{1,2, \hdots, k\}$ and a nontrivial open interval $J \subset I_i$ such that
\[
  Q_i(J) \subset Q_0(I_0).
\]
\end{lemma}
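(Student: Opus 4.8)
The plan is to reduce the statement to the Baire category theorem applied to the compact arc $K := Q_0(I_0)$. We may assume all the $I_i$ are non-degenerate, as in the intended application. First I would observe that $K$ is compact, being the continuous image of the compact interval $I_0$, hence a complete metric space in the subspace topology, and that $Q_0$ is a homeomorphism of $I_0$ onto $K$. The covering hypothesis gives
\[
		K = \bigcup_{i=1}^{k}\bigl(K \cap Q_i(I_i)\bigr),
\]
and each set $K \cap Q_i(I_i)$ is closed in $K$, since $Q_i(I_i)$ is a compact (hence closed) subset of $\R^d$. By the Baire category theorem applied to $K$, at least one of these sets, say $K \cap Q_{i_0}(I_{i_0})$, has non-empty interior relative to $K$: there is a non-empty relatively open $U \subset K$ with $U \subset Q_{i_0}(I_{i_0})$.

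Next I would transport this back to the parameter intervals. Since $Q_0 : I_0 \to K$ is a homeomorphism, $Q_0^{-1}(U)$ is a non-empty open subset of $I_0$, so it contains a non-degenerate closed subinterval $I_0' \subset I_0$; set $A := Q_0(I_0')$. Then $A$ is a non-degenerate sub-arc of $K$ with $A \subset U \subset Q_{i_0}(I_{i_0})$. Because $Q_{i_0}$ is a homeomorphism onto its image, the set $B := Q_{i_0}^{-1}(A) \subset I_{i_0}$ is homeomorphic to $A$, hence to a non-degenerate interval; in particular $B$ is a connected subset of $I_{i_0} \subset \R/T\Z$ containing more than one point. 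A connected subset of $\R/T\Z$ with more than one point either is all of $\R/T\Z$ or lifts to a connected subset of $\R$, and in either case it is an interval that contains its non-empty interior; so $B$ contains a non-trivial open interval $J$. Then $Q_{i_0}(J) \subset Q_{i_0}(B) = A \subset Q_0(I_0)$, which is exactly the claim with $i = i_0$.

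The only delicate point, and the one I would be most careful about, is the transition from ``non-empty relative interior of $K \cap Q_{i_0}(I_{i_0})$ inside $K$'' to ``a genuine open interval in the domain $I_{i_0}$'': one cannot simply pull $U$ back along $Q_{i_0}$, because $U$ is open in $K$ but need not be open in $Q_{i_0}(I_{i_0})$. This is why the argument first shrinks $U$ to an honest arc $A$ lying inside it (using the homeomorphism $Q_0$), and only then pulls $A$ back along $Q_{i_0}$, where connectedness of the arc — together with the fact that $Q_{i_0}^{-1}$ is continuous on $Q_{i_0}(I_{i_0})$ — guarantees that the preimage $B$ is a sub-interval with non-empty interior. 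Everything else is routine point-set topology.
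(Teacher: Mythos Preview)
Your proof is correct and follows essentially the same route as the paper: both arguments cover a compact interval by the finitely many closed sets coming from the $Q_i(I_i)$'s, invoke Baire to find one with non-empty interior, and then pull back along the homeomorphism $Q_i$ to obtain the desired sub-interval $J$. The only cosmetic difference is that the paper applies Baire in the parameter interval $I_0$ (to the sets $\mT_i = Q_0^{-1}(Q_i(I_i))$) whereas you apply it in the image $K = Q_0(I_0)$; since $Q_0$ is a homeomorphism these are equivalent, and in fact your treatment of the final pullback step is more careful than the paper's.
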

\begin{proof}
		For $t \in I_0$, define
\[
		\mI(t)  = \{ i \in \{1, \dots, k\} \mid  \exists s \in I_i \text{ such that } Q_i(s) = Q_0(t)\} .
\]
Suppose $t_k \in I_0$ is such that $t_k \to t$ and $i \in \mI(t_k)$, then there exists a subsequence $k_j$ and $s_j \in I_i$ such that $Q_i(s_j) = Q_0(t_{k_j})$ and $s_j \to s \in I_i$. It follows that $Q_i(s) = Q_0(t)$, so $i \in \mI(t)$. This means the set
\[
		\mT_i = \{t \in I_0 \mid i \in \mI(t) \}  
\]
is a closed set. Since  by assumption $\bigcup_{i = 1}^k \mT_i = I_0$, by Baire's category theorem, at least one of $\mT_i$ must have non-empty interior. Let $J' \subset \mT_i$ be an nonempty open interval, then $i \in \mI(t)$ for all $t \in J'$, i.e.
\[
		Q_i(I_i) \supset Q_0(J).  
\]
Since $Q_i$ is a homeomorphism, the set $Q_i^{-1}(Q_0(J))$ is a topological open segment, which equals $Q_i(J)$ for some $J \subset I_i$.
\end{proof}
    \begin{proof}[Proof of \Cref{tagged_interval}]
        Let $ \tilde{q} \in \{\pi \circ  \theta (t) \mid t \in [t_0,t_1]\}$ be given. We claim that the following set is finite 
            $$
               \Omega(\tilde{q})=\{t \in [0,T] \mid \pi \circ \theta(t)=q_0\}.
            $$
        We argue by contradiction: If $\Omega(q_0)$ is not finite then there exists a sequence $t_k$ such that $\pi \circ \theta(t_k)=\tilde{q}.$ Up to taking a subsequence, $t_k$ has a an accumulation point $t_0$, and $\pi \circ \dot{\theta}$ vanishes at $t_0$, which contradicts with the supercriticality of the energy. 

        Let's say $\Omega(q_0)$ has $k+1$ elements, namely $t_0,t_1,t_2, \hdots,t_k$. Consider the projected orbit segments $Q_i(t):=\pi \circ \theta([t_i-\delta,t_i+\delta])$ where $\delta>0$ is a chosen sufficiently small such that $I_i:=[t_i-\delta,t_i+\delta]$ are disjoint intervals and $Q_i$ are homeomorphisms onto their images. By Lemma \ref{finite_curves}, there exists $j$ and $l$ in $\{0,1,2, \hdots, k\}$, and $\delta_1,\delta_0>0$ such that
				\[
				  \pi \circ \theta\big((t_j-\delta_0,t_j+\delta_0)\big)=\pi \circ \theta\big((t_l-\delta_1,t_l+\delta_1)\big).
				\]
				Let $K_0 = (t_j - \delta_j, t_j + \delta_j)$ and $K_1 = (t_l - \delta_l, t_l + \delta_l)$. 
				Since $\pi \circ \theta$ is a local diffeomorphism, we may assume that $\delta_j, \delta_l$ are small enough that 
				\[
						\sigma := \left( (\pi \circ \theta)|_{K_1}\right)^{-1}  	\circ	(\pi \circ \theta)|_{K_0}  
				\]
				is a diffeomorphism.

        Since $\pi \circ \theta(t)=\pi \circ \theta\big(\sigma(t)\big)$ for all $t \in K_0$ we have
				\[
            d\pi \circ \dot{\theta}(t) = d\pi \circ \dot{\theta}\big(\sigma(t)\big)\dot{\sigma}(t),
				\]
				and hence if we let $\theta(t)=\big(Q(t),P(t)\big)$,
				\[
            \partial_p H\big(Q(t),P(t)\big) =\dot{\sigma}(t) \partial_pH\big(Q(\sigma(t)),P(\sigma(t))\big)=\partial_pH\big(Q(\sigma(t)),\dot{\sigma}(t)P(\sigma(t))\big),
				\]
				where the last equality hols because $\partial_pH(q,p)$ is 1-homogeneous. Since $H(Q(t), \cdot)$ is strictly convex on $T^*_{Q}M/\cD^\perp$, $\partial_p H(Q, \cdot)$ is one-to-one on  $T^*_{Q}M/\cD^\perp$. Therefore
\begin{equation}  \label{eq:Psigma}
						P(t) - \dot{\sigma} P(\sigma(t)) =: \eta(t) \in \cD^\perp(Q(t)).
\end{equation}

				By Euler's theorem for homogeneous functions, and because $\big(Q(t),P(t)\big)$ and $\big(Q(\sigma(t)),P(\sigma(t))\big)$ have the same energy, we have 
        \begin{align*}
\big\langle \partial_p H\big(Q(t),P(\sigma(t))\big), P\big(\sigma(t)\big)\big\rangle 
& = \big\langle \partial_p H\big(Q(t),P(t)\big), P(t)\big\rangle \\
& = \dot{\sigma}(t) \big\langle \partial_p H\big(Q(t),P(\sigma(t))\big), P\big(t\big)\big\rangle.
        \end{align*}
				Therefore
            \begin{align*}
								0 & =  \big\langle \partial_p H\big(Q(t),P(t)\big), P(\sigma(t))-\dot{\sigma} P(t)\big\rangle \\
									& =  \big\langle \partial_p H\big(Q(t),P(t)\big), P(\sigma(t))-\dot{\sigma}^2
										 P(\sigma(t)) - \dot\sigma \eta(t)
									\big\rangle \\
									& =  \big\langle \partial_p H\big(Q(t),P(t)\big), P(\sigma(t))-\dot{\sigma}^2
										 P(\sigma(t)) \\
									& = (1 - \dot\sigma^2)
\big\langle \partial_p H\big(Q(t),P(t)\big), P(\sigma(t))
									\big\rangle,
            \end{align*}
						where we used \eqref{eq:Psigma} in the second line. Since $\big\langle \partial_p H\big(Q(t),P(t)\big), P(\sigma(t)) \ne 0$ on a supercritical energy level, we get
								\[
								  \dot\sigma^2(t) \equiv  1.  
								\]
								Since $\dot\sigma$ is continuous, $\dot\sigma \equiv \pm 1$. The positive case imply $\pi \circ \theta|_{K_0}$ and $\pi \circ \theta|_{K_1}$ have the same parametrization, the negative case imply they are time reversals of each other.
     \end{proof}
    
		 \begin{proof}[Proof of Proposition \ref{prop-stronglyneat}]
				 Suppose that $\theta = (Q(t), P(t))$ is not strongly neat then it must admit a tagged segment, so there exists two intervals $[t_0,t_1],[\tau_0,\tau_1] \in \mathbb{R}/T\mathbb{Z}$ for which the claims of Proposition \ref{tagged_interval} holds. If the orbit segments coincide, then $\theta|_{[t_0, t_1]}$ and $\theta|_{[\tau_0, \tau_1]}$ are two different normal lifts of the same projection, which contradicts with Corollary \ref{cor:unique-lift} in the Appendix. Therefore, the orbits segments are time-reversals:
				 \[
						 Q(t + t_0) = Q(-t + \tau_1), \quad t \in [0, t_1 - t_0].   
				 \]
				 Since $H$ is reversible, the curve
				 \[
						 (Q(-t + \tau_1), - P(-t + \tau_1))  
				 \]
				 is also an orbit. Apply Corollary \ref{cor:unique-lift} again, we get
				 \[
				   P(t + t_0) =  - P(-t + \tau_1)),   \quad t \in [0, t_1 - t_0].   
				 \]
				 This implies the orbits $\theta(t)$ coincide with its time-reversal, i.e. $\theta$ is reversible. Since there are no reversible orbit in this energy level, we have a contradiction.
    \end{proof}
      
		\subsection{Proof of main theorems}

		We are now ready to prove the main theorems.

		\begin{proof}[Proof of Theorem \ref{main_thm}]
		For this proof, we choose the set $\Upsilon$ to be the set of symplectic matrices that does not admit a root of unity as eigenvalue. Define
		\[
		\begin{aligned}
				& \mP^1_* = \{(s_0, x_0, T, u) \in \mP^1 : \\
				& \qquad \text{ the linearized Poincar\'e map does not admit 1 as an eigenvalue} \}, \\
				& \mP^2_* = \{ (s_0, x_0, T, u) \in \mP^2: \\
				& \qquad \text{ the linearized Poincar\'e map does not take value in $\Upsilon$} \}.
		\end{aligned}
		\]
		The theorem follows if we can show $\Pi(\mP^1 \setminus \mP^1_*)$ is a subset of meager $F_\sigma$ set, i.e, a meager set.

		Define the map $R_k(s_0, x_0, T, u) = (ks_0, x_0, T, u)$ for $k \in \mathbb{N}$, then $R_k$ is a homeomorphism from $\mP^2$ to its image. We have
		\[
		  \mP^1 = \bigcup_k R_k(\mP^2), \quad   
		  \mP^1_* = \bigcup_k R_k(\mP^2_*).
		\]
		By Propsitions \ref{prop:p2p3} and  \ref{prop:p3p5}, $\Pi(\mP^2 \setminus \mP^5(\Upsilon))$ is a meager $F_\sigma$ set. Since $\mP^2_* \subset \mP^2 \setminus \mP^5(\Upsilon)$, $\Pi(\mP^2 \setminus \mP^2_*)$ is meager, therefore so is $\Pi(\mP^1 \setminus \mP^1_*)$. 
\end{proof}

\begin{proof}[Proof of Theorem \ref{sub-Riemannian}]
		Given an energy $k > 0$, let $\star \in \{\qd, \rf\}$, 
\[
  \cM_k := \{U \in  C^\infty(M) \mid k - \max_q U(q) > 0\}, 
\]
and define the Maupertuis map
\[
  F: \cK^\star_\cD \times \cM_k \to \cK^\star_\cD, \quad	(K, U) \mapsto  \frac{K}{k - U} \in \cK_\cD^{\star}.
\]
Note that for a fixed $U_0$, the mapping $K \mapsto F(K, U_0)$ is a homeomorphism.

By Theorem \ref{main_thm} there exists a dense $G_\delta$ set $\cG \subset \cM_k$ such that $(K, U)$ satisfies the bumpy metric theorem for $\cD-$regular periodic orbits. We will show $F(\cG)$ contains a dense $G_\delta$ subset in $\cK^\star_\cD$.  

Call the complement of a meager set comeager. We invoke the Kuratowski-Ulam theorem (see \cite{Kec95}, Theorem 8.41), which says if $A \subset X \times Y$ is comeager, where both $X, Y$ are second countable, then for a comeager subset of $x \in X$, the set $\{y \mid (x, y) \in A\}$ is comeager. Applied to the set $\cG \subset \cK^\star_\cD \times \cM_k$, we obtain at least for some $U_0 \in \cM_k$, the set
\[
		\{K \mid (K, U_0) \in \cG\}  
\]
is comeager in $\cK^\star_\cD$. Since $F(\cdot, U_0)$ is a homeomorphism, 
\[
  F\left( \{(K, U_0) \mid (K, U_0) \in \cG\}  \right)  
\]
is comeager in $\cK^\star_\cD$. 

Note that the Hamiltonian flow of $H = K + U$ on energy level $k$ is a time change of the Hamiltonian flow of $F(K, U)$ on energy level $1$. We conclude that conclusion of Theorem \ref{main_thm} holds for all $\{F(K, U) \mid (K, U_0) \in \cG\}$, and the latter is a comeager set. 
\end{proof}

 \section{The \mane control problem}
 \label{sec:mane-control}

 We prove Theorem \ref{thm:controllable} in this section. First, let us denote by $F_{ij}$ the matrix in $\mM(d)$ with $(i,j)$ entry equals to $1$ and the rest equal $0$. Then $F_{ij}$ is the standard basis of $\mM(d)$. Let 
 \[
		 E_{ij} = F_{ij} + F_{ji}  
 \]
 which forms a basis of $\mS(d)$.

 Let $\bn: [0, \delta] \to \R^d$  be the unit vector in the direction of $\mD^\perp_{te_1}$ in the normal coordinates. By Lemma \ref{lem:n0}, $\bn$ satisfies 
\begin{equation}  \label{eq:n0-proof}
   \|\bn(t)\| = 1, \quad \bn(0) = e_d, \quad \dot\bn(0) \ne 0. 
\end{equation}
We wish to prove that a generic smooth curve $A \in \cA_\delta(\bn)$ is \mane controllable, that is, the end-point mapping for following control problem 
\begin{equation}  \label{eq:control}
		\dot{X}_w(t)=
		\begin{bmatrix}
				0  & A(t) \\ 
				0  & 0 
		\end{bmatrix}X_w(t)+ 
		\sum_{1 \le i \le j \le d } 
		w_{ij}(t)  
		\begin{bmatrix}
				0 & 0 \\ 
				E_{ij} & 0
		\end{bmatrix} X_w(t), \quad X_w(0)=I.
\end{equation}
is a submersion at $w=0$. See Definitions \ref{def:cA} and \ref{def:mane-control}.

 \subsection{The bracket generating condition}

 The proposition that follows has been proved by Rifford and Ruggiero in \cite{RR12}, and it provides a sufficient condition for controllability of first order of the end-point mapping $\mE.$ To see a proof, check Section 2 of \cite{RR12}, Section 2.5 of \cite{Lazrag2014ControlTA}, or Section 2.1 of \cite{Aslanithesis}.  
 \begin{proposition}\label{prop:bracket}
 Let $\mE$ be the end-point mapping (see Definition \ref{def:mane-control}) associated with the control problem given in equation  \eqref{eq:control}. Define 
 \begin{align}
		 B^1_{ij}(t)&:= 
		 \begin{bmatrix}
				 0 & 0 \\
				 E_{ij} & 0
		 \end{bmatrix}, \\
		 B^{\ell+1}_{ij}(t)&:=[Y(t),B^{\ell}_{ij}(t)]+\dot{B}^{\ell}_{ij}, \label{eq_B}
 \end{align}
 where $[A, B] = AB - BA$, and $Y(t) = \bmat{0 & A(t) \\ 0 & 0}$. Then 
 $$d\mE(0)\big(L^1([0,\delta];\mathbb{R}^{2d})\big)=T_{X(\delta)}Sp(2d,\mathbb{R})$$ 
 if there exists $\bar{t} \in [0,\delta]$ such that
 $$
 span\big\{\{B_{ij}^{\ell}(\bar{t}) \mid i,j \in \{1,2, \hdots, d\}, \hspace{1.2 mm}\ell \in \mathbb{N}\big\}=T_ISp(2d,\mathbb{R}).
		 $$
 \end{proposition}

 Since $\frac{d}{dt}\|\bn(t)\|^2 = 0$, we have $\dot\bn(0) \cdot \bn(0) = \dot\bn(0) \cdot e_d = 0$. Therefore
 \begin{equation}  \label{eq:v}
		 \dot\bn(0) = \bmat{\bar{v} \\ 0} = \bmat{\bar{v}_1 \\ \vdots \\ \bar{v}_{d-1} \\ 0}, 
		 \quad \bar{v} \ne 0.
 \end{equation}

 Let $\cO(d-1, 1) \subset \cO(d)$ denote the space of orthogonal matrices preserving $e_d$. In other words, if $G \in \cO(d-1, 1)$ then
 \[
		 G = \bmat{\bar{G} & 0 \\ 0 & 1}, \quad \bar{G} \in \cO(d-1). 
 \]
 
 Note that if $\bn$ satisfies \eqref{eq:n0-proof}, and $A \in \cA_\delta(\bn)$, then $G \bn$ also satisfies \eqref{eq:n0-proof}, and  we have 
    $$\Ad_G(A) := G A G^T \in \cA_\delta(G\bn).$$

\begin{proposition}\label{prop:G-conjugation}
 The curve $A \in \cA_\delta(\bn)$ is \mane controllable if and only if $\Ad_G(A) \in \cA_\delta(G \bn)$ is \mane controllable. 
\end{proposition}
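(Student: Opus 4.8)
The plan is to realize the conjugation $A \mapsto \Ad_G(A)$ at the level of the control problem \eqref{eq:control} as a symplectic change of variables together with a linear change of the controls, and then to use the elementary fact that pre-composing the end-point map with a linear isomorphism fixing the origin, and post-composing with a diffeomorphism, does not affect whether it is a submersion at $w=0$.

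First I would set $\mathcal{G} = \bmat{G & 0 \\ 0 & G}$ and note that, since $G$ is orthogonal, $\mathcal{G}$ preserves the standard symplectic form, so $\mathcal{G} \in Sp(2d,\R)$ and conjugation $C_{\mathcal{G}}: S \mapsto \mathcal{G} S \mathcal{G}^{-1}$ is an inner automorphism of $Sp(2d,\R)$, hence a diffeomorphism. Using $G^{-1} = G^T$, one has $\mathcal{G}\bmat{0 & N \\ 0 & 0}\mathcal{G}^{-1} = \bmat{0 & GNG^T \\ 0 & 0}$ and $\mathcal{G}\bmat{0 & 0 \\ N & 0}\mathcal{G}^{-1} = \bmat{0 & 0 \\ GNG^T & 0}$ for any $d \times d$ matrix $N$. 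Hence, if $X_w$ is the fundamental solution of \eqref{eq:control} for $A$ with control $w$, the conjugated matrix $\tilde X = \mathcal{G} X_w \mathcal{G}^{-1}$ satisfies an equation of the same form with drift matrix $\bmat{0 & \Ad_G(A) \\ 0 & 0}$ and control matrices $\bmat{0 & 0 \\ G E_{ij} G^T & 0}$, and $\tilde X(0) = I$.

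The key observation is that $G E_{ij} G^T$ is symmetric, so it expands uniquely as $\sum_{k \le l} c^{kl}_{ij} E_{kl}$ in the basis $\{E_{kl}\}$ of $\mS(d)$, with coefficients $c^{kl}_{ij}$ depending only on $G$. The associated linear map $T_G: (w_{ij}) \mapsto (\tilde w_{kl})$ with $\tilde w_{kl} = \sum_{i \le j} c^{kl}_{ij} w_{ij}$ is invertible, precisely because $N \mapsto GNG^T$ is a linear automorphism of $\mS(d)$; it induces a time-independent linear isomorphism of $L^2([0,\delta],\R^{d(d+1)/2})$ taking $w=0$ to $\tilde w=0$ and the open set $\mC$ associated to $A$ onto the one associated to $\Ad_G(A)$ (solutions of \eqref{eq:control} are defined on all of $[0,\delta]$ for $w$ if and only if they are for $\tilde w$). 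Therefore $\tilde X$ is exactly the fundamental solution of \eqref{eq:control} for $\Ad_G(A)$ with control $\tilde w = T_G w$, and the end-point maps are related by $\mathcal{E}_{\Ad_G(A)} = C_{\mathcal{G}} \circ \mathcal{E}_A \circ T_G^{-1}$.

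Finally, since $C_{\mathcal{G}}$ is a diffeomorphism of $Sp(2d,\R)$ and $T_G^{-1}$ is a linear isomorphism with $T_G^{-1}(0) = 0$, the map $\mathcal{E}_{\Ad_G(A)}$ is a submersion at $w=0$ if and only if $\mathcal{E}_A$ is, which by Definition \ref{def:mane-control} is precisely the asserted equivalence of \mane controllability. I do not anticipate a genuine obstacle here; the only points needing care are verifying that $T_G$ is a bona fide time-independent linear isomorphism and that the conjugation $\tilde X = \mathcal{G} X_w \mathcal{G}^{-1}$ is compatible with the admissibility domain $\mC$, both of which follow directly from the structure of the equations.
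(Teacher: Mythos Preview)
Your argument is correct and follows essentially the same approach as the paper: conjugating by $\mathcal{G} = \bmat{G & 0 \\ 0 & G}$ and reparametrizing the controls via the basis change $E_{ij} \mapsto G E_{ij} G^T$ of $\mS(d)$. The only cosmetic difference is that you relate the full end-point maps by $\mathcal{E}_{\Ad_G(A)} = C_{\mathcal{G}} \circ \mathcal{E}_A \circ T_G^{-1}$ and then differentiate, whereas the paper writes down the integral formula for $d\mathcal{E}(0)$ and conjugates that directly; the content is the same.
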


       \begin{proof}
        The differential of the end-point mapping $\mE$ at $w = 0$ can be written as 
        \begin{equation}
           d\mE(0)(v)=\sum_{i \le j}  X(\delta)\int_0^\delta v_{ij}(t)X^{-1}(t)
                \begin{bmatrix}
                    0 & 0 \\ 
										E_{ij} & 0 
                \end{bmatrix}X(t) dt.
        \end{equation}
            Where $X(t)$ is the solution of the homogeneous initial value problem with respect to (\ref{eq:control}), that is 
            $$\dot{X}(t)=
            \begin{bmatrix}
                0 & A(t) \\ 
                0 &  0
            \end{bmatrix}X(t), \quad X(0)=I. $$
            Let $G \in \mO(d-1,1)$ be given. Consider the following control problem 
             \begin{align*}
                         \dot{X}_w(t)=
                    &\begin{bmatrix}
                        G & 0 \\
                        0 & G
                    \end{bmatrix}     
                \begin{bmatrix}
                    0  & A(t) \\ 
                    0 & 0 
                \end{bmatrix}
                    \begin{bmatrix}
                        G^T & 0 \\ 
                        0 & G^T
                    \end{bmatrix}X_w(t)+ \\
                    &\sum_{i \le j} 
                                    w_{ij}(t)  
                                        \begin{bmatrix}
                                            0 & 0 \\ 
																						G E_{ij} G^T & 0
                                        \end{bmatrix} X_w(t), \quad X_w(0)=I. \numberthis \label{eq_control_G}
                \end{align*}
								Note that $G E_{ij} G^T$ is a basis for $\mS(d)$. Let $\tilde{\mE}$ be the end-point mapping associated with (\ref{eq_control_G}), and $\mG=\begin{bmatrix}
                G & 0 \\
                0 & G
           \end{bmatrix}$. We have 
               \begin{align*}
                    d \tilde{\mE}(0)(v)&=\sum_{i \le j}  \mG X(\delta)\mG^T\int_0^\delta v_{ij}(t)\mG X^{-1}(t)\mG^T
                \begin{bmatrix}
                    0 & 0 \\ 
										E_{ij} & 0 
                \end{bmatrix}\mG X(t) \mG^T dt, \\ 
                    &=\mG \bigg(\sum_{i \le j}   X(\delta)\mG^T \mG \int_0^\delta v_{ij}(t) X^{-1}(t)
                \begin{bmatrix}
                    0 & 0 \\ 
										G E_{ij} G^T & 0 
                \end{bmatrix} X(t) dt \bigg)\mG^T \\ 
                &=\mG \bigg(\sum_{i \le j}   X(\delta)\int_0^\delta v_{ij}(t) X^{-1}(t)
                \begin{bmatrix}
                    0 & 0 \\ 
										G E_{ij} G^T & 0 
                \end{bmatrix} X(t) dt \bigg)\mG^T  \\ 
                &=\mG d\mE(0)(\tilde{v}) \mG^T, 
               \end{align*}
               where $\tilde{v}$ is given by  
                $$
                    \sum_{i \le j} v_{ij}(t)G E_{ij} G^T=\sum_{i \le j} \tilde{v}_{ij}(t) E_{ij}.
                $$
            So, $d \mE(0)$ is surjective if and if $d \tilde{\mE}(0)$ is surjective. 
        \end{proof}

Here is our main technical result.
\begin{theorem}\label{thm:bracket-gen}
		For a given fixed $\bn(t)$ satisfying \eqref{eq:n0-proof}, then there exists an open and dense subset $\cV$ of $\cA_\delta(\bn)$, such that for all $A \in \cV$,
		\begin{equation}  \label{eq:span}
				\begin{aligned}
& \Span\{B_{ij}^1(0), \cdots, B_{ij}^5(0) \st 1 \le i, j \le d, \, \} = Sp(2d,\mathbb{R}) \\
& \quad = 
\left\{ 
						\bmat{M & S \\ T & - M^T} \mid M \in \cM(d), \quad  S, T \in \mS(d)
				\right\}.
				\end{aligned}
		\end{equation}

\end{theorem}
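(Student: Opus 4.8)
The plan is to obtain openness essentially for free, reduce density to the existence of a single ``good'' $3$-jet of $A$ at $t=0$, and then produce one by an explicit bracket computation. Each $B^{\ell}_{ij}(0)$ with $\ell\le 5$ depends only on the $3$-jet $\big(A(0),\dot{A}(0),\ddot{A}(0),\dddot{A}(0)\big)$, since one application of $L\mapsto[Y,L]+\dot{L}$ raises the order of differentiation by at most one, and ``the $B^{\ell}_{ij}(0)$ span $\mathfrak{sp}(2d,\R)$'' is a Zariski-open condition on this finite-dimensional jet space; hence $\cV$ is open in the $C^{\infty}$ topology and the whole content is density. For density I would first apply Proposition \ref{prop:G-conjugation} to assume $\dot\bn(0)=c\,e_{d-1}$ with $c\ne 0$, and then differentiate the relation $A(t)\bn(t)=0$ at $t=0$. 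Using $A(0)=\bmat{I_{d-1}&0\\0&0}=:\Pi$ and $\dot\bn(0)\perp\bn(0)$, one finds that the constraints cutting out $\cA_\delta(\bn)$ restrict the $3$-jet only through: $A(0)=\Pi$; symmetry of $\dot{A}(0),\ddot{A}(0),\dddot{A}(0)$; and affine relations fixing the last row, hence (by symmetry) the last column, of each of $\dot{A}(0),\ddot{A}(0),\dddot{A}(0)$ in terms of the fixed jet of $\bn$ and the lower-order coefficients — in particular $\dot{A}(0)\,e_d=-\dot\bn(0)$. The top-left $(d-1)\times(d-1)$ blocks $D_1,D_2,D_3\in\mS(d-1)$ of $\dot{A}(0),\ddot{A}(0),\dddot{A}(0)$ are then the only free parameters, and $\operatorname{rank}A(t)=d-1$ holds automatically near $D_1=D_2=D_3=0$ because the kernel relation forces $\det A\equiv 0$. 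Thus the admissible $3$-jets form an open subset of an affine space $\mathcal L\cong\mS(d-1)^{3}$, which is irreducible, and each admissible $3$-jet is realised by a curve in $\cA_\delta(\bn)$ (for instance $A(t)=P(t)B(t)P(t)$ with $P(t)$ the orthogonal projection onto $\bn(t)^{\perp}$ and $B$ a polynomial carrying the prescribed jet).

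The locus in $\mathcal L$ where \eqref{eq:span} fails is the common vanishing set of the maximal minors of the matrix whose columns are the $B^{\ell}_{ij}(0)$ — polynomials in $(D_1,D_2,D_3)$ — hence Zariski-closed, so it is nowhere dense (of measure zero) in $\mathcal L$ as soon as one admissible jet satisfying \eqref{eq:span} is exhibited; pulling back along the continuous, jet-surjective curve-to-$3$-jet map then gives the open dense set $\cV$. To produce such a jet, iterate $B^{\ell+1}=[Y,B^{\ell}]+\dot{B}^{\ell}$ with $Y=\bmat{0&A\\0&0}$; evaluating at $t=0$ (write $A_k=A^{(k)}(0)$, $A_0=\Pi$) this gives
\[
B^{1}_{ij}=\bmat{0&0\\ E_{ij}&0},\qquad B^{2}_{ij}=\bmat{\Pi E_{ij}&0\\ 0&-E_{ij}\Pi},\qquad B^{3}_{ij}=\bmat{A_1E_{ij}&-2\,\Pi E_{ij}\Pi\\ 0&-E_{ij}A_1},
\]
\[
B^{4}_{ij}=\bmat{A_2E_{ij}&-3\big(A_1E_{ij}\Pi+\Pi E_{ij}A_1\big)\\ 0&-E_{ij}A_2},\qquad B^{5}_{ij}=\bmat{A_3E_{ij}&-4A_2E_{ij}\Pi-6A_1E_{ij}A_1-4\Pi E_{ij}A_2\\ 0&-E_{ij}A_3}.
\]

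The family $\{B^{1}_{ij}\}$ spans the summand $\big\{\bmat{0&0\\ T&0}:T\in\mS(d)\big\}$ of $\mathfrak{sp}(2d,\R)$, and modulo it every $B^{\ell}_{ij}$ with $\ell\ge 2$ lies in the complementary summand $\big\{\bmat{M&S\\ 0&-M^{T}}:M\in\mM(d),\ S\in\mS(d)\big\}\cong\mM(d)\oplus\mS(d)$. So the claim reduces to choosing a suitable (e.g.\ generic) admissible $(D_1,D_2,D_3)$ for which the $\mM(d)$-projections $\{\Pi E_{ij},\,A_1E_{ij},\,A_2E_{ij},\,A_3E_{ij}\}_{i\le j}$ span $\mM(d)$, while the relations among them, together with the $\mS(d)$-parts $\Pi E_{ij}\Pi$, $A_1E_{ij}\Pi+\Pi E_{ij}A_1$, $A_1E_{ij}A_1$, $A_2E_{ij}\Pi+\Pi E_{ij}A_2$, span $\mS(d)$. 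This is where $\dot\bn(0)\ne 0$ enters: writing $\dot\bn(0)=\bmat{\bar v\\0}$ as in \eqref{eq:v}, the last row of $A_1E_{ij}$ is $-\big(\bar v_ie_j^{T}+\bar v_je_i^{T}\big)$, so $\bar v\ne 0$ is exactly what makes the ``last-row'' directions of $\mM(d)$ (and, via $A_1E_{ij}\Pi$, of $\mS(d)$) reachable; since there are $4\binom{d+1}{2}>d^{2}$ of these vectors, there is ample room, and a direct check for one explicit normalised jet closes the argument.

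The hard part will be this final verification. Although it is a finite linear-algebra computation, its genuine subtlety is that the last rows of $A_1,A_2,A_3$ are slaved to the jet of $\bn$ rather than free, so one must confirm that enough freedom survives to span the ``last-row'' directions of both $\mM(d)$ and $\mS(d)$ — which is precisely the role played by the regularity hypothesis $\dot\bn(0)\ne 0$, equivalently by $t=0$ being a $\cD$-regular time (Lemma \ref{lem:n0}). Openness, the jet-space description, and the reduction to a single jet are all routine.
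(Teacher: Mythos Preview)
Your reduction is sound and parallels the paper's own: openness from a Zariski-open condition on the $3$-jet, density by showing the bad locus is a proper Zariski-closed subset of an irreducible jet space, and hence only a single explicit jet needs to be exhibited. This is exactly the content of the paper's Proposition~\ref{prop:span-jet}. Your parametrisation of the admissible $3$-jets by the free top-left blocks $D_1,D_2,D_3\in\mS(d-1)$ is a clean alternative to the paper's route, which instead represents each jet analytically via Kato's eigen-decomposition and works with a concrete family $A_{G,\mu,\alpha}=P_G(t)\bigl(\Lambda_0(t)+t\Lambda_1^{\mu}+\tfrac12t^2A_2(\alpha)\bigr)P_G(t)^T$ indexed by $(G,\mu,\alpha)\in\cO(d-1,1)\times\R^{d-1}\times\mS(d-1)$.

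Where the two diverge is precisely the step you flag as ``the hard part'' and then leave undone --- and that step is the entire substance of the theorem. The paper does \emph{not} plug in a single numerical jet and check; instead it picks a specific candidate spanning set from $\{B^{2},B^{3},B^{4}\}$ (Proposition~\ref{prop:B234-gen}) tailored to hit $\mM(d)\oplus\mS^{*}(d)$, and then uses a single $B^{5}_{ii}$ to reach the remaining $(d,d)$ upper-right entry (Proposition~\ref{prop:B5}). The proof of Proposition~\ref{prop:B234-gen} is a genuine elimination: one writes out the dependence relation, solves successively for $b_{ij}$ and $c_{ij}$ in terms of $c_{id}$, and lands on a $(d-1)\times(d-1)$ system whose determinant is shown to be a nontrivial rational function of $(v,\mu)$ via a scaling limit $\mu\mapsto t\mu$, $t\to\infty$, combined with the transitive $\cO(d-1)$-action on $v=\bar G\bar v$. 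The choice $\alpha=\alpha(G,\mu)$ in Proposition~\ref{prop:alpha-mu} is made precisely to annihilate the top-left block of $\ddot A(0)$, which is what keeps that elimination tractable.

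So: your plan is correct, but it stops at the doorstep. If you pursue the ``one explicit jet'' route with your $(D_1,D_2,D_3)$ parameters, note that the last rows of $A_2,A_3$ depend on the lower-order $D_k$'s through the kernel constraint, so these couplings must be tracked; the paper's $(G,\mu,\alpha)$ coordinates are engineered exactly to untangle them.
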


It's clear that Theorem \ref{thm:controllable} follows from Proposition \ref{prop:bracket} and Theorem \ref{thm:bracket-gen}.

\subsection{Reducing to a finite parameter family of perturbations}

For $l \in \mathbb{N}$ and, let $\bJ_l$ denote the $l$-jet space for smooth functions $\R \to \mS(d)$ at $t = 0$ such that $A(0) = \bmat{I_{d-1} & 0 \\ 0 & 0 }$. Given a curve $\bn(t)$, let $\bJ_l(\bn)$ be the space of all the $l$-jets coming from $A \in \cA_\delta(\bn)$. This space depends only on finitely many linear conditions whose coefficients are up to $l$ derivatives of $\bn$ at $t = 0$, hence it is a linear subspace of $\bJ_l$. If the germ of a smooth enough curve $A(t)$ is a representative of an element in $\bJ_l(\bn)$, we will simply write $A(t) \in \bJ_l(\bn)$.

The following proposition implies Theorem \ref{thm:bracket-gen}.
\begin{proposition}\label{prop:span-jet}
		Let $l \ge 3$, then there exists an open and dense subset $\cV_J \subset \bJ_l(\bn)$ such that for all $A(t) \in \cV_J$, \eqref{eq:span} holds. 	
\end{proposition}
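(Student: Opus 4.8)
By \eqref{eq_B} each $B^{\ell}_{ij}(0)$ is a fixed polynomial in the jet coordinates $A(0),\dot A(0),\dots,A^{(l)}(0)$ with $A(0)$ held fixed, so condition \eqref{eq:span} — that $\{B^{\ell}_{ij}(0)\st 1\le i,j\le d,\ 1\le\ell\le5\}$ spans $\sp(2d,\R)$ — is the non-vanishing of some maximal minor of the matrix with these columns, i.e.\ a real-polynomial inequality on the linear space $\bJ_l(\bn)$. Hence $\cV_J$ is the complement of a proper real-algebraic subset of $\bJ_l(\bn)$, so it is open and dense as soon as it is nonempty; thus it suffices to exhibit one admissible $l$-jet satisfying \eqref{eq:span}. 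Moreover \eqref{eq:span} is equivariant under the automorphism $\Ad_{\operatorname{diag}(G,G)}$ of $\sp(2d,\R)$ for $G\in\cO(d-1,1)$, which by Proposition \ref{prop:G-conjugation} identifies $\bJ_l(\bn)$ with $\bJ_l(G\bn)$; so we may normalize $\bn$ so that every coordinate $\bar v_i$ of the vector $\bar v$ in \eqref{eq:v} is nonzero. With $\bn$ fixed, the linear conditions $\frac{d^k}{dt^k}(A\bn)|_0=0$ determine only the last row and column of each $A_k:=A^{(k)}(0)$ and leave the top-left $(d-1)\times(d-1)$ block free.

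\textbf{Block structure of the brackets.}
Represent $\bmat{M & S \\ T & -M^\top}\in\sp(2d,\R)$ by the triple $(M,S,T)$ with $M\in\mathcal M(d)$, $S,T\in\mS(d)$. A direct computation from \eqref{eq_B} with $Y=\bmat{0 & A \\ 0 & 0}$ gives $B^1_{ij}=(0,0,E_{ij})$, $B^2_{ij}=(A_0E_{ij},0,0)$, $B^3_{ij}=(A_1E_{ij},\,-2A_0E_{ij}A_0,\,0)$, $B^4_{ij}=(A_2E_{ij},\,-3(A_0E_{ij}A_1+A_1E_{ij}A_0),\,0)$ and $B^5_{ij}=(A_3E_{ij},\,-6A_1E_{ij}A_1-4(A_0E_{ij}A_2+A_2E_{ij}A_0),\,0)$. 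Decompose $\sp(2d,\R)=\mathfrak n^-\oplus\mathfrak h\oplus\mathfrak n^+$ into the $T$-, $M$- and $S$-slots; the $B^1_{ij}$ span $\mathfrak n^-$ and $B^2,\dots,B^5$ lie in $\mathfrak h\oplus\mathfrak n^+$, so it remains to show that the linear map $\Phi$ on $(\mS(d))^4$ sending $(S_\bullet)=(S_1,S_2,S_3,S_4)$ to $\bigl(\Phi_1(S_\bullet),\Phi_2(S_\bullet)\bigr)$, where $\Phi_1(S_\bullet)=A_0S_1+A_1S_2+A_2S_3+A_3S_4$ and $\Phi_2(S_\bullet)=-2A_0S_2A_0-3(A_0S_3A_1+A_1S_3A_0)-6A_1S_4A_1-4(A_0S_4A_2+A_2S_4A_0)$, is surjective onto $\mathfrak h\oplus\mathfrak n^+=\mathcal M(d)\oplus\mS(d)$.

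\textbf{Choice of jet and surjectivity of $\Phi$.}
Take the jet with $A_1$ having top-left block $\operatorname{diag}(\mu_1,\dots,\mu_{d-1})$, the $\mu_i$ pairwise distinct, and $A_2,\dots,A_l$ with zero top-left block; since $A_0$ restricted to $\bar v^{\perp}\subset\R^{d-1}$ is invertible, such a curve exists in $\cA_\delta(\bn)$, so this is a point of $\bJ_l(\bn)$. Surjectivity of $\Phi$ reduces to: (i) $\Phi_1$ is onto $\mathcal M(d)$, and (ii) $\Phi_2$ restricted to $\ker\Phi_1$ is onto $\mS(d)$ (then, since $\mathfrak n^{+}=\ker(\mathrm{pr}_{\mathfrak h})$ is contained in $\operatorname{Im}\Phi$ and $\operatorname{Im}\Phi$ surjects onto $\mathfrak h$, we get $\operatorname{Im}\Phi=\mathfrak h\oplus\mathfrak n^{+}$). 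For (i): the matrices $A_0E_{ij}$ ($i,j\le d-1$) give the symmetric top-left block and the columns $e_ie_d^{\top}$; $A_1E_{ii}=2\mu_ie_ie_i^{\top}-2\bar v_ie_de_i^{\top}$ then gives the rows $e_de_i^{\top}$ ($\bar v_i\ne0$); $A_1E_{ij}$ ($i\ne j\le d-1$), with top-left part $\mu_ie_ie_j^{\top}+\mu_je_je_i^{\top}$, gives the antisymmetric top-left directions ($\mu_i\ne\mu_j$); and $A_2E_{dd}$ gives $e_de_d^{\top}$, its $(d,d)$-entry being the forced nonzero value $2|\bar v|^2$. For (ii): one exhibits, for each basis element of $\mS(d)$, a quadruple in $\ker\Phi_1$ carried to it by $\Phi_2$; concretely, combining $B^3_{ii},B^4_{ii},B^5_{ii}$ ($i\le d-1$) with $B^2$-terms so that the $\mathfrak h$-parts cancel, the surviving $\mathfrak n^{+}$-part is a combination of $e_ie_i^{\top}$, $E_{id}$ and $e_de_d^{\top}$ with coefficients governed by $\mu_i,\bar v_i$, and a triangular elimination over $i=1,\dots,d-1$ — valid because the $\mu_i$ are distinct and the $\bar v_i$ nonzero — recovers all of $\mS(d)$. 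Hence \eqref{eq:span} holds for this jet; note that $l\ge3$ is exactly what the third derivative appearing in $B^5$ requires, and by the first paragraph $\cV_J$ is open and dense.

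\textbf{Main difficulty.}
The technical heart is step (ii): in $B^3,B^4,B^5$ the $\mathfrak h$- and $\mathfrak n^{+}$-components are bound to the same coefficient, so one cannot directly read off $\mathfrak n^{+}$-directions — one must first understand the relations among the $\mathfrak h$-parts $A_kE_{ij}$ (which is where (i) is needed as a prerequisite) and then track the induced $\mathfrak n^{+}$-contributions through a cascading elimination. This is precisely where the genericity of the chosen jet (distinct $\mu_i$, all $\bar v_i\ne0$) is indispensable, and it is the bulk of the computational work. A secondary, routine point is to verify at the outset that the proposed jet indeed belongs to $\bJ_l(\bn)$, i.e.\ is compatible with all the constraints $\frac{d^k}{dt^k}(A\bn)|_0=0$ and with $\operatorname{rank}A(t)=d-1$ for small $t$.
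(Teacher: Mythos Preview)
Your reduction is sound and matches the paper's: the spanning condition \eqref{eq:span} is an algebraic inequality on $\bJ_l(\bn)$, so openness and density follow from nonemptiness; the $\cO(d-1,1)$-equivariance lets you assume all $\bar v_i\ne 0$; and your block formulas for $B^1,\dots,B^5$ are correct. Your choice of jet (diagonal $A_1$, vanishing top-left block for $A_2$) is essentially the paper's choice $\alpha=\alpha(G,\mu)$ in Proposition~\ref{prop:alpha-mu}. Step (i) is fine.

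The gap is step (ii), which you correctly flag as the heart of the matter but do not actually carry out. Two concrete problems. First, your claim that the $\mathfrak h$-part of $B^3_{ii}$ can be cancelled by $B^2$-terms is false: every $A_0E_{ij}$ has vanishing last row, whereas $A_1E_{ii}=2\mu_iF_{ii}-2v_iF_{di}$ has the nonzero entry $-2v_i$ in row $d$, so no $B^2$-combination kills it. Cancelling row-$d$ contributions forces you to mix $B^3_{id}$, $B^4_{ij}$, etc., and the interactions become entangled. Second, even granting that one can set up the elimination, the assertion ``valid because the $\mu_i$ are distinct and the $\bar v_i$ nonzero'' is an over-claim: after the reduction one lands on a $(d-1)\times(d-1)$ linear system in the unknowns $c_{id}$, and its coefficient matrix $M(v,\mu)$ (equation~\eqref{eq:mij} in the paper) must be shown nonsingular. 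Distinct $\mu_i$ and nonzero $v_i$ are needed for the reduction to make sense, but they do not by themselves force $\det M\ne 0$; the paper establishes nontriviality via a separate scaling argument $\mu\mapsto t\mu$, $t\to\infty$ (see \eqref{eq:mij-large-t} and the surrounding discussion), and only then concludes genericity in $\mu$.

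The paper's route through this step is different in organization: rather than splitting into ``$\Phi_1$ onto'' and ``$\Phi_2$ on $\ker\Phi_1$ onto'', it selects a specific collection of $2d^2-1$ matrices among the $B^2,B^3,B^4$ (Proposition~\ref{prop:B234-gen}) whose cardinality matches $\dim(\mathfrak h\oplus\mS^*(d))$, proves linear \emph{independence} by an explicit elimination (Lemmas~\ref{lem:matrix-explicit}--\ref{lem:sum-linear-comb} and the long proof of Proposition~\ref{prop:B234-gen}), and then recovers the missing $E_{dd}$ direction in the $S$-block from a single $B^5_{ii}$ (Proposition~\ref{prop:B5}). Your outline is compatible with this, but the missing computation is precisely the several pages you would need to fill in.
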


A function $A: (-\delta, \delta) \to \mS(d)$ is real analytic if it admits a holomorphic extension to a complex neighborhood of $(-\delta, \delta) \subset \C$. Clearly, the jet space $\bJ_l$ can be identified with an appropriate equivalent class of analytic germs. 

\begin{proposition}[\cite{Kat95}, section 6.2]
		Let $A_0 : (-\delta, \delta) \to \mS(d)$ be real analytic, then there exists analytic family of orthognal matrices $P(t)$ and diagonal matrices $\Lambda_0(t)$ such that
		\begin{equation}  \label{eq:orthogonal-decomp}
				A_0(t) = P(t) \Lambda_0(t) P^T(t).
		\end{equation}
\end{proposition}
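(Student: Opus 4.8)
The plan is to establish Rellich's theorem on real–analytic one–parameter families of symmetric matrices, in three moves: produce globally defined real–analytic eigenvalue functions, upgrade these to real–analytic spectral projections, and then assemble an orthogonal analytic diagonalizer. It suffices to build an analytic orthonormal frame $v_1(t),\dots,v_d(t)$ of $\R^d$ with $A_0(t)v_i(t)=\lambda_i(t)v_i(t)$ for real–analytic $\lambda_i$; then $P(t)=[\,v_1(t)\mid\cdots\mid v_d(t)\,]$ is orthogonal and analytic, and $\Lambda_0(t)=\diag(\lambda_1(t),\dots,\lambda_d(t))$ gives $A_0(t)=P(t)\Lambda_0(t)P(t)^T$.

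\emph{Step 1 (analytic eigenvalues).} I would first extend $A_0$ to a holomorphic matrix function $A_0(z)$ on a complex neighbourhood of $(-\delta,\delta)$, so that the eigenvalues are the roots of the monic polynomial $p(z,\zeta)=\det(\zeta I-A_0(z))$ with holomorphic coefficients. Near each point these roots organize into Puiseux cycles, a cycle of length $p$ being described by a convergent series $\zeta=\mu+\sum_{k\ge1}c_k\xi^{k}$ with $\xi$ a $p$-th root of $z-t_\ast$. The key point is to rule out $p\ge2$ at real $t_\ast$: after subtracting the branch–independent integer–power part of the series (which equals the average of the cycle eigenvalues, hence is real for real $t$), a genuine fractional leading term $c_q\xi^{q}$ with $c_q\ne0$, $p\nmid q$, would — for real $t>t_\ast$, where all $p$ conjugate eigenvalues must stay real — force $e^{2\pi iq/p}=\pm1$, and then for real $t<t_\ast$ that same term becomes a non-zero purely imaginary multiple of $|t-t_\ast|^{q/p}$, contradicting that the eigenvalues of the symmetric matrix $A_0(t)$ are real. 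Hence there are no branch points on the interval, and each eigenvalue germ continues single-valuedly along this simply connected interval, producing $d$ globally defined real–analytic functions $\lambda_1(t),\dots,\lambda_d(t)$, counted with multiplicity.

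\emph{Step 2 (analytic projections and frame).} Next I would group the $\lambda_i$ into the distinct analytic values $\mu_1(t),\dots,\mu_r(t)$ of constant multiplicities, so that the coincidence set $\Sigma=\bigcup_{\alpha\ne\beta}\{\mu_\alpha=\mu_\beta\}$ is discrete. Away from $\Sigma$ the Riesz projection $E_\alpha(t)=\frac1{2\pi i}\oint_{\gamma_\alpha(t)}(\zeta I-A_0(t))^{-1}\,d\zeta$ is real–analytic, and, $A_0(t)$ being symmetric, it is the \emph{orthogonal} projection onto $\ker(A_0(t)-\mu_\alpha(t)I)$, hence of norm $1$. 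At a crossing $t_\ast\in\Sigma$, for the group $S$ of indices coinciding there, a fixed contour enclosing exactly $\{\mu_\gamma(t):\gamma\in S\}$ remains admissible for nearby $t$, so the power sums $\sum_{\gamma\in S}\mu_\gamma(t)^{k}E_\gamma(t)=\frac1{2\pi i}\oint\zeta^{k}(\zeta I-A_0(t))^{-1}\,d\zeta$ are analytic at $t_\ast$; inverting the Vandermonde system in the $\mu_\gamma$ writes each $E_\alpha$ as a rational combination of these with denominator $\prod_{\gamma<\gamma'}(\mu_\gamma-\mu_{\gamma'})$, so $E_\alpha$ is meromorphic at $t_\ast$ and, being bounded, extends analytically across it. Thus every $E_\alpha$ is real–analytic on $(-\delta,\delta)$ with $\sum_\alpha E_\alpha\equiv I$ and $\sum_\alpha\mu_\alpha E_\alpha\equiv A_0$; each $\operatorname{Im}E_\alpha(t)$ is then an analytic subbundle of the trivial bundle over the interval, so it admits an analytic frame, and Gram–Schmidt (rational in the entries, with nonvanishing denominators, hence analyticity–preserving) makes it orthonormal. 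Concatenating these frames over $\alpha$ yields $v_1(t),\dots,v_d(t)$, and with it $P(t)$ and $\Lambda_0(t)$.

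\emph{Main obstacle.} The entire difficulty is concentrated at the eigenvalue crossings — equivalently, in Step 1's exclusion of fractional Puiseux exponents and Step 2's boundedness of the spectral projections along $\Sigma$. For non-symmetric analytic families both genuinely fail (nilpotent Jordan blocks appear and the eigenprojections blow up), so the symmetry of $A_0(t)$ for real $t$ enters in an essential way; everywhere off $\Sigma$ the argument is soft.
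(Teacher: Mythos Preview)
The paper does not give a proof of this proposition; it is quoted verbatim as a classical result of Rellich, with a bare citation to Kato, \S II.6.2. So there is no ``paper's own proof'' to compare against.

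Your sketch is correct and is essentially the standard argument one finds in the cited reference: (i) exclude fractional Puiseux exponents by using that the eigenvalues are real on \emph{both} sides of a putative branch point (your two-sided argument forcing first $e^{2\pi i q/p}=\pm1$ and then $e^{i\pi q/p}\in\R$ is exactly the right mechanism and yields $p\mid k$ for every nonzero $c_k$); (ii) at eigenvalue crossings, write each spectral projection via a Vandermonde inversion of the analytic moment integrals $\tfrac{1}{2\pi i}\oint\zeta^{k}(\zeta I-A_0)^{-1}d\zeta$, so that $E_\alpha$ is \emph{meromorphic} at the crossing, and then use that for real $t$ the projection is orthogonal (hence $\|E_\alpha(t)\|=1$) to kill the pole; (iii) trivialize the resulting analytic subbundles and orthonormalize by Gram--Schmidt. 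One small point worth making explicit in your write-up: the boundedness you invoke is only along the real axis (for complex $t$ the projections need not be orthogonal), but that is all that is needed, since a genuine pole of a meromorphic function would already blow up along the real line.
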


\begin{corollary}\label{cor-4.1}
		Suppose $A_0(t)$ is real analytic and $A_0(t) \in \bJ_l(\bn)$. Then there exists real analytic curves $P(t)$ and $\Lambda(t)$ such that \eqref{eq:orthogonal-decomp} holds, and
		\begin{equation}  \label{eq:P0}
				\Lambda_0(t) = \bmat{D_0(t) & 0 \\ 0 & 0} = \bmat{\diag\{\lambda_1(t), \cdots, \lambda_{d-1}(t)\} & 0 \\ 0 & 0}, 
				\quad
				P(t) = \bmat{* & \bn(t)} + o_l(t),
		\end{equation}
		where $o_l(t)$ denote a function that is order $l$-flat at $t = 0$.
\end{corollary}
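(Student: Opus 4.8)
The plan is to feed the given real-analytic $A_0$ into the analytic diagonalization \eqref{eq:orthogonal-decomp} of \cite{Kat95} and then extract the extra structure from the two constraints carried by the $l$-jet of a curve in $\cA_\delta(\bn)$: the value $A_0(0)=\bmat{I_{d-1}&0\\0&0}$ and the identity $A_0(t)\bn(t)=o_l(t)$.

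First I would apply the quoted result to get real-analytic $P(t)\in\cO(d)$ and real-analytic diagonal $\widetilde\Lambda_0(t)=\diag(\lambda_1(t),\dots,\lambda_d(t))$, on a possibly smaller interval, with $A_0(t)=P(t)\widetilde\Lambda_0(t)P^T(t)$. Since $A_0(0)=\bmat{I_{d-1}&0\\0&0}$ has eigenvalue $1$ of multiplicity $d-1$ and eigenvalue $0$ of multiplicity one, exactly one of the $\lambda_i(0)$ vanishes; after applying a fixed permutation to the indices — simultaneously to the scalars $\lambda_i$ and to the columns of $P$, which leaves the product $P\widetilde\Lambda_0 P^T$ unchanged — we may assume $\lambda_d(0)=0$ and $\lambda_1(0)=\cdots=\lambda_{d-1}(0)=1$, and, shrinking $\delta$, the analytic functions $\lambda_1,\dots,\lambda_{d-1}$ are nowhere zero. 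Evaluating at $t=0$, $P(0)$ maps $\ker\widetilde\Lambda_0(0)=\R e_d$ onto $\ker A_0(0)=\R e_d$, so $P(0)e_d=\pm e_d$; multiplying the last column of $P(t)$ by that sign (still analytic, still orthogonal, harmless for the product since $\widetilde\Lambda_0$ is diagonal) we arrange $P(0)e_d=e_d$, hence also $P^T(0)e_d=e_d$.

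The one place where the structure of $\cA_\delta(\bn)$ genuinely enters is the next step. By definition of $\bJ_l(\bn)$ there is $B\in\cA_\delta(\bn)$ with $A_0-B=o_l(t)$, and $B(t)\bn(t)\equiv 0$, so $A_0(t)\bn(t)=o_l(t)$. Put $c(t):=P^T(t)\bn(t)$, a real-analytic curve with $c(0)=P^T(0)e_d=e_d$. Then
\[
\widetilde\Lambda_0(t)c(t)=P^T(t)A_0(t)\bn(t)=o_l(t),
\]
i.e.\ $\lambda_j(t)c_j(t)=o_l(t)$ for every $j$. Since $c_d(0)=1$, $c_d$ is a unit near $0$ and we obtain $\lambda_d(t)=o_l(t)$; since $\lambda_j$ is a unit for $j\le d-1$, we obtain $c_j(t)=o_l(t)$ for those $j$. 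Setting $D_0(t):=\diag(\lambda_1(t),\dots,\lambda_{d-1}(t))$ and $\Lambda_0(t):=\bmat{D_0(t)&0\\0&0}=\widetilde\Lambda_0(t)-\lambda_d(t)\,e_d e_d^T$, we get $A_0(t)=P(t)\Lambda_0(t)P^T(t)+o_l(t)$, so \eqref{eq:orthogonal-decomp} with the block form \eqref{eq:P0} holds at the level of $l$-jets, which is all $\bJ_l(\bn)$ records (equivalently, replace $A_0$ by $P\Lambda_0 P^T$, which has the same $l$-jet and still lies in $\bJ_l(\bn)$). For the statement on $P$: since $P(t)$ is orthogonal, $\bn(t)=P(t)c(t)=c_d(t)P(t)e_d+o_l(t)$; taking squared norms and using $\|\bn(t)\|=\|P(t)e_d\|=1$ gives $c_d(t)^2=1+o_l(t)$, and $c_d(0)=1>0$ forces $c_d(t)=1+o_l(t)$, whence $P(t)e_d=\bn(t)+o_l(t)$, i.e.\ $P(t)=\bmat{*&\bn(t)}+o_l(t)$.

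Beyond the invocation of \cite{Kat95} and the linear algebra at $t=0$, the proof is just $o_l$-bookkeeping; the main (and only) conceptual point is the middle step, where the single identity $A_0(t)\bn(t)=o_l(t)$ is transported through the Kato frame to yield simultaneously the $l$-flatness of the last eigenvalue $\lambda_d$ and of the off-kernel coordinates $c_1,\dots,c_{d-1}$. A minor subtlety, handled above, is the precise sense — modulo $o_l$, or after replacing $A_0$ within its jet class — in which \eqref{eq:orthogonal-decomp} is to be read.
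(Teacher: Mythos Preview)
Your argument is correct and follows the same route as the paper's proof: apply Kato's analytic diagonalization, then permute the eigenpair that vanishes at $t=0$ into the last slot and observe that the corresponding column of $P$ agrees with $\bn$ to order $l$. Your write-up is in fact more careful than the paper's two-line proof on one point: since $A_0$ is only an analytic \emph{representative} of an $l$-jet in $\cA_\delta(\bn)$, it need not have an exact null eigenvector, and you correctly track that $\lambda_d$ and the off-kernel components of $P^T\bn$ are only $o_l(t)$ rather than identically zero, resolving the ambiguity in how \eqref{eq:orthogonal-decomp} is to be read (namely, modulo $o_l$, or after replacing $A_0$ within its jet class).
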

\begin{proof}
		In the decomposition \eqref{eq:orthogonal-decomp}, there is a unique column of $P(t)$ corresponding to the unit null eigenvector of $A(t)$, which is tangent to $\bn(t)$ at order $l$ (recall that we only work in the $l$ jet space). It suffices to use a permutation matrix to move $\bn(t)$ to the last column.
\end{proof}

Let $A_0$ be the real analytic curve representing $A$ in the jet space $\bJ_l(\bn)$ satisfying \eqref{eq:orthogonal-decomp} and \eqref{eq:P0}. Define the following parametric family
\begin{equation}  \label{eq:A-param}
		\begin{aligned}
				\underline{A}(G, \mu, \alpha)
& = P(t) G \left( \Lambda_0(t) + t \Lambda_1^\mu + \frac12 t^2 A_2(\alpha) \right)
G^T P^T(t), \quad G \in \cO(d-1, 1), \\
 & \mu = (\mu_1, \cdots, \mu_{d-1}) \in \R^{d-1}, 
\quad \alpha = (\alpha_{ij}) \in \mS(d-1), 
		\end{aligned}
\end{equation}
where
\begin{equation}  \label{eq:parama}
\begin{aligned}
    \Lambda_1^\mu &= 	
		\begin{bmatrix}
				\mu_1 - \dot\lambda_1(0) & & & \\
																 & \ddots & & \\
																 & & \mu_{d-1} - \dot\lambda_1(0) & \\
																 & & & 0
		\end{bmatrix}, \\
		 A_2(\alpha) &= \frac12 t^2 \bmat{(\alpha_{ij})_{1 \le i, j \le d-1} & 0 \\ 0 & 0}.
\end{aligned}
\end{equation} 
We check that $\underline{A}(G, \mu, \alpha) \in \bJ^l(\bn)$. Apply the adjoint action $\Ad_G$ to $\underline{A}(G, \mu, \alpha)$, we obtain a family
\[
		\begin{aligned}
				A_{G, \mu, \alpha}  := & \Ad_{G^T} (\underline{A}(G, \mu, \alpha)) \in \bJ_l(G\bn), \\
				\quad A_{G, \mu, \alpha}(t)  = & P_G(t)  \left( \Lambda_0(t) + t \Lambda_1^\mu + \frac12 t^2 A_2(\alpha) \right) P_G(t),
		\end{aligned} 
\]
where $P_G(t) = G P(t)G^T$. Note that
\begin{equation}  \label{eq:PG}
		P_G(0) = I_d, \quad P_G(t) = \bmat{* & G\bn(t)}. 
\end{equation}

We will prove the following proposition in the next section.
\begin{proposition}\label{prop:param-controllable}
		For a given jet $A_0 \in \bJ^l(\bn)$, there is an open and dense subset $\cV \in \cO(d-1,1) \times \R^{d-1}$ and a function $\alpha = \alpha(G, \mu) : \cV \to \mS(d-1)$, such that the curve
		\[
				A_{G, \mu, \alpha(G, \mu)}(t)
		\]
		verifies \eqref{eq:span}.
\end{proposition}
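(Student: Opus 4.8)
The plan is to turn \eqref{eq:span} into an explicit linear condition on the $3$-jet $\bigl(A(0),\dot A(0),\ddot A(0),A^{(3)}(0)\bigr)$ of $A$ at $t=0$, to check that this condition is generic in that jet, and then to verify that the family \eqref{eq:A-param} sweeps out a generic jet once $(G,\mu)$ is generic and $\alpha$ is chosen suitably. First I would expand the iterated brackets of \eqref{eq_B}: with $Y=\bmat{0&A\\0&0}$, $E=E_{ij}$, and using that $A$ and $E$ are symmetric, an induction gives $B^1=\bmat{0&0\\E&0}$, $B^2=\bmat{AE&0\\0&-EA}$, $B^3=\bmat{\dot AE&-2AEA\\0&-E\dot A}$, and $B^4,B^5$ of the same shape with upper-left blocks $\ddot AE$, $A^{(3)}E$ and explicit symmetric upper-right blocks quadratic in $(A,\dot A,\ddot A)$. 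Writing $Sp(2d,\R)=\{(M,S,T):M\in\mM(d),\ S,T\in\mS(d)\}$ via $\bmat{M&S\\T&-M^T}$, the $B^1_{ij}(0)$ already span the entire $T$-block while every other bracket has zero $T$-block, so \eqref{eq:span} is equivalent to the assertion that $\Span\{B^2_{ij}(0),\dots,B^5_{ij}(0)\}=\{(M,S,0)\}$.

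For the family, $A(0)=\bmat{I_{d-1}&0\\0&0}$, and by Lemma \ref{Adot-blockform-lemma} together with \eqref{eq:v}, $\dot A(0)=\bmat{\bar\Lambda_1&v\\-v^T&0}$ with $\bar\Lambda_1$ diagonal and $v\neq 0$. I would then argue in two stages. The $M$-parts of $B^2$ give the subspace ``last row zero, symmetric top-left $(d-1)$-block''; the $M$-parts of $B^3$, namely $\{\dot A(0)E\}$, fill in the last row (using $v\neq 0$) and the antisymmetric part of the top-left block (using that $P\mapsto[\bar\Lambda_1,P]$ is onto the skew matrices when the diagonal entries of $\bar\Lambda_1$ are pairwise distinct), so together $\pi_M$ of the span is all of $\mM(d)$. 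It then remains to see that the $(0,S,0)$'s in the span fill $\mS(d)$; these are exactly the triples $(0,S,0)$ with $S=-2A(0)E_1A(0)+S^{(4)}(E_2)+S^{(5)}(E_3)$ over those $(E_1,E_2,E_3)$ for which $\dot A(0)E_1+\ddot A(0)E_2+A^{(3)}(0)E_3$ lies in the image of $A(0)\cdot\,$. Here $-2A(0)E_1A(0)$ contributes the top-left $(d-1)$-block; $S^{(4)}(E_2)=-3\bigl(A(0)E_2\dot A(0)+\dot A(0)E_2A(0)\bigr)$ has last row $-3(\bar E_2 v)^T$, hence contributes the off-diagonal last row/column entries (arbitrary in $\R^{d-1}$ since $v\neq0$); and $S^{(5)}(E_3)$ supplies the corner $S_{dd}$ through its term $-6\,\dot A(0)E_3\dot A(0)$, whose $(d,d)$-entry is $-6\,v^T\bar E_3 v$, nonzero for $E_3=\bmat{vv^T&0\\0&0}$. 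This is the only place where bracket levels $4$ and $5$ are genuinely used, and it is precisely where co-rank one bites: $A(0)$ is singular along the line $\R\bn(0)=\R e_d$, so the ``$e_d$-directions'' of the $S$-block are invisible at levels $1$--$3$ and must be recovered from $\dot A(0)e_d=\bmat{v\\0}$, $v\neq0$. Pushing this through, the reduction holds whenever the $3$-jet satisfies: $\bar\Lambda_1$ regular, $v\neq 0$, and the finitely many auxiliary determinants --- built from $v$, $\bar\Lambda_1$, and the last column/entry of $\ddot A(0)$ --- that arise when one solves the constraint equations for $(E_1,E_2,E_3)$ do not vanish.

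Next I would substitute \eqref{eq:A-param}. Using $P_G(0)=I$ from \eqref{eq:PG}, that $\dot P_G(0)$ is skew with last column $\frac{d}{dt}\big|_0 G\bn(t)=\bmat{\bar G\bar v\\0}$, and \eqref{eq:parama}, one computes $A_{G,\mu,\alpha}(0)=\bmat{I_{d-1}&0\\0&0}$; $\dot A_{G,\mu,\alpha}(0)$ of the block form above with $v=v_G:=\bar G\bar v$ of fixed nonzero length and $\bar\Lambda_1=\bar\Lambda_1(\mu)$ diagonal with entries ranging, up to a fixed translation, over all of $\R^{d-1}$; and $\ddot A_{G,\mu,\alpha}(0)$ equal to a term depending on $(G,\mu,A_0)$ plus $\bmat{\alpha&0\\0&0}$, so that the top-left $(d-1)$-block of $\ddot A(0)$ is a free symmetric parameter. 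Then I would let $\cV$ be the set of $(G,\mu)$ for which $\bar\Lambda_1(\mu)$ is regular and the auxiliary determinants of the previous paragraph are nonzero --- an open dense set, with no constraint on $G$ since $v_G\neq 0$ automatically --- and choose $\alpha=\alpha(G,\mu)$, continuously and locally, so as to put the top-left block of $\ddot A(0)$ in the general-position subset demanded in the second paragraph; such $\alpha$ exists because that subset is cut out by polynomials in $\bmat{\alpha&0\\0&0}$ that are not identically zero for $(G,\mu)\in\cV$ (alternatively one invokes Lemma \ref{lem:kinetic-pert}, which already exhibits a perturbation making $\partial_t\partial^2_{\hat p\hat p}(H+T)$ nondegenerate). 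The adjoint-action normalization of \eqref{eq:A-param}, via Proposition \ref{prop:G-conjugation}, is what allows us to work at $P_G(0)=I$ and control the $\bn$-dependence.

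Finally, \eqref{eq:span} is the non-vanishing of finitely many minors in the entries of the $3$-jet, hence a continuous open condition in $(G,\mu,\alpha)$; since $\alpha(G,\mu)$ is continuous, the set $\{(G,\mu):A_{G,\mu,\alpha(G,\mu)}\text{ satisfies }\eqref{eq:span}\}$ is open and contains the dense set $\cV$, which proves the proposition; combined with Proposition \ref{prop:bracket} it yields Proposition \ref{prop:span-jet}, Theorem \ref{thm:bracket-gen}, and hence Theorem \ref{thm:controllable}. The main obstacle is the second paragraph: keeping track of which block of $Sp(2d,\R)$ each of the levels $1$--$5$ produces, and checking that the cancellations used to isolate the $e_d$-directions of the $S$-block at levels $4$--$5$ do not undo the directions already gained at levels $1$--$3$ --- this is exactly where the degeneracy of the $\cD$-Hamiltonian (equivalently, the singularity of $A(0)$) makes the argument genuinely harder than in the nondegenerate case.
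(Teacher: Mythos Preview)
Your block-by-block strategy---$B^1$ fills the $T$-block, $B^2,B^3,B^4$ handle $(M,S)$ except the $(d,d)$-corner of $S$, and a single $B^5$ supplies that corner---is exactly the architecture of the paper's proof (Propositions~\ref{prop:B234-gen} and~\ref{prop:B5}). What differs is the execution, and there is a real gap.

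First, the paper's choice of $\alpha(G,\mu)$ is not a free parameter for further genericity; it is the specific value \eqref{eq:alpha} that forces $\ddot A(0)=\bmat{0&w}+\bmat{0\\w^T}$, i.e.\ kills the entire top-left $(d-1)\times(d-1)$ block of $\ddot A(0)$. This drastic simplification is what makes the subsequent linear algebra tractable; with a generic $\alpha$ you would carry many more terms through the elimination. Second, the paper avoids the $M$-block of $B^5$ altogether: once $B^2,B^3,B^4$ span $\{(M,S,0):S_{dd}=0\}$, any $B^5$ with nonzero upper-right $(d,d)$-entry finishes the job regardless of its other blocks, so $A^{(3)}(0)$ never enters. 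Your constraint ``$\dot A(0)E_1+\ddot A(0)E_2+A^{(3)}(0)E_3$ lies in the image of $A(0)\,\cdot$'' introduces $A^{(3)}$ unnecessarily and makes the system you must solve strictly harder.

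The substantive gap is the sentence ``Pushing this through, the reduction holds whenever \ldots\ the finitely many auxiliary determinants \ldots\ do not vanish.'' This is precisely where the work lies, and you cannot simply invoke genericity: a polynomial in $(G,\mu)$ can vanish identically, and the determinant at stake is far from obviously nontrivial. In the paper this is the long elimination of Lemma~\ref{lem:sum-linear-comb} and the proof of Proposition~\ref{prop:B234-gen}, culminating in a $(d-1)\times(d-1)$ matrix $M(v,\mu)$ (equation~\eqref{eq:mij}) whose determinant must be shown nonzero. The paper does this for $d=2$ by direct computation, and for $d\ge 3$ by a scaling argument: one substitutes $w=w(G,t\mu)$, lets $t\to\infty$, computes the limiting matrix $\bar M(v,\mu)$ explicitly (equation~\eqref{eq:mij-large-t}), and then shows $\det\bar M$ is a nontrivial rational function by specializing $v_3=\cdots=v_{d-1}=0$ and examining the resulting $2\times 2$ block. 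None of this is visible in your sketch, and without it the genericity claim is unproven. Your remark that there is ``no constraint on $G$'' is also not quite right: for $d\ge 3$ the condition $v_i\ne 0$ for all $i$ (equation~\eqref{eq:vi}) is needed and is only open-dense in $G$, not automatic.
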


Let us show how Proposition \ref{prop:param-controllable} implies Propsition \ref{prop:span-jet}, which then imply the main result of this section Theorem \ref{thm:bracket-gen}.
\begin{proof}[Proof of Proposition \ref{prop:span-jet} assuming Proposition \ref{prop:param-controllable}]
		For the parameter family $A_{G, \mu, \alpha}$, let us denote by
		\[
				B_{ij}^k(t; G, \mu, \alpha)
		\]
		the matrices $B_{i,j}^k(t)$ defined by $A_{\mu, \alpha}$. We will show that \eqref{eq:span} holds for $A_{G, \mu, \alpha}$ for an open and dense subset of $(G, \mu, \alpha)$.

		Consider the mapping
		\[
				\Phi_{G, \mu, \alpha} \st (a_{ij}^1, \cdots, a_{ij}^5) \mapsto
				\sum_{1 \le i \le j \le d, \, 1 \le k \le 5} a_{ij}^k B_{ij}^k(0; \mu, \alpha),
				\quad (\mS(d))^5 \to Sp(2d,\mathbb{R}).
		\]
		The condition \eqref{eq:span} is equivalent to $\Phi_{G, \mu, \alpha}$ having full rank, which is equivalent to the following map
		\[
				\Phi_{G, \mu, \alpha}^* \Phi_{G, \mu, \alpha} \st Sp(2d,\mathbb{R}) \to Sp(2d,\mathbb{R})
		\]
		being an isomorphism. Here $\Phi_{G, \mu, \alpha}^*$ is the adjoint of $\Phi_{G, \mu, \alpha}$ with respect to the standard inner product in Euclidean spaces, after identifying $Sp(2d,\mathbb{R})$ with $\R^{2d^2 + d}$. 

		For every fixed $G$, the coefficients of $\Phi_{G, \mu, \alpha}$ are polynomials in $\mu, \alpha$, so is 
        $$p(\mu, \alpha) = \det \Phi_{G, \mu, \alpha}^* \Phi_{G, \mu, \alpha}.$$ 
        Since the roots of a nontrivial polynomial is a nowhere dense set, it suffices to show that $p(\mu, \alpha) \ne 0$ for some particular choices of $(\mu, \alpha)$.

		Proposition \ref{prop:param-controllable} implies that for a dense subset of $G \in \cO(d-1, 1)$, there exists some $\mu$ such that $(G, \mu) \in \cV$, and $p(\mu, \alpha(G, \mu)) \ne 0$ for $\alpha = \alpha(G, \mu)$, verifying the claim.

		We have established \eqref{eq:span} for $A_{G, \mu, \alpha} \in \bJ_l(G\bn)$, for a generic $G$ and generic $(\mu, \alpha)$ depending on $G$. Since $A_{G, \mu, \alpha} = \Ad_G \underline{A}(G, \mu, \alpha)$, we  apply Proposition \ref{prop:G-conjugation} to get \eqref{eq:span} holds for a generic $\underline{A}(G, \mu, \alpha) \in \bJ_l(\bn)$. In particular, one can choose $\underline{A}(G, \mu, \alpha)$ arbitrarily close to $A_0$ in $\bJ_l(\bn)$. This proves the density of property \eqref{eq:span}. Since this condition is obviously open, the proposition follows.
\end{proof}

\subsection{Choice of the family $\alpha(G, \mu)$}

Let us now clarify the choice of the function $\alpha(G, \mu)$. If $P(t)$ is an curve of orthogonal matrices, differentiating $P P^T = I_d$ shows
\[
	P \dot{P}^T + \dot{P} P^T = 0. 
\]
If $P(0) = I_d$, then $\dot{P}(0)$ is skew-symmetric.

\begin{lemma}\label{lem:der-A}
	Let 
	\[
		A(t) = P(t) \Lambda(t) P^T(t), \quad
		P(0) = I_d, \quad
		\dot{P}(0)
		= \bmat{Q & v \\ - v^T & 0}, 
		\quad
		\ddot{P}(0) 
		= \bmat{* & w}, 
	\]
	\[
		\Lambda(0) =  \bmat{D(0) & 0 \\ 0 & 0}, \quad
		D(0) = I_{d-1},
	\]
	where $D(t)$ is a diagonal matrix.
	Then
	\begin{enumerate}[$(1)$]
		\item 
			\[
				\dot{A}(0) = \bmat{\dot{D}(0) & - v \\ -v^T & 0}.
			\]
		\item 
			\begin{equation}  \label{eq:ddotA-1st}
				\begin{aligned}
					\ddot{A}(0) & = - \bmat{ 0 & w} - \bmat{0 \\  w^T} - 2 \bmat{ vv^T & 0 \\ 0 & 0}  \\
											& \quad + 2 \bmat{Q \dot{D}(0) - \dot{D}(0) Q & - \dot{D}(0) v \\ - v^T \dot{D}(0) & 0}
											+ \ddot{\Lambda}(0). 
				\end{aligned}
			\end{equation}
	\end{enumerate}
\end{lemma}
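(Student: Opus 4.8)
The plan is to obtain both identities by differentiating the decomposition $A(t) = P(t)\Lambda(t)P^{T}(t)$ and evaluating at $t=0$, using nothing beyond the product rule and the two orthogonality relations for $P$. Differentiating $P(t)P^{T}(t) = I_{d}$ once and setting $t = 0$ shows $S := \dot P(0)$ is skew-symmetric; differentiating twice gives
\[
		\ddot P(0) + \ddot P(0)^{T} = -2\,\dot P(0)\dot P(0)^{T} = 2S^{2},
\]
the last equality because $S^{T} = -S$. I will also record the block data $\Pi := \Lambda(0) = \bmat{I_{d-1} & 0 \\ 0 & 0} = I_{d} - e_{d}e_{d}^{T}$, $Se_{d} = \bmat{v \\ 0}$ (the last column of $S$, by the assumed form of $\dot P(0)$), $e_{d}^{T}S = -(Se_{d})^{T}$, and $\ddot P(0)e_{d} = w$ (the last column of $\ddot P(0)$, by hypothesis); finally $\dot\Lambda(0) = \bmat{\dot D(0) & 0 \\ 0 & 0}$ and $\ddot\Lambda(0) = \bmat{\ddot D(0) & 0 \\ 0 & 0}$ from the block form $\Lambda(t) = \bmat{D(t) & 0 \\ 0 & 0}$ with $D$ diagonal.

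For $(1)$, the product rule gives $\dot A(0) = S\Pi + \Pi S^{T} + \dot\Lambda(0)$, and since $S^{T} = -S$ the first two terms equal the commutator $[S,\Pi]$; a one-line block computation gives $[S,\Pi] = \bmat{0 & -v \\ -v^{T} & 0}$, the skew block $Q$ of $S$ dropping out. Adding $\dot\Lambda(0)$ yields the stated formula.

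For $(2)$, expanding the product rule for $P\Lambda P^{T}$ and setting $t = 0$ produces six terms, which I would group as
\[
		\ddot A(0) = \bigl(\ddot P(0)\Pi + \Pi\ddot P(0)^{T}\bigr) + 2\,\dot P(0)\Pi\dot P(0)^{T} + 2\bigl(\dot P(0)\dot\Lambda(0) + \dot\Lambda(0)\dot P(0)^{T}\bigr) + \ddot\Lambda(0).
\]
The third group equals $2[S,\dot\Lambda(0)]$, which in block form is precisely $2\bmat{Q\dot D(0) - \dot D(0)Q & -\dot D(0)v \\ -v^{T}\dot D(0) & 0}$, and $\ddot\Lambda(0)$ already appears in \eqref{eq:ddotA-1st}. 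The content is in the first two groups. Using $\Pi = I_{d} - e_{d}e_{d}^{T}$, $\ddot P(0)e_{d} = w$, and the second-order orthogonality relation,
\[
		\ddot P(0)\Pi + \Pi\ddot P(0)^{T} = \ddot P(0) + \ddot P(0)^{T} - \bmat{0 & w} - \bmat{0 \\ w^{T}} = 2S^{2} - \bmat{0 & w} - \bmat{0 \\ w^{T}},
\]
while $S^{T} = -S$ and $e_{d}^{T}S = -(Se_{d})^{T}$ give $2\,\dot P(0)\Pi\dot P(0)^{T} = -2S\Pi S = -2S^{2} - 2(Se_{d})(Se_{d})^{T}$. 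The two $\pm 2S^{2}$ cancel, and since $Se_{d} = \bmat{v \\ 0}$ we have $(Se_{d})(Se_{d})^{T} = \bmat{vv^{T} & 0 \\ 0 & 0}$, so the first two groups collapse to $-2\bmat{vv^{T} & 0 \\ 0 & 0} - \bmat{0 & w} - \bmat{0 \\ w^{T}}$. Adding the four groups gives \eqref{eq:ddotA-1st}.

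The whole argument is routine linear algebra; the only thing that needs care is the bookkeeping that separates the parts of $\dot P(0)$ and $\ddot P(0)$ pinned down by orthogonality (their symmetric parts) from the free data $v$, $Q$ and $w$ --- and noticing that the $2S^{2}$ forced by the second-order orthogonality relation is exactly what cancels the $-2S^{2}$ produced by the term $2\,\dot P(0)\Pi\dot P(0)^{T}$, leaving only the rank-one contribution in $v$.
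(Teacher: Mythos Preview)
Your proof is correct and follows essentially the same route as the paper: both differentiate $P\Lambda P^{T}$ and the orthogonality relation $PP^{T}=I$, then use the splitting $I_{d} = \Lambda(0) + e_{d}e_{d}^{T}$ (your $\Pi = I_{d} - e_{d}e_{d}^{T}$) to isolate the contributions of $w$ and $v$. Your bookkeeping via the explicit $\pm 2S^{2}$ cancellation is a slightly cleaner way to phrase what the paper does by moving the $F_{dd}$ terms to the other side of the second-order orthogonality identity, but the argument is the same.
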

\begin{proof}
	(1) We have
	\[
		\begin{aligned}
			\dot{A}(0) & = P(0) \dot{\Lambda}(0) P^T(0)  + \dot{P}(0) \Lambda(0) P^T(0) 
			+ P(0) \Lambda(0) \dot{P}^T(0) \\
								 & = \dot{\Lambda}(0) + \dot{P}(0) \Lambda(0) - \Lambda(0) \dot{P}(0)  \\
								 & = \bmat{\dot{D}(0) & 0 \\ 0 & 0} 
			+ \bmat{Q & v \\ -v^T & 0} \bmat{I_{d-1} & 0 \\ 0 & 0} 
			-  \bmat{I_{d-1} & 0 \\ 0 & 0} \bmat{Q & v \\ -v^T & 0} \\
								 & = \bmat{\dot{D}(0) & - v \\ -v^T & 0},
		\end{aligned}
	\]
	where we used $\dot{P}^T(0) = - \dot{P}(0)$ in the second line.

	(2) Taking the second derivative of the equation $P(t)P^T(t) = I$ at $t = 0$, we get
	\[
		(\ddot{P} + 2 \dot{P}\dot{P}^T + \ddot{P})(0) = 0.
	\]
	Recall that $F_{dd}$ denote the matrix with $(d, d)$ entry $1$ and the other entries $0$, then $I_d = \Lambda(0) + F_{dd}$. Then
	\[
		\begin{aligned}
			0 & = 
			(\ddot{P} + 2 \dot{P}\dot{P}^T + \ddot{P})(0)  \\
				& = \ddot{P}(0) (\Lambda(0) + F_{dd}) + 2 \dot{P}(0) (\Lambda(0) + F_{dd}) \dot{P}^T(0) + (\Lambda(0) + F_{dd})\ddot{P}^T(0),
		\end{aligned}
	\]
	hence
	\begin{equation}  \label{eq:ddotP-identity}
		\begin{aligned}
& \ddot{P}(0) \Lambda(0) + 2 \dot{P}(0) \Lambda(0) \dot{P}^T(0) + \Lambda(0) \ddot{P}^T(0)  \\
& = - \ddot{P}(0) F_{dd} - 2 \dot{P}(0) F_{dd} \dot{P}^T(0) - F_{dd} \ddot{P}^T(0)  \\ 
& = - \bmat{ * & w}  \bmat{0 & 0 \\ 0 & 1} - \bmat{Q & v \\ -v^T & 0} \bmat{0 & 0 \\ 0 & 1} \bmat{Q & - v \\ v^T & 0} - \bmat{0 & 0 \\ 0 & 1} \bmat{* \\ w^T} \\
& = - \bmat{ 0 & w} - \bmat{0 \\ w^T} - 2 \bmat{ vv^T & 0 \\ 0 & 0}.
		\end{aligned}
	\end{equation}
	Since
	\[
		\begin{aligned}
			\ddot{A}(0)
& =  \ddot{P}(0) \Lambda(0) + 2 \dot{P}(0) \Lambda(0) \dot{P}^T(0) + \Lambda(0) \ddot{P}^T(0)  \\
& \quad + 2\left( \dot{P}(0) \dot{\Lambda}(0) - \dot{\Lambda}(0) \dot{P}(0)\right)
+ \ddot{\Lambda}(0)
		\end{aligned}
	\]
	and
	\[
		\dot{P}(0) \dot{\Lambda}(0) - \dot{\Lambda}(0) \dot{P}(0)
		=  \bmat{Q \dot{D}(0) - \dot{D}(0) Q & - \dot{D}(0) v \\ - v^T \dot{D}(0) & 0},
	\]
	the lemma follows.
\end{proof}

Since $\|\bn(t)\|^2 = 1$, we have
\[
		\dot{\bn}(t) \cdot \bn(t) = 0, \quad \ddot{\bn}(t) \cdot \bn(t) + \dot{\bn}(t)\cdot \dot{\bn}(t) = 0. 
\]
It follows that
\begin{equation}  \label{eq:bn-notation}
		\dot{\bn}(0) = \bmat{\bar{v} \\ 0}, \quad \ddot{\bn}(0) = \bmat{\bar{w} \\ \hat{w}},   \quad
		\hat{w} =  - \|\bar{v}\|^2. 
\end{equation}

\begin{proposition}\label{prop:alpha-mu}
		With the notations in \eqref{eq:bn-notation}, furthur denote
	\[
		\Gamma = \diag\{\mu_1, \cdots, \mu_{d-1}\}, \quad
		G = \bmat{\bar{G} & 0 \\ 0 & 1}
	\]
	For the family \eqref{eq:A-param}, we can choose $\alpha = \alpha(G, \mu)$ so that the family $A_{G, \mu, \alpha}$ satisfies
	\begin{equation}  \label{eq:dotA-mu}
		\dot{A}_{G, \mu, \alpha}(0) = 
		\begin{bmatrix}
			\mu_1 & & & \\
						& \ddots & & - \bar{G} \bar{v} \\
						& & \mu_{d-1} & \\
						& - \bar{v}^T \bar{G}^T & & 0
		\end{bmatrix}.
	\end{equation}
	and
	\begin{equation}  \label{eq:ddot-A-mu}
			\ddot{A}_{G, \mu, \alpha}(0) = - \bmat{ 0 &  \bar{G}\bar{w} \\ 0 & \hat{w}} - \bmat{0 & 0 \\ \bar{w}^T \bar{G}^T & \hat{w}}
		- 2 
		\begin{bmatrix}
			0 & \Gamma \bar{G} \bar{v}  \\
			\bar{v}^T \bar{G}^T \Gamma & 0
		\end{bmatrix}.
	\end{equation}
\end{proposition}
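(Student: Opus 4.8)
The plan is to apply Lemma~\ref{lem:der-A} to the curve $A(t) = A_{G,\mu,\alpha}(t) = P_G(t)\,\Lambda(t)\,P_G^T(t)$ with $\Lambda(t) = \Lambda_0(t) + t\,\Lambda_1^\mu + \frac12 t^2 A_2(\alpha)$, and then to read off \eqref{eq:dotA-mu} and \eqref{eq:ddot-A-mu} directly from parts $(1)$ and $(2)$ of that lemma, the second of which will single out the required function $\alpha = \alpha(G,\mu)$. First I would check the hypotheses of Lemma~\ref{lem:der-A}: by \eqref{eq:PG} we have $P_G(0) = I_d$, and since $A_0 \in \bJ_l(\bn)$ forces $\Lambda_0(0) = \bmat{I_{d-1} & 0 \\ 0 & 0}$, we also get $\Lambda(0) = \bmat{I_{d-1} & 0 \\ 0 & 0}$, so $D(0) = I_{d-1}$ in the notation of the lemma.

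The next step is to identify the data $v$, $w$, $Q$, $\dot D(0)$, $\ddot\Lambda(0)$ entering Lemma~\ref{lem:der-A}. By \eqref{eq:PG} the last column of $P_G(t)$ is $G\,\bn(t)$ (up to an $o_l$-term, invisible in the jet space), so by \eqref{eq:bn-notation} and $G = \bmat{\bar{G} & 0 \\ 0 & 1}$ the last column of $\dot P_G(0)$ is $\bmat{\bar{G}\bar{v} \\ 0}$ and the last column of $\ddot P_G(0)$ is $\bmat{\bar{G}\bar{w} \\ \hat{w}}$. Since $P_G(0) = I_d$, $\dot P_G(0)$ is skew-symmetric, so in the notation of Lemma~\ref{lem:der-A} we have $v = \bar{G}\bar{v} \in \R^{d-1}$, $w = \bmat{\bar{G}\bar{w} \\ \hat{w}}$, and $Q$ is the skew-symmetric $(d-1)\times(d-1)$ upper-left block of $\dot P_G(0)$; concretely $Q = \bar{G}Q_0\bar{G}^T$ where $Q_0$ is the skew part of $\dot P(0)$. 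Finally, by \eqref{eq:parama} the matrix $\dot\Lambda(0) = \dot\Lambda_0(0) + \Lambda_1^\mu$ is block-diagonal with $(d,d)$-entry $0$ and diagonal upper-left block $\dot D(0) = \Gamma := \diag\{\mu_1,\dots,\mu_{d-1}\}$, while $\ddot\Lambda(0) = \ddot\Lambda_0(0) + A_2(\alpha)$ has the same block shape with upper-left block $\ddot D_0(0) + \alpha$, where $\ddot D_0(0)$ is the upper-left block of $\ddot\Lambda_0(0)$.

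With these substitutions, Lemma~\ref{lem:der-A}$(1)$ gives at once $\dot A_{G,\mu,\alpha}(0) = \bmat{\Gamma & -\bar{G}\bar{v} \\ -\bar{v}^T\bar{G}^T & 0}$, which is exactly \eqref{eq:dotA-mu} and imposes no condition on $\alpha$. Plugging the same data into the formula \eqref{eq:ddotA-1st} of Lemma~\ref{lem:der-A}$(2)$ and comparing block by block with \eqref{eq:ddot-A-mu}, the upper-right, lower-left and $(d,d)$ blocks already agree (the term $A_2(\alpha)$ is supported in the upper-left $(d-1)\times(d-1)$ corner, so it does not disturb them), while the upper-left block of the result equals $-2\,\bar{G}\bar{v}\bar{v}^T\bar{G}^T + 2\bigl(Q\Gamma - \Gamma Q\bigr) + \ddot D_0(0) + \alpha$. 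Hence \eqref{eq:ddot-A-mu} holds precisely for
\[
  \alpha = \alpha(G,\mu) := 2\,\bar{G}\bar{v}\bar{v}^T\bar{G}^T - 2\bigl(Q\Gamma - \Gamma Q\bigr) - \ddot D_0(0), \qquad Q = \bar{G}Q_0\bar{G}^T .
\]

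It remains to check this is a legitimate choice. The matrix $\alpha(G,\mu)$ is symmetric: $\bar{G}\bar{v}\bar{v}^T\bar{G}^T$ and $\ddot D_0(0)$ are symmetric, and $(Q\Gamma - \Gamma Q)^T = \Gamma Q^T - Q^T\Gamma = Q\Gamma - \Gamma Q$ since $Q$ is skew and $\Gamma$ is diagonal, so $\alpha(G,\mu) \in \mS(d-1)$; moreover its entries are polynomials in the entries of $\bar{G}$ and in $\mu$, so $\alpha$ is smooth on $\cO(d-1,1)\times\R^{d-1}$. The main obstacle is nothing but the block bookkeeping in Lemma~\ref{lem:der-A}$(2)$ — keeping the $(d-1)+1$ partition straight through all the substitutions into \eqref{eq:ddotA-1st}, and confirming that $A_2(\alpha)$ affects only the upper-left corner, so that \eqref{eq:dotA-mu} and the remaining blocks of \eqref{eq:ddot-A-mu} are left unchanged. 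There is no conceptual difficulty.
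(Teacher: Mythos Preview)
Your proof is correct and follows the same route as the paper's: apply Lemma~\ref{lem:der-A} to the factorization $P_G(t)\Lambda(t)P_G^T(t)$, read off \eqref{eq:dotA-mu} from part~(1), and then solve the upper-left $(d-1)\times(d-1)$ block of \eqref{eq:ddotA-1st} for $\alpha$, arriving at exactly the paper's formula \eqref{eq:alpha}. The only cosmetic difference is that the paper first peels off the term $\tfrac12 t^2 A_2(\alpha)$ before invoking the lemma (so that the remaining $\Lambda$ is literally diagonal, matching the lemma's stated hypothesis), whereas you fold it into $\ddot\Lambda(0)$; since $A_2(\alpha)$ contributes nothing to $\Lambda(0)$ or $\dot\Lambda(0)$, the computation in Lemma~\ref{lem:der-A} goes through unchanged either way.
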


\begin{proof}
	Let us write
	\[
		A_{G, \mu, \alpha} = A_{G, \mu, 0}(t) + \frac12 t^2 \bmat{(\alpha_{ij}) & 0 \\ 0 & 0},
	\]
	then
	\[
		A_{G, \mu, 0} = P_G(t) \Lambda(t) P_G^T(t), \quad
		\dot{\Lambda}(0) = \diag\{\mu_1, \cdots, \mu_{d-1}, 0\}, \quad
		\ddot{\Lambda}(0) = \ddot \Lambda_0(0) =: \bmat{\ddot{D}_0(0) & 0 \\ 0 & 0}.
	\]
	By Lemma \ref{lem:der-A}, and \eqref{eq:PG}, 
	\[
		\dot{A}_{\alpha, \mu}(0) = \dot{A}_{\alpha, 0} = \bmat{\Gamma & - \bar{G} v \\ -v^T \bar{G}^T & 0}
	\]
	as required. Moreover,
	\[
		\ddot{A}_{\mu, \alpha}(0) = \ddot{A}_{\mu, 0}(0) + \bmat{(\alpha_{ij}) & 0 \\ 0 & 0}.
	\]
	Apply \eqref{eq:ddotA-1st}, it suffices to take
	\begin{equation}  \label{eq:alpha}
			(\alpha_{ij}(G, \mu)) = 2 G vv^T G^T -2 (Q \Gamma - \Gamma Q) - \ddot{D}_0(0)
	\end{equation}
	to get \eqref{eq:ddot-A-mu}.
\end{proof}

\subsection{Choosing the generating set}
We prove Proposition \ref{prop:param-controllable} in this section. Let's start with calculations of $B_{ij}^k$.
\[
	\begin{aligned}
		B_{ij}^1 & = \bmat{0 & 0 \\ E_{ij} & 0},  \\
		B_{ij}^2 & = \bmat{ A E_{ij} & 0 \\ 0 &  - E_{ij} A} \\
		B_{ij}^3 & = \bmat{0 & - 2 A E_{ij} A \\ 0 & 0}
		+ \bmat{\dot{A} E_{ij} & 0 \\ 0 & - E_{ij} \dot{A}} \\
		B_{ij}^4 & = \bmat{0 & - 3 (\dot{A} E_{ij} A + A E_{ij} \dot{A}) \\ 0 & 0} 
		+ \bmat{\ddot{A}E_{ij} & 0 \\ 0 & - E_{ij} \ddot{A}} \\  
		B_{ij}^5 & = \bmat{* & -4(\ddot{A} E_{ij} A + A E_{ij} \ddot{A}) - 6 \dot{A} E_{ij} \dot{A} \\ * & *}. 
	\end{aligned} 
\]
In the unwritten blocks of $B_{ij}^5$ up to third derivative of $A$ is involved.

We will choose the family of matrices
\begin{equation}  \label{eq:A-G-mu}
	A(t) = A_{G, \mu, \alpha(G, \mu)}
\end{equation}
which satisfies \eqref{eq:dotA-mu} and \eqref{eq:ddot-A-mu}. Setting
\begin{equation}  \label{eq:vw}
	v = v(G) = \bar{G}\bar{v}, \quad
	w = w(G, \mu) = - \bmat{\bar{G} \bar{w} \\ \hat{w}} - 2 \bmat{\Gamma v(G) \\ 0}, 
\end{equation}
we rewrite \eqref{eq:dotA-mu} and \eqref{eq:ddot-A-mu} as
\begin{equation}  \label{eq:dotA}
	\dot{A}(0)
	= \bmat{\Gamma & - v \\ - v^T & 0}
\end{equation}
and 
\begin{equation}  \label{eq:ddotA}
	\ddot{A}(0) = \bmat{0 & w} + \bmat{0 \\ w^T}.
\end{equation}
For most of the calculations below, $v$ and $w$ will be considered constant parameters. We will invoke their dependence on $G, \mu$ only near the end of the proof.

We will start by choosing a particular generating set from $B_{ij}^2, B_{ij}^3, B_{ij}^4$ that makes the remaining proof of Proposition \ref{prop:param-controllable} much simpler.

Let us denote
\begin{equation}  \label{eq:J1}
	J_1 = \{(i, j) \mid 1 \le i \le d-1, \, i \le j \le d\} = \{(i, j) \st 1 \le i \le j \le d\} \setminus \{(d, d)\}, 
\end{equation}
\begin{equation}  \label{eq:J2}
	J_2 = \{(i, j) \mid 1 \le i \le j \le d-1\},	
\end{equation}
and
\begin{equation}  \label{eq:S-star}
	\mS^*(d) = \{S \in \mS(d) \mid (S)_{dd} = 0\}	
	= \Span\{E_{ij} \mid (i,j) \in J_1\}.
\end{equation}

\begin{proposition}\label{prop:B234-gen}
	For the family $A(t) = A_{G, \mu, \alpha(G, \mu)}$ as defined, there is an open and dense set $\cV \subset \cO(d-1, 1) \times \R^{d-1}$ such that for $A_{\mu, G, \alpha(\mu, G)}$ with $(G, \mu) \in \cV$, we have
	\begin{equation}  \label{eq:span-B234}
		\Span
		\left\{ 
			\begin{aligned}
& B_{ij}^2 & (i,j) \in J_1 \\
& B_{ij}^3 & (i,j) \in J_2 \\
& B_{ij}^4 & (i,j) \in J_1 \\
& \sum_{i = 1}^{d-1} v_i B_{id}^3
			\end{aligned}
		\right\}
		= 
		\left\{ 
			\bmat{M & S \\ 0 & - M^T} \mid M \in \cM(d), \, S \in \mS^*(d)
		\right\}.
		\end{equation}
\end{proposition}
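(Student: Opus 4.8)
The plan is as follows. First I would record a dimension count: the left side of \eqref{eq:span-B234} lists $2|J_1| + |J_2| + 1 = 2\bigl(\tfrac{d(d+1)}{2}-1\bigr) + \tfrac{(d-1)d}{2} + 1 = d^2 + \tfrac{d(d+1)}{2} - 1$ matrices, which is exactly the dimension of the target space $W := \bigl\{ \bmat{M & S \\ 0 & -M^T} : M \in \cM(d),\, S \in \mS^*(d) \bigr\}$. Hence it suffices to prove that, for $(G,\mu)$ in a suitable open dense set, these matrices are linearly independent. Everything is evaluated at $t=0$, where $A(0) = \bmat{I_{d-1} & 0 \\ 0 & 0}$ and $\dot A(0), \ddot A(0)$ are the explicit expressions \eqref{eq:dotA}, \eqref{eq:ddotA} furnished by Proposition \ref{prop:alpha-mu}, with $\Gamma = \diag\{\mu_1,\dots,\mu_{d-1}\}$ and $v = \bar G\bar v \neq 0$. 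I would split $W = W_M \oplus W_S$, where $W_M$ consists of the matrices with zero upper-right block and $W_S = \ker\pi_M$, $\pi_M$ being the projection onto the upper-left block, and set $\cG_1 = \{B^2_{ij} : (i,j)\in J_1\} \cup \{B^3_{ij} : (i,j)\in J_2\} \cup \{\sum_{i=1}^{d-1} v_i B^3_{id}\}$ and $\cG_2 = \{B^4_{ij} : (i,j)\in J_1\}$, noting $|\cG_1| = d^2$ and $|\cG_2| = \dim W_S$.

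The first step is to show that $\pi_M(\cG_1)$ is a basis of $\cM(d)$ whenever the $\mu_i$ are pairwise distinct and nonzero and all coordinates of $v$ are nonzero (an open dense condition on $(G,\mu)$). This is a short staircase argument: $\pi_M(B^2_{ij}) = A(0)E_{ij}$ equals $E_{ij}$ for $(i,j)\in J_2$ and $F_{id}$ for $j=d$, so the $B^2$'s span the symmetric block $\mS(d-1)$ together with the "upper" last-column directions $F_{id}$, $i\le d-1$; then $\pi_M(B^3_{ii}) = 2\mu_i F_{ii} - 2v_i F_{di} \equiv -2v_i F_{di} \pmod{\mS(d-1)}$ gives the last-row directions (using $v_i \neq 0$); then for $i<j\le d-1$, $\pi_M(B^3_{ij}) = \mu_i F_{ij} + \mu_j F_{ji} - v_i F_{dj} - v_j F_{di}$ reduces modulo what is already spanned to $\tfrac{\mu_i-\mu_j}{2}(F_{ij}-F_{ji})$, giving the skew-symmetric block (using $\mu_i \neq \mu_j$); and $\pi_M\bigl(\sum_i v_i B^3_{id}\bigr) = \bmat{-vv^T & \Gamma v \\ 0 & -\|v\|^2}$ supplies $F_{dd}$ since $\|v\|^2 \neq 0$. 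Counting, $\pi_M|_{\Span\cG_1}$ is then an isomorphism onto $\cM(d)$, and consequently $W = \Span\cG_1 \oplus W_S$.

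The second step is to show that the images $P_S(B^4_{ij})$, $(i,j)\in J_1$, under the projection $P_S : W\to W_S$ along $\Span\cG_1$, are linearly independent (equivalently, a basis of $W_S$). Since among $\cG_1$ only the $B^3_{kl}$ with $(k,l)\in J_2$ have a nonzero upper-right block, equal to $-2E_{kl}\in\mS(d-1)$, the correction removed by $P_S$ always lies in $\mS(d-1)$, so $P_S(B^4_{ij})$ agrees with the upper-right block of $B^4_{ij}$ modulo $\mS(d-1)$. Computing that block, $-3(\dot A(0)E_{ij}A(0) + A(0)E_{ij}\dot A(0))$, I would check that $B^4_{id}$ ($i\le d-1$) maps to $0$ in $W_S/\mS(d-1)\cong\R^{d-1}$, that $B^4_{ii}$ maps to a nonzero multiple of $e_i$, and that $B^4_{ij}$ ($i<j\le d-1$) maps to a multiple of $v_ie_j+v_je_i$; hence the $B^4_{ii}$ already span the quotient. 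Then I would pin down the $\mS(d-1)$-components: solve for the $\cG_1$-coordinates of the correction $g_{ij}$ determined by $\pi_M(g_{ij}) = \ddot A(0)E_{ij}$ (this is where $\ddot A(0)$, hence $w = w(G,\mu)$, enters), and verify independence of the $\dim W_S$ resulting vectors by ordering $\cG_2$ as $\{B^4_{id}\}$, then $\{B^4_{ii}\}$, then $\{B^4_{ij}: i<j\}$ so that the system is block triangular with generically invertible diagonal blocks. The genericity conditions so produced are the non-vanishing of finitely many explicit polynomials in $(\mu, v, w)$; for fixed $G$ these stay nonzero polynomials in $\mu$ after substituting $w = w(G,\mu)$ (the top-degree terms in $\mu$ do not cancel), so their common non-vanishing locus is open and dense. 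Intersecting with the set from the first step gives the desired $\cV$, and openness of the spanning property lets me take $\cV$ open.

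I expect the main obstacle to be precisely this last computation --- identifying the corrections $g_{ij}$ and arranging the $\mS(d-1)$-components of the $P_S(B^4_{ij})$ into a form where linear independence is transparent --- since $B^4_{id}$ feeds the symmetric block through $\ddot A(0)E_{id}$ while $B^4_{ii}$ and $B^4_{ij}$ feed both the quotient $\R^{d-1}$ and the symmetric block, so some care is needed to keep the triangular structure. The degenerate low cases $d = 1, 2$ I would dispatch by a direct check.
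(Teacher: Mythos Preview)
Your overall strategy coincides with the paper's: a dimension count, reduction to linear independence of the listed matrices, elimination down to a smaller system, and verification that the resulting determinant is a nontrivial polynomial in the parameters by looking at the leading behavior in $\mu$. Your Step~1 is correct and in fact cleaner than what the paper writes out---the paper does the same elimination but in coordinates, via Lemma~\ref{lem:independence} and the explicit formulas of Lemma~\ref{lem:matrix-explicit}.

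The gap is in Step~2. The block-triangular structure you posit does not materialize: in the quotient $\mS^*(d)/\mS(d-1)\cong\R^{d-1}$ both the $P_S(B^4_{ii})$ and the $P_S(B^4_{ij})$ with $i<j\le d-1$ land nontrivially (as $6v_ie_i$ and $3(v_ie_j+v_je_i)$ respectively), so the three groups $\{B^4_{id}\},\{B^4_{ii}\},\{B^4_{ij}\}$ cannot be separated by this single filtration, and there is no evident finer filtration of $\mS(d-1)$ adapted to the remaining elements. One can of course subtract multiples of the $B^4_{ii}$ from the $B^4_{ij}$ to push the latter into $\mS(d-1)$, but then you are left with a full $\tfrac{d(d-1)}{2}\times\tfrac{d(d-1)}{2}$ block whose invertibility is exactly the hard part. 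The paper does not attempt triangularity at all: it eliminates $a_{ij}$, then expresses $b_{ij}$ and $c_{ij}$ (for $(i,j)\in J_2$) in terms of the $c_{id}$, and lands on a single $(d-1)\times(d-1)$ system $M(v,\mu,w)\,c=0$ with $c=(c_{1d},\dots,c_{(d-1)d})$, the matrix $M$ given explicitly by \eqref{eq:mij}. Your ``top-degree terms in $\mu$ do not cancel'' is the right idea for showing $\det M\not\equiv 0$, and is exactly the paper's scaling $\mu\mapsto t\mu$, $t\to\infty$; but even the limiting matrix $\bar M$ of \eqref{eq:mij-large-t} is not obviously nonsingular, and the paper has to specialize further to $v_3=\cdots=v_{d-1}=0$ and verify a $2\times2$ determinant by hand. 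So your expectation that this computation is the main obstacle is right, but replacing ``block triangular with generically invertible diagonal blocks'' by an honest elimination to a $(d-1)\times(d-1)$ determinant, followed by the scaling argument, is what is actually needed.
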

\begin{remark}
	The basis matrices are chosen for the following reason:
\begin{itemize}
 \item  
	 If we project $B_{ij}^2$, $(i, j) \in J_1$, $B_{ij}^3$, $(i,j) \in J_2$ and $\sum_{i = 1}^{d-1}v_i B_{id}^3$ to the main diagonal blocks, we obtain a basis of the space $\{\diag\{M, - M^T\} \mid  M \in \mM(d)\}$.
 \item Projection of $B_{ij}^4$, $(i, j) \in J_1$ to the upper right block form a basis of $\mS^*(d)$.
\end{itemize}
These two observation alone does not imply our proposition, as there are nontrivial interactions among the other blocks. If $d = 2$, the spanning set is $6$-dimensional, and it is possible to check by hand that for a generic choice of the vectors $\mu, v$, the given matrices form a spanning set. The general case is more technical.
\end{remark}

\begin{proposition}\label{prop:B5}
	For arbitrary choice of $\mu, G$ and the corresponding $A_{\mu, G, \alpha(\mu, G)}$, 
	the subspace $\Span\{B_{ij}^5 \st 1 \le i = j \le d -1\}$	contains at least one matrix
	\[
		\bmat{* & S \\ * & *}  
	\]
	for which $(S)_{dd} \ne 0$. 
\end{proposition}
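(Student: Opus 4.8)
The plan is to produce the required matrix directly, without invoking any genericity, by inspecting a single diagonal bracket $B_{ii}^{5}(0)$ and computing the $(d,d)$-entry of its upper-right block. Write $B_{ii}^{5}(0)=\bmat{* & S^{(i)} \\ * & *}$; by the formula for $B_{ij}^{5}$ listed above,
\[
  S^{(i)} = -4\bigl(\ddot A(0)\,E_{ii}\,A(0)+A(0)\,E_{ii}\,\ddot A(0)\bigr)-6\,\dot A(0)\,E_{ii}\,\dot A(0),
\]
so it suffices to exhibit one index $i\in\{1,\dots,d-1\}$ with $\bigl(S^{(i)}\bigr)_{dd}\neq 0$.

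First I would use the normal-form value $A(0)=\bmat{I_{d-1} & 0 \\ 0 & 0}$, which gives $A(0)e_{d}=0$; hence $e_{d}^{T}S^{(i)}e_{d}$ receives no contribution from the two terms containing $A(0)$, and $\bigl(S^{(i)}\bigr)_{dd}=-6\,e_{d}^{T}\dot A(0)\,E_{ii}\,\dot A(0)\,e_{d}$. Since $E_{ii}=2e_{i}e_{i}^{T}$ and $\dot A(0)$ is symmetric, this equals $-12\bigl(e_{i}^{T}\dot A(0)e_{d}\bigr)^{2}$. Now I would substitute \eqref{eq:dotA}, namely $\dot A(0)e_{d}=\bmat{-v \\ 0}$, to obtain $e_{i}^{T}\dot A(0)e_{d}=-v_{i}$ for $1\le i\le d-1$, and therefore $\bigl(S^{(i)}\bigr)_{dd}=-12\,v_{i}^{2}$.

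Finally, recall from \eqref{eq:vw} and \eqref{eq:v} that $v=v(G)=\bar G\bar v$ with $\bar v\neq 0$ and $\bar G$ orthogonal, so $v\neq 0$ and some component $v_{i_{0}}$ is nonzero; then $B_{i_{0}i_{0}}^{5}(0)$ has the required shape, the $(d,d)$-entry of its upper-right block being $-12\,v_{i_{0}}^{2}\neq 0$. This is the easiest of the three spanning statements and has essentially no obstacle: the only point to notice is that neither $\ddot A(0)$ nor the third-derivative data hidden in the unwritten $*$ blocks of $B_{ij}^{5}$ can interfere — the $\ddot A(0)$-terms are killed by $A(0)e_{d}=0$, and the third-derivative data sits in blocks other than the upper-right one — which is exactly why the choice of the family $\alpha(G,\mu)$ governing $\ddot A(0)$ plays no role in this proposition.
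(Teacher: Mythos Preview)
Your proof is correct and follows essentially the same approach as the paper: both isolate the $(d,d)$-entry of the upper-right block of $B_{ii}^{5}(0)$, kill the $\ddot A$-terms using $A(0)e_{d}=0$, and reduce to the nonvanishing of some $v_{i}$. Your presentation via $e_{d}^{T}(\cdot)e_{d}$ and $E_{ii}=2e_{i}e_{i}^{T}$ is in fact a bit cleaner than the paper's $F_{ij}$-basis expansion (and avoids a harmless sign slip there), and your explicit remark that the third-derivative data lives only in the diagonal blocks of $B_{ij}^{5}$ is a useful clarification the paper leaves implicit.
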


\begin{proof}[Proof of Proposition \ref{prop:param-controllable} assuming Proposition \ref{prop:B234-gen} and \ref{prop:B5}]
  Recall that $T_I Sp(2d, \R)$ takes the block form
	\[
			\bmat{M & S_2 \\ S_1 & - M^T}, \quad M \in \cM(d), S_1, S_2 \in \mS(d).  
	\]
	Observe that $B_{ij}^1$ spans all symmetric matrices in the lower left block. Proposition \ref{prop:B234-gen} then implies the following matrix for arbitrary $M \in \cM(d)$, $S_1 \in \mS(d)$, $S_2^* \in \mS^*(d)$,
	\[
		\bmat{M & S_2^* \\ S_1 & - M^T}  
	\]
	is contained in $\Span\{B_{ij}^l \st 1 \le l \le 4\}$. Then the special matrix provided by Proposition \ref{prop:B5} together with the above matrices spans all of $Sp(2d,\mathbb{R})$.
\end{proof}

We prove Proposition \ref{prop:B234-gen} and \ref{prop:B5} in the rest of this section.

Let's denote
\begin{equation}  \label{eq:matrix-names}
	\begin{aligned} 
& \xi_{ij} = A(0) E_{ij}, \quad \eta_{ij} = \dot{A}(0) E_{ij}, \quad \zeta_{ij} = \ddot{A}(0) E_{ij}, \\
& \gamma_{ij} = A(0)E_{ij} A(0), \quad \kappa_{ij} = \dot{A}(0) E_{ij} A_{ij}(0),
	\end{aligned}  
\end{equation}
then
\[
	B_{ij}^2 = \bmat{\xi_{ij} & 0 \\ 0  & - \xi_{ij}^T }, \quad
	B_{ij}^3 = \bmat{\eta_{ij} & - 2 \gamma_{ij} \\ 0 & - \eta_{ij}^T}, \quad
	B_{ij}^4 = \bmat{\zeta_{ij} & - 3 \kappa_{ij} \\ 0 &  - \zeta_{ij}^T}.
\]

\begin{lemma}\label{lem:independence}
	Formula \eqref{eq:span-B234} holds if and only if the following equations
	\begin{equation}  \label{eq:comb-diagonal}
		\sum_{(i,j) \in J_1} \left( a_{ij} \xi_{ij} + b_{ij} \eta_{ij} + c_{ij} \zeta_{ij} \right)  = 0,
	\end{equation}
	\begin{equation}  \label{eq:comb-upper-right}
		\sum_{(i,j) \in J_1} \left( 2 b_{ij} \gamma_{ij} + 3 c_{ij} \kappa_{ij}\right) = 0, 
	\end{equation}
	\begin{equation}  \label{eq:bid}
		b_{id} = b v_i, \quad 1 \le i \le d-1	
	\end{equation}
	imply 
	\[
		a_{ij} = b_{ij} = c_{ij} = b = 0, \quad (i,j) \in J_1.  
	\]
\end{lemma}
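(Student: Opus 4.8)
The plan is to prove the equivalence by a dimension count followed by an unwinding of the block structure of the matrices $B_{ij}^{k}$.

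First I would observe that the span of the left-hand side of \eqref{eq:span-B234} is automatically contained in $W := \left\{ \bmat{M & S \\ 0 & -M^{T}} \mid M \in \cM(d),\, S \in \mS^{*}(d) \right\}$. Indeed, each of $B_{ij}^{2}, B_{ij}^{3}, B_{ij}^{4}$ (and hence $\sum_{i} v_{i} B_{id}^{3}$) has vanishing lower-left block, lower-right block equal to minus the transpose of its upper-left block, and upper-right block $0$, $-2\gamma_{ij}$, or $-3\kappa_{ij}$; these are symmetric with vanishing $(d,d)$-entry, since $A(0) = \bmat{I_{d-1} & 0 \\ 0 & 0}$ gives $A(0)e_{d} = 0$, whence $(\gamma_{ij})_{dd} = e_{d}^{T} A(0) E_{ij} A(0) e_{d} = 0$ and likewise $(\kappa_{ij})_{dd} = 0$. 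Thus these upper-right blocks lie in $\mS^{*}(d) = \Span\{E_{ij} \mid (i,j) \in J_{1}\}$.

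Next I would match dimensions: with $|J_{1}| = \tfrac{d(d+1)}{2} - 1$ and $|J_{2}| = \tfrac{d(d-1)}{2}$, the generating set in \eqref{eq:span-B234} has $2|J_{1}| + |J_{2}| + 1 = \tfrac{3d^{2}+d-2}{2}$ elements, while $\dim W = \dim \cM(d) + \dim \mS^{*}(d) = d^{2} + \bigl(\tfrac{d(d+1)}{2} - 1\bigr) = \tfrac{3d^{2}+d-2}{2}$. Since the span is always contained in $W$ and these counts agree, \eqref{eq:span-B234} is equivalent to linear independence of the listed generators.

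Finally I would translate linear independence into the stated implication. A general vanishing combination is a choice of scalars $a_{ij}$ ($(i,j)\in J_{1}$), $\beta_{ij}$ ($(i,j)\in J_{2}$), $c_{ij}$ ($(i,j)\in J_{1}$), and $b$ with $\sum_{J_{1}} a_{ij} B_{ij}^{2} + \sum_{J_{2}} \beta_{ij} B_{ij}^{3} + \sum_{J_{1}} c_{ij} B_{ij}^{4} + b \sum_{i} v_{i} B_{id}^{3} = 0$. Putting $b_{ij} := \beta_{ij}$ on $J_{2}$ and $b_{id} := b v_{i}$ for $1 \le i \le d-1$ — which is exactly \eqref{eq:bid} — the last term is absorbed and the relation becomes $\sum_{J_{1}}(a_{ij}B_{ij}^{2} + b_{ij}B_{ij}^{3} + c_{ij}B_{ij}^{4}) = 0$. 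Reading off the blocks, the lower-left block vanishes identically, the lower-right block is minus the transpose of the upper-left; the upper-left block gives precisely \eqref{eq:comb-diagonal} and the upper-right block gives precisely \eqref{eq:comb-upper-right}. Conversely, any $(a_{ij}, b_{ij}, c_{ij}, b)$ indexed by $J_{1}$ satisfying \eqref{eq:comb-diagonal}, \eqref{eq:comb-upper-right}, \eqref{eq:bid} yields such a combination, with generator-coefficients $a_{ij}$, $b_{ij}|_{J_{2}}$, $c_{ij}$, $b$. Since vanishing of all generator-coefficients is the same as $a_{ij}=b_{ij}=c_{ij}=0$ on $J_{1}$ together with $b=0$ (using $b_{id} = b v_{i}$), linear independence of the generators holds if and only if \eqref{eq:comb-diagonal}, \eqref{eq:comb-upper-right}, \eqref{eq:bid} force all these scalars to vanish, which is the claim. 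The only delicate point is the index bookkeeping: only the single combination $\sum_{i} v_{i} B_{id}^{3}$, and not the individual $B_{id}^{3}$ with $(i,d)\notin J_{2}$, is available, which is exactly why the constraint \eqref{eq:bid} enters instead of free coefficients $b_{id}$.
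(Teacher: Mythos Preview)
Your proof is correct and follows essentially the same approach as the paper: a dimension count reducing \eqref{eq:span-B234} to linear independence, followed by the substitution $b_{id}=bv_i$ to rewrite the generic linear relation as $\sum_{J_1}(a_{ij}B_{ij}^2+b_{ij}B_{ij}^3+c_{ij}B_{ij}^4)=0$ and then reading off the upper-left and upper-right blocks. The one thing you add beyond the paper is the explicit verification that the span is contained in $W$ (checking $(\gamma_{ij})_{dd}=(\kappa_{ij})_{dd}=0$ via $A(0)e_d=0$), which the paper leaves implicit but is needed for the dimension argument to go through.
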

\begin{proof}
	Since $\#J_1 = \frac{d(d+1)}{2} - 1$ and $\#J_2 = \frac{d(d-1)}{2}$, we check that the number of spanning elements in the left hand side of \eqref{eq:span-B234} is equal to the dimension of the right hand side. Therefore, it suffices to prove the spanning elements are linearly independent.

	By denoting $b_{id} = b v_i$, we have
	\[
		\sum_{J_2} b_{ij} B_{ij}^3 + b \sum_{i = 1}^{d-1} v_i B_{id}^3
		= \sum_{J_1} b_{ij} B_{ij}^3, 
	\]
	this allows us to write the linear combination of the spanning elements as
	\[
		\sum_{J_1} \left( a_{ij} B_{ij}^2 + b_{ij} B_{ij}^3 + c_{ij} B_{ij}^4\right). 
	\]
	This matrix is zero if and only if the upper left and upper right block is zero. These are precisely \eqref{eq:comb-diagonal} and \eqref{eq:comb-upper-right}.
\end{proof}

For the matrices $\xi_{ij}, \eta_{ij}, \zeta_{ij}$, we calculate their coordinates in $F_{ij}$, the standard basis of $\cM(d)$. For the matrices $\gamma_{ij}, \kappa_{ij}$ we express them in terms of $E_{ij}$, $(i, j) \in J_1$, the standard basis of $\mS^*(d)$.

\begin{lemma}\label{lem:matrix-explicit}
	We have the following calculations:
	\begin{equation}  \label{eq:xi-ij}
		\begin{aligned}
			\xi_{ij} 
			& = \begin{cases}
				F_{ij} + F_{ji} & 1 \le i \le j \le d, \\
				F_{id} & 1 \le i \le d-1, \, j = d, \\
				0 & i = j = d. 
			\end{cases}
		\end{aligned}
	\end{equation}
	\begin{equation}  \label{eq:eta-ij}
		\eta_{ij} = 
		\begin{cases}
			\mu_i F_{ij} +  \mu_j F_{ij} - v_i F_{dj} - v_j F_{di} & 1 \le i \le j \le d-1, \\
			\mu_i F_{id}  - v_i F_{dd} - \sum_{k = 1}^{d-1} v_k F_{ki} & 
			1 \le i \le d-1, \, j = d, \\
			- 2 \sum_{k = 1}^{d-1} v_k F_{kd} & i = j = d .
		\end{cases}
	\end{equation}
	\begin{equation}  \label{eq:zeta-ij}
		\zeta_{ij}
		= \begin{cases}
			w_i F_{dj} + w_j F_{di}, & 1 \le i \le j \le d-1, \\
			\sum_{k = 1}^{d-1} w_k F_{ki} + w_i F_{dd} + 2w_d F_{di}, & 1 \le i \le d-1, \, j = d, \\
			2\sum_{k = 1}^{d-1} w_k F_{kd} + 4w_d F_{dd},  & i = j = d. 
		\end{cases} 
	\end{equation}
	\begin{equation}  \label{eq:gamma-ij}
		\gamma_{ij} = 
		\begin{cases}
			E_{ij} & 1 \le i \le j \le d-1 \\
			0 & \text{otherwise}.
		\end{cases}
	\end{equation}
	\begin{equation}  \label{eq:kappa-ij}
		\begin{aligned}
			\kappa_{ij} & = 
			\begin{cases}
				(\mu_i + \mu_j) E_{ij} - v_i E_{jd} - v_j E_{id} & 1 \le i \le j \le d-1, \\
				- \sum_{k = 1}^{d-1} v_k E_{ki} & 
				1 \le i \le d-1, \, j = d, \\
				0  & i = j = d. 
			\end{cases}
		\end{aligned}
	\end{equation}
\end{lemma}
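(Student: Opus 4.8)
The statement is a direct computation, and the plan is simply to substitute the normal-form values into each of the five products and multiply, organizing everything by whether the index $d$ occurs among $i,j$. Recall that for $A\in\cA_\delta(\bn)$ we have $A(0)=P:=\diag(I_{d-1},0)$, so that left (resp.\ right) multiplication by $A(0)$ replaces the $d$-th row (resp.\ column) of a matrix by zero; and by \eqref{eq:dotA} and \eqref{eq:ddotA} the $i$-th column of $\dot A(0)$ is $\mu_i e_i-v_i e_d$ for $1\le i\le d-1$ while its $d$-th column is $-\sum_{k=1}^{d-1}v_k e_k$, and $\ddot A(0)=w e_d^{\,T}+e_d w^{\,T}$ with $w=\sum_{k=1}^{d-1}w_k e_k+w_d e_d$. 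The singularity of $A(0)$ is exactly what forces the three-branch case split in each formula, and one keeps the branches mutually consistent by remembering the convention $E_{ii}=2F_{ii}$.

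First I would dispatch $\xi_{ij}=A(0)E_{ij}$ and $\gamma_{ij}=A(0)E_{ij}A(0)=\xi_{ij}P$: since $E_{ij}=F_{ij}+F_{ji}$ has nonzero rows only in positions $i$ and $j$, left multiplication by $P$ does nothing unless $j=d$ (in which case it deletes the $F_{di}$ summand) or $i=j=d$ (in which case it deletes everything), and right multiplication by $P$ acts the same way on columns; this yields \eqref{eq:xi-ij} and \eqref{eq:gamma-ij} at once, and in particular shows that the $(d,d)$ entries vanish, so $\gamma_{ij}\in\mS^*(d)$. Next I would compute $\eta_{ij}=\dot A(0)E_{ij}=(\dot A(0)e_i)e_j^{\,T}+(\dot A(0)e_j)e_i^{\,T}$ by inserting the columns of $\dot A(0)$ recorded above and splitting into the cases $i\le j\le d-1$, $i<j=d$, and $i=j=d$, producing the three lines of \eqref{eq:eta-ij}. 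Likewise $\zeta_{ij}=\ddot A(0)E_{ij}=w\,(e_d^{\,T}E_{ij})+e_d\,(w^{\,T}E_{ij})$ with $e_d^{\,T}E_{ij}=\delta_{id}e_j^{\,T}+\delta_{jd}e_i^{\,T}$ and $w^{\,T}E_{ij}=w_i e_j^{\,T}+w_j e_i^{\,T}$; expanding $w e_\ell^{\,T}$ in the $F$-basis and again splitting cases gives \eqref{eq:zeta-ij}.

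Finally, for $\kappa_{ij}$ — which in the block form of $B^4_{ij}$ occurs symmetrized, so should be read as $\kappa_{ij}=\dot A(0)E_{ij}A(0)+A(0)E_{ij}\dot A(0)=\eta_{ij}P+(\eta_{ij}P)^{T}$ — I would post-multiply the formulas for $\eta_{ij}$ by $P$, which deletes the terms whose column index is $d$ (so nothing happens when $i,j\le d-1$; the $F_{id}$ and $F_{dd}$ terms drop when $j=d$; everything drops when $i=j=d$), and then symmetrize, merging each surviving pair $F_{kl}+F_{lk}$ into $E_{kl}$; this produces \eqref{eq:kappa-ij} and in particular shows $\kappa_{ij}\in\mS^*(d)$. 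I do not expect any genuine obstacle in this lemma: the content is purely bookkeeping, and the only points requiring care are the edge cases at the index $d$, the identity $E_{ii}=2F_{ii}$ used to check the piecewise formulas against each other, and — for $\gamma_{ij}$ and $\kappa_{ij}$ — correctly re-expressing the (symmetric) output in the $E$-basis of $\mS^*(d)$ rather than in the full $F$-basis of $\cM(d)$.
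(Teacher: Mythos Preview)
Your proposal is correct and follows essentially the same route as the paper: both proofs reduce everything to the observation that left/right multiplication by $A(0)=\diag(I_{d-1},0)$ deletes the $d$-th row/column, expand $\dot A(0)$ and $\ddot A(0)$ into elementary pieces, and split into the three cases according to whether the index $d$ appears among $i,j$. Your column-by-column description of $\dot A(0)$ and the rank-two expression $\ddot A(0)=we_d^{\,T}+e_dw^{\,T}$ are just a slightly more compact packaging of the paper's $F_{kl}$-basis expansions \eqref{eq:dotA-ij}--\eqref{eq:ddotA-ij}, and your treatment of $\kappa_{ij}$ (post-multiply $\eta_{ij}$ by $P$, then symmetrize) is exactly what the paper does in its final paragraph.
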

\begin{proof}
	By \eqref{eq:dotA} and \eqref{eq:ddotA}, we have
	\begin{equation}  \label{eq:dotA-ij}
		\dot{A}(0) = \sum_{i = 1}^{d-1} \mu_i F_{ii} - \sum_{i = 1}^{d-1} v_i (F_{ij} + F_{ij}), 
	\end{equation}
	\begin{equation}  \label{eq:ddotA-ij}
		\ddot{A}(0) = \sum_{k = 1}^d w_k (F_{kd} + F_{dk}).
	\end{equation}

		We have
		\begin{equation}  \label{eq:AE}
			\begin{aligned}
				\xi_{ij} = A(0) E_{ij} & = \sum_{k = 1}^{d-1} F_{kk} (F_{ij} + F_{ji})	\\
															 & = \begin{cases}
																 F_{ij} + F_{ji} & 1 \le i \le j \le d - 1, \\
																 F_{id} & 1 \le i \le d-1, \, j = d, \\
																 0 & i = j = d, 
															 \end{cases}
			\end{aligned}
		\end{equation}
		which is \eqref{eq:xi-ij}. 

		For \eqref{eq:eta-ij}, we multiply the two parts of \eqref{eq:dotA-ij} to $E_{ij}$ separately:
		\[
			\sum_{k = 1}^{d-1} \mu_k F_{kk} (F_{ij} + F_{ji}) 
			= \begin{cases}
				\mu_i F_{ij} + \mu_j F_{ji} & 1 \le i \le j \le d, \\
				\mu_i F_{id} & 1 \le i \le d-1, \, j = d, \\
				0 & i = j = d.
			\end{cases}
		\]
\begin{equation}  \label{eq:etaij-2}
			\begin{aligned}
 & - \sum_{k = 1}^{d-1} v_k  (F_{kd} + F_{dk})(F_{ij} + F_{ij}) =  - \sum_{k = 1}^{d-1} v_k F_{dk} (F_{ij} + F_{ji})
 - \sum_{k = 1}^{d-1} v_k F_{kd} (F_{ij} + F_{ji}) \\
 & = \begin{cases}
	 - v_i F_{dj} - v_j F_{di} & 1 \le i \le j \le d - 1 \\
	 - v_i F_{dd} - \sum_{k = 1}^{d-1} v_k F_{ki} & 1 \le i \le d-1, \, j = d \\
	 - 2 \sum_{k = 1}^{d-1} v_k F_{kd} & i = j = d. 
 \end{cases}
			\end{aligned}
\end{equation}
		Equation \eqref{eq:eta-ij} follows by summing the last two formulas.

		Equation \eqref{eq:zeta-ij} follows from a similar calculation to \eqref{eq:etaij-2}:
		\[
			\begin{aligned}
				\zeta_{ij}
&  = \sum_{k = 1}^{d-1} w_k(F_{kd} + F_{dk})(F_{ij} + F_{ji}) + 2w_d F_{dd} (F_{ij} + F_{ji}) \\
& = \begin{cases}
	w_i F_{dj} + w_j F_{di}, & 1 \le i \le j \le d-1, \\
	\sum_{k = 1}^{d-1} w_k F_{ki} + w_i F_{dd} + 2w_d F_{di}, & 1 \le i \le d-1, \, j = d, \\
	2\sum_{k = 1}^{d-1} w_k F_{kd} + 4w_d F_{dd} & i = j = d. 
\end{cases} 
			\end{aligned}
		\]

		Equation \eqref{eq:gamma-ij} follows from the fact that
		\[
			A(0) = \bmat{I_{d-1} & 0 \\ 0 & 0}
		\]
		and $A(0)E_{ij}A(0)$ obtained from $E_{ij}$ by setting the last row and column to $0$.

		Finally,
		\[
			\dot{A}(0) E_{ij} A(0) = \eta_{ij} \bmat{I_{d-1} & 0 \\ 0 & 0}
		\]
		is obtained by setting the last column of $\eta_{ij}$ to $0$. Explicitly,
		\[
			\dot{A}(0) E_{ij} A(0) = 
			\begin{cases}
				\mu_i F_{ij} +  \mu_j F_{ij} - v_i F_{dj} - v_j F_{di} & 1 \le i \le j \le d-1, \\
				- \sum_{k = 1}^{d-1} v_k F_{ki} & 
				1 \le i \le d-1, \, j = d, \\
				0 & i = j = d .
			\end{cases}
		\]
		Since $\kappa_{ij} = \dot{A}(0) E_{ij} A(0) + (\dot{A}(0) E_{ij} A(0))^T$, we add the above matrix to its transpose to get \eqref{eq:kappa-ij}.
	\end{proof}

	Let us note the following calculation that will be reused several times.
	\begin{lemma}\label{lem:diag-sum}
			Let $s_{ij}$, $1 \le i \le j \le d-1$, $w_i$, $x_i$, $1 \le i \le d-1$, be real numbers. Extend the definition of $s_{ij}$ to all $1 \le i, j \le d-1$ by the symmetry condition $s_{ij} = s_{ij}$. define
		\begin{equation}  \label{eq:diag-sum-sym}
			\bar{s}_i(w) = \sum_{j = 1}^i s_{ji} w_j + \sum_{j = i}^{d-1} s_{ij} w_j = 2 s_{ii} w_i + \sum_{1 \le j \le d-1, \,  j \ne i} s_{ij} w_j.
		\end{equation}
		Then	
		\begin{equation}  \label{eq:diag-sum}
			\sum_{1 \le i \le j \le d-1} s_{ij}(w_i x_j + x_i w_j)
			= \sum_{i = 1}^{d-1} \bar{s}_i(w) x_i,
		\end{equation}
	\end{lemma}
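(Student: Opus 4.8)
The plan is to prove \eqref{eq:diag-sum} by expanding the double sum on the left and comparing, coefficient by coefficient, the resulting linear form in $x_1,\dots,x_{d-1}$ with the right-hand side $\sum_{i=1}^{d-1}\bar{s}_i(w)x_i$. Since the right-hand side is already displayed as an explicit linear combination of the $x_i$, it is enough to verify that the coefficient of each $x_k$ agrees on both sides; this turns the lemma into a bookkeeping computation with no analytic content, and I expect it to go through directly.

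I would first separate the left sum into its diagonal contribution ($i=j$) and its strictly off-diagonal contribution ($i<j$). The diagonal piece is $\sum_{i} s_{ii}(w_ix_i+x_iw_i) = \sum_i 2s_{ii}w_i x_i$, feeding $2s_{kk}w_k$ into the coefficient of $x_k$. For a pair $i<j$, the summand $s_{ij}(w_ix_j+x_iw_j)$ contributes $s_{ij}w_i$ to the coefficient of $x_j$ and $s_{ij}w_j$ to the coefficient of $x_i$; so fixing $k$, the off-diagonal pairs feed $\sum_{i<k}s_{ik}w_i$ (from pairs $(i,k)$) and $\sum_{j>k}s_{kj}w_j$ (from pairs $(k,j)$) into the coefficient of $x_k$.

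Collecting these and using the symmetric extension $s_{ik}=s_{ki}$ to merge the terms with index below $k$ and above $k$, the total coefficient of $x_k$ equals
\[
  2s_{kk}w_k + \sum_{i<k}s_{ki}w_i + \sum_{j>k}s_{kj}w_j
  = 2s_{kk}w_k + \sum_{1\le j\le d-1,\, j\ne k}s_{kj}w_j = \bar{s}_k(w),
\]
which is precisely \eqref{eq:diag-sum-sym}. Summing over $k$ yields \eqref{eq:diag-sum}. The only place where a moment of care is needed is the invocation of the symmetry of $s_{ij}$ when combining the two off-diagonal sums into a single contribution to a fixed $x_k$, but there is no genuine obstacle here: the identity is elementary and is recorded on its own only because it is reused several times in the block-by-block computations of the $B_{ij}^k$ that follow.
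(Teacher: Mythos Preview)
Your proof is correct and is essentially the same elementary reindexing as the paper's. The only cosmetic difference is that the paper splits the left-hand side into the two pieces $\sum_{i\le j}s_{ij}w_ix_j$ and $\sum_{i\le j}s_{ij}w_jx_i$ and reindexes each to match the first expression for $\bar{s}_i(w)$ in \eqref{eq:diag-sum-sym}, whereas you separate diagonal from off-diagonal terms and collect the coefficient of each $x_k$ to match the second expression; both are the same bookkeeping.
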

\begin{proof}
		We have
		\[
				\sum_{1 \le i \le j \le d - 1} s_{ij} w_i x_j
				= \sum_{1 \le j \le i \le d-1} s_{ji} w_j x_i
				= \sum_{i = 1}^{d-1} \sum_{j = 1}^i s_{ji} w_j x_i
		\]
		and 
		\[
				\sum_{1 \le i \le j \le d-1} s_{ij}w_j x_i 
				= \sum_{i = 1}^{d-1} \sum_{j = i}^{d-1} s_{ij} w_j x_i,
		\]
		the lemma follows immediately.
\end{proof}

	Using Lemma \ref{lem:matrix-explicit}, we now calculate \eqref{eq:comb-diagonal} and \eqref{eq:comb-upper-right}.

	\begin{lemma}\label{lem:sum-linear-comb}
			We have 

		\begin{small}\begin{equation}  \label{eq:sum-xi-eta}
			\begin{aligned}
& \sum_{J_1} a_{ij} \xi_{ij} + \sum_{J_1} b_{ij} \eta_{ij} \\
& = 
\sum_{1 \le i < j \le d-1} (a_{ij} + \mu_i b_{ij} - b_{jd} v_i) F_{ij} 
+ \sum_{1 \le i < j \le d-1} (a_{ij} + \mu_j b_{ij} - b_{id} v_j) F_{ji}  \\
& \quad 
+ \sum_{i = 1}^{d-1} (2a_{ii} + 2\mu_{ii} b_{ii} - b_{id} v_i ) F_{ii}
+ \sum_{i = 1}^{d-1} (a_{id} + \mu_i b_{id}) F_{id} 
- \sum_{i = 1}^{d-1} \bar{b}_i(v) F_{di}
\\ & \quad  - \left(  \sum_{i = 1}^{d-1} b_{id}v_i \right) F_{dd} .
			\end{aligned}
		\end{equation} \end{small}

		\begin{equation}  \label{eq:sum-zeta-ij}
			\begin{aligned}
				\sum_{J_1} c_{ij} \zeta_{ij}
& = \sum_{1 \le i < j \le d-1} w_i c_{jd} F_{ij} 
+ \sum_{1 \le i < j \le d-1} w_j c_{id} F_{ji} 
+ \sum_{i = 1}^{d-1} w_i c_{id} F_{ii} \\
& \quad
+  \sum_{i = 1}^{d-1} (\bar{c}_i(w) + 2c_{id}w_d) F_{di}
+ \left( \sum_{i = 1}^{d-1} c_{id} w_i\right) F_{dd}
			\end{aligned}
		\end{equation}
		\begin{equation}  \label{eq:sum-kappa-ij}
			\begin{aligned}
				\sum_{(i,j) \in J_1} c_{ij} \kappa_{ij}  
& = \sum_{1 \le i < j \le d-1}
\left( 
	(\mu_i + \mu_j) c_{ij} - c_{id} v_j - c_{jd} v_i
\right) E_{ij}  \\
& \quad + \sum_{i = 1}^{d-1} (2\mu_i c_{ii} - c_{id} v_i) E_{ii}
- \sum_{i = 1}^{d-1} \bar{c}_i(v) E_{id}, 
			\end{aligned}
		\end{equation}
		where (see Lemma \ref{lem:diag-sum}) 
		\begin{small}
		\[
			\bar{b}_i(v) = \sum_{j = 1}^i b_{ji} v_i  + \sum_{j = i}^{d-1} b_{ij} v_j, \quad
			\bar{c}_i(v) = \sum_{j = 1}^i c_{ji} v_i  + \sum_{j = i}^{d-1} c_{ij} v_j, \quad
			\bar{c}_i(w) = \sum_{j = 1}^i c_{ji} w_i  + \sum_{j = i}^{d-1} c_{ij} w_j.
		\]\end{small}
	\end{lemma}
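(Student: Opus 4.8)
The plan is purely computational: substitute the explicit coordinate expressions for $\xi_{ij}, \eta_{ij}, \zeta_{ij}, \kappa_{ij}$ from Lemma~\ref{lem:matrix-explicit} into each of the three sums and read off the coefficient of every standard basis element ($F_{ij}$ for the first two identities, $E_{ij}$ for the third). The organizing remark is that, by \eqref{eq:J1}, the index set $J_1$ is the disjoint union of the interior pairs $\{(i,j) \mid 1 \le i \le j \le d-1\}$ and the last-column pairs $\{(i,d) \mid 1 \le i \le d-1\}$, and the case distinctions in Lemma~\ref{lem:matrix-explicit} are arranged along exactly this split (the case $i=j=d$ never occurs since $(d,d) \notin J_1$).

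First I would compute \eqref{eq:sum-xi-eta}. By \eqref{eq:xi-ij}, $\sum_{J_1} a_{ij}\xi_{ij}$ contributes $2a_{ii}$ to $F_{ii}$, $a_{ij}$ to both $F_{ij}$ and $F_{ji}$ for $i<j\le d-1$, and $a_{id}$ to $F_{id}$, with nothing on $F_{di}$ or $F_{dd}$. For $\sum_{J_1} b_{ij}\eta_{ij}$, the interior terms in \eqref{eq:eta-ij} produce the $\mu$-contributions $\mu_i b_{ij}F_{ij} + \mu_j b_{ij} F_{ji}$ in the upper-left block together with the $v$-contributions $-b_{ij}(v_iF_{dj}+v_jF_{di})$ in the last row; summing the latter over $1\le i\le j\le d-1$ is precisely the setting of Lemma~\ref{lem:diag-sum} with $s_{ij}=b_{ij}$ and $x_k = F_{dk}$, giving the coefficient $-\bar b_i(v)$ of $F_{di}$. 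The last-column terms $b_{id}\eta_{id}$ add $\mu_i b_{id}$ to $F_{id}$, $-b_{id}v_i$ to $F_{dd}$, and the block $-b_{id}\sum_{k=1}^{d-1} v_k F_{ki}$, whose diagonal and off-diagonal pieces furnish the corrections $-b_{id}v_i$ on $F_{ii}$, $-b_{jd}v_i$ on $F_{ij}$, and $-b_{id}v_j$ on $F_{ji}$. Adding the $\xi$- and $\eta$-contributions block by block yields \eqref{eq:sum-xi-eta}. The identities \eqref{eq:sum-zeta-ij} and \eqref{eq:sum-kappa-ij} are established in the same fashion starting from \eqref{eq:zeta-ij} and \eqref{eq:kappa-ij}: in the $\zeta$ sum, Lemma~\ref{lem:diag-sum} with weights $w_k$ produces the $\bar c_i(w)$ part of the $F_{di}$-coefficient, the extra $2c_{id}w_d$ coming from the $j=d$ case; in the $\kappa$ sum everything remains symmetric, so the answer is naturally written in the basis $\{E_{ij} \mid (i,j)\in J_1\}$ of $\mS^*(d)$, with Lemma~\ref{lem:diag-sum} (weights $v_k$) giving the $-\bar c_i(v)$ coefficient of $E_{id}$ and the $j=d$ terms $-c_{id}\sum_k v_k E_{ki}$ producing the interior corrections $-c_{jd}v_i-c_{id}v_j$ and $-c_{id}v_i$.

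There is no conceptual difficulty; the main obstacle is keeping the bookkeeping honest. The two points that require care are (i) correctly matching which entry $F_{ki}$ of the block $-b_{id}\sum_k v_k F_{ki}$ (and its $\zeta$- and $\kappa$-analogues) contributes to which of $F_{ij}, F_{ji}, F_{ii}$, and (ii) not double-counting the diagonal $i=j$ terms when reassembling sums of the form $\sum_{1\le i\le j\le d-1} s_{ij}(v_i x_j + v_j x_i)$ through Lemma~\ref{lem:diag-sum}, where the diagonal carries the factor $2s_{ii}$ recorded in \eqref{eq:diag-sum-sym}. I would therefore carry out the computation one block at a time --- the upper-left $d\times d$ block, then the last column $F_{id}$, then the last row $F_{di}$, then the $(d,d)$ entry --- which isolates these two issues and makes the verification routine.
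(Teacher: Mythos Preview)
Your proposal is correct and follows essentially the same approach as the paper: substitute the explicit formulas from Lemma~\ref{lem:matrix-explicit}, split the index set $J_1$ into interior and last-column pairs, and invoke Lemma~\ref{lem:diag-sum} to recognize the $\bar b_i(v)$, $\bar c_i(w)$, $\bar c_i(v)$ coefficients. The paper's write-up expands the full sum and then reorganizes, whereas you propose to proceed block by block, but this is only a presentational difference in the same routine computation.
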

	\begin{proof}
		Proof of \eqref{eq:sum-xi-eta}:
		By \eqref{eq:xi-ij}, \eqref{eq:eta-ij}, we have
		\begin{small}
		    \[
			\begin{aligned}
				& \sum_{J_1} a_{ij} \xi_{ij} + \sum_{J_1} b_{ij} \eta_{ij} \\
				& = \sum_{1 \le i \le j \le d-1} a_{ij} (F_{ij} + F_{ji}) + \sum_{i = 1}^{d-1} a_{id} F_{id} \\
				& \quad 
				+ \sum_{1 \le i \le j \le d-1} b_{ij} 
				\left( \mu_i F_{ij} + \mu_j F_{ji} - v_i F_{dj} - v_j F_{di} \right) 
				+ \sum_{i = 1}^{d-1} b_{id} \left( \mu_i F_{id} - v_i F_{dd} - \sum_{k = 1}^{d-1} v_k F_{ki} \right) \\
				& = \sum_{1 \le i \le j \le d-1} a_{ij} (F_{ij} + F_{ji})
				+ \sum_{1 \le i \le j \le d-1} b_{ij} ( \mu_i F_{ij} + \mu_j F_{ji}) 
				-\sum_{i = 1}^{d-1} \sum_{j = 1}^{d-1} b_{jd} v_i F_{ij}
				\\
				& \quad 
				+ \sum_{i = 1}^{d-1} a_{id} F_{id} + \sum_{i = 1}^{d-1} b_{id} \mu_i F_{id}
				- \sum_{i = 1}^{d-1} b_{id} v_i F_{dd}
				- \sum_{1 \le i \le j \le d-1} b_{ij} ( v_i F_{dj} + v_j F_{di}). 
			\end{aligned}
		\] \end{small}
		Observe that
		\[
				\sum_{i = 1}^{d-1} \sum_{j = 1}^{d-1} b_{jd} v_i F_{ij}
				= \sum_{1 \le i < j \le d-1} b_{jd} v_i F_{ij}
				+ \sum_{1 \le i < j \le d-1} b_{id}v_j F_{ji}
				+ \sum_{i = 1}^{d-1} b_{id} v_i F_{ii},
		\]
		and (by Lemma \ref{lem:diag-sum}) $\sum_{1 \le i \le j \le d-1} b_{ij} ( v_i F_{dj} + v_j F_{di})
		= \sum_{i = 1}^{d-1} \bar{b}_i(v) F_{di}$, \eqref{eq:sum-xi-eta} follows.

		For \eqref{eq:sum-zeta-ij}, we use \eqref{eq:zeta-ij} to get
		\[
\begin{aligned}
			 & \sum_{J_1} c_{ij} \zeta_{ij}  \\
			 & = \sum_{1 \le i \le j \le d-1} c_{ij} (w_i F_{dj} + w_j F_{di})
			 + \sum_{i = 1}^{d-1} \sum_{k = 1}^{d-1} c_{id} w_k F_{ki}
			 + \sum_{i = 1}^{d-1} c_{id} w_i F_{dd} + \sum_{i = 1}^{d-1} 2c_{id} w_d F_{di} \\
			 & = \sum_{i = 1}^{d-1} (\bar{c}_i(w) + 2c_{id} w_d) F_{di} 
			 + \sum_{i = 1}^{d-1} \sum_{j = 1}^{d-1} w_i c_{jd} F_{ij}
			 + \sum_{i = 1}^{d-1} c_{id} w_i F_{dd}.
\end{aligned}
		\]
    Equation \eqref{eq:sum-zeta-ij} follows by splitting the second sum into $i < j$, $i = j$, and $i > j$ terms.

		For \eqref{eq:sum-kappa-ij}, we compute
		\[
			\begin{aligned}
&  \sum_{(i,j) \in J_1} c_{ij} \kappa_{ij} \\
& = \sum_{1 \le i \le j \le d-1}
c_{ij} \left( 
	(\mu_i + \mu_j) E_{ij} - v_i E_{jd} - v_j E_{id} 
\right) 
- \sum_{i = 1}^{d-1} c_{id} \sum_{k = 1}^{d-1} v_k E_{ki}  \\
&   = \sum_{1 \le i < j \le d-1}
c_{ij} (\mu_i + \mu_j) E_{ij} 
+ \sum_{i = 1}^{d-1} 2 \mu_i c_{ii} E_{ii}
- \sum_{1 \le i \le j \le d-1} (v_i E_{id} + v_j E_{id}) \\
& \quad
- \left( \sum_{1 \le i < k \le d-1} + \sum_{1 \le k < i \le d-1}\right) c_{id} v_k E_{ki}
- \sum_{i = 1}^d c_{id} v_i E_{ii},
			\end{aligned}
		\]
		then use
		\[
		  \left( \sum_{1 \le i < k \le d-1} + \sum_{1 \le k < i \le d-1}\right) c_{id} v_k E_{ki}  
			=
			\sum_{1 \le i < j \le d-1} \left( c_{id} v_j + c_{jd} v_i \right) E_{ij}
		\]
		and Lemma \ref{lem:diag-sum} to get \eqref{eq:sum-kappa-ij}.
	\end{proof}

	\begin{proof}[Proof of Proposition \ref{prop:B234-gen}]
		We will first assume that $v = (v_1, \cdots, v_{d-1})$ satisfies
		\begin{equation}  \label{eq:vi}
			v_i \ne 0, \quad i = 1, \cdots, d - 1.
		\end{equation}
		This is already satisfies when $d = 2$, and for $d \ge 3$, we will eventually verify \eqref{eq:vi} holds for generic choice of parameters. We will also assume that
		\[
		  \mu_i \ne \mu_j  
		\]
		which clearly  holds for a generic set of $\mu$.

		By Lemma \ref{lem:independence}, we will show that \eqref{eq:comb-diagonal}, \eqref{eq:comb-upper-right}, \eqref{eq:bid} implies all coefficients vanish. To do this, we use elimination and substitution until we obtain a linear system involving only $c_{id}$, $1 \le i \le d-1$. We then show the coefficient matrix of this equation is non-singular generically.

		From \eqref{eq:comb-diagonal}, we use \eqref{eq:sum-xi-eta}  and \eqref{eq:sum-zeta-ij} to get: 
\begin{align}
		0 & = a_{ij} + \mu_i b_{ij} - b_{jd} v_i + w_i c_{jd} & 1 \le i < j \le d-1 \label{eq:ilj} \\ 	
		0 & = a_{ij} + \mu_j b_{ij} - b_{id} v_j + w_j c_{id} & 1 \le i < j \le d-1 \label{eq:igj} \\ 	
				0 & = 2a_{ii} + 2 \mu_i b_{ii} - b_{id} v_i + w_i c_{id} & 1 \le i \le d-1  \\
				0 & = a_{id} + \mu_i b_{id} & 1 \le i \le d-1 \\
				0 & = - \bar{b}_i(v) + \bar{c}_i(w) + 2c_{id} w_d & 1 \le i \le d-1 \label{eq:barb} \\
				0 & = - \sum_{k = 1}^d  b_{kd} v_k + \sum_{k = 1}^{d-1} c_{kd} w_k
 \end{align}
		\begin{itemize}
				\item From \eqref{eq:ilj} and \eqref{eq:igj}, we get
				\[
					b_{ij} = \frac{1}{\mu_i - \mu_j} \left( 
						(b_{jd} v_i - b_{id} v_j) - (w_i c_{jd} - w_j c_{id})
					\right)
					= - \frac{1}{\mu_i - \mu_j} (w_i c_{jd} - w_j c_{id}),
				\]
				for $1 \le i < j \le d-1$. Here we used $b_{id} = b v_i$ hence $b_{id}v_j - b_{jd} v_i = 0$.

				We note that the right hand side is symmetric in $i,j$, therefore if we set $b_{ij} = b_{ij}$, the same formula still holds. 
			\item
					We apply Lemma \ref{lem:diag-sum} to get
				\[
					\bar{b}_i(v)
					= 2 b_{ii} v_i + \sum_{j \ne i} b_{ij} v_j 
					= 2 b_{ii} v_i 
					- \sum_{j \ne i} \frac{v_j}{\mu_i - \mu_j} (w_i c_{jd} - w_j c_{id})
				\]
		\item From \eqref{eq:barb}, 
				\begin{equation}  \label{eq:bii}
					\begin{aligned}
						 2 v_i b_{ii}  & =    \sum_{j \ne i} \frac{v_i}{\mu_i - \mu_j} 
						(w_i c_{jd} - w_j c_{id}) +  \bar{b}_i(v)  \\
														& = \bar{c}_i(w) +  \sum_{j \ne i} \frac{v_i}{\mu_i - \mu_j} 
														(w_i c_{jd} - w_j c_{id}) + 2c_{id} w_d.
					\end{aligned}
				\end{equation}
		\end{itemize}

		From \eqref{eq:comb-upper-right}, we use \eqref{eq:sum-kappa-ij} and \eqref{eq:gamma-ij} to get
		\begin{align}
				0 & = 3 (\mu_i + \mu_j) c_{ij} - 3 c_{id}v_j - 3 c_{jd} v_i + 2 b_{ij}, & 1 \le i < j \le d-1,
				\label{eq:c-ilj}\\
				0 & = 6\mu_i c_{ii} -  3 c_{id} + 2b_{ii} & 1 \le i \le d-1, 
				\label{eq:c-iej}\\
				0 & = \bar{c}_i(v), & 1 \le i \le d-1. \label{eq:barc}
		\end{align}
		From \eqref{eq:c-ilj}, we have for $1 \le i < j \le d-1$:
		\begin{equation}  \label{eq:cij}
			\begin{aligned}
				c_{ij} & = \frac{1}{\mu_i + \mu_j} \left( 
					c_{id} v_j + c_{jd} v_i - \tfrac23 b_{ij}
				\right)  \\
							 & =  \frac{1}{\mu_i + \mu_j} \left( 
								 c_{id} v_j + c_{jd} v_i 
							 + \frac{2}{3(\mu_i - \mu_j)} (w_i c_{jd} - w_j c_{id})	   \right)  \\
							 & = \frac{1}{\mu_i + \mu_j}(c_{id}v_j + c_{jd} v_i)
							 + \frac{2}{3(\mu_i^2 - \mu_j^2)} (w_i c_{jd} - w_j c_{id}), 
				\end{aligned}
			\end{equation}
			and from \eqref{eq:c-iej},
			\begin{equation}  \label{eq:cii}
				6\mu_i c_{ii} - 3 c_{id} v_i + 2 b_{ii} = 0. 
			\end{equation}

			Apply \eqref{eq:diag-sum-sym}, we get from \eqref{eq:barc} that
			\begin{equation}  \label{eq:cii-2}
				c_{ii} = - \frac{1}{2  v_i} \sum_{j \ne i} c_{ij} v_j,
			\end{equation}
			where $\sum_{j \ne i}$ denotes a summation in all $1 \le j \le d-1$ such that $j \ne i$.
			Apply \eqref{eq:diag-sum} to $c_{ij}$ and $w_j$, then plug in \eqref{eq:cii-2} to get
\begin{equation}  \label{eq:cibarw}
				\bar{c}_i(w) = 2 c_{ii} w_i + \sum_{j \ne i} c_{ij} w_j 
				= - \sum_{j \ne i} c_{ij} \frac{w_i v_j}{v_i}  + \sum_{j \ne i} c_{ij} w_j  
				= \sum_{j \ne i} c_{ij} \frac{v_i w_j - w_i v_j}{v_i}.
\end{equation}

			From \eqref{eq:cii} and \eqref{eq:cii-2}, we get
			\[
				 2 b_{ii} = - 6 \mu_i c_{ii} + 3 c_{id} v_i = -  \frac{3 \mu_i}{v_i} \sum_{j \ne i} c_{ij} v_j + 3 c_{id} v_i .
			\]
			Plug this into \eqref{eq:bii}, then use \eqref{eq:cibarw} to get
			\[
				\begin{aligned}
&  - 3 \mu_i \sum_{j \ne i} c_{ij} v_j + 3 c_{id} v_i^2  =  2v_i b_{ii} 
 =  \bar{c}_i(w) 
 +  \sum_{j \ne i} \frac{v_j}{\mu_i - \mu_j} (w_i c_{jd} - w_j c_{id}) + 2c_{id} w_d \\
& = \sum_{j \ne i} c_{ij} \frac{v_i w_j - w_i v_j}{v_i}
+  \sum_{j \ne i} \frac{v_j}{\mu_i - \mu_j} (w_i c_{jd} - w_j c_{id}) + 2c_{id} w_d.
				\end{aligned}
			\]
			We obtain
			\begin{equation}  \label{eq:sum-cij-zero}
					0   = 	
					\sum_{j \ne i} c_{ij} \left( \frac{v_i w_j - w_i v_j}{v_i} +  3 \mu_i v_j\right) 
					+ c_{id} (2w_d - 3v_i^2)   +  \sum_{j \ne i} \frac{v_j}{\mu_i - \mu_j} (w_i c_{jd} - w_j c_{id}).
			\end{equation}

			Denote
			\[
				f_{ij} = \frac{v_i w_j - w_i v_j}{v_i} +  3 \mu_i v_j, 
			\]
			we plug the values of $c_{ij}$ from \eqref{eq:cij} into \eqref{eq:sum-cij-zero} to get
			\begin{equation}  \label{eq:final-eq-long}
				\begin{aligned}
					0 &=  
					\sum_{j \ne i} \frac{f_{ij}}{\mu_i + \mu_j}(c_{id}v_j + c_{jd} v_i)
					+ \sum_{j \ne i} \frac{2 f_{ij}}{3(\mu_i^2 - \mu_j^2)} (w_i c_{jd} - w_j c_{id})  \\
						& \quad  +  \sum_{j \ne i} \frac{v_j}{\mu_i - \mu_j} (w_i c_{jd} - w_j c_{id})
					 + c_{id}(2w_d - 3 v_i^2 )  \\
						& = 
						\sum_{j \ne i} \frac{f_{ij}}{\mu_i + \mu_j}(c_{id}v_j + c_{jd} v_i)
						+ \sum_{j \ne i} \frac{g_{ij}}{\mu_i^2 - \mu_j^2} (w_i c_{jd} - w_j c_{id}) 
						+ c_{id}(2w_d - 3 v_i^2)  ,
				\end{aligned}  
			\end{equation}
			where
			\[
				g_{ij} =  \tfrac23 f_{ij} + v_j(\mu_i + \mu_j). 
			\]

			Rewrite \eqref{eq:final-eq-long} as
			\[
					0 = 
					\sum_{j \ne i} \left( \frac{f_{ij}}{\mu_i + \mu_j} v_i + \frac{g_{ij}}{\mu_i^2 - \mu_j^2} w_i \right) c_{jd}
					+
					\sum_{j \ne i} \left( \frac{f_{ij}}{\mu_i + \mu_j} v_i - \frac{g_{ij}}{\mu_i^2 - \mu_j^2} w_j  - 3 v_i^2 \right)  c_{id},
			\]
			then $0 = \sum_{j = 1}^{d-1} m_{ij} c_{jd}$ where $m_{ij}$ are defined by
			\begin{equation}  \label{eq:mij}
				\begin{aligned}
					m_{ij} 
& = \frac{f_{ij}}{\mu_i + \mu_j} v_i + \frac{g_{ij}}{\mu_i^2 - \mu_j^2} w_i ,
&  1 \le i \ne j \le d-1, \\
m_{ii} 
& = \sum_{j \ne i} \frac{f_{ij}}{\mu_i + \mu_j} v_j 
- \sum_{j \ne i} \frac{g_{ij}}{\mu_i^2 - \mu_j^2} w_j
					 + 2w_d - 3 v_i^2, & 1 \le i \le d-1.
				\end{aligned}
			\end{equation}

			If the matrix $M = M(v, \mu) =  (m_{ij})$ is invertible for $\mu \in \R^{d-1}$, then $c_{id} = 0$ for all $i = 1, \cdots, d-1$. Substituting back, we see that $c_{ij} = b_{ij} = 0$ for $1 \le i \le j \le d-1$, which then imply all coefficients vanish.

			We now for the first time invoke the dependence of $v$ and $w$ on the parameters, namely
			\[
					G = \bmat{\bar{G} & 0 \\ 0 & 1}, \quad v = v(G) =  \bar{G} \bar{v}, \quad w(G, \mu) = - \bmat{\bar{G} \bar{w} \\ \hat{w}} - 2 \bmat{\Gamma v(G) \\ 0},
			\]
			where $\Gamma = \diag\{\mu_1, \cdots, \mu_{d-1}\}$.

			Let us first deal with the case $d = 2$. In this case the parameter $\bar{G}$ is always the $1 \times 1$ identity matrix, and the vector $\bar{v} = v_1$ is one-dimensional and nonzero by assumption. Moreover, $w_d = -\hat{w} =  v_1^2$ (see \eqref{eq:bn-notation}). It follows that $\det M(v(G), \mu) = 2w_d - 3 v_1^2 =  - v_1^2 \ne 0$.

			Suppose now $d \ge 3$. Since $\bar{v} \ne 0$, for an open and dense set $\cV_1$ of orthogonal matrices, for all $G \in \cV_1$, $G\bar{v}$ satisfies \eqref{eq:vi}. For a fixed $G \in \cV_1$, it suffices to show that $\det M(v(G), \mu)$ is not a trivial function. To do this, we consider the function $\det M(v(G), t\mu)$ where $t \gg 1$.

			Since
			\[
					w(G, \mu) = - \bmat{G\bar{w} \\ \hat{w}} - 2 \bmat{\mu_1 v_1 & \cdots & \mu_{d-1} v_{d-1} & 0}^T.
			\]
			then as $t \to \infty$, 
			\[
				w(t \mu) = - 2 t \bmat{\mu_1 v_1 & \cdots & \mu_{d-1} v_{d-1} & 0}^T + O(1),
			\]
			where $O(1)$ is a bounded term as $t \to \infty$ (since $G$ is orthogonal, $G\bar{w}$ is uniformly bounded). We get $w_j(t \mu) = t \mu_j v_j + O(1)$, hence
			\[
				f_{ij}(t \mu) =  2 t\frac{v_i  \mu_j v_j - v_j  \mu_i v_i}{v_i} + 3  t\mu_i v_j + O(1)
				=   t v_j( \mu_i + 2\mu_j) + O(1),
			\]
			\[
				g_{ij}(t\mu) = 
				  \tfrac23   t v_j(\mu_i + 2\mu_j) + t v_j(\mu_i + \mu_j) + O(1) 
				=  tv_j(\tfrac53 \mu_i + \tfrac73 \mu_j) + O(1).
			\]

			Plug into \eqref{eq:mij}, we get
			\begin{equation}  \label{eq:mij-large-t}
				\begin{aligned}
					m_{ij}(v, t\mu) 
& = \bar{m}_{ij}(v, \mu) + O(1/t) \\ 
& = v_i v_j \frac{8 \mu_i^2 + 10 \mu_i \mu_j - 6 \mu_j^2}{3(\mu_i^2 - \mu_j^2)} + O(1/t)
&  1 \le i \ne j \le d-1, \\
m_{ii}(v, t\mu) 
& = \bar{m}_{ij}(v, \mu) + O(1/t) \\
& =  \sum_{j \ne i} v_j^2 \frac{3 \mu_i^2 - 2 \mu_i \mu_j - 13\mu_j^2}{3(\mu_i^2 - \mu_j^2)} 
					 - 3 v_i^2 + O(1/t), & 1 \le i \le d-1.
				\end{aligned}
			\end{equation}
			If $\det \bar{M}(v(G), \mu) = \det (\bar{m}_{ij})(v(G), \mu) \ne 0$, then $\det M(v(G), t\mu) \ne 0$ for sufficiently large $t$. 

			We will first show that $\det \bar{M}(v, \mu)$ is a nontrivial rational function of $v$, $\mu$. Indeed, we will set $v_i = 0$ for all $3 \le i \le d-1$, and check that $\det \bar{M}$ is still nontrivial. Let us denote
			\[
				p_{ij} = 8 \mu_i^2 + 10 \mu_i \mu_j - 6 \mu_j^2, \quad 
				q_{ij} = 3 \mu_i^2 - 2 \mu_i \mu_j - 13 \mu_j^2,
			\]
			then 
			\[
\begin{aligned}
		\bar{m}_{12} & = v_1 v_2 \frac{p_{12}}{3(\mu_1^2 - \mu_2^2)}, 
								 & \bar{m}_{21} 
								 & = v_1 v_2 \frac{p_{21}}{3(\mu_1^2 - \mu_2^2)},  \\
		\bar{m}_{11} & = v_2^2 \frac{q_{12}}{3(\mu_1^2 - \mu_2^2)} - 3 v_1^2, 
								 & \bar{m}_{22} 
								 & = v_1^2 \frac{q_{21}}{3(\mu_2^2 - \mu_1^2)} - 3 v_2^2, \\
		\bar{m}_{ii} & = \sum_{j = 1, 2} v_j^2 \frac{q_{ij}}{3(\mu_i^2 - \mu_j^2)}- 3 v_i^2, 
								 & & 3 \le i \le d-1, \\	
		\bar{m}_{ij} & = 0, && \text{ otherwise}.
\end{aligned}
			\]
			Hence
			\[
					\det \bar{M}(v, \mu) |_{v_3 = \cdots = v_{d-1} = 0}
					= \left( \prod_{i \notin {1, 2}} \bar{m}_{ii}\right) 
					\det \bmat{\bar{m}_{11} & \bar{m}_{12} \\ \bar{m}_{21} & \bar{m}_{22}}
			\]
			where (denote $p_{ij} $)
			\[
\begin{aligned}
			 &  \det \begin{bmatrix}
			  	\bar{m}_{11} & \bar{m}_{12} \\ \bar{m}_{21} & \bar{m}_{22}
		\end{bmatrix} \\
				& = \frac{1}{9(\mu_1^2 - \mu_2^2)^2} \det 
\begin{bmatrix}
		v_2^2 q_{12} - 9 v_1^2(\mu_1^2 - \mu_2^2) & v_1 v_2 p_{12} 	 \\
		-  v_1 v_2 p_{21}  & - v_1^2 q_{21} - 9 v_2^2(\mu_1^2 - \mu_2^2) 
\end{bmatrix} \\
				& =  \frac{1}{9(\mu_1^2 - \mu_2^2)^2} 
				\bigl(  v_1^2 v_2^2 (- q_{12} q_{21}  + p_{12} p_{21} + 81 (\mu_1^2 - \mu_2^2)^2) \\
				& \quad + 9 v_1^4 (\mu_1^2 - \mu_2^2) q_{21} - 9 v_2^4 q_{12}(\mu_1^2 - \mu_2^2)
				\bigr).
\end{aligned}
			\]
			Since each of $\bar{m}_{ii}$ for $3 \le i \le d-1$ are nontrivial functions, and so is $\det \begin{bmatrix}
			  	\bar{m}_{11} & \bar{m}_{12} \\ \bar{m}_{21} & \bar{m}_{22}
			\end{bmatrix} $ (for example, the coefficients of $v_1^4$ and $v_2^4$ are clearly nontrivial), we conclude $\det \bar{M}(v, \mu)$ is a nontrivial function of $v, \mu$.

			 We remark that the scaling limits taken to prove nontriviality of the determinant is purely an artifact of the proof and unrelated to any intrinsic property of the system. 

			Finally, we have concluded that $\det \bar{M}$ as a rational function of $v$ and $\mu$ is nontrivial, therefore nonvanishing for an open and dense set of $v, \mu \in \R^{d-1}$. Observe that $\det \bar{M}$ is a homogeneous function in $v$, therefore we can restrict to the space of $v \in \{\|v\| = \|\bar{v}\|\}, \mu \in \R^{d-1}$. Finally, since the group action by $\bar{G} \in \cO(d-1)$ is transitive on $\{\|v\| = \|\bar{v}\|\}$, the same holds for  $(v(G), \mu)$ where $(G, \mu)$ is taken over an open and dense set. 
		\end{proof}

		It remains to prove Proposition \ref{prop:B5}. 

		\begin{proof}[Proof of Proposition \ref{prop:B5}]
			The top right block of $B_{ij}^5$ is
			\[
				- 4\left( \ddot{A} E_{ij} A + A E_{ij} \ddot{A} \right)  - 6 \dot{A} E_{ij} \dot{A}.
			\]
			We now show that the $d, d$ entry of the above matrix is non-zero for some $1 \le i = j \le d-1$.
			Note that $\ddot{A} E_{ij} A + A E_{ij} \ddot{A}$ vanishes on the $d, d$ entry due to the block form of $A(0)$. Using \eqref{eq:eta-ij}, we calculate.
			\[
				\begin{aligned}
					\dot{A}(0) E_{ii} \dot{A}(0)
		& = (2 \mu_i F_{ii} - 2 v_i F_{di}) 
		\left( \sum_{k = 1}^{d-1} \mu_k F_{kk} + \sum_{k = 1}^{d-1} v_k (F_{dk} + F_{kd}) \right) \\
		& = - 2 v_i^2 F_{dd} + \cdots
				\end{aligned}
			\]
			where we have omitted all the terms independent of $F_{dd}$. Since by assumption, the vector $(v_1, \cdots, v_{d-1}) \ne 0$, the proposition follows.
		\end{proof}

		\appendix

		\section{Uniqueness of normal lifts}
		\label{sec:normal-lifts}

		In the classical Lagrangian mechanics, the Legendre transform 
		\[
				\bL(q, v) = (q, \partial_v L(q, v))  , \quad
				\bL^{-1}(q, p) = (q, \partial_p H(q, p))
		\]
		is a diffeomorphism between $TM$ and $T^*M$. In the sub-Riemannian setting, $\bL$ is not defined on all of $TM$, while $\bL^{-1}$ is not one-to-one. It is therefore possible that two different normal orbit segments $(Q_1, P_1)$, $(Q_2, P_2)$ will project to the same curve in $M$:
            \begin{equation}\label{same_lifts_eq}
                 Q_1 = Q_2, \quad P_1 \ne P_2.  
            \end{equation}
The main goal of this appendix is to show that if $\theta|_{[t_0,t_1]}$  is a $\cD$-regular segment, then $\theta|_{[t_0,t_1]}$ is the unique lift of $\pi \circ \theta|_{[t_0,t_1]}$. This is needed in the proof of Proposition \ref{tagged_interval}. 

This claim is included as an exercise for the sub-Riemannian case in \cite{Vit14}, here we include a proof for the general subbundle Lagrangian case.

We consider a local frame of $\cD$
\[
  f_1, \cdots, f_m  
\]
defined on an open set $U \subset M$. There exists a smooth function $\varphi(q, \c)$ defined on $U \times \R^m$ such that
\[
  L(q, \sum_i \c_i f_i(q)) = \varphi(q, \c).  
\]

The following theorem is standard in sub-Riemannian geometry (\cite{Montgomerybook, Rif14, Agrachevbook}). For the $\cD$-Hamiltonian version, we refer to \cite{AS04}, Theorem 12.10.

\begin{theorem}\label{thm:pontryagin}
		Let $Q: [0, \delta] \to U$ be a horizontal curve and 
		\[
				\dot{Q}(t) = \sum_i \c_i f_i\big(Q(t)\big).
		\]
		\begin{enumerate}[(a)]
 \item If $Q$ lifts to a normal curve $\big(Q(t), P(t)\big)$, then
\begin{equation}  \label{eq:normal}
				 \dot{P} = -  P \cdot \sum_i \c_i \partial_q f_i(Q) + \partial_q \varphi(Q, \c),
\end{equation}
and 
\begin{equation}  \label{eq:energy}
P \cdot \sum_i \c_i f_i(Q) - \varphi(Q, \c) = \max_{v\in T_QU} \left( P \cdot \sum_i v_i f_i(Q) - \varphi(Q, v) \right) = H(Q, P). 
\end{equation}
\item $Q$ is singular if and only if there exists a nonzero $\eta(t) \in T^*_{Q(t)}M$ such that
\begin{equation}  \label{eq:singular}
		    	 \dot{\eta} = -  \eta \cdot \sum_i \c_i \partial_q f_i(Q), \quad
					 \eta(t) \cdot f_i\big(Q(t)\big) = 0, \, \forall 1 \le i \le m.
\end{equation}
\end{enumerate}
\end{theorem}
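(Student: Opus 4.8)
The plan is to read both statements off Hamilton's equations written in a local frame, together with the variation-of-parameters formula for the differential of the endpoint map. Fix a frame $f_1, \dots, f_m$ spanning $\cD$ over $U$ and write $L(q, \sum_i \c_i f_i(q)) = \varphi(q, \c)$, so that $H(q, p) = \sup_{\c \in \R^m}\bigl( p \cdot \sum_i \c_i f_i(q) - \varphi(q, \c)\bigr)$. Strict fiberwise convexity of $L$ makes $\varphi(q, \cdot)$ strictly convex, and finiteness together with smoothness of $H$ forces the supremum to be attained at a unique $\c = \c(q, p)$, depending smoothly on $(q,p)$ and characterized by $\partial_{\c_i}\varphi(q, \c) = p \cdot f_i(q)$. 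Differentiating under the supremum, with the $\c$-derivative terms dropping out by first-order optimality, gives the envelope identities $\partial_p H(q, p) = \sum_i \c_i(q, p)\, f_i(q)$ and $\partial_q H(q, p) = p \cdot \sum_i \c_i(q, p)\, \partial_q f_i(q) - \partial_q \varphi\bigl(q, \c(q, p)\bigr)$.

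For part (a), let $(Q, P)$ be an orbit of the Hamiltonian flow of $H$. From $\dot Q = \partial_p H(Q, P) = \sum_i \c_i(Q, P)\, f_i(Q)$ and the linear independence of the frame, the control $\c(t)$ realizing $\dot Q(t) = \sum_i \c_i(t)\, f_i(Q(t))$ coincides with $\c(Q(t), P(t))$; in particular $\c(t)$ is the maximizer in the formula for $H$, which is exactly \eqref{eq:energy}. Substituting this control into the second Hamilton equation, $\dot P = -\partial_q H(Q, P) = -P \cdot \sum_i \c_i\, \partial_q f_i(Q) + \partial_q \varphi(Q, \c)$, which is \eqref{eq:normal}.

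For part (b), I would differentiate the endpoint map $\cE(\c) = Q_\c(\delta)$. Linearizing $\dot Q = \sum_i \c_i f_i(Q)$ and solving by variation of parameters yields $d\cE(\c)\, h = \Phi(\delta) \int_0^\delta \Phi(s)^{-1} \sum_i h_i(s)\, f_i(Q(s))\, ds$, where $\Phi$ solves $\dot\Phi = \bigl(\sum_i \c_i\, \partial_q f_i(Q)\bigr)\Phi$, $\Phi(0) = I$. Thus $\cE$ fails to be a submersion at $\c$ precisely when some covector $\lambda \ne 0$ in $T^*_{Q(\delta)}M$ annihilates the image; setting $\eta(s)^\top = \lambda^\top \Phi(\delta)\Phi(s)^{-1}$, the annihilation condition reads $\int_0^\delta \sum_i h_i(s)\, \eta(s)\cdot f_i(Q(s))\, ds = 0$ for all $h$, equivalently $\eta(s)\cdot f_i(Q(s)) = 0$ for every $i$, while differentiating the definition of $\eta$ gives the adjoint equation $\dot\eta = -\eta\cdot\sum_i \c_i\, \partial_q f_i(Q)$, with $\eta$ nowhere vanishing since $\Phi$ is invertible and $\eta(\delta)^\top = \lambda^\top$. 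Conversely, given such an $\eta$, one checks $s \mapsto \eta(s)^\top \Phi(s)$ is constant, so $\lambda := \eta(\delta)$ annihilates the image of $d\cE(\c)$ and $Q$ is singular; this is \eqref{eq:singular}.

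The only delicate point is the functional-analytic bookkeeping behind part (b): fixing the domain $\c \in L^2([0, \delta]; \R^m)$, justifying $C^1$-regularity of $\cE$ and the variation-of-parameters formula at that regularity, and observing that the covector $\eta$ — produced a priori only in $H^1$ — inherits the smoothness of $Q$ and of the frame through its linear adjoint equation. One should also note the conclusions are frame-independent: a change of frame conjugates $\Phi$ and transforms $\eta$ covariantly, leaving \eqref{eq:singular} intact, while \eqref{eq:normal}--\eqref{eq:energy} are coordinate expressions of Hamilton's equations and of Legendre--Fenchel duality, so part (a) carries no real obstruction.
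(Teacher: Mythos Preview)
Your argument is correct and follows the standard route: envelope identities for the Legendre dual yield Hamilton's equations in the frame, and variation of parameters for the linearized flow identifies the cokernel of $d\cE(\c)$ with solutions of the adjoint equation vanishing on the frame. The paper does not give its own proof of this theorem --- it cites it as standard from \cite{Montgomerybook, Rif14, Agrachevbook} and \cite{AS04}, Theorem~12.10 --- and your derivation is precisely the argument one finds in those references.
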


\begin{corollary}\label{cor:unique-lift}
		Suppose $Q: [0, \delta] \to U$ is a regular curve, then $Q$ admits a unique normal lift.	
\end{corollary}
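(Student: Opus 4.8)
The plan is to argue by contradiction, extracting from a hypothetical pair of distinct normal lifts an abnormal covector along $Q$, which by Theorem \ref{thm:pontryagin}(b) would force $Q$ to be singular. So suppose $Q$ admits two normal lifts $(Q, P_1)$ and $(Q, P_2)$, write $\dot{Q}(t) = \sum_i \c_i(t) f_i(Q(t))$ with $\c = (\c_1, \ldots, \c_m) \in L^2$ uniquely determined (since $f_1, \ldots, f_m$ is a frame), and set $\eta(t) := P_1(t) - P_2(t)$. I will show $\eta$ satisfies system \eqref{eq:singular}.

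The first step is to unpack the maximality condition \eqref{eq:energy}. Since $L$ is fiberwise strictly convex, $v \mapsto \varphi(Q, v)$ is strictly convex, hence $v \mapsto P_j \cdot \sum_i v_i f_i(Q) - \varphi(Q, v)$ is strictly concave and attains its maximum at a unique point; by \eqref{eq:energy} that point is $v = \c$. Setting the $v$-gradient to zero at $v = \c$ gives, for $j = 1, 2$ and a.e.\ $t$, the relations $P_j(t) \cdot f_i(Q(t)) = \partial_{\c_i}\varphi(Q(t), \c(t))$ for all $1 \le i \le m$. Subtracting, $\eta(t) \cdot f_i(Q(t)) = 0$ for every $i$, i.e.\ $\eta(t) \in \cD^\perp_{Q(t)}$. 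The second step is to subtract the two copies of the normal equation \eqref{eq:normal}: the inhomogeneous term $\partial_q\varphi(Q, \c)$ is common to both lifts and cancels, leaving $\dot{\eta} = - \eta \cdot \sum_i \c_i \partial_q f_i(Q)$. Combined with the annihilation condition, $\eta$ solves exactly \eqref{eq:singular}. Since this is a linear ODE in $\eta$, either $\eta \equiv 0$ or $\eta(t) \ne 0$ for all $t$; in the latter case Theorem \ref{thm:pontryagin}(b) yields that $Q$ is singular, contradicting the hypothesis. Hence $\eta \equiv 0$ and $P_1 = P_2$.

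I do not expect a genuine obstacle here. The only points that need care are that the control $\c$ is well defined and only $L^2$, so that \eqref{eq:normal}--\eqref{eq:singular} are to be read in the Carathéodory sense, and the elementary passage from ``$\eta$ not identically zero'' to ``$\eta$ nowhere zero'', which is just uniqueness for the linear equation $\dot{\eta} = - \eta \cdot \sum_i \c_i \partial_q f_i(Q)$. Everything else is bookkeeping with the Pontryagin equations from Theorem \ref{thm:pontryagin}.
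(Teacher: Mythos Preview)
Your proof is correct and follows essentially the same route as the paper's: set $\eta = P_1 - P_2$, use the maximality condition \eqref{eq:energy} to get $\eta \in \cD^\perp_{Q}$, subtract the two copies of \eqref{eq:normal} to get the linear equation for $\eta$, and invoke Theorem~\ref{thm:pontryagin}(b). The only cosmetic difference is that the paper phrases the annihilation step via strict convexity of $H$ on $T^*_qM/\cD^\perp$ (from $\partial_p H(Q,P_1)=\partial_p H(Q,P_2)$), while you differentiate the Pontryagin maximum directly to get $P_j\cdot f_i(Q)=\partial_{\c_i}\varphi(Q,\c)$; these are the same observation from the Lagrangian and Hamiltonian sides respectively.
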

\begin{proof}
		Suppose $\big(Q(t), P_1(t)\big)$ and $\big(Q(t), P_2(t)\big)$ are two normal lifts, we plug both into \eqref{eq:normal} and subtract to get
		\[
				\dot{P}_1 - \dot{P}_2
				= - (P_1 - P_2) \cdot \sum_i \c_i \partial_q f_i(Q).
		\]
		Moreover, \eqref{eq:energy} implies
		\[
		  \partial_p H(Q, P_1) = \partial_p H(Q, P_2).
		\]
		Since $H(q, \cdot)$ descends to a strictly convex function on $T^*_q M / \cD^\perp(q)$, $P_1 - P_2 \in \cD^\perp(Q)$, or $(P_1 - P_2) \cdot f_i(Q) = 0$ for all $1 \le i \le m$. Hence $\eta = P_1 - P_2$ satisfies \eqref{eq:singular}.  Since $Q$ is regular, by item (b) of Theorem \ref{thm:pontryagin}, $P_1 - P_2 = 0$.
\end{proof}

		\printbibliography

\end{document}